\documentclass[11pt,reqno]{amsart}
\usepackage[utf8]{inputenc}
\usepackage{graphicx}
\usepackage{amsfonts}
\usepackage{amsmath}
\usepackage{nccmath}
\usepackage{amssymb}
\usepackage{fancyhdr}
\usepackage{amsthm}
\usepackage{array}
\usepackage{systeme}
\usepackage{xcolor}
\usepackage{mathtools}

\usepackage{afterpage}
\usepackage{calligra}
\usepackage[T1]{fontenc}
\usepackage{esint}

\usepackage{enumerate}
\usepackage[top = 1in,
bottom = 1in, left = 1in, right = 1in]{geometry}
\usepackage[unicode, pagebackref,colorlinks=true,linkcolor=blue,citecolor=blue]{hyperref}
\usepackage{subfig}
\usepackage[]{mdframed}
\usepackage{lipsum}
\usepackage{enumitem}
\usepackage{booktabs}
\usepackage{longtable}
\usepackage{tikz}

\newcommand{\cA}{\mathcal{A}}   \newcommand{\cB}{\mathcal{B}}
   \newcommand{\cD}{\mathcal{D}}
\newcommand{\cE}{\mathcal{E}}   \newcommand{\cF}{\mathcal{F}}

\newcommand{\cM}{\mathcal{M}}   \newcommand{\cN}{\mathcal{N}}
   \newcommand{\cP}{\mathcal{P}}
   
\newcommand{\cS}{\mathcal{S}}   \newcommand{\cT}{\mathcal{T}}
   
   \newcommand{\cX}{\mathcal{X}}
\newcommand{\cY}{\mathcal{Y}}   

    \newcommand{\bM}{\mathbb{M}}
    
\newcommand{\bP}{\mathbb{P}}    \newcommand{\bQ}{\mathbb{Q}}
    \newcommand{\bS}{\mathbb{S}}

\renewcommand{\P}{\mathbb{P}}
\newcommand{\R}{\mathbb{R}}     
\newcommand{\N}{\mathbb{N}}     \newcommand{\E}{\mathbb{E}}
\newcommand{\p}{\mathbb{P}}

\newcommand{\hmu}{\hat\mu}
\newcommand{\hnu}{\hat\nu}

\newcommand{\wto}{\overset{w}{\to}}

\newcommand{\deq}{\coloneqq}

\DeclareMathOperator{\divergence}{div}

\DeclareMathOperator{\KL}{\mathsf{KL}}

\newcommand{\Var}{\mathrm{Var}}

\newcommand{\dd}{d}

\newcommand{\grad}{\nabla}
\newcommand{\nv}{\nabla\varphi}

\newcommand{\om}{\omega}

\newcommand{\id}{\mathrm{id}}

\newcommand{\1}{{\rm 1}\kern-0.24em{\rm I}}

\newcommand{\proj}{{\mathbf{P}}^\perp}

\newcommand{\Pc}{\cP_2(\R^d)}
\newcommand{\Pac}{\cP_{2,ac}(\R^d)}

\newcommand{\cfi}{\cF_{\kern-0.25em \infty}}

\newcommand{\pn}{\p_{\kern-0.25em n}}
\newcommand{\pnm}{\p_{\kern-0.25em n,m}}
\newcommand{\psubm}{\p_{\kern-0.25em m}}
\newcommand{\psubp}{\p_{\kern-0.25em p}}

\newcommand{\LW}{L^2\left(\Omega\,;\, \mathcal{P}_{2}(\R^d)\right)}
\newcommand{\WL}{L^2_{W}(\R^d)}
\newcommand{\wlac}{L^2_{W,ac}(\R^d)}
\newcommand{\WLS}{L^2_{W}(\bS^{d-1})}
\newcommand{\wlsac}{L^2_{W,ac}(\bS^{d-1})}
\newcommand{\gradW}{\nabla\mkern-10mu\nabla}
\newcommand{\gradWL}{\gradW_{L^2}}

\newcommand{\mF}{\mathfrak{F}}
\newcommand{\mE}{\mathfrak{E}}
\newcommand{\ninf}{n\to\infty}

\newcommand{\hmuns}{\hmu_n^\sigma}
\newcommand{\hmun}{\hmu_n}
\newcommand{\hmuno}{\hmu_n(\om)}
\newcommand{\hnun}{\hnu_n}
\newcommand{\hnuno}{\hnu_n(\om)}

\renewcommand{\Delta}{\varDelta}
\renewcommand{\Phi}{\varPhi}
\renewcommand{\Sigma}{\varSigma}
\renewcommand{\Theta}{\varTheta}
\renewcommand{\Upsilon}{\varUpsilon}
\renewcommand{\tilde}{\widetilde}

\newtheorem{theorem}{Theorem}[section]
\newtheorem{proposition}[theorem]{Proposition}
\newtheorem{corollary}[theorem]{Corollary}
\newtheorem{lemma}[theorem]{Lemma}
\newtheorem{definition}[theorem]{Definition}
\newtheorem{remark}[theorem]{Remark}

\newlength{\minipagewidth}
\usepackage{chngcntr}
\counterwithin{figure}{section}

\renewcommand*{\backrefalt}[4]{%
    \ifcase #1 %
        (Not cited.)%
    \or
        (Referred to on page #2)%
    \else
        (Referred to on pages #2)%
    \fi
}

\def\l{\left}
\def\r{\right}

\title{$L^2$ over Wasserstein: Statistical Analysis for Optimal Transport} 

\address{Department of Mathematics, Imperial College London\vspace{0.25cm}}

\author[R. Passeggeri]{Riccardo Passeggeri}
\email{riccardo.passeggeri@imperial.ac.uk}
\author[R.M. Shenoy]{Rohan M. Shenoy}
\email{rohan.shenoy22@imperial.ac.uk}
\author[P. Ye]{Pengcheng Ye}
\email{david.ye23@imperial.ac.uk}

\date{\today}

\begin{document}
\begin{abstract}
    Optimal transport provides an inherently geometric and highly structured framework for studying spaces of probability measures, supplying a rich theoretical toolkit for contemporary statistics, machine learning, and generative modelling. In applications, however, the measures of interest are almost never known precisely, calling for a theory of optimal transport that accounts for statistical uncertainty. We construct such a framework, lifting the classical theory to the setting of \textit{random} probability measures. We introduce the $L^2$ over Wasserstein space $\WL$, establishing that it inherits the formal Riemannian structure of the Wasserstein space by characterising distances and geodesic geometry. The structure induces random flows with Wasserstein gradient flow sample paths, making it the natural extension of the Wasserstein space which allows for random gradient flow dynamics. We ensemble statistical convergence results of the optimal transport machinery using the empirical measure within the $\WL$ framework. Moreover, in the setting of Bayesian non-parametrics, we refine Schwartz's consistency theorem to the Wasserstein topology and deduce posterior convergence of the same machinery in $\WL$. We demonstrate that the growing theory of random token sampling for transformer models using self-attention flow paths can be embedded into the $\WL$ framework. The results provide a unified treatment of random optimal transport and its consequences for principled inference and generative modelling under the statistical uncertainty of random sampling.\\

\noindent \textbf{Keywords:} Optimal transport, random measures, Wasserstein gradient flows, empirical measures, Bayesian inference, propagation of chaos, transformers, self-attention.
\end{abstract}
\maketitle

\tableofcontents

\addtocontents{toc}{\protect\setcounter{tocdepth}{1}}
\section{Introduction}
This work introduces the $L^2$ over Wasserstein space, lifting the classical theory of optimal transport to random probability measures. We demonstrate the utility of the space for applications in statistical analysis, Bayesian analysis, and generative modelling, consolidating both new and existing results under one unified framework for optimal transport under uncertainty.

\subsection*{Classical Optimal Transport}

The study of optimal transport was introduced by Monge in a Memoir from 1781 \cite{monge1781memoire}, considering the problem of finding a volume-preserving mapping to transport mass from a starting distribution $\mu$ to a terminal distribution $\nu$ whilst minimising the transport cost. The modern mathematical formulation of the optimal transport problem between probability measures, due to Kantorovich in 1942 \cite{Kantorovich1942}, replaces the search for a deterministic transport map $T:\cX\to\cY$ from the Monge formulation with the more flexible notion of a coupling, a joint distribution over $\cX\times\cY$ marginals $\mu$ and $\nu$. The resulting Wasserstein distance $W_2(\mu,\nu)$, obtained by optimising the expected squared displacement over all such couplings, endows the space of probability measures $\Pc$ with a rich geometric structure, respecting that of the base space $\R^d$. The topological and metric properties of the Wasserstein space are treated comprehensively by Villani in \cite{villani_topics_2003} and \cite{villani2009optimal}. 

By Brenier’s fundamental theorem (1991) \cite{brenier1991polar}, in the setting of absolutely continuous measures one obtains an optimal transport map, realised as the gradient of a convex potential. This, in turn, underpins a formal Riemannian structure on $\Pac$, where geodesics are characterised by the Benamou–Brenier theorem, and gradient flows governed by the continuity equation—a framework extensively developed by Ambrosio, Gigli, and Savaré \cite{ambrosioGradientFlowsMetric2008}.

In section \ref{sec:2} we characterise the classical theory, broadly following chapters 1 and 5 from \cite{chewi2025statistical} as primary reference. We formulate the
$p$-Wasserstein space over $\R^d$, and present Brenier's theorem in the absolutely continuous case. We then explore Otto's formal Riemannian structure on $\Pac$ from \cite{otto2001geometry}. The Benamou–Brenier theorem from \cite{benamou2000computational} identifies the alternative dynamic formulation of the Wasserstein
distance as the minimised kinetic energy over all curves of measures subject to the continuity equation, inducing a geodesic geometry characterised via displacement interpolations. The section ends by introducing Wasserstein gradient flows, which serve as a widely used foundation for optimization algorithms that operate directly on the space of probability measures and are applied to many problems in statistics and machine learning.

\subsection*{Optimal Transport on the realisations of random probability measures}

The central contribution of this paper is to lift the classical optimal transport framework to the setting of \textit{random probability measures} with appropriate structure developed in section \ref{sec:3}. The framework we develop allows one to discuss the optimal transport problem between distributions subject to statistical uncertainty.

In the language of Kallenberg \cite{kallenberg2017random}, a random measure $\xi$ is a measure-valued random variable;
we focus on random probability measures with
$\Pc$ realisations for which the classical optimal transport theory applies. We carefully define the
$L^2$ over Wasserstein space $\WL \deq L^2(\Omega; (\Pc, W_2))$ as the $L^2$ space of probability measures endowed with the Wasserstein topology, naturally equipping the space with the metric
\begin{equation}\label{eq:metric_d}
    d(\xi,\eta)\deq \E_\om\l[W_2^2(\xi(\om),\eta(\om))\r]^{\frac{1}{2}}.
\end{equation}
The structure induced by this metric topology is rich. Within our framework, we are able to lift the geodesic geometry of $\Pc$ to $\WL$. Constant-speed geodesics in $\WL$ are obtained by applying the Benamou-Brenier displacement interpolation to the sample paths, characterising $d^2(M_0, M_1)$ as the infimum of \textit{expected} kinetic energy over sample paths $M_{t\in[0,1]}(\om)$ subject to the continuity equation. 

We demonstrate that the $\WL$ framework gives rise to a formal Riemannian structure over the space of random probability measures, where the tangent space at a random measure $M$ is identified as the $L^2$ random vectors whose realisations are tangent vectors to the Wasserstein space, endowed with the Wasserstein Dirichlet form inner product from \cite{vonrenesse2009} - the energy of the random tangent vector with respect to the law of $M$, $\P_M$, over the Wasserstein space. In the context of Wasserstein gradient flows with respect to a functional over $\Pc$, this definition induces a random gradient flow with Wasserstein gradient flow sample paths. The space $\WL$ can thus be considered the natural extension of the Wasserstein space which allows for random gradient flow dynamics.

\subsection*{Statistical analysis of empirical measures in $\WL$}

Section \ref{sec:4} demonstrates the utility of the new definitions by consolidating a statistical theory for the optimal transport machinery constructed from empirical measures. Given sample draws from a population measure $\mu$, we view the empirical measure $\hmun$ as a random probability measure in $\WL$. We characterise an $\WL$ law of large numbers, a well-known result in the i.i.d. case, now in the language from our framework, with convergence both $\P$-almost surely and in $\WL$ at rates provided by a theorem from Fournier and Guillin \cite{fournier2015rate}. We also show convergence in the non-i.i.d. case under strong mixing conditions, providing adjusted Fournier-Guillin rates.

Via the stability results of the preceding section, we demonstrate the uniform convergence of the gradient flow paths initialised at the empirical measure to the gradient flow initialised at the population measure $\mu$ in both i.i.d. and non-i.i.d. cases, providing explicit propagation of chaos rates using a Grönwall bound. 

We unify both new and existing results within $\WL$ space to provide a single, canonical framework to study the optimal transport of randomly sampled empirical measures in Wasserstein space.

\subsection*{Bayesian Analysis in $\WL$} Section \ref{sec:5} explores the applicability of the $\WL$ framework for Bayesian non-parametrics. Many examples of existing research combining variational inference with optimal transport focus on Bayesian robustness (\cite{micheli2025wasserstein_dro}) and approximate Bayesian computation as in \cite{bernton2019approximate_abc}, \cite{deshpande2019approximate_abc}, \cite{li2026optimaltransportbasedgenerativemodel_abc}, \cite{mallasto2020bayesianinferenceoptimaltransport_abc} among other works. However, many of these assume the data are known a priori, thus treating the posterior as a deterministic measure. To target uncertainty quantification, one assumes the posterior is itself a random measure, where the data are not known and are modelled as random processes.

More concretely, the data-generating distribution $\pi \in \Pac$ is unknown, hence modelled by a prior $\Pi$: a random probability measure which, under our framework, is an element of $\WL$. The posterior $\Pi_n(\cdot,\om) = \Pi(\cdot | X_1(\om), \ldots, X_n(\om))$, updated via Bayes' theorem after observing
$n$ random i.i.d. draws from $\pi$, is again a random probability measure in $\WL$. Bayesian consistency, the concentration of posterior mass around the truth $\pi$ in the Wasserstein metric, follows from the classical theorem of Schwartz \cite{Schwartz1965OnBP}, which we refine from the weak topology to Wasserstein topology.

Under moment conditions on the prior and posterior, we establish uniform posterior consistency in $\WL$. We study the properties of the optimal transport machinery under Bayesian uncertainty, performing a similar analysis section to \ref{sec:4}, now for the Bayesian posteriors in place of empirical measures. We are able to discuss the Bayesian gradient flow as a propagation of uncertainty which can be controlled and stabilised appropriately under the sufficient conditions we show for posterior concentration in $\WL$.

\subsection*{Transformers with $\WL$ token samples}
The introduction of the transformer architecture in 2017 by Vaswani et al. \cite{Vaswani} marks a profound milestone in generative modelling. Central to the architecture is the self-attention mechanism which, as remarked by Geshkovski et al. \cite{glpr}, invites pairwise interaction between tokens. A recent line of work, explored by \cite{glpr}, \cite{rigollet2026meanfielddynamicstransformers},  \cite{bruno2026a} and \cite{agazzi2026} among others, has focused on the mean-field dynamics of the transformer model - these works demonstrate a token clustering phenomenon over the depth of the transformer, both empirically and theoretically in the neural ODE setting. The growing theory highlights connections between attention dynamics and areas of mathematics such as interacting particle systems, optimal transport,
synchronization models, and gradient flows, sitting nicely within the framework we develop. A key feature is the use of the empirical measure of the token particles to encapsulate mean-field dynamics between the tokens induced by a vector field acting on the empirical measure. 

The initialised empirical measure at time $0$ represents a token sample. Assuming tokens are sampled from an underlying population distribution, in the language of section \ref{sec:4}, the empirical measure is a random measure in $\WL$. Agazzi et al. \cite{agazzi2026} explore multiple propagation of chaos results after injecting noise at different stages in the transformer model, including at the initialisation of the empirical measure. We embed their results within the $\WL$ framework (marginally generalising their i.i.d. initialisation of the empirical measure to non-i.i.d. strong mixing). Moreover, expanding this viewpoint to the Bayesian paradigm, we consider sampling tokens from a Bayesian posterior, inviting the theory developed in section \ref{sec:5}, using the results of that section to argue convergence to the population self-attention flow, as in the empirical measure case.

\subsection*{Related works and literature}
The key feature of our approach is that it operates directly on the random measures themselves, rather than on their laws, distinguishing our framework from several recent works in the literature. In effect, this difference in approach is equivalent to the difference between exploring properties of real-valued random variables, and properties of their laws (probability distributions), only now at the level of \textit{measure-valued} random variables.

Pinzi and Savaré \cite{pinzi}, consider the \textit{Wasserstein-over-Wasserstein} (WoW) space of random measure laws, the ground cost is the Wasserstein distance, rather than the Euclidean. The 2-WoW distance between random measure laws $\P, \bQ\in\cP_2(\Pc)$ can be expressed using our definitions
\begin{equation*}
    W_{W_2}^2(\P,\bQ) = \inf_{\xi\sim \P, \eta\sim\bQ}\E_\om\l[W^2(\xi(\om),\eta(\om))\r]\qquad \text{s.t. } \xi,\eta\in\WL
\end{equation*}
which asserts that $\xi$ and $\eta$ are optimally coupled. Here one defines an optimal coupling (at the lifted level of $\cP_2(\Pc)$) for $\xi,\eta$ in the infimum. Contrasting with our setting \eqref{eq:metric_d} we assume that $\xi,\eta$ are defined on the same probability space a priori, hence already coupled.

The authors additionally develop gradient flows in the WoW space in \cite{pinzi2025nestedsuperpositionprinciplerandom}, demonstrating a nested superposition result, first superposing the dynamics of particles in $\R^d$ (subject to the continuity equation) to the dynamics of measures on $\Pac$, then further superposing to the dynamics of random measure laws on $\cP(\Pac)$. As analogy, our framework defines a single (stochastic) superposition from $L^2(\R^d)$ to $\WL$, though we do not discuss this in detail.

The inner product that we impose on the tangent space to $\WL$ was originally introduced as the \textit{Wasserstein Dirichlet form} by von Renesse and Sturm in \cite{vonrenesse2009}. The authors establish the existence of a unique, continuous, symmetric Markov process on $\cP(\bS^{d-1})$ by proving that the Dirichlet form \eqref{eq:lifted_ke_norm} is closed, applying standard Dirichlet form theory as in \cite{Fukushima2010}. Subsequent contributions, including \cite{Schiavo2018ART} and \cite{delloschiavo2024massive}, have constructed further diffusion processes on various Wasserstein spaces in a similar fashion. Our structure, arising naturally from the definition of $\WL$ and the resulting dynamic equivalence of Theorem \ref{thm:lifted_benamou-brenier}, coincides with the one studied in these works, placing our results within the general setting of random Wasserstein dynamics. However, while the aforementioned works focus on randomness introduced by diffusion processes on Wasserstein spaces, primarily motivated by statistical modelling, we are interested instead in random gradient flows driven by a prescribed functional on the Wasserstein space $\Pc$.

The adapted Wasserstein distances, introduced by Bartl, Beiglböck and Pammer in \cite{Bartl2021TheWS}, are explored by Acciaio et al. in \cite{acciaio2025absolutelycontinuouscurvesstochastic} where the authors address optimal transport between stochastic processes under bi-causal constraints. While the paper explores a different structural concern from ours, the developments are similarly extend definitions of classical optimal transport to random processes.

Catalano and Lavenant \cite{catalano2023merging}, targeting Bayesian inference as we do in section \ref{sec:5}, similarly work with random measure laws rather than directly at the level of random measures. The authors develop a natural framework to investigate posterior merging rates using Dirichlet process priors, as the sample size increases, according to their Wasserstein over bounded Lipschitz distance, treating posterior updates as a stochastic process.

\subsection*{Organisation of the paper} In section \ref{sec:2} we recall the classical optimal transport theory, broadly following sections 1 and 5 from \cite{chewi2025statistical}  in a manner which facilitates the developments of later sections. In section \ref{sec:3} we develop the definitions and theory for the $L^2$ over Wasserstein space $\WL$, lifting the optimal transport machinery from section \ref{sec:2} to the space of random probability measures. In section \ref{sec:4} we specialise to consider empirical measures within the $\WL$ framework outlined in section 3, proving various statistical convergence results of empirical measures sampled from the Wasserstein space. In section \ref{sec:5} we proceed similarly to section 4; now considering the Bayesian paradigm, we regard priors and posteriors over $\Pc$ as elements of $\WL$ and prove consistency/concentration results of posteriors over the Wasserstein space. In section \ref{sec:6}, we demonstrate the applications of the empirical measure and Bayesian $\WL$ framework, considering random transformer token sampling and establishing a propagation of chaos result for self-attention dynamics, even under non-i.i.d.~conditions. In the \hyperlink{app}{Appendix} we collect various technical results and explorations deferred from the main sections.

\addtocontents{toc}{\protect\setcounter{tocdepth}{2}}
\section{Classical Optimal Transport}\label{sec:2}
\subsection{The Wasserstein Space}
The utility of optimal transport lies in its endowment of the space of probability measures with a geometric structure which canonically respects that of the underlying space. The most common optimal transport distance is the Wasserstein distance which lifts the metric on the ground space $\cX$ to the space of measures on $\cX$, $\cP(\cX)$. When the ground space is $\R^d$, the common choice of ground metric is the Euclidean norm.

Let $\mu,\nu\in\cP_p(\R^d)$ be two  probability measures with finite $p^{\text{th}}$ moment. Then $p$-Wasserstein distance between $\mu$ and $\nu$ is given by
\begin{equation}\label{eq:wassersteindef}
    W_p(\mu,\nu) = \inf_{\gamma\in\Gamma_{\mu,\nu}}\l(\int||x-y||^p\gamma(dx,dy)\r)^{\frac{1}{p}},
\end{equation}
where $\Gamma_{\mu,\nu}$ is the set of couplings between $\mu$ and $\nu$ - the set of joint distributions on $\R^d\times\R^d$ with marginals $\mu$ and $\nu$ respectively. One verifies $W_p(\delta_x,\delta_y) = ||x-y||$ so that $(\R^d,||\cdot||)$ is isometrically embedded in $(\cP_p(\R^\dd), W_p)$ via $x \mapsto \delta_x$; it is in this manner the Wasserstein distance provides its naturally geometric distance between probability measures. The metric topology induced by this distance on $\cP_p(\R^d)$ is explored extensively in Villani's monographs \cite{villani_topics_2003} and \cite{villani2009optimal}. The topology induced by the Wasserstein metric is characterised by the following equivalence.

\begin{theorem}[\cite{villani_topics_2003} Theorem 7.12]\label{thm:weak_wass}
    A sequence $(\mu_n)_{n\geq1}$ satisfies $W_p(\mu_n,\mu) \to 0$ if and only if it converges weakly to $\mu$, (denoted $\mu_n \wto \mu$), and the $p$-th moment, denoted $\cM_p(\cdot)$, converges: $\int \|x\|^p  \mu_n(dx) \to \int \|x\|^p  \mu(dx)$.
\end{theorem}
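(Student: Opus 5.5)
The plan is to prove the two implications separately. Both directions rest on the standard equivalence between convergence of $p$-th moments and uniform integrability of $\|x\|^p$. Concretely, given $\mu_n \wto \mu$, one has $\cM_p(\mu_n)\to\cM_p(\mu)<\infty$ if and only if
\begin{equation*}
\lim_{R\to\infty}\sup_n \int_{\|x\|>R}\|x\|^p\,\mu_n(dx)=0 .
\end{equation*}
This follows by splitting $\|x\|^p = \min(\|x\|^p,R^p) + (\|x\|^p-R^p)_+$. The first piece is bounded and continuous, so weak convergence applies to it. For the second piece, the portmanteau/Fatou argument gives the liminf inequality, and the limsup is controlled by uniform integrability.

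For the forward direction, assume $W_p(\mu_n,\mu)\to 0$.

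\textbf{Weak convergence.} First note that $W_1\le W_p$ by Jensen's inequality applied to any coupling. For bounded Lipschitz $f$, the coupling bound gives
\begin{equation*}
\Big|\int f\,d\mu_n-\int f\,d\mu\Big|\le \mathrm{Lip}(f)\,W_1(\mu_n,\mu)\to 0 .
\end{equation*}
Bounded Lipschitz test functions determine weak convergence, so $\mu_n \wto \mu$.

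\textbf{Moment convergence.} The triangle inequality gives $|W_p(\mu_n,\delta_0)-W_p(\mu,\delta_0)|\le W_p(\mu_n,\mu)$. Since $W_p(\nu,\delta_0)^p=\cM_p(\nu)$, the moments converge.

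For the converse, assume weak convergence and convergence of $p$-th moments. By the equivalence above, $\|x\|^p$ is then uniformly integrable along $(\mu_n)$. By Skorokhod's representation theorem, realise $X_n\sim\mu_n$ and $X\sim\mu$ on one probability space with $X_n\to X$ almost surely. The coupling of $(X_n,X)$ gives
\begin{equation*}
W_p^p(\mu_n,\mu)\le \E\|X_n-X\|^p .
\end{equation*}
We have $\|X_n-X\|^p\le 2^{p-1}(\|X_n\|^p+\|X\|^p)$, and this dominating family is uniformly integrable. Vitali's convergence theorem therefore gives $\E\|X_n-X\|^p\to 0$.

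The step I expect to need the most care is turning moment convergence into uniform integrability of the tails. One must pass from $\int \|x\|^p d\mu_n\to\int\|x\|^p d\mu$ to uniform smallness of $\int_{\|x\|>R}\|x\|^p d\mu_n$. The truncation handles this: for large $R$, chosen with $\mu(\|x\|=R)=0$, convergence of the bounded truncated integrals forces the tail integrals to converge to $\int (\|x\|^p-R^p)_+d\mu$, which is small. Finitely many initial indices are then absorbed by enlarging $R$. Since this is a classical result, in the paper one may also simply cite \cite{villani_topics_2003} Theorem 7.12.
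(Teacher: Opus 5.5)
Your proof is correct and has the same overall skeleton as the paper's proof in Appendix~\ref{App:Wasserstein_is_weak_+_moments}. Forward, weak convergence and moment convergence are proved separately; conversely, Skorokhod plus $W_p^p(\mu_n,\mu)\le\E\|X_n-X\|^p$ plus a convergence theorem. The ingredients differ at each step.

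For weak convergence, the paper takes an arbitrary $f\in C_b(\R^d)$ and an optimal coupling $\gamma_n$. It uses a uniform bound on $\cM_2(\mu_n)$, Markov's inequality on the tails, uniform continuity of $f$ on $\bar B_R\times\bar B_R$, and Markov on $\{\|x-y\|>\delta\}$. Your route via $W_1\le W_p$ and bounded Lipschitz test functions is shorter, but it relies on the portmanteau characterisation by Lipschitz functions.

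For moments, your reverse triangle inequality against $\delta_0$ is cleaner than the paper's Cauchy--Schwarz estimate pairing $\|x-y\|$ with $\|x+y\|$. The paper itself uses your trick in the proof of Theorem~\ref{thm:Posterior_consistency_wasserstein}.

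In the converse, the paper avoids the step you flag as delicate. It applies the generalised dominated convergence theorem (Lemma~\ref{lemma:generalised_lebesgue_dominated_convergence_theorem}) with $g_n=2\|X_n\|^2+2\|X\|^2$. Both $g_n\to4\|X\|^2$ almost surely and $\E[g_n]\to4\E\|X\|^2$ follow immediately from the hypotheses, so no uniform-integrability lemma is needed. You could do the same with $g_n=2^{p-1}(\|X_n\|^p+\|X\|^p)$. Alternatively, note that on the Skorokhod space $\|X_n\|^p\to\|X\|^p$ almost surely with converging means, so Scheff\'e's lemma gives $L^1$ convergence and hence uniform integrability for free.

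Your truncation argument is still valid, with two small remarks. The requirement $\mu(\|x\|=R)=0$ is superfluous, since $\min(\|x\|^p,R^p)$ is already continuous. Passing from $\int(\|x\|^p-R^p)_+\,d\mu_n$ to $\int_{\|x\|>2R}\|x\|^p\,d\mu_n$ uses $(\|x\|^p-R^p)_+\ge(1-2^{-p})\|x\|^p$ on $\{\|x\|>2R\}$.

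Finally, your argument covers general $p\ge1$, matching the statement, whereas the paper's appendix proof is written only for $p=2$.
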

We show the equivalence in Appendix \ref{App:Wasserstein_is_weak_+_moments}, while further equivalences are documented in Theorem 7.12 of \cite{villani_topics_2003}. A useful result towards proving Theorem \ref{thm:weak_wass} is that the functional $W_2(\cdot,\mu)$ is lower semi-continuous with respect to the weak topology, as shown in the preceding Appendix \ref{App:lsc}. The equivalence of Theorem \ref{thm:weak_wass} proves powerful in sections \ref{sec:4} and \ref{sec:5} where we establish convergence results by proving weak convergence and moment convergence, rather than computing $W_p$ directly, which is intractable in general.

The case $p = 2$ presents additional interesting structure for the Wasserstein space, in close connection with convex analysis. The following theorem of Brenier \cite{brenier1991polar} provides the existence of a transport map when the source measure is absolutely continuous with respect to Lebesgue measure.

\begin{theorem}[\cite{brenier1991polar}]\label{thm:brenier}
Let $\mu, \nu \in \cP_2(\R^\dd)$ be two probability measures such that $\mu$ has a density and let $X \sim \mu$. If $\bar \gamma$ is an optimal coupling for the Kantorovich problem with squared Euclidean cost i.e.
$$
\int \|x-y\|^2 \bar \gamma(d x,d y)=\min_{\gamma \in \Gamma_{\mu, \nu}}\int \|x-y\|^2 \gamma(d x,d y)=W_2^2(\mu, \nu),
$$
then there exists a convex function $\varphi:\R^\dd \to \R$ such that $(X, \nabla \varphi(X))\sim \bar \gamma \in \Gamma_{\mu, \nu}$. $\nv$ is referred to as the Brenier map.
\end{theorem}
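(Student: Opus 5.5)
The plan is the standard route through cyclical monotonicity and Rockafellar's theorem, followed by a differentiability argument that uses the density of $\mu$.

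\emph{Step 1: the optimal plan is concentrated on a cyclically monotone set.} Expanding $\|x-y\|^2=\|x\|^2-2\langle x,y\rangle+\|y\|^2$ and using that both marginals are fixed with finite second moments, $\bar\gamma$ also maximises $\int\langle x,y\rangle\,\gamma(dx,dy)$ over $\Gamma_{\mu,\nu}$. I would then show that $\operatorname{supp}\bar\gamma$ is cyclically monotone: for any $(x_1,y_1),\dots,(x_N,y_N)\in\operatorname{supp}\bar\gamma$,
\[
\sum_{i=1}^N\langle x_i,y_i\rangle\ \geq\ \sum_{i=1}^N\langle x_i,y_{i+1}\rangle, \qquad y_{N+1}=y_1 .
\]
The argument is by contradiction. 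If this fails, continuity gives small neighbourhoods $U_i\times V_i$ of the points, each of positive $\bar\gamma$-mass, on which the strict reverse inequality persists. Take a common mass $\varepsilon>0$ and remove from $\bar\gamma$ the normalised restrictions to $U_i\times V_i$. Then add back the products of the $x$-marginal piece from $U_i$ with the $y$-marginal piece from $V_{i+1}$. This leaves the marginals unchanged and strictly increases $\int\langle x,y\rangle\,d\gamma$, contradicting optimality. I expect this perturbation to be the main technical obstacle. The points to check carefully are that the modified measure is nonnegative, that it still has marginals $\mu,\nu$, and that the gain is strictly positive.

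\emph{Step 2: Rockafellar's theorem.} A nonempty cyclically monotone set $S\subset\R^d\times\R^d$ lies in the subdifferential of a proper lower semicontinuous convex function. Fix $(x_0,y_0)\in S$ and define
\[
\varphi(x)=\sup\Bigl\{\langle x-x_N,y_N\rangle+\sum_{i=0}^{N-1}\langle x_{i+1}-x_i,y_i\rangle\Bigr\},
\]
the supremum taken over all finite chains $(x_i,y_i)\in S$. This $\varphi$ is a supremum of affine functions, hence convex and lower semicontinuous. Cyclical monotonicity gives $\varphi(x_0)\le 0$, so $\varphi$ is proper. Appending a point $(x,y)\in S$ to a chain shows $\varphi(z)\ge\varphi(x)+\langle z-x,y\rangle$ for all $z$, that is, $y\in\partial\varphi(x)$. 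Therefore $\bar\gamma$ is concentrated on the graph of $\partial\varphi$.

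\emph{Step 3: passing from the subdifferential to the gradient.} The function $\varphi$ is finite on $\operatorname{proj}_x S$, and so finite on its convex hull. Since $\mu$ has a density, $\mu$ gives no mass to the boundary of this convex set, because a convex set's boundary is Lebesgue-null. Hence $\mu$-almost every $x$ lies in the interior of $\operatorname{dom}\varphi$. There $\varphi$ is locally Lipschitz, and by Rademacher's theorem (or Alexandrov's) it is differentiable Lebesgue-almost everywhere, and thus $\mu$-almost everywhere. At points of differentiability $\partial\varphi(x)=\{\nabla\varphi(x)\}$.

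It follows that for $(X,Y)\sim\bar\gamma$ we have $Y=\nabla\varphi(X)$ almost surely, so $(X,\nabla\varphi(X))\sim\bar\gamma$. If one insists that $\varphi$ be real-valued on all of $\R^d$, as the statement is written, I would modify $\varphi$ outside the $\mu$-full set; the precise extension is a minor point.
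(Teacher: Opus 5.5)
\paragraph{Verdict.} Your Steps 1--3 are sound, but your closing remark is wrong: the real-valued potential the statement asks for cannot be obtained in general.

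\paragraph{What is correct.} Steps 1--3 are the standard argument. They follow the route the paper itself indicates; the paper gives no proof, only the citation to Brenier and the remark that cyclical monotonicity and Rockafellar's theorem are the key ingredients. The perturbation in Step 1, the Rockafellar potential in Step 2, and the null-boundary plus Rademacher argument in Step 3 (using $\mu(\operatorname{proj}_x S)\ge\bar\gamma(S)=1$) all work.

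\paragraph{The gap.} Producing a convex $\varphi$ that is real-valued on all of $\R^d$ is not a minor extension issue. It is impossible in general, so the statement as literally written is false. A one-dimensional counterexample:
\begin{itemize}
\item Take $d=1$, $\mu(dx)=3x^2\mathbf{1}_{(0,1)}(x)\,dx$, $T(x)=-1/x$ and $\nu=T_\#\mu$.
\item Then $\int y^2\,\nu(dy)=\int_0^1 x^{-2}\cdot 3x^2\,dx=3$, so $\nu\in\cP_2(\R)$.
\item $\bar\gamma=(\id,T)_\#\mu$ is optimal, because it is carried by the graph of the nondecreasing map $T$. Equivalently, it is carried by $\partial\varphi$ for $\varphi(x)=-\log x$ on $(0,\infty)$ and $+\infty$ elsewhere.
\item Suppose some convex $\varphi:\R\to\R$ gave $(X,\varphi'(X))\sim\bar\gamma$. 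Then $\varphi'=T$ Lebesgue-a.e.\ on $(0,1)$.
\item Since $\varphi$ is locally Lipschitz, $\varphi(a)=\varphi(1/2)-\log(2a)\to+\infty$ as $a\downarrow0$. This contradicts continuity at $0$ of the finite convex function $\varphi$.
\end{itemize}
Tensorising with a standard Gaussian in the other coordinates gives the same obstruction in every dimension. No modification of $\varphi$ off a $\mu$-full set can help, because the values forced on the support of $\mu$ are what produce the blow-up.

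\paragraph{What your argument proves.} It proves the correct form of Brenier's theorem. There is a proper, lower semicontinuous, convex $\varphi:\R^d\to\R\cup\{+\infty\}$ with $\mu(\operatorname{int}\operatorname{dom}\varphi)=1$. Hence $\nabla\varphi$ is defined $\mu$-a.e.\ and $(X,\nabla\varphi(X))\sim\bar\gamma$. You should state the conclusion in that form rather than claim the real-valued version.

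A potential finite on all of $\R^d$ does exist under extra hypotheses, for instance when $\operatorname{supp}\mu=\R^d$. Then $\operatorname{proj}_x S$ is dense, so its convex hull is a dense convex set and hence all of $\R^d$. Your Step 3 then already gives $\operatorname{dom}\varphi=\R^d$.
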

A full proof relies on results concerning gradients of convex functions such as cyclical monotonicity and Rockafellar's Theorem.
As is remarked in 1.4 of \cite{chewi2025statistical}, $\mu$ admitting a density is the crucial assumption of Brenier's Theorem as it guarantees that $\nabla \varphi(X)$ is well-defined (with convex functions being differentiable Lebesgue-almost everywhere). We use $\cP_{2,\rm ac}(\R^d)$ to denote the class of absolutely continuous measures with finite second moment. Restricting to this space, Brenier's Theorem shows that an optimal coupling is always deterministic, provided as the gradient of a convex function $\varphi$.

\subsection{Curves of probability measures}\label{subsec:curves}

Given the map $x\mapsto \delta_x$ which isometrically embeds  $(\R^d,\|\cdot\|)$ into $(\Pc, W_2)$, one can understand dynamics on $\Pc$ by superposing the corresponding dynamics of particles on $\R^d$ satisfying an appropriate evolution equation.

The standard prescription of dynamics on $\R^d$ using differential calculus is via an ordinary differential equation (ODE). Given a time-dependent family of vector fields ${(v_t)}_{t\ge 0}$, consider the ODE
\begin{align}\label{eq:particle_ode}
    \dot X_t
    &= v_t(X_t).
\end{align}
Under suitable assumptions on ${(v_t)}_{t\ge 0}$ (e.g. Lipschitz growth), \eqref{eq:particle_ode} admits a unique solution for any given initial condition $X_0$.
Suppose now that $X_0\sim\mu_0 \in \Pc$, and similarly let $\mu_t$ denote the law of $X_t$ for all $t\ge 0$.
Informally, the curve of measures ${(\mu_t)}_{t\ge 0}$ describe the evolution of a \emph{collection} of particles with (potentially infinitesimal) mass prescribed by the measure at each time $t$.
The dynamics of ${(\mu_t)}_{t\ge 0}$ are subject to the \emph{continuity equation}.

\begin{proposition}[Continuity equation \cite{villani_topics_2003}]
    Suppose that $X_0 \sim \mu_0$, and that ${(X_t)}_{t\ge 0}$ evolves according to the dynamics \eqref{eq:particle_ode}, assumed to be well-posed.
    Let $\mu_t$ denote the law of $X_t$ for all $t\ge 0$.
    Then, ${(\mu_t)}_{t\ge 0}$ satisfies the following equation in the weak sense,
    \begin{align}\label{eq:continuity}
        \partial_t \mu_t + \divergence(\mu_t v_t) = 0,
    \end{align}
    i.e. for all compactly supported and smooth test functions $\varphi\in C^\infty_c(\R^d)$, $\varphi : \R^d\to\R$,
    \begin{align}\label{eq:continuity_weak}
        \partial_t \int \varphi  d \mu_t = \int \langle \nabla \varphi, v_t\rangle  d \mu_t.
    \end{align}
\end{proposition}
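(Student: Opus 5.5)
The plan is to derive the weak form \eqref{eq:continuity_weak} directly from the particle dynamics \eqref{eq:particle_ode}. The key identity is the definition of the law, $\int \varphi\, d\mu_t = \E[\varphi(X_t)]$ for every test function $\varphi\in C^\infty_c(\R^d)$. The whole argument then reduces to differentiating $t\mapsto \E[\varphi(X_t)]$ and identifying the derivative as an integral against $\mu_t$.

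\emph{Step 1: pathwise chain rule.} Fix $\varphi\in C^\infty_c(\R^d)$. By well-posedness, each trajectory $t\mapsto X_t$ is absolutely continuous (indeed $C^1$ under Lipschitz-type assumptions on $v$). The chain rule then gives, along every trajectory,
\begin{equation*}
\varphi(X_t)-\varphi(X_s)=\int_s^t \langle \nabla\varphi(X_r), v_r(X_r)\rangle\, dr .
\end{equation*}

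\emph{Step 2: take expectations and exchange the order of integration.} Taking $\E$ of both sides, I would justify Fubini by the bound
\begin{equation*}
|\langle \nabla\varphi(X_r), v_r(X_r)\rangle| \le \|\nabla\varphi\|_\infty \sup_{x\in \mathrm{supp}\,\varphi}\|v_r(x)\|.
\end{equation*}
The right-hand side is locally integrable in $r$ whenever $v$ is locally bounded on compact sets, for instance continuous, or satisfying $\int_0^T\int\|v_t\|\,d\mu_t\,dt<\infty$, which is the standard integrability hypothesis. Because $\varphi$ has compact support, this domination holds even though $X_r$ itself may be unbounded. Fubini then yields
\begin{equation*}
\int\varphi\,d\mu_t-\int\varphi\,d\mu_s=\int_s^t \E\bigl[\langle\nabla\varphi(X_r),v_r(X_r)\rangle\bigr]\,dr=\int_s^t\int\langle\nabla\varphi,v_r\rangle\,d\mu_r\,dr .
\end{equation*}
The last equality again uses only that $\mu_r$ is the law of $X_r$.

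\emph{Step 3: differentiate.} The previous display shows that $t\mapsto\int\varphi\,d\mu_t$ is absolutely continuous. Its derivative equals $\int\langle\nabla\varphi,v_t\rangle\,d\mu_t$ for a.e.\ $t$, and for every $t$ when $r\mapsto \int\langle\nabla\varphi,v_r\rangle d\mu_r$ is continuous. Continuity in $r$ follows by dominated convergence, using continuity of $v$ in time and of $X_r$ in $r$. This is exactly \eqref{eq:continuity_weak}, the distributional form of \eqref{eq:continuity}.

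The main obstacle is Step 2, namely justifying the interchange of $\E$, the time integral and the derivative. Everything hinges on the domination provided by the compact support of $\varphi$ together with a local integrability assumption on $v$. I would state this assumption explicitly as part of ``well-posed'', and interpret the equation in the integrated-in-time sense to avoid pointwise differentiability issues.
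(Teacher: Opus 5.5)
Your proof is correct and is the standard argument behind this cited result; the paper gives no proof of its own, only the reference to Villani. You apply the chain rule to $\varphi(X_t)$ along trajectories, take expectations via $\int\varphi\,d\mu_t=\E[\varphi(X_t)]$, and use the compact support of $\varphi$ to dominate the integrand and justify Fubini. One small precision: under your alternative hypothesis $\int_0^T\int\|v_t\|\,d\mu_t\,dt<\infty$, the displayed $\sup$-bound need not be integrable. What licenses the exchange in that case is instead the direct estimate $\E\int_s^t|\langle\nabla\varphi(X_r),v_r(X_r)\rangle|\,dr\le\|\nabla\varphi\|_\infty\int_s^t\int\|v_r\|\,d\mu_r\,dr$.
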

The equation expresses a conservation of (probability) mass: the change in density at any point is equal to the divergence of the probability mass escaping at that point. 
When $\mu_t$ admits a smooth density then an integration by parts yields the strong form of \eqref{eq:continuity}, identifying $\mu_t$ with its density.\newline

Consider a single particle of unit mass evolving according to the ODE \eqref{eq:particle_ode} with trajectory $x\mapsto X_t$. The instantaneous kinetic energy is given by $\|\dot X_t\|^2 = \|v_t(X_t)\|^2$ and the total kinetic energy of the trajectory over the time interval $[0,1]$ is given by the integral
$\int_0^1 \|v_t(X_t)\|^2dt$. One verifies that the energy minimising trajectory over curves with fixed endpoints $X_0, X_1$ is given by the straight line interpolating $X_0$ to $X_1$, $t\mapsto (1-t)X_0 + tX_1$.

The picture is superposed to $\Pc$. Viewing all particles in aggregate, weighted by the probability mass assigned the measure $\mu_t$ at time $t$ one measures the instantaneous kinetic energy of the system with
\begin{align}\label{eq:kinetic_energy}
    \|v_t\|_{\mu_t}^2 \deq \int \|v_t\|^2  d \mu_t,
\end{align}
where the total kinetic energy over the time interval $[0,1]$ is given by
\begin{equation}\label{eq:lifted_ke}
    \int_0^1 \|v_t\|_{\mu_t}^2 dt = \int_0^1 \int_{\R^d} \|v_t(x)\|^2\mu_t(dx)
    dt.
\end{equation}
As Chewi et al. \cite{chewi2025statistical} remark in 5.1, the velocity field generating the path $\mu_t$ can be highly non-unique. One would like to choose the velocity field minimising the kinetic energy in \eqref{eq:lifted_ke} while describing the dynamics of the curve and conclude that this choice is unique.

The action of the velocity field $v_t(\cdot)$ on a single particle $x_t$ in $\R^d$ describes a tangent vector to its trajectory $(x_t)_{t\geq0}$ at time $t$. To fully prescribe the dynamics of a curve in Wasserstein space, one would like to identify tangent vectors. Theorem \ref{thm:fundamental_otto} of Otto \cite{otto2001geometry} below informs us that these should be gradients of convex functions, recalling Brenier's Theorem \ref{thm:brenier}.

\begin{theorem}[\cite{otto2001geometry}]\label{thm:fundamental_otto}
    There is a unique family ${(v_t)}_{t\ge 0}$ 
    satisfying \eqref{eq:continuity} minimising the kinetic energy \eqref{eq:kinetic_energy} for each $t\geq 0$. This family is given by the limit
    \begin{align}\label{eq:vector_field_as_limit}
        v_t = \lim_{h\searrow 0} \frac{\nv_{\mu_t\to\mu_{t+h}} - {\id}}{h} \qquad \text{in}~L^2(\mu_t),
    \end{align}
    where $\id$ is the identity function.
\end{theorem}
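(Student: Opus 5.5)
The plan has two parts. First, characterise the minimisers of the kinetic energy \eqref{eq:kinetic_energy} among velocity fields compatible with \eqref{eq:continuity}; this gives existence and uniqueness. Second, identify the minimiser with the limit \eqref{eq:vector_field_as_limit} using Brenier's Theorem \ref{thm:brenier}. Throughout I will assume $\mu_t\in\Pac$ and that $t\mapsto\mu_t$ is absolutely continuous in $W_2$ with some velocity field of finite energy, as is implicit in the statement.

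\emph{Step 1: structure of admissible fields.} Fix $t$. The weak form \eqref{eq:continuity_weak} depends on $v_t$ only through the pairings $\int\langle\nabla\varphi,v_t\rangle\,d\mu_t$ for $\varphi\in C_c^\infty(\R^d)$. Hence two fields $v_t,v_t'$ both solve \eqref{eq:continuity} if and only if $v_t-v_t'$ is orthogonal in $L^2(\mu_t)$ to the closed subspace
\[
T_{\mu_t}\deq\overline{\{\nabla\varphi:\varphi\in C_c^\infty(\R^d)\}}^{L^2(\mu_t)} .
\]
So the set of admissible fields is an affine space $v_t+T_{\mu_t}^\perp$, which is closed and convex. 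Minimising $\|\cdot\|_{\mu_t}^2$ over it is a Hilbert-space projection problem. The unique minimiser is the orthogonal projection of any admissible field onto $T_{\mu_t}$, and it is characterised as the unique admissible field lying in $T_{\mu_t}$. Measurability in $t$ of the projected family follows since projection is applied pointwise to a measurable family. This settles existence and uniqueness.

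\emph{Step 2: identification with the Brenier limit.} Let $T_h\deq\nabla\varphi_{\mu_t\to\mu_{t+h}}$ be the Brenier map given by Theorem \ref{thm:brenier}, and set $u_h\deq (T_h-\id)/h$. Then $\|u_h\|_{\mu_t}=W_2(\mu_t,\mu_{t+h})/h$. This is bounded as $h\searrow0$ by the metric derivative $|\dot\mu|(t)$, which in turn is at most $\|v_t\|_{\mu_t}$; the latter bound comes from comparing with the coupling obtained by pushing $\mu_t$ along the flow of $v$, at Lebesgue points $t$. So $u_h$ has weak limit points $u$ in $L^2(\mu_t)$.

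To show that any such $u$ solves \eqref{eq:continuity}, take a test function $\varphi$ and Taylor expand:
\[
\int\varphi\,d\mu_{t+h}-\int\varphi\,d\mu_t=\int\big(\varphi(T_h)-\varphi\big)\,d\mu_t=h\int\langle\nabla\varphi,u_h\rangle\,d\mu_t+O\big(h^2\|u_h\|^2_{\mu_t}\big).
\]
Dividing by $h$ and letting $h\searrow0$ gives \eqref{eq:continuity_weak} with velocity $u$.

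To show $u\in T_{\mu_t}$, note that $u_h=\nabla\big(\varphi_h-\tfrac{|x|^2}{2}\big)/h$ is a gradient, and $T_{\mu_t}$ is weakly closed; the gradients must be approximated by smooth compactly supported ones, which is standard by truncation and mollification. By Step 1, $u$ is therefore the unique minimiser, so the limit does not depend on the subsequence.

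Strong convergence then follows from a norm comparison. We have
\[
\limsup_{h\searrow0}\|u_h\|_{\mu_t}\le|\dot\mu|(t)\le\|u\|_{\mu_t}\le\liminf_{h\searrow0}\|u_h\|_{\mu_t},
\]
where the middle inequality is the bound above applied to $u$ and the last is weak lower semicontinuity. Equality of norms together with weak convergence gives convergence in $L^2(\mu_t)$.

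\emph{Main obstacle.} The hard part is the inequality $|\dot\mu|(t)\le\|v_t\|_{\mu_t}$ together with its converse for the minimal field. Proving it requires representing $\mu_{t+h}$ as the push-forward of $\mu_t$ along a flow of possibly non-Lipschitz fields, for instance via a superposition principle. It also requires controlling Lebesgue points in $t$, so the statement should be read for almost every $t$. I would handle this by first mollifying the curve in space, so that the fields are smooth and Lipschitz and the flow argument of \eqref{eq:particle_ode} applies directly, and then passing to the limit using lower semicontinuity of $W_2$ and of the energy.
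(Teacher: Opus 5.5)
The paper gives no argument of its own here. It defers to \cite{otto2001geometry}, where the minimal field is identified by formal Riemannian calculus, and mentions the metric-derivative characterisation of \cite{ambrosioGradientFlowsMetric2008}. Your proposal is a rigorous version of the latter and follows essentially the route of Chapter~8 of \cite{ambrosioGradientFlowsMetric2008}:
\begin{itemize}
\item Step~1 replaces the formal variational argument by the orthogonal decomposition $L^2(\mu_t)=T_{\mu_t}\oplus T_{\mu_t}^\perp$.
\item Step~2 identifies the minimal field with the limit of the rescaled Brenier displacements, using weak compactness, the linearised continuity equation and a comparison of norms with $|\dot\mu|(t)$.
\end{itemize}
This buys an actual proof. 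The price is reading the statement for almost every $t$, and proving the estimate $|\dot\mu|(t)\le\|v_t\|_{\mu_t}$, which you rightly single out as the analytic core. In the mollification you propose, set $\mu^\varepsilon_t=\mu_t*\rho_\varepsilon$ and $v^\varepsilon_t=(v_t\mu_t)*\rho_\varepsilon/\mu^\varepsilon_t$. The estimate that closes the argument is $\|v^\varepsilon_t\|_{L^2(\mu^\varepsilon_t)}\le\|v_t\|_{L^2(\mu_t)}$, which follows from joint convexity of $(a,b)\mapsto\|a\|^2/b$. It is this upper bound, not lower semicontinuity of the energy, that lets you pass $W_2(\mu^\varepsilon_s,\mu^\varepsilon_r)\le\int_s^r\|v^\varepsilon_\tau\|_{\mu^\varepsilon_\tau}d\tau$ to the limit.

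Two steps need repair.

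\textbf{The claim $u_h\in T_{\mu_t}$.} Justifying it ``by truncation and mollification'' works if the potential is finite on all of $\R^d$, as Theorem~\ref{thm:brenier} is phrased. In general, however, the Brenier potential is finite only on the interior of the convex hull of $\mathrm{supp}\,\mu_t$, and its gradient can blow up at the boundary. For instance, take a radial map out of the uniform disc with $\|\nabla\varphi(x)\|\sim(1-\|x\|)^{-1/3}$. There, every $\chi_k\in C_c^\infty$ of the open disc with $\chi_k\to1$ has $\int\|\nabla\chi_k\|^2d\mu_t\to\infty$, so cutting off inside the domain fails. Moreau--Yosida envelopes $\varphi^\lambda$ of the potential repair this: they are $C^{1,1}$ on $\R^d$, with $\|\nabla\varphi^\lambda\|\le\|\nabla\varphi\|$ and $\nabla\varphi^\lambda\to\nabla\varphi$ $\mu_t$-a.e.

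It is simpler, though, to drop the step. With $v^*_t$ the projection of Step~1, your continuity-equation argument gives $u-v^*_t\in T_{\mu_t}^\perp$. Hence $\|u\|_{\mu_t}^2=\|v^*_t\|_{\mu_t}^2+\|u-v^*_t\|_{\mu_t}^2$, while $\|u\|_{\mu_t}\le\limsup_{h\searrow0}\|u_h\|_{\mu_t}\le|\dot\mu|(t)\le\|v^*_t\|_{\mu_t}$. So $u=v^*_t$ for every subsequence, and the same chain gives $\|u_h\|_{\mu_t}\to\|v^*_t\|_{\mu_t}$, hence strong convergence.

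\textbf{Measurability of $t\mapsto P_{T_{\mu_t}}v_t$.} This is not automatic, since the subspace moves with $t$. You genuinely need it, because $|\dot\mu|(t)\le\|v^*_t\|_{\mu_t}$ is only available once $v^*$ is itself a time-dependent solution of \eqref{eq:continuity}. Instead, project in $L^2(\mu_t\otimes dt)$ onto the closure of the space-time gradients $\nabla_x\zeta(t,x)$, and disintegrate in $t$.

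Finally, fix a single full-measure set of times before extracting subsequences. It should be where $|\dot\mu|(t)$ exists, where the bound above holds, and where \eqref{eq:continuity_weak} holds for a countable dense family of test functions.
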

A full proof is provided in \cite{otto2001geometry}. An equivalent formulation of the family is the unique choice of vector field solving the continuity equation \eqref{eq:continuity} with length $\|v_t\|_{\mu_t}$ equal to the metric derivative $|\dot \mu|(t)$, as in section 1.1 of \cite{ambrosioGradientFlowsMetric2008}. 

The fact that $\mu_t$ is absolutely continuous allows Brenier's Theorem to provide the existence of an optimal transport map, letting one write \eqref{eq:vector_field_as_limit}. We are now in position to define Otto's formal Riemannian\footnote{For a review of Riemannian geometry, the reader is directed to \cite{doCarmo1992Riemannian}.} structure over $\cP_{2,ac}(\R^d)$ as in \cite{otto2001geometry}.
Recall from Brenier's Theorem that optimal transport maps under quadratic cost are gradients of convex functions.
From \eqref{eq:vector_field_as_limit}, it follows that optimal velocity vector fields lie in the $L^2$ closure of the space of gradients of functions (not necessarily convex, since the identity was subtracted in \eqref{eq:vector_field_as_limit}).

\begin{definition}[Tangent space to $\cP_{2,ac}(\R^d)$]\label{def:w2_tangent}
    Let $\mu \in \cP_{2,\rm ac}(\R^d)$.
    The \emph{tangent space} to $\cP_{2,\rm ac}(\R^d)$ at $\mu$ is defined to be
    \begin{align*}
        \cT_\mu \cP_{2,\rm ac}(\R^d)
        &\deq \overline{\bigg\{\nabla \psi \mid \psi : \R^d\to\R~\text{compactly supported, smooth}\bigg\}}^{L^2(\mu)}
    \end{align*}
    where $\overline{\{\cdot\}}^{L^2(\mu)}$ denotes the $L^2(\mu)$ closure.
    The space $\cT_\mu \cP_{2,\rm ac}(\R^d)$ is endowed with the $L^2(\mu)$ inner product,
    \begin{align*}
        \langle \nabla \psi_1,\nabla \psi_2\rangle_\mu
        &\deq \int \langle \nabla \psi_1,\nabla \psi_2\rangle  d \mu.
    \end{align*}
\end{definition}
With this in hand, the Benamou-Brenier Theorem \ref{thm:benamou_brenier} below from \cite{benamou2000computational} computes the constant-speed geodesics in Wasserstein space, which are the paths between $\mu_0,\mu_1\in\cP_{2,ac}(\R^d)$ minimising the kinetic energy \eqref{eq:kinetic_energy}. It asserts that the metric induced on $\cP_{2,ac}(\R^d)$ by the Riemannian structure of definition \ref{def:w2_tangent} is indeed the Wasserstein distance.

\begin{theorem}[\cite{benamou2000computational}]\label{thm:benamou_brenier}\index{Benamou--Brenier formula}
    Let $\mu_0,\mu_1 \in \cP_{2,\rm ac}(\R^d)$.
    Then,
    \begin{align}
        W_2^2(\mu_0,\mu_1)
        &= \inf\Bigl\{\int_0^1 \|v_t\|_{\mu_t}^2  d t \Bigm\vert {(\mu_t, v_t)}_{t\in [0,1]}~\text{solves}~\eqref{eq:continuity}\Bigr\}.\label{eq:benamou_brenier}
    \end{align}
    Moreover, the optimal curve ${(\mu_t)}_{t\in [0,1]}$ is unique and is described by $X_t\sim\mu_t$, where $X_t = (1-t)X_0 +tX_1$ and $(X_0,X_1)\sim\bar\gamma \in \Gamma_{\mu_0,\mu_1}$ for an optimal coupling $\bar\gamma$.
\end{theorem}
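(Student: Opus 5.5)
We prove the Benamou--Brenier formula \eqref{eq:benamou_brenier} by establishing the two inequalities separately, and then identify the optimal curve.

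\emph{Upper bound.} Let $\nv$ be the Brenier map from $\mu_0$ to $\mu_1$ given by Theorem \ref{thm:brenier}, so that $(X_0,X_1)=(X_0,\nv(X_0))\sim\bar\gamma$ with $X_0\sim\mu_0$. Set $X_t=(1-t)X_0+tX_1$ and let $\mu_t$ be its law. For $t\in[0,1)$ the map $T_t=(1-t)\id+t\nv$ is injective on the support of $\mu_0$, since it is the gradient of the strictly convex function $\tfrac{1-t}{2}\|x\|^2+t\varphi$. Hence the velocity field $v_t=(\nv-\id)\circ T_t^{-1}$ is well defined $\mu_t$-almost everywhere and satisfies $\dot X_t=v_t(X_t)$. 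By the Continuity equation proposition, which we check directly in the weak form \eqref{eq:continuity_weak} by differentiating $\E[\psi(X_t)]$ under the integral sign, the pair $(\mu_t,v_t)$ solves \eqref{eq:continuity}. Its kinetic energy is
\[
\int_0^1\|v_t\|_{\mu_t}^2\,dt=\int_0^1\E\|X_1-X_0\|^2\,dt=W_2^2(\mu_0,\mu_1),
\]
so the infimum is at most $W_2^2(\mu_0,\mu_1)$.

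\emph{Lower bound.} This is the main obstacle. Take any $(\mu_t,v_t)$ solving \eqref{eq:continuity} with finite energy. If $v_t$ is smooth and Lipschitz, the flow $X_t$ of \eqref{eq:particle_ode} started at $X_0\sim\mu_0$ has law $\mu_t$ by uniqueness for the continuity equation. Then $(X_0,X_1)$ is a coupling of $\mu_0$ and $\mu_1$, and Jensen's inequality (Cauchy--Schwarz in time) gives
\[
W_2^2(\mu_0,\mu_1)\le \E\|X_1-X_0\|^2\le \E\int_0^1\|v_t(X_t)\|^2\,dt=\int_0^1\|v_t\|_{\mu_t}^2\,dt .
\]
For general $v_t$ we would regularise. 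Convolve in space with a Gaussian, $\mu_t^\varepsilon=\mu_t*\rho_\varepsilon$, and set $v_t^\varepsilon=(v_t\mu_t)*\rho_\varepsilon/\mu_t^\varepsilon$. This pair still solves the continuity equation, and by joint convexity of $(a,b)\mapsto\|a\|^2/b$ its energy does not increase. We then apply the smooth case and pass to the limit $\varepsilon\to0$, using $W_2(\mu_i^\varepsilon,\mu_i)\to0$ for $i=0,1$. Alternatively, one can invoke the superposition principle of \cite{ambrosioGradientFlowsMetric2008}, which represents $\mu_t$ as the time marginals of a measure on curves solving the ODE, and run the same Jensen argument pathwise.

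\emph{Uniqueness.} Equality in both steps forces two things: the coupling $(X_0,X_1)$ must be optimal, and the paths must have constant speed, hence be straight lines. So every minimiser has the form $X_t=(1-t)X_0+tX_1$ with $(X_0,X_1)\sim\bar\gamma$. Since $\mu_0$ is absolutely continuous, Brenier's theorem shows the optimal coupling is unique, and therefore so is the curve.
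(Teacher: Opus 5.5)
Your proof is correct and follows the paper's route, which defers to \cite{chewi2025statistical}, Theorem 5.7: the sub-optimal coupling bound $W_2^2(\mu_0,\mu_1)\le\E\|X_1-X_0\|^2\le\int_0^1\|v_t\|_{\mu_t}^2\,dt$ along trajectories, attained by the displacement interpolation, with your mollification/superposition step supplying rigour the paper's sketch leaves implicit. The paper does not argue uniqueness, and for your equality argument note that a general minimiser (e.g.\ $v_t=(\nv-\id)\circ T_t^{-1}$) need not be Lipschitz, so run it through the superposition principle: the mollification limit only gives the inequality and yields no pathwise equality information (optimal endpoint coupling, straight-line paths) for the original curve.
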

A simple proof (found in \cite{chewi2025statistical} Theorem 5.7) uses a sub-optimal coupling bound, any path satisfying the continuity equation is lower bounded by the Wasserstein distance, and one verifies that the given curve achieves the infimum.
One formulates the following definition as a result, introduced by McCann in \cite{mccann1997convexity}.

\begin{definition}[Displacement interpolation]\label{def:w2_geod}
    Let $\mu_0,\mu_1 \in \cP_{2,\rm ac}(\R^d)$ and let $T$ denote the optimal transport map from $\mu_0$ to $\mu_1$.
    The {displacement interpolation} joining $\mu_0$ to $\mu_1$ is the curve ${(\mu_t)}_{t\in [0,1]}$ where
    \begin{align}\label{eq:displ_interp}
        \mu_t = [(1-t){\id} + t\nv]_\# \mu_0.
    \end{align}
\end{definition}

Definition \ref{def:w2_geod} characterises a constant speed geodesic\footnote{Geodesic spaces are revised concisely for the reader in section 7.1 of \cite{chewi2025statistical} who follow the more in-depth course notes of Burago et al. \cite{burago2001course}.} between elements of $\cP_{2,ac}(\R^d)$. A geodesic in a metric space is a ``length minimising'' optimal path between points in the space - they provide a strong handle on the given geometry. In the metric geometry $(\cP_{2,ac}(\R^d),W_2)$, the points are probability measures and the geodesics are displacement interpolations of their masses. Ambrosio et al. \cite{ambrosioGradientFlowsMetric2008} section 7.2 extends the notion of displacement interpolation in definition \ref{def:w2_geod} to an interpolation of couplings - restricting to the absolutely continuous case, the couplings are deterministic, recovering the interpolation \eqref{eq:displ_interp} above.

\begin{proposition}[\cite{ambrosioGradientFlowsMetric2008}]\label{prop:Wasserstein_is_geodesic}
    Let $\mu_0, \mu_1\in(\Pc,W_2)$ and let $\gamma\in\Gamma_{\mu_0,\mu_1}$ be an optimal coupling. Then for the displacement interpolation on the coupling $$\pi_t(x,y) \deq (1-t)x + ty,\quad t\in[0,1],$$ the path $\mu_t$ given by ${\pi_t}_\#\gamma$ is a constant-speed geodesic in $(\Pc,W_2)$ connecting $\mu_0$ to $\mu_1$ i.e. for $s,t\in[0,1]$, 
    $W_2(\mu_s, \mu_t) = |s-t|W_2(\mu_0,\mu_1)$. In particular, when $\mu_0,\mu_1\in (\cP_{2,ac}(\R^d),W_2)$ are absolutely continuous, the coupling $\pi_\#\gamma$ is the deterministic, given by the displacement interpolation \eqref{eq:displ_interp}
\end{proposition}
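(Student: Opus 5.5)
To prove Proposition \ref{prop:Wasserstein_is_geodesic}, I would establish the upper bound $W_2(\mu_s,\mu_t)\le |s-t|W_2(\mu_0,\mu_1)$ for all $s,t\in[0,1]$ and then obtain equality from the triangle inequality. For the upper bound, fix $s,t$ and push the optimal coupling $\gamma$ forward under $(x,y)\mapsto(\pi_s(x,y),\pi_t(x,y))$. This produces a coupling of $\mu_s={\pi_s}_\#\gamma$ and $\mu_t={\pi_t}_\#\gamma$. Since $\pi_s(x,y)-\pi_t(x,y)=(t-s)(x-y)$, we get
\begin{equation*}
W_2^2(\mu_s,\mu_t)\le \int \|\pi_s(x,y)-\pi_t(x,y)\|^2\gamma(dx,dy) = (t-s)^2\int\|x-y\|^2\gamma(dx,dy) = (t-s)^2W_2^2(\mu_0,\mu_1),
\end{equation*}
where the last equality uses optimality of $\gamma$. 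I would also check that each $\mu_t\in\Pc$: this follows from $\|\pi_t(x,y)\|^2\le 2\|x\|^2+2\|y\|^2$ and the finite second moments of the marginals.

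For the reverse inequality, take $s\le t$ without loss of generality. The triangle inequality for $W_2$ gives
\begin{equation*}
W_2(\mu_0,\mu_1)\le W_2(\mu_0,\mu_s)+W_2(\mu_s,\mu_t)+W_2(\mu_t,\mu_1)\le \big(s+W_2(\mu_s,\mu_t)/W_2(\mu_0,\mu_1)+(1-t)\big)W_2(\mu_0,\mu_1)
\end{equation*}
after applying the upper bound to the two outer terms. This forces $W_2(\mu_s,\mu_t)\ge (t-s)W_2(\mu_0,\mu_1)$. When $W_2(\mu_0,\mu_1)=0$, the claim is trivial from the upper bound alone.

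For the absolutely continuous statement, I would invoke Brenier's Theorem \ref{thm:brenier}. Since $\mu_0$ has a density, the optimal coupling has the form $\gamma=(\id,\nv)_\#\mu_0$. Then
\begin{equation*}
{\pi_t}_\#\gamma=((1-t)\id+t\nv)_\#\mu_0,
\end{equation*}
which is exactly \eqref{eq:displ_interp}. The only delicate point is that the triangle inequality for $W_2$ is assumed rather than reproved; it requires the gluing lemma, but I take it as standard since $W_2$ is already used as a metric. Otherwise the argument is routine.
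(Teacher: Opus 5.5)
Your argument is correct and complete. The paper gives no proof of its own and simply cites Ambrosio--Gigli--Savar\'e, whose argument is the same as yours: push the optimal plan forward under $(\pi_s,\pi_t)$ to get $W_2(\mu_s,\mu_t)\le|s-t|\,W_2(\mu_0,\mu_1)$, upgrade to equality with the triangle inequality, and identify ${\pi_t}_\#\gamma$ with the displacement interpolation via Brenier's theorem. Only $\mu_0$ needs a density for that last step, and you can avoid dividing by $W_2(\mu_0,\mu_1)$ by writing the three-term bound directly as $sW_2(\mu_0,\mu_1)+W_2(\mu_s,\mu_t)+(1-t)W_2(\mu_0,\mu_1)$.
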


\subsection{Wasserstein Gradient Flows}\label{subsec:WGFlows}

With Otto's \cite{otto2001geometry} formal Riemannian geometry defined in section \ref{subsec:curves}, one can make the identification of the Wasserstein gradient of a functional over the space of probability measures, in turn permitting the definition of a Wasserstein gradient flow. We begin by defining the first variation, as defined below.

\begin{definition}[First variation]\label{def:first_variation}\index{first variation}
    Let $\cF : \cP_{2,\rm ac}(\R^d)\to\R$ be a functional.
    The {first variation} of $\cF$ at $\mu$, denoted $\delta \cF(\mu) : \R^d\to\R$, is defined by
    \begin{align}\label{eq:first_var_def}
        \lim_{\varepsilon\searrow 0}\frac{\cF(\mu+\varepsilon\chi) - \cF(\mu)}{\varepsilon} = \int \delta\cF(\mu)d\chi,
    \end{align}
    for all signed measures $\chi$ such that $\mu+\varepsilon\chi \in \cP_{2,\rm ac}(\R^d)$ for all sufficiently small $\varepsilon$.
\end{definition}

Following section 5.4 in \cite{chewi2025statistical} and interpreting ${(\mu_t)}_{t\ge 0}$ as a curve of densities, motivating the linear approximation $\mu_{t+\varepsilon} \approx \mu_t + \varepsilon\partial_t\mu_t$ for small $\varepsilon$, where $\partial_t\mu_t$ denotes the time derivative of the density.
Taking $\chi = \partial_t \mu_t$,~\eqref{eq:first_var_def} then reads
\begin{equation}\label{eq:fvdiscuss}
    \lim_{\varepsilon\searrow 0}\frac{\cF(\mu_t+\varepsilon\chi) - \cF(\mu_t)}{\varepsilon} = \partial_t\cF(\mu_t) = \int \delta\cF(\mu_t)\partial_t\mu_t.
\end{equation}
The Wasserstein gradient can now be identified.
By definition \ref{def:w2_tangent}, given a curve of measures ${(\mu_t)}_{t\ge 0}$ with corresponding tangent vectors ${(v_t)}_{t\ge 0}$, the gradient of a functional $\cF : \cP_{2,\rm ac}(\R^d) \to\R$ is the element of the tangent space $\cT_{\mu_t}\cP_{2,\rm ac}(\R^d)$ such that $\partial_t \cF(\mu_t) = \langle \gradW \cF(\mu_t),v_t\rangle_{\mu_t}$.
This definition can be written directly in terms of the first variation, a formal calculation first explored by Otto in \cite{otto2001geometry}.
\begin{proposition}[\cite{otto2001geometry}]\label{prop:w2_grad}\index{Wasserstein gradient}
    Let $\cF : \cP_{2,\rm ac}(\R^d) \to\R$ be a functional with first variation $\delta \cF$.
    Then, the Wasserstein gradient of $\cF$ is the vector field $ \gradW \cF(\mu): \R^d \to \R^d$ defined by
    \begin{align*}
        \gradW \cF(\mu) = \nabla \delta \cF(\mu),
    \end{align*}
    where $\nabla$ on the right-hand side denotes the usual Euclidean gradient.
\end{proposition}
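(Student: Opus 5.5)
Proposition \ref{prop:w2_grad} is a formal identity, so the plan is to match the defining relation of the Wasserstein gradient with the chain rule \eqref{eq:fvdiscuss} for the first variation. Take a smooth curve $(\mu_t)_{t\ge0}$ in $\cP_{2,\rm ac}(\R^d)$ passing through $\mu=\mu_{t_0}$. Let its tangent vectors be $(v_t)$, which are the kinetic-energy-minimising fields from Theorem \ref{thm:fundamental_otto}, so that $v_t\in\cT_{\mu_t}\cP_{2,\rm ac}(\R^d)$ and the continuity equation \eqref{eq:continuity} holds. Taking $\chi=\partial_t\mu_t$ in Definition \ref{def:first_variation} gives, via \eqref{eq:fvdiscuss},
\begin{equation*}
\partial_t\cF(\mu_t)=\int\delta\cF(\mu_t)\,\partial_t\mu_t .
\end{equation*}

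The key step is to replace $\partial_t\mu_t$ using the continuity equation and then integrate by parts. Formally, $\partial_t\mu_t=-\divergence(\mu_t v_t)$. Equivalently, one applies the weak form \eqref{eq:continuity_weak} with test function $\varphi=\delta\cF(\mu_t)$, assuming it is smooth enough, or after approximating it by elements of $C_c^\infty$. This gives
\begin{equation*}
\partial_t\cF(\mu_t)=-\int\delta\cF(\mu_t)\,\divergence(\mu_t v_t)=\int\langle\nabla\delta\cF(\mu_t),v_t\rangle\,d\mu_t=\langle\nabla\delta\cF(\mu_t),v_t\rangle_{\mu_t}.
\end{equation*}
The boundary terms vanish by the compact support of the test function, or by the decay at infinity implied by finite second moments.

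Next I check that $\nabla\delta\cF(\mu)$ lies in the tangent space of Definition \ref{def:w2_tangent}. It is a gradient, and assuming $\nabla\delta\cF(\mu)\in L^2(\mu)$, it lies in the $L^2(\mu)$-closure of gradients of compactly supported smooth functions. This follows by truncating $\delta\cF(\mu)$ with smooth cutoffs and mollifying.

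For uniqueness, suppose two tangent vectors $g_1,g_2$ both satisfy $\partial_t\cF(\mu_t)=\langle g_i,v_t\rangle_{\mu}$ for every admissible curve. Then $\langle g_1-g_2,v\rangle_\mu=0$ for every $v$ realised as a tangent vector of a curve. Every $\nabla\psi$ with $\psi\in C_c^\infty$ arises this way, for instance from the curve $\mu_t=(\id+t\nabla\psi)_\#\mu$ for small $t$. These vectors are dense in $\cT_\mu$, so $g_1=g_2$, and hence $\gradW\cF(\mu)=\nabla\delta\cF(\mu)$.

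The main obstacle is justifying the integration by parts. This needs regularity and integrability of $\delta\cF(\mu)$: it must be differentiable with $\nabla\delta\cF(\mu)\in L^2(\mu)$, and there must be enough control to pass from $C_c^\infty$ test functions to $\delta\cF(\mu)$. Consistent with the paper's "formal calculation" framing, I would state these as standing assumptions and handle the approximation with a cutoff argument and dominated convergence, rather than prove them in full generality.
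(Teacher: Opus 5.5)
Your proposal is correct and follows the same formal route as the paper: the chain rule through the first variation with $\chi=\partial_t\mu_t$, substitution of the continuity equation, integration by parts, and identification of $\nabla\delta\cF(\mu)$ through the inner product $\langle\cdot,\cdot\rangle_{\mu}$ on $\cT_\mu\cP_{2,\rm ac}(\R^d)$. Your tangent-space membership check and your uniqueness argument via the curves $(\id+t\nabla\psi)_\#\mu$ fill in details the paper leaves implicit; for the membership step, truncate the values of $\delta\cF(\mu)$ before applying a spatial cutoff $\zeta_R$, because $\nabla\delta\cF(\mu)\in L^2(\mu)$ alone does not control the term $\delta\cF(\mu)\nabla\zeta_R$.
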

This follows after using the continuity equation in combination with an integration by parts, concluding via the identification described above. Recalling that the tangent vectors govern the evolution of ${(\mu_t)}_{t\ge 0}$ via the continuity equation~\eqref{eq:continuity}, one arrives at the following definition.

\begin{definition}[Wasserstein gradient flow]\label{def:w2_grad_flow}\index{Wasserstein gradient flow}
    Let $\cF : \cP_{2,\rm ac}(\R^d)\to\R$ be a functional. ${(\mu_t)}_{t\ge 0}$ is the {Wasserstein gradient flow} of $\cF$ if it solves the PDE
    \begin{equation}\label{eq:WGF}
        \partial_t \mu_t = \divergence\bigl(\mu_t \gradW \cF(\mu_t)\bigr)\,.
    \end{equation}
\end{definition}
The Wasserstein gradient flow allows one to consider optimisation problems at the level of measures, using the gradient flow to define a path towards the measure minimising a functional under suitable conditions. Many of these applications, as well as their computational schemes, are presented in chapter 6 of \cite{chewi2025statistical}.

\section{A Random Optimal Transport of Random Measures}\label{sec:3}
\subsection{Random probability measures}

In this section we develop the $L^2$ over Wasserstein space of random probability measures. Towards this, we first recall some basic notions of random measures, as in Kallenberg \cite{kallenberg2017random}.

A random measure is in effect a measure-valued random variable: a randomly chosen measure $\xi$ on a measurable space $(S,\cS)$. In this manner, one regards $\xi$ simply as a measure depending on an extra parameter $\om$ belonging to some abstract probability space\footnote{Henceforth, unless otherwise stated, we assume the base probability space is given by the triplet $(\Omega,\cA, \bP)$} $(\Omega,\cA, \bP)$. Moreover, we need to ensure $\xi$ is a kernel from $\Omega$ to $S$, that is, the integral $\xi f \deq \int f d\xi$ is a (real-valued) random variable for every measurable function $f\geq0$ on $S$.
\begin{definition}[Random measure]\label{def:random_measure}
    Let $S$ be a Polish space and  ${\cS}$ its Borel $\sigma$-algebra.
    A random measure $\xi$ is an almost sure locally finite transition kernel from a probability space 
    $(\Omega ,{\cA},\P)$ to ${ (S,{\cS}})$ i.e.
    \begin{itemize}
        \item For every fixed $B\in\cS$, the map $\om\mapsto\xi(\om, B)$ is measurable from $(\Omega,\cA)\to(\R, \cB(\R))$.
        \item For every fixed $\om\in\Omega$, the map $B\mapsto\xi(\om, B)$, $B\in \cS$ is a measure on $(S,\cS)$ where each measure $\xi(\om, \cdot)$ satisfies $\xi(\om,\tilde B)<\infty$ for all bounded measurable sets $\tilde B\in \cS$ and $\om\in\Omega$ up to some $\P$-null set.
    \end{itemize}
\end{definition}
    One formalises the analogy of treating a random measure as a measure-valued random variable by furnishing the space of measures with an appropriate $\sigma$-algebra. Denote $$\tilde \cM\deq\big\{\mu\,|\,\mu \text{ is a measure on }(S,\cS)\big\},\qquad \cM\deq\l\{\mu\in\tilde M\,|\,\mu(\tilde B)<\infty\text{ for all bounded }\tilde B \in \cS\r\}.$$
    For all bounded measurable ${\Tilde {B}}$, define the maps $I_{\Tilde B}:\Tilde{\mathcal{M}}\to\R$, $\mu\mapsto\mu(\Tilde B)$. Let $\Tilde{\bM}$ be the $\sigma$-algebra induced by the mappings $I_{\Tilde B}$ on $\Tilde{\mathcal{M}}$ and $\bM$ the $\sigma$-algebra induced by the mappings $I_{\Tilde B}$ on $\cM$ (one finds $\Tilde{\bM}|_\cM = \bM$). A random measure is then a measurable map from $(\Omega, \cA, \bP)$ to $(\Tilde \cM, \Tilde{\bM})$.\newline

\subsection{$L^2$ over Wasserstein space of random probability measures}

The classical optimal transport framework requires measures of the same total mass, thus we specialise to random \textit{probability} measures on $\R^d$ where for each $\om\in\Omega$, $\xi(\om, \cdot)$ is a probability measure on $(\R^d)$. Having detailed the optimal transport machinery for the space of probability measures, we are placed to \textit{lift} the existing theory to  the space of random probability measures via optimal transport on their realisations. 

Let $\xi,\eta$ be two random probability measures. Under suitable assumptions (e.g. finite moments), the machinery from section \ref{sec:2} immediately applies to consider the optimal transport problem between $\xi(\om,\cdot)$ and $\eta(\om,\cdot)$. Our goal is to consider the induced optimal transport problems over the full event space $\Omega$, where the probability space $(\Omega,\cA,\P)$ describes statistical uncertainty about the source and target measures.

Just as $L^2$ random variables are used to describe statistical uncertainty at the level of $\R^d$, we would like to explore an $L^2$ space of random probability measures to describe statistical uncertainty at the level of the Wasserstein space, $\l(\Pc,W_2\r)$. We make the following definition.

\begin{definition}[$L^2$ over Wasserstein space]\label{def:rm_space}
    Consider the space of random measures from the probability space $(\Omega,\cA,\bP)$ to $(\R,\cB(\R))$. Let $\WL\deq\LW$ denote the subset of random probability measures which
    \begin{itemize}
        \item have realisations with finite second moments so that $\xi(\om, \cdot)\in\Pc$ for all $\om\in\Omega$,
        \item are measurable from $(\Omega,\cA)\to \l(\Pc, \cB_{W_2}(P_{2}(\R^d))\r)$ where $\cB_{W_2}(P_{2}(\R^d))$ is the Borel $\sigma$-algebra induced by the $W_2$ topology on $\Pc$,
        \item have finite $2^{\text{nd}}$ energy i.e. 
    $$\cE_2(\xi) \deq \E_{\om}\l[\int_{\R^d}|x|^2\xi(\om)(dx)\r] = \int_{\Omega}\l(\int_{\R^d}|x|^2\xi(\om)(dx)\r)\,\P(d\om)<\infty.$$
    \end{itemize}
    We refer to this as the $L^2$ over Wasserstein space of random probability measures.

\end{definition}
As is common for $L^p$ spaces of random variables, we introduce the equivalence classes $$[\xi] = \l\{\eta\in \WL: \eta\sim\xi\r\}, \qquad \xi\sim\eta\iff \xi = \eta \quad \P\text{-a.s.}$$
and identify the equivalence class $[\xi]$ by a representative $\xi$ throughout. Given these classes, we endow the $L^2$ over Wasserstein space in definition \ref{def:rm_space} with the following metric, the $L^2$ distance between random probability measures, with the $W_2$ distance on their realisations.
\begin{definition}[$L^2$ over Wasserstein distance]\label{def:W2L2}
    Let $\xi,\eta\in\WL$.
    We define the $L^2$ over Wasserstein distance as the following quantity:
\begin{equation}\label{eq:L2W_distance}
    d(\xi,\eta) = \E\l[D^{2}(\xi,\eta)\r]^{\frac{1}{2}} = \E_{\om}\l[W_2^2(\xi(\om),\eta(\om))\r]^{\frac{1}{2}}.
\end{equation}

\end{definition}
Since $L^2$ of any metric space is a metric space too, $d$ indeed defines a metric on random probability measures. Symmetry is immediate from the definition. Let $\xi,\eta,\zeta\in \WL$. $d(\xi,\eta) = 0$ if and only if for $\P$-a.s. $\om$, $W_2(\xi(\om), \eta(\om)) = 0$. But $W_2$ is a metric on $\Pc(\R^d)$ thus $W_2(\xi(\om), \eta(\om)) = 0$ is equivalent to $\xi(\om) = \eta(\om)$. So $\xi = \eta$ $\P$-a.s. .The triangle inequality follows from calculation
\begin{align*}
            d(\xi,\eta) &= \E_\om\l[W_2^2(\xi(\om), \eta(\om))\r]^{\frac{1}{2}} \leq \E_\om\l[ 
                \bigg(
                    W_2(\xi(\om), \zeta(\om))+W_2(\zeta(\om), \eta(\om))
                \bigg)^2
            \r]^{\frac{1}{2}}\\
            &\leq \E_\om\l[W_2^2(\xi(\om), \zeta(\om))\r]^{\frac{1}{2}} + \E_\om\l[W_2^2(\zeta(\om), \eta(\om))\r]^{\frac{1}{2}} = d(\xi,\zeta) + d(\zeta,\eta).
        \end{align*}
where the first inequality is the triangle inequality of $W_2$ on $\Pc$ and the second is the Minkowski triangle inequality for $L^2(\P)$.\newline

Classically, random probability measures are measurable maps to the space of probability measures endowed with the $\sigma$-algebra $\bM|_{\Pc}$, induced by the weak topology on $\Pc$. However, this is coarser than the $W_2$ topology; indeed, recall from Theorem \ref{thm:weak_wass} that $W_2$ convergence is equivalent to weak convergence plus the convergence of the 2nd moment. We choose to furnish $\Pc$ with the corresponding Borel $\sigma$-algebra $\cB_{W_2}(\Pc)$, respecting the Wasserstein topology. This is important, as it now means that the quantity $D(\xi,\eta)(\om) \deq W_2(\xi(\om), \eta(\om))$ defines a real-valued (non-negative) random variable over $(\Omega,\cA,\P)$, when $\xi,\eta\in\WL$ are fixed. This is shown in Appendix \ref{App:Wasserstein_rv} but follows routinely after using the $\sigma$-algebra $\cB_{W_2}(\Pc)$ in place of $\bM|_{\Pc}$. This ensures our space is well-defined.

\begin{remark}
    Note that the finite $2^{nd}$ energy condition is equivalent to $\E[W_2^2(\xi,\eta)]<\infty$ where $\eta$ is \textit{any} other random probability measure with finite second energy. Indeed, we can reformulate
\begin{align*}
    \cE_2(\xi) = \E_{\om}\l[W_2^2(\xi(\om),\delta_0)\r] &\leq 2\E_{\om}\l[W_2^2(\xi(\om),\eta(\om))\r] + 2\E_{\om}\l[W_2^2(\eta(\om),\delta_0)\r] \\&= 2\E_{\om}\l[W_2^2(\xi(\om),\eta(\om))\r] + \cE_2(\eta),
\end{align*}
thus it suffices to verify $\E[W_2^2(\xi(\om),\mu)]<\infty$ for any deterministic measure $\mu\in\Pc$, for which $\cE_2(\mu) = \cM_2(\mu)<\infty$.     
\end{remark}

    Recall that while $L^p$ spaces act at the level of random variables directly, the Wasserstein space is at the level of laws of those random variables, and relies on couplings between them. This distinguishes our space from the Wasserstein over Wasserstein (WoW) space presented by Pinzi and Savaré in \cite{pinzi2025nestedsuperpositionprinciplerandom}, \cite{pinzi} and other works.  The 2-WoW distance between random measure laws $\P_0,\P_1\in\cP_2(\Pc)$ can be formulated using random probability measures in $\WL$:
    \begin{equation}\label{eq:Lp_Wp_WoW}
        W_{W_2}(\P_0,\P_1)\deq \inf_{\xi\sim\P_0,\eta\sim\P_1}\E_\om\l[W_2^2(\xi(\om),\eta(\om))\r]^{\frac{1}{2}} = \inf_{\xi\sim\P_0,\eta\sim\P_1}d(\xi,\eta)
    \end{equation}
    where $\xi,\eta\in\WL$ are defined over a new common probability space, and the infimum determines an optimal coupling. 
    
    Alternatively, at the direct level of random measures, rather than their laws, $\xi,\eta\in\WL$ are already defined on the same probability $(\Omega,\cA,\P)$, hence already coupled. Rather than lifting the Wasserstein space of laws of random variables to laws of random measures, we lift the $L^2$ space of random variables to the $L^2$ space of random measures, with the intrinsically geometric pathwise Wasserstein cost. The relationships between the different spaces are shown in Figure \ref{fig:superposition}. To facilitate the empirical and Bayesian analysis results in sections \ref{sec:4} and \ref{sec:5}, working at directly the level of the random measures directly is necessary, and the $\WL$ framework we lay out in this section is discussed in a manner so as to accommodate later results.\newline

\begin{figure}
    \centering
    \begin{center}
\begin{tikzpicture}[x=3.5cm, y=3cm, >=stealth, every node/.style={font=\large}]
  
  \node (R)    at (0, 0) {\Large $\R^d$};
  \node (L2R)  at (1, 1) {\Large$L^2\l(\R^d\r)$};
  \node (P2R)  at (2, 0) {\Large$\Pc$};
  \node (L2P2) at (3, 1) {\Large$L^2\l(\Pc\r)$};
  \node (P2P2) at (4, 0) {\Large $\cP_2\l(\Pc\r)$};

  \draw[->, thick] (R) -- (L2R) 
    node[midway, sloped, above=2pt, text=darkgray] {\small Stochasticity}
    node[midway, sloped, below=2pt, text=gray] {\small $\omega\mapsto X(\om)$};
    
  \draw[->, thick] (R) -- (P2R) 
    node[midway, above=2pt, text=darkgray] {\small Superposition}
    node[midway, sloped, below=2pt, text=gray] {\small $x\mapsto \delta_x$};
    
  \draw[->, thick] (L2R) -- (P2R) 
    node[midway, sloped, above=2pt, text=darkgray] {\small Laws}
    node[midway, sloped, below=2pt, text=gray]{\small $X\mapsto \P_X$};
  
  \draw[->, thick] (L2R) -- (L2P2) 
    node[midway, above=2pt, text=darkgray] {\small Superposition}
    node[midway, sloped, below=2pt, text=gray]{\small $X(\om)\mapsto \delta_{X(\om)}$};
  
  \draw[->, thick] (P2R) -- (L2P2) 
    node[midway, sloped, above=2pt, text=darkgray] {\small Stochasticity}
    node[midway, sloped, below=2pt, text=gray]{\small $\omega\mapsto \xi(\om)$};
    
  \draw[->, thick] (P2R) -- (P2P2) 
    node[midway, above=2pt, text=darkgray] {\small Superposition}
    node[midway, sloped, below=2pt, text=gray]{\small $\mu\mapsto \delta_\mu$};
    
  \draw[->, thick] (L2P2) -- (P2P2) 
    node[midway, sloped, above=2pt, text=darkgray] {\small Laws}
    node[midway, sloped, below=2pt, text=gray]{\small $\xi\mapsto \P_\xi$};

  \draw[->, thick] (R) to[bend right=20] 
    node[midway, above=6pt, text=darkgray] {\small Nested Superposition} node[midway, sloped, below=2pt, text=gray]{\small $x\mapsto \delta_{\delta_x}$} (P2P2);

\end{tikzpicture}
\end{center}
    \caption{Interaction between the different spaces. \textit{Stochasticity} denotes moving from a space to $L^2$ functions (random elements) on the space. \textit{Laws} denote moving from an $L^2$ random variable to its probability distribution. \textit{Superposition} denotes the embedding of one space into a higher space via a Dirac distribution (recall from section \ref{subsec:curves} the superposition of dynamics on $\R^d$ to dynamics on $\Pc$). (Nested) Superposition is explored in detail in \cite{pinzi2025nestedsuperpositionprinciplerandom}.}
    \label{fig:superposition}
\end{figure}

As shown in Theorem 6.18 of Villani \cite{villani2009optimal}, $(P_2(\R^d),W_2)$ is a Polish space. As $L^2$ over any Polish space is Polish when $\cA$ is countably generated, we collect the following topological result.
\begin{proposition}\label{prop:Polish}
    The $L^2$ over Wasserstein space $\WL$ is a Polish space when the $\sigma$-algebra $\cA$ over $\Omega$ is countably generated.
\end{proposition}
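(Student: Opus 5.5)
We have to show that $(\WL,d)$ is complete and separable. We use two inputs: $(\Pc,W_2)$ is Polish (Villani, Theorem 6.18), and $\cA$ is countably generated. The proof is a direct adaptation of the classical argument for Bochner-type $L^2$ spaces with values in a Polish metric space. Fix the reference point $\delta_0$. By the Remark after Definition \ref{def:W2L2}, $\xi\in\WL$ if and only if $\E[W_2^2(\xi,\delta_0)]<\infty$, which is the usual description of $L^2(\Omega;X)$ for a metric space $X$.

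\emph{Completeness.} Let $(\xi_n)$ be $d$-Cauchy and extract a subsequence with $d(\xi_{n_k},\xi_{n_{k+1}})\le 2^{-k}$. By Minkowski,
\[
\E\Bigl[\Bigl(\sum_k W_2(\xi_{n_k}(\om),\xi_{n_{k+1}}(\om))\Bigr)^2\Bigr]^{1/2}\le \sum_k 2^{-k}<\infty.
\]
Hence for $\P$-a.e.\ $\om$ the sequence $(\xi_{n_k}(\om))$ is Cauchy in $(\Pc,W_2)$, and by completeness it converges to some limit $\xi(\om)$. On the null set we set $\xi(\om)=\delta_0$.

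Measurability of $\xi$ into $(\Pc,\cB_{W_2})$ holds because it is a pointwise limit of measurable maps into a metric space. Concretely, for closed $F$ one has $\{\xi\in F\}=\bigcap_m\bigcup_j\bigcap_{k\ge j}\{\mathrm{dist}(\xi_{n_k},F)<1/m\}$, up to the null set. This $\xi$ is also a random probability measure in the kernel sense of Definition \ref{def:random_measure}: the maps $\mu\mapsto\mu(B)$ are Borel for the weak topology, and that topology is coarser than $W_2$.

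Fatou's lemma gives $\E[W_2^2(\xi_{n_k},\xi)]\le\liminf_l \E[W_2^2(\xi_{n_k},\xi_{n_l})]\to0$ as $k\to\infty$. Therefore $\xi\in\WL$ via the Remark, and $d(\xi_{n_k},\xi)\to0$. The full Cauchy sequence then converges to $\xi$ as well.

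\emph{Separability.} This is where countable generation of $\cA$ enters, and I expect it to be the main obstacle. Let $D=\{\mu_j\}$ be a countable dense subset of $\Pc$, for instance rational convex combinations of Diracs at rational points. Let $\cA_0$ be a countable algebra generating $\cA$. The candidate dense set consists of the simple random measures $\sum_{i=1}^m \mu_{j_i}\1_{A_i}$, where $\{A_i\}$ is a finite partition of $\Omega$ by sets of $\cA_0$; this is a countable family. Showing density takes three approximations.

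First, truncation. Given $\xi\in\WL$ and $\varepsilon>0$, put $\xi^{(N)}=\xi$ on the event $\{W_2(\xi,\delta_0)\le N\}$ and $\xi^{(N)}=\delta_0$ otherwise. Dominated convergence gives $d(\xi,\xi^{(N)})\to0$.

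Second, discretisation into countably many values. Define $\tilde\xi(\om)=\mu_{j(\om)}$, where $j(\om)$ is the first index with $W_2(\xi^{(N)}(\om),\mu_j)<\varepsilon$ and $W_2(\mu_j,\delta_0)\le N+1$. This map is measurable and takes countably many values. Since $\P$ has finite mass, keeping only finitely many values and sending the remainder to $\delta_0$ costs at most $(N+1)^2\,\P(\text{tail})$, which is small.

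Third, approximating the partition. A finite-valued $\tilde\xi=\sum_i\mu_{j_i}\1_{B_i}$ has $B_i\in\cA$. By the standard approximation lemma for a generating algebra, there are $A_i\in\cA_0$ with $\P(A_i\triangle B_i)$ arbitrarily small. We disjointify the $A_i$ and assign $\delta_0$ to the leftover set. The resulting error is bounded by
\[
\sum_i \P(A_i\triangle B_i)\,\bigl(2(N+1)\bigr)^2 .
\]

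Combining the three errors via the triangle inequality for $d$ proves density, and hence that $\WL$ is Polish.
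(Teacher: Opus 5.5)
Your proof is correct and follows the same route as the paper's sketch: you get completeness from a subsequence that converges almost surely in $(\Pc,W_2)$, and separability from simple random measures taking values in a countable dense subset of $\Pc$. Your version is tighter in two places. Fatou's lemma removes the need for the dominating function that the paper's dominated-convergence step leaves implicit. More importantly, you take partitions from a countable generating algebra $\cA_0$ and use the approximation lemma, which corrects the paper's claim that finite partitions from all of $\cA$ form a countable family; that claim is false when $\cA$ is countably generated but uncountable.
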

For a more specific heuristic, completeness follows after using the completeness of the Wasserstein space to extract a convergent subsequence which converges almost surely in the Wasserstein space; one then uses dominated convergence to lift almost sure Wasserstein convergence to $\WL$. For separability, one first verifies that the set $\cD$ of discrete probability measures $\sum_{k=1}^n{\alpha_k\delta_{x_k}}$, with rational weights $\alpha_k$ and points $x_k$, is countable and dense in $(\P_2(\R^d), W_2)$. Then the set of simple random measures in $\WL$, $\xi(\om) = \sum_{k=1}^n \mu_{k}\1_{A_k}(\om)$ for $\mu_{k}\in\cD$ and $\{A_k\}_{k=1}^n$ a finite measurable partition of $\cA$, is countable (since $\cA$ is countably generated) and dense.
\begin{remark}
    Assuming $\cA$ complete and countably generated is a reasonably weak requirement - one commonly considers probability spaces $(\Omega,\cA,\P)$ isomorphic to $([0,1], \cB([0,1]), \lambda)$ where $\lambda$ is the Lebesgue measure and $\cB([0,1])$ is countably generated by the set of intervals with rational endpoints.
\end{remark}

\subsection{Geodesic geometry for random probability measures}\label{subsec:geo_rm}

Having outlined formulated the $L^2$ over Wasserstein space, we simultaneously consider all optimal transport problems induced between the realisations $\xi(\om)$ and $\eta(\om)$. On the paths, all of the Wasserstein space theory from section $\ref{sec:2}$ applies, and so many of the classical results will continue to hold at the $L^2$ level $\WL$ given suitable arguments.

Just as the 2-Wasserstein distance over $\Pac$ enjoys a rich Riemannian structure, we can lift much of this structure to the space $\WL$ when we restrict to random probability measures with absolutely continuous realisations, which we denote $\wlac$. The first piece of machinery we sample are the constant-speed geodesics resulting from the Benamou-Brenier Theorem \ref{thm:benamou_brenier}.

\begin{proposition}\label{prop:lifted_geodesics}
    Let $M_0,M_1\in \wlac$. The following quantity $M_t$ is a constant-speed geodesic in $\wlac$ joining $M_0$ to $M_1$,
    \begin{equation}\label{eq:lifted_constant_speed}
        M_{t}(\om) \deq [(1-t)\id + t\nabla \varphi_\om]_{\#} M_0(\om), \qquad t\in[0,1],
    \end{equation}
    where $\varphi_\om$ is the Brenier map between the realisations $M_0(\om)$ and $M_1(\om)$, is in $\wlac$.
\end{proposition}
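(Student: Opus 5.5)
We split the argument into three parts. First, $M_t$ is a well-defined element of $\WL$ with absolutely continuous realisations. Second, the pathwise geodesic property holds. Third, we lift it to the metric $d$.

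For each fixed $\om$, $M_0(\om),M_1(\om)\in\Pac$, so Theorem \ref{thm:brenier} gives a Brenier map $\nabla\varphi_\om$. Proposition \ref{prop:Wasserstein_is_geodesic} then gives
\[
W_2(M_s(\om),M_t(\om))=|s-t|\,W_2(M_0(\om),M_1(\om)), \qquad s,t\in[0,1],
\]
for every $\om$. Absolute continuity of $M_t(\om)$ for $t\in[0,1)$ is the classical fact (McCann) that $(1-t)\id+t\nabla\varphi_\om$ is the gradient of the strongly convex function $\tfrac{1-t}{2}|x|^2+t\varphi_\om$. Its gradient is therefore injective with Lipschitz inverse on its image, so the pushforward of an absolutely continuous measure stays absolutely continuous. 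At $t=1$ we simply have $M_1(\om)$.

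The finite second energy follows from the pathwise identity with $s=0$ and the triangle inequality:
\[
\cE_2(M_t)\le 2\,\E\big[W_2^2(M_t,M_0)\big]+2\,\cE_2(M_0)\le 2t^2 d^2(M_0,M_1)+2\cE_2(M_0)<\infty.
\]
Here $d(M_0,M_1)<\infty$ by the Remark following Definition \ref{def:W2L2}.

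Taking expectations of the squared pathwise identity gives
\[
d(M_s,M_t)=\E\big[(s-t)^2W_2^2(M_0,M_1)\big]^{1/2}=|s-t|\,d(M_0,M_1),
\]
which is exactly the constant-speed geodesic property in $(\wlac,d)$.

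The main obstacle is measurability. We need $\om\mapsto M_t(\om)$ to be $\cA$-to-$\cB_{W_2}(\Pc)$ measurable, and this is not automatic because $\varphi_\om$ is chosen pointwise. The plan is to avoid choosing maps measurably at all. Under absolute continuity the optimal coupling $\gamma_\om$ between $M_0(\om)$ and $M_1(\om)$ is unique, and $M_t(\om)=(\pi_t)_\#\gamma_\om$. The map $(\mu,\nu)\mapsto$ set of optimal couplings has closed graph, by stability of optimal plans (Villani \cite{villani2009optimal}, Thm.~5.20). On the set where the optimal plan is unique, this closed-graph property makes the map $(\mu,\nu)\mapsto\gamma$ continuous, hence Borel, in the $W_2$ topologies. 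The pushforward $\gamma\mapsto(\pi_t)_\#\gamma$ is $W_2$-continuous, since $\pi_t$ is $1$-Lipschitz in the appropriate sense. So $M_t$ is a composition of the measurable map $\om\mapsto(M_0(\om),M_1(\om))$ with Borel maps.

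If one prefers to avoid this continuity argument, a fallback is a measurable selection theorem (Kuratowski–Ryll-Nardzewski) applied to the closed-valued optimal-plan correspondence. The measurability of $\om\mapsto W_2(M_s(\om),M_t(\om))$ then follows as in Appendix \ref{App:Wasserstein_rv}.
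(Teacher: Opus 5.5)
Your proposal is correct and follows the same route as the paper: apply Proposition~\ref{prop:Wasserstein_is_geodesic} pathwise, square and take expectations to get $d(M_s,M_t)=|s-t|\,d(M_0,M_1)$, and bound $\cE_2(M_t)$ via the triangle inequality through $M_0$. You also fill in points the paper only asserts: absolute continuity of $M_t(\om)$ via McCann's strong-convexity argument, measurability of $\om\mapsto M_t(\om)$ via uniqueness and stability of optimal plans, and the factor $2$ in $\cE_2(M_t)\le 2t^2d^2(M_0,M_1)+2\cE_2(M_0)$, which the paper's displayed inequality omits.
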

\begin{proof}
    Recall that the path $M_{t\in[0,1]}$ from $M_0$ to $M_1$ in $\wlac$ is a constant-speed geodesic in $(\wlac,d)$ if and only if $
    d(M_s,M_t) = |s-t|d(M_0,M_1)$ for any $s,t\in[0,1]$. But we know from Proposition \ref{prop:Wasserstein_is_geodesic} that, pathwise, \eqref{eq:lifted_constant_speed} is the constant-speed geodesic in $\P_{2,ac}(\R^d)$ joining $M_0(\om)$ to $M_1(\om)$ and so $$W_2(M_s(\om), M_t(\om)) = |s-t|W_2(M_0(\om),M_1(\om)),$$ 
    from which we calculate,
    \begin{align*}
        d(M_s,M_t) &= \E_\om\l[W_2^2(M_s(\om), M_t(\om))\r]^\frac{1}{2}
        =\E_\om\l[|s-t|^2W_2^2(M_0(\om), M_1(\om))\r]^{\frac{1}{2}} = |s-t|d(M_0,M_1)
    \end{align*}
    as desired. From the definition, $M_t$ has absolutely continuous realisations in $\Pac$ and defines a random probability measure (as a convex combination of a random probability measure and a pushforward on its realisations). Moreover, for each $t\in[0,1]$, the second energy of $M_t$ is bounded:
    \begin{align*}
        \cE_2[M_t] \leq \E\l[W_2^2(M_0, M_t)\r] + \cE_2[M_0] = t^2d^2(M_0,M_1) + \cE_2[M_0] <\infty,
    \end{align*}
    thus $M_t$ is indeed well-defined in $\wlac$.
\end{proof}
\begin{remark}
    One verifies that these constant-speed geodesics in $\wlac$ generalise to $\WL$ in the non-absolutely continuous case by using pathwise coupling interpolations (as in Proposition \ref{prop:Wasserstein_is_geodesic}) rather than displacement interpolations.
\end{remark}

Recall that in the context of the Benamou-Brenier Theorem \ref{thm:benamou_brenier}, the constant-speed geodesics \eqref{eq:displ_interp} define an optimal path minimising the kinetic energy between probability measures in the Wasserstein space.
Similarly, the constant-speed geodesic \eqref{eq:lifted_constant_speed} defines an optimal trajectory to minimise now the \textit{expected} kinetic energy\footnote{Often representing temperature in statistical physics.} of the trajectory between the random probability measures, subject to paths which satisfy the continuity equation \eqref{eq:continuity}. This a simple pathwise application of the Benamou-Brenier Theorem \ref{thm:benamou_brenier}.
\begin{theorem}[Lifted Benamou-Brenier]\label{thm:lifted_benamou-brenier}
    Let $M_0, M_1\in \wlac$. Then 
    \begin{equation}\label{eq:lifted_benamou_brenier}
        d^2(M_0,M_1) = 
        \inf\l\{
            \E_\om\l[\int_0^1
                \big\|v_t(\om,\cdot)\big\|^2_{M_t(\om)}
            dt \r]\,\bigg|\,(M_t(\om, \cdot), v_t(\om, \cdot))_{t\in[0,1]}\text{ solves \eqref{eq:continuity} $\P$-a.s.}
        \r\}
    \end{equation}
\end{theorem}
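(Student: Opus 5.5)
We prove \eqref{eq:lifted_benamou_brenier} by establishing two inequalities, each obtained by applying the classical Benamou--Brenier Theorem \ref{thm:benamou_brenier} realisation by realisation and then integrating over $\Omega$.

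\textbf{Lower bound.} Fix an admissible pair $(M_t(\om,\cdot),v_t(\om,\cdot))_{t\in[0,1]}$, so that the continuity equation \eqref{eq:continuity} holds for $\P$-a.e.\ $\om$, with the endpoints of the curve equal to $M_0(\om)$ and $M_1(\om)$.
\begin{itemize}
    \item For each such $\om$, the curve $(M_t(\om),v_t(\om))$ is admissible in \eqref{eq:benamou_brenier} between $M_0(\om)$ and $M_1(\om)$. Hence
    \begin{equation*}
        W_2^2(M_0(\om),M_1(\om))\le \int_0^1\|v_t(\om,\cdot)\|^2_{M_t(\om)}\,dt \qquad \P\text{-a.s.}
    \end{equation*}
    \item The left-hand side is a random variable by the measurability of $D(M_0,M_1)$ (Appendix \ref{App:Wasserstein_rv}).
    \item For the right-hand side, we either assume joint measurability of $(\om,t,x)\mapsto v_t(\om,x)$ as part of admissibility, or interpret $\E$ as an outer expectation. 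In either case, monotonicity of the expectation gives $d^2(M_0,M_1)\le \E_\om[\int_0^1\|v_t\|^2_{M_t}dt]$.
    \item Taking the infimum over admissible pairs yields $d^2(M_0,M_1)\le\inf\{\cdots\}$.
\end{itemize}

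\textbf{Upper bound (attainment).} We use the lifted geodesic $M_t$ of Proposition \ref{prop:lifted_geodesics}, with velocity field
\begin{equation*}
    v_t(\om,\cdot)=(\nabla\varphi_\om-\id)\circ\big((1-t)\id+t\nabla\varphi_\om\big)^{-1}.
\end{equation*}
\begin{itemize}
    \item This velocity is well defined $M_t(\om)$-a.e., since the interpolating map is injective on the support for $t<1$ by cyclical monotonicity.
    \item By Theorem \ref{thm:benamou_brenier}, the pair $(M_t(\om),v_t(\om))$ solves \eqref{eq:continuity} for every $\om$.
    \item Its kinetic energy is pathwise optimal and constant in time, with $\|v_t(\om)\|^2_{M_t(\om)}=\|\nabla\varphi_\om-\id\|^2_{L^2(M_0(\om))}=W_2^2(M_0(\om),M_1(\om))$.
    \item Integrating in $t$ and then in $\om$ gives total expected energy exactly $d^2(M_0,M_1)$. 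So the infimum is at most $d^2(M_0,M_1)$, and in fact is attained.
\end{itemize}

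\textbf{Main obstacle: measurability of the optimal family in $\om$.} The upper bound requires the Brenier potentials $\om\mapsto\nabla\varphi_\om$, and hence $v_t(\om,\cdot)$, to depend measurably on $\om$, so that $M_t$ is a random measure and the expectation makes sense. The plan is as follows.
\begin{enumerate}
    \item Invoke a measurable selection theorem for optimal couplings. The set-valued map $(\mu,\nu)\mapsto\{\text{optimal plans}\}$ has closed graph in $(\Pc)^2\times\cP_2(\R^{2d})$, so by Kuratowski--Ryll-Nardzewski (cf.\ Villani \cite{villani2009optimal}, Corollary 5.22) it admits a Borel selection.
    \item On $\wlac$ the optimal plan is unique by Theorem \ref{thm:brenier}, so the selection is the Brenier plan itself.
    \item Composing this Borel selection with the measurable map $\om\mapsto(M_0(\om),M_1(\om))$ gives a measurable plan $\om\mapsto\bar\gamma_\om$.
    \item The pushforward $M_t(\om)=(\pi_t)_\#\bar\gamma_\om$ is then measurable, and the energy $\int\|y-x\|^2\,d\bar\gamma_\om$ is measurable as well.
\end{enumerate}
If one prefers to avoid pointwise measurability of $v_t$ itself, the energy functional can be written directly in terms of $\bar\gamma_\om$, which suffices for the computation in the upper bound.
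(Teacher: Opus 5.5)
Your proposal is correct and follows the paper's proof. The lower bound is the pathwise Benamou--Brenier inequality integrated over $\Omega$, and the upper bound is attained by the lifted displacement interpolation of Proposition \ref{prop:lifted_geodesics} with the velocity field \eqref{eq:varphi_omega}. Your measurable-selection step (uniqueness of the Brenier plan combined with Villani's Corollary 5.22) supplies the $\om$-measurability of $M_t$ and of the expected energy that the paper tacitly assumes, so it is a worthwhile addition rather than a different route.
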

\begin{proof}
    The proof of the Benamou-Brenier Theorem, as in \cite{chewi2025statistical} Theorem 5.7,  demonstrates that for all $(\mu_t,v_t)_{t\in[0,1]}$ subject to the continuity equation \eqref{eq:continuity}    $$W_2^2(\mu_0,\mu_1)\leq\int_0^1\big\|v_t\big\|^2_{\mu_t}dt,$$
    which, pathwise for our random probability measures, gives
    $$ W_2^2\big(M_0(\om),M_1(\om))\leq\int_0^1\big\|v_t(\om,\cdot)\big\|^2_{M_t(\om)}dt,$$
     where each path ($M_t(\om),v_t(\om, \cdot))$ is subject to the continuity equation. Thus taking expectations we find 
    $$d^2(M_0,M_1) = \E_\om\bigg[W_2^2\big(M_0(\om), M_1(\om)\big)\bigg]\leq \E\l[\int_0^1\big\|v_t(\om,\cdot)\big\|^2_{M_t(\om)}dt\r]$$
    so that $d^2(M_0,M_1)$ indeed lower bounds the infimum in \eqref{eq:lifted_benamou_brenier}.
    We also know that pathwise, the constant-speed geodesics \eqref{eq:lifted_constant_speed} attain
    $$
        W_2^2(M_0(\om),M_1(\om))=\int_0^1\big\|v_t(\om,\cdot)\big\|^2_{M_t(\om)}dt, \qquad \forall\om\in\Omega
        $$ 
    given
    \begin{equation}\label{eq:varphi_omega}
        v_t(\om,\cdot) = (\grad\varphi_\om - \id)\circ\l(\grad\varphi_{\om,t}\r)^{-1}(\cdot),\qquad \varphi_{\om,t}(x) \deq (1-t)\frac{\|x\|^2}{2} + t\varphi_{\om}(x),
    \end{equation}
    where $\varphi_\om$ is the Brenier map from $M_0(\om)$ to $M_1(\om)$.
    So indeed when $M_t$ is given by \eqref{eq:lifted_constant_speed},
    $$d^2(M_0,M_1) = \E_\om\bigg[W_2^2(M_0(\om),M_1(\om))\bigg] = \E_\om\l[
        \int_0^1\|v_t(\om,\cdot)\|^2_{M_t(\om)}dt
        \r].
    $$
    Thus the infimum on the RHS of \eqref{eq:lifted_benamou_brenier} is also attained.
\end{proof}
This result provides a natural definition for a formal Riemannian structure for $\WL$ in the next section.

\subsection{Gradient flow paths of random probability measures}\label{subsec:3gfs}

In definition \ref{def:w2_tangent} we defined the tangent space to $\Pac$ for which the metric induced by the given Riemannian structure coincides with that of the Wasserstein distance. Given the formulation of $\WL$, we can similarly define a lifted tangent space to $\wlac$ for which the metric induced by the lifted Riemannian structure will coincide with our $L^2$ over Wasserstein metric. We will show that the resulting structure gives rise to a canonical random Wasserstein gradient flow, in sense that it induces a random gradient flow where the sample paths are each deterministic Wasserstein gradient flows.

In light of the (random) dynamic formulation of the $\WL$ distance $d$ in Theorem \ref{thm:lifted_benamou-brenier}, the definition of a formal Riemannian structure on $\WL$ is now apparent. The tangent space we define consists of $L^2$ random vectors whose realisations are tangent vectors to $\Pac$, the $L^2$ closure of gradients of functions in $C_c^\infty(\R^d)$. The (squared) norm for those tangent vectors to $\Pac$ was given by the corresponding kinetic energy $\int_0^1 \|v_t\|^2dt$. At the level of $\Pac$, the norm we place on the tangent space below in \eqref{eq:lifted_ke_norm} corresponds to the $\P$-expectation of the kinetic energy, as discussed in Theorem \ref{thm:lifted_benamou-brenier}.

\begin{definition}[Tangent space to $\wlac$]\label{def:Lifted_tangent}
    Let $M\in\wlac$. We define the tangent space to $\wlac$ at $M$ to be
    \begin{equation}
        \cT_M\wlac = \overline{\bigg\{
            V:\Omega\times\R^d\to\R^d \,\,\Big|\,\, \forall\om\in\Omega,\,V(\om,\cdot)\in \cT_{M(\om)}P_{2,ac}(\R^d), \,\,||V||_M<\infty
        \bigg\}}^{L^2(\P)},
    \end{equation}
    where we recall
    \begin{align*}
        \cT_{M(\om)} \cP_{2,\rm ac}(\R^d)
        &\deq \overline{\bigg\{\nabla \psi \,\Big|\, \psi : \R^d\to\R ~\text{compactly supported, smooth}\bigg\}}^{L^2(M(\om))},
    \end{align*}
    and we have endowed the tangent space $\cT_M\wlac$ with the inner product
    \begin{equation}\label{eq:lifted_ke_norm}
        \langle V, W \rangle_M = \E_\om\bigg[\big\langle V(\om,\cdot), W(\om, \cdot)\big\rangle_{M(\om)}\bigg] = \int_{\Omega}\int_{\R^d} \bigg(\big\langle V(\om, x), W(\om, x)\big\rangle M(\om)(dx)\bigg) \P(d\om).
    \end{equation}
\end{definition}
\begin{remark}
    The inner product \eqref{eq:lifted_ke_norm} is known as the Wasserstein Dirichlet form, first introduced by von Renesse and Sturm in \cite{vonrenesse2009} who use standard Dirichlet form theory (as in \cite{Fukushima2010}), to provide the existence of a unique, continuous, symmetric Markov process on $\P(\bS^{d-1})$ after proving closure of the Dirichlet form \eqref{eq:lifted_ke_norm}. Similar works such as \cite{Schiavo2018ART} and \cite{delloschiavo2024massive} have since shown the existence of other diffusion processes on various Wasserstein spaces via a similar route, making use of the Wasserstein Dirichlet form. These works use random measure laws to describe random dynamics on Wasserstein spaces and indeed, the quantity \eqref{eq:lifted_ke_norm} uses only the law $\P_M$ of the random measure $M$. 
    
    The fact that our structure, emerging naturally from the definition of $\WL$ and the dynamic equivalence established in Theorem \ref{thm:lifted_benamou-brenier}, aligns with the structure explored in these other works situates our approach within the broader framework of random Wasserstein dynamics. However, the type of random dynamics we focus on is different. While these works investigate randomness via diffusion processes on Wasserstein space, aiming at statistical modelling, our interest lies instead in random gradient flows, generated by a given functional.
\end{remark}

The formal Riemannian structure enables us to make the identification of random Wasserstein gradient flows for random probability measures, thereby showing that $\WL$ is the appropriate space in which to study random gradient flows. Recall from Proposition \ref{prop:w2_grad} that the Wasserstein gradient of a functional evaluated at a measure in $\Pac$ is given by the Euclidean gradient of the first variation of the functional evaluated at $\mu$ i.e. $\gradW\cF(\mu) = \grad\delta\cF(\mu)$.

We would like to determine first what functionals to consider. Given our definition of $L^2$ over Wasserstein as a space for random optimal transport, we would like to consider functionals of random probability measures as random elements too.

\begin{definition}[Random functional]\label{def:random_functional}
    Let $\{\cF_\om\}_{\om\in\Omega}$ be a collection of functionals over $\Pac$ i.e.  for each $\om\in\Omega$, $\cF_{\om}:\Pac\to\R$ . Let $\cF:\Omega\times\wlac\to\R$ be given by $$\cF(\om,M) = \cF_{\om}(M(\om)).$$ We call $\cF$ a random functional. We further define the lifted functional given by the map $\mF:\wlac\to\R$, $$\mF(M) = \E[\cF(M)] = \E_{\om}[\cF_\om(M(\om))]$$
\end{definition}

In most cases, we will consider $\cF_\om = \cF$ for all $\om\in\Omega$ for some functional $\cF:\Pac\to\R$; however, the theory we develop will remain general as we will see that the paths are decoupled. Under suitable conditions on the $\cF_\om$ (e.g. measurability w.r.t to $B_{W_2}(\Pac)$, the $W_2$ Borel $\sigma$-algebra over $\Pac$), this defines a real valued random variable $\mF:\Omega\to\R$ when $M$ is fixed. \newline

In our lifted setting, given a curve of random probability measures ${(M_t)}_{t\ge 0}$ with corresponding tangent vectors ${(V_t)}_{t\ge 0}$, the gradient of a lifted functional $\mF :\Omega\times \wlac \to\R$ is the element of the tangent space $\cT_{M_t}\wlac$ such that $\partial_t \mF(M_t) = \langle \gradWL \mF(M_t),V_t\rangle_{M_t}$ (where the inner product is defined by \eqref{eq:lifted_ke_norm}). We can now identify the $L^2$ over Wasserstein gradient as a random gradient with Wasserstein gradient sample paths, making the $\WL$ structure the natural extension of the Wasserstein space to allow for random dynamics.
\begin{theorem}[$\WL$ gradient]\label{thm:LiftedgradW}
    The function $\gradW_{L^2}\mF(M):\Omega\times\R^d\to\R^d$ given by
    \begin{equation}
        \gradW_{L^2}\mF(M)(\om) = \gradW \cF(M(\om)) = \nabla \delta \cF(M(\om))
    \end{equation}
    is the Wasserstein over $L^2$ gradient of $\mF$ at $M\in\wlac$ under suitable integrability conditions. Phrased differently, the lifted $\WL$ gradient is a random gradient with Wasserstein gradient sample paths.
\end{theorem}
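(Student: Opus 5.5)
The plan is to verify the defining property of the $\WL$ gradient directly. For every curve $(M_t)_{t\ge0}$ in $\wlac$ whose tangent vectors $(V_t)_{t\ge0}$ satisfy the continuity equation \eqref{eq:continuity} $\P$-a.s., we must show
\[
\partial_t \mF(M_t) = \big\langle \gradWL\mF(M_t), V_t\big\rangle_{M_t}.
\]
We must also show that the candidate $\om\mapsto \nabla\delta\cF_\om(M(\om))$ lies in $\cT_M\wlac$. Uniqueness then follows because $\langle\cdot,\cdot\rangle_M$ is an inner product on $\cT_M\wlac$: any two elements satisfying the identity against all admissible $V_t$ differ by something orthogonal to the whole tangent space, hence vanish. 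I would state the ``suitable integrability conditions'' explicitly as two requirements. First, $\E_\om\|\nabla\delta\cF_\om(M(\om))\|^2_{M(\om)}<\infty$. Second, a domination condition of the form $\sup_{t}\,|\partial_t\cF_\om(M_t(\om))|\le G(\om)$ with $G\in L^1(\P)$, together with measurability of $\om\mapsto\cF_\om(M(\om))$ with respect to $\cB_{W_2}$.

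The first step is pathwise. Fix $\om$. The curve $t\mapsto M_t(\om)$ lies in $\Pac$ and solves the continuity equation with velocity $V_t(\om,\cdot)$. By Proposition \ref{prop:w2_grad} and the identification preceding it,
\[
\partial_t \cF_\om(M_t(\om)) = \big\langle \nabla\delta\cF_\om(M_t(\om)), V_t(\om,\cdot)\big\rangle_{M_t(\om)}.
\]
This comes from \eqref{eq:fvdiscuss} combined with the weak form \eqref{eq:continuity_weak}, integrating by parts against $\delta\cF_\om$. The same pathwise fact gives tangency: $\nabla\delta\cF_\om(M(\om))\in\cT_{M(\om)}\Pac$ for each $\om$. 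The integrability assumption then supplies the finite $\|\cdot\|_M$ norm, so the candidate belongs to the set whose $L^2(\P)$-closure defines $\cT_M\wlac$.

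The second step lifts to $\WL$ by taking expectations. By definition $\mF(M_t)=\E_\om[\cF_\om(M_t(\om))]$. The domination hypothesis justifies exchanging $\partial_t$ and $\E_\om$ (differentiation under the integral, via dominated convergence applied to difference quotients and the mean value theorem). This gives
\[
\partial_t\mF(M_t) = \E_\om\Big[\big\langle \nabla\delta\cF_\om(M_t(\om)), V_t(\om,\cdot)\big\rangle_{M_t(\om)}\Big] = \big\langle \gradWL\mF(M_t), V_t\big\rangle_{M_t},
\]
where the last equality is exactly \eqref{eq:lifted_ke_norm}. Cauchy–Schwarz in $L^2(\P)$ guarantees the right side is finite, using the integrability of the candidate and $\|V_t\|_{M_t}<\infty$.

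I expect the main obstacle to be the interchange of derivative and expectation, together with the measurability of the integrand $\om\mapsto\langle\nabla\delta\cF_\om(M_t(\om)),V_t(\om,\cdot)\rangle_{M_t(\om)}$. The pathwise statement is only as rigorous as Proposition \ref{prop:w2_grad}, which is formal, so I would absorb both issues into the ``suitable integrability conditions'' clause. Concretely, I would try a uniform bound such as $\|\nabla\delta\cF_\om(\mu)\|_{\mu}\le C(\om)(1+\cM_2(\mu)^{1/2})$ with $C\in L^2(\P)$. Combined with $\E\int_0^1\|V_t\|^2_{M_t}dt<\infty$ (as in Theorem \ref{thm:lifted_benamou-brenier}), this yields an $L^1(\P)$ dominating function and lets the argument run for a.e.\ $t$. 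Finally, I would remark that since the Wasserstein gradient flow $\partial_t M_t=\divergence(M_t\gradWL\mF(M_t))$ then decouples over $\om$, its sample paths are Wasserstein gradient flows of $\cF_\om$, which is the ``phrased differently'' clause of the theorem.
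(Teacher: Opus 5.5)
Your proposal is correct and follows essentially the same route as the paper. The paper interchanges $\partial_t$ and $\E_\om$ first and then applies the continuity equation and integration by parts inside the expectation; you do that pathwise computation first (via Proposition \ref{prop:w2_grad}) and then take expectations, and both conclude by identifying the result with \eqref{eq:lifted_ke_norm} and noting that the sample paths are gradients, hence tangent. Your explicit hypotheses and uniqueness remark go a little beyond the paper's ``by identification'' (uniqueness tacitly needs realisable velocities to be dense in $\cT_M\wlac$), but your concrete bound $\|\nabla\delta\cF_\om(\mu)\|_\mu\le C(\om)(1+\cM_2(\mu)^{1/2})$ with only $C\in L^2(\P)$ does not by itself give an $L^1(\P)$ majorant, since $C(\om)\,\cM_2(M_t(\om))^{1/2}\|V_t(\om,\cdot)\|_{M_t(\om)}$ is a product of three $L^2(\P)$ factors, so rely on the abstract domination hypothesis you state first.
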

\begin{proof}
    Let ${(M_t)}_{t\ge 0}$ be a curve of random probability measures in $\WL$ with corresponding tangent vectors ${(V_t)}_{t\ge 0}$.
    The fact that $V_t$ is the tangent vector at time $t$ means that each path $V_t(\om,\cdot)$ solves the continuity equation~\eqref{eq:continuity}.
    We calculate
    \begin{equation}\label{eq:interchange}
        \partial_t \mF(M_t)
        = \partial_t \,\E_\om\bigg[\cF_\om(M_t(\om))\bigg]
        = \E_\om\bigg[\partial_t \cF_\om(M_t(\om))\bigg]
    \end{equation}
    where we have interchanged the partial derivative and the expectation under suitable integrability conditions\footnote{These for example could be the existence of a suitable dominating function.} of the random velocity field $V_t(\om,\cdot)$ and the pathwise Wasserstein gradient $\grad \delta \cF(M(\om))(\cdot)$.
    We then apply the continuity equation \eqref{eq:continuity} (now inside the expectation)
    \begin{align*}
        \E_\om\bigg[\partial_t \cF_\om(M_t(\om))\bigg] &= \E_\om\l[\int \delta \cF_\om(M_t(\om))\partial_t M_t(\om)\r]\\
        &= \E_\om\l[-\int \delta \cF_\om(M_t(\om)) \divergence\big(M_t(\om) V_t(\om,\cdot)\big)\r]
    \end{align*}
    after which integration by parts (again inside the expectation) yields
    \begin{align*}
        \E_\om\l[-\int \delta \cF_\om(M_t(\om)) \divergence\big(M_t(\om) V_t(\om,\cdot)\big)\r] &= \E_\om\l[\int \langle \nabla \delta \cF_\om(M_t(\om)), V_t(\om,\cdot)\rangle d M_t(\om)\r],
    \end{align*}
    which we can identify as the inner product in the tangent space $\cT_{M_t}\wlac$ we defined in \ref{def:Lifted_tangent},
    \begin{align*}
       \E_\om\l[\int \big\langle \nabla \delta \cF_\om(M_t(\om))\,,\, V_t(\om,\cdot)\big\rangle d M_t(\om)\r] &= \E_{\om}\bigg[ \big\langle\nabla \delta \cF(M_t)\,,\, V_t\big\rangle_{M_t(\om)} \bigg]\\
        &= 
        \big\langle \nabla \delta \cF(M_t), V_t\big\rangle_{M_t}.
    \end{align*}
    Moreover, considering the paths $\nabla \delta \cF_\om(M_t(\om))$ are gradients of some function, from definition \ref{def:Lifted_tangent} we indeed find that $\nabla \delta \cF_\om(M_t) \in \cT_{M_t} \wlac$. By identification, we conclude that $\nabla \delta \cF_\om(M_t(\om))$ is indeed the $L^2$ over Wasserstein gradient of $\mF$ at $M_t$.
\end{proof}

\begin{definition}[Random Wasserstein gradient flow]\label{def:lifted_w2_grad_flow}\index{Lifted Wasserstein gradient flow}

    Let $\{\cF_\om\}_{\om\in\Omega}$ be a collection of functionals on $\Pac$ and let the lifted functional be given by the map $\mF:\wlac\to\R$, $\mF(M) = \E[\cF(M)] = \E_{\om}[\cF_\om(M_\om)]$. ${(M_t)}_{t\ge 0}$ is the {Wasserstein gradient flow} of $\mF$ if for each $\om\in\Omega$ it solves the PDE
    \begin{equation}\label{eq:WGF_lifted}
        \partial_t M_t(\om) = \divergence\bigl(M_t(\om) \gradWL \mF(M_t)(\om)\bigr) = \divergence\bigg(M_t(\om) \gradW \cF_\om\big(M_t(\om)\big)\bigg),
    \end{equation}
    i.e. $M_t$ has sample paths which are Wasserstein gradient flows of $\cF_\om$, as in definition \ref{def:w2_grad_flow}.
\end{definition}
One such condition for the gradient flow to be well posed following.
\begin{definition}[Wasserstein $C^1$ functional]\label{def:Wasserstein_C1}
    We say that $\cF:\Pc\to \R$ is Wasserstein $C^1$ if there exists a jointly continuous map
    \begin{equation}\label{eq:Wasserstein_C1}
        \frac{\delta F}{\delta\mu}:\Pc\times\R^d\to\R,\qquad (\mu,x)\mapsto\frac{\delta F}{\delta \mu}(\mu,x),
    \end{equation}
    where continuity is in $W_2$ for $\mu$ and Euclidean for $x$.
\end{definition}

Under this condition, we are able to provide the following useful Grönwall estimate.
\begin{proposition}\label{prop:WGF_lifted_stability}
    Let $\cF$ be Wasserstein $C^1$ as in definition \ref{def:Wasserstein_C1} with $\gradW \cF$ globally Lipschitz in $W_2\times\R^d$. Let $M_0,N_0\in\wlac$, and let $M_t(\om), N_t(\om)$ denote the Wasserstein gradient flow paths of $\cF$ initialised at $M_0(\om), N_0(\om)\in\Pac$ respectively, as in \eqref{eq:WGF_lifted}. Then, for each fixed $T>0$, there is a Lipschitz constant $L>0$ such that,
    \begin{enumerate}[label=\alph*)]
        \item $\sup_{t\in[0,T]}W_2(M_t(\om),N_t(\om))\leq e^{LT}W_2^2(M_0(\om), N_0(\om))$ for $\P$-almost every $\om$,
        \item $\sup_{t\in[0,T]}d(M_t,N_t)\leq e^{LT}d^2(M_0,N_0)$.
    \end{enumerate}
\end{proposition}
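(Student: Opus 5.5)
The plan is to argue pathwise for each fixed $\om$, using a synchronous coupling of the particle characteristics, and then lift to $\WL$ by taking expectations. I will prove the estimate in the dimensionally consistent form
\[
\sup_{t\in[0,T]}W_2(M_t(\om),N_t(\om))\leq e^{LT}W_2(M_0(\om),N_0(\om)).
\]
Squaring it gives the version with $W_2^2$ on both sides and constant $e^{2LT}$. The stated right-hand sides carry an extra square, which I read as a typo. After renaming the constant, (a) and (b) are meant in this homogeneous sense.

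\emph{Step 1: representation of the flow by characteristics.} Fix $\om$ and write $\mu_t=M_t(\om)$, $\nu_t=N_t(\om)$. Let $b(\mu,x)\deq-\gradW\cF(\mu)(x)$, which is globally Lipschitz with constant $L_0$ in $(\mu,x)\in(\Pc,W_2)\times\R^d$. The gradient flow \eqref{eq:WGF_lifted} is a nonlinear continuity equation \eqref{eq:continuity} with velocity $v_t=b(\mu_t,\cdot)$. By the superposition principle of Section \ref{subsec:curves}, $\mu_t=\mathrm{Law}(X_t)$, where $X$ solves the McKean--Vlasov ODE $\dot X_t=b(\mathrm{Law}(X_t),X_t)$ with $X_0\sim\mu_0$. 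The same holds for $\nu_t$ with a process $Y$.

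Because $b$ is Lipschitz, this ODE is well posed by a standard Picard iteration on $C([0,T];L^2)$. This means I may choose the initial pair freely. I take $(X_0,Y_0)$ to be an optimal coupling of $(\mu_0,\nu_0)$, realised on an auxiliary probability space. Both processes are then driven from this coupled initial condition. I expect this step to be the main obstacle: one must justify that the solution of \eqref{eq:WGF_lifted} coincides with the law of the characteristic flow. This requires uniqueness for the nonlinear continuity equation, which I would obtain by freezing $\mu_t$ to get a linear equation with a Lipschitz field and then applying a fixed point argument. I would state it under the hypotheses given and treat it as the standard well-posedness input.

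\emph{Step 2: Grönwall estimate.} Let $e(t)\deq\E|X_t-Y_t|^2$, which satisfies $W_2^2(\mu_t,\nu_t)\le e(t)$. Differentiating and using the Lipschitz bound gives
\[
e'(t)=2\E\langle X_t-Y_t,\,b(\mu_t,X_t)-b(\nu_t,Y_t)\rangle\le 2L_0\,\E\bigl[|X_t-Y_t|^2\bigr]+2L_0\,W_2(\mu_t,\nu_t)\,\E|X_t-Y_t|.
\]
By Cauchy--Schwarz and $W_2(\mu_t,\nu_t)\le e(t)^{1/2}$, the second term is at most $2L_0e(t)$, so $e'\le 4L_0e$. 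Grönwall then yields $W_2^2(\mu_t,\nu_t)\le e^{4L_0t}W_2^2(\mu_0,\nu_0)$. Taking $L=2L_0$ and the supremum over $t\le T$ gives (a). This holds for every $\om$, in particular $\P$-a.s.

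\emph{Step 3: measurability and lifting.} The map $t\mapsto W_2(M_t(\om),N_t(\om))$ is continuous, so the supremum may be taken over rational $t$. It is therefore a random variable, with measurability coming from Appendix \ref{App:Wasserstein_rv} and measurable dependence of the flow on the initial datum; the latter follows from Lipschitz continuity of the solution map in $W_2$, which is exactly (a). For each fixed $t$, squaring (a) and taking expectations gives
\[
d^2(M_t,N_t)=\E_\om\bigl[W_2^2(M_t(\om),N_t(\om))\bigr]\le e^{2LT}d^2(M_0,N_0).
\]
The right-hand side does not depend on $t$, so taking the supremum over $t\in[0,T]$ gives (b). As a by-product, $M_t,N_t\in\WL$, since their second energy is controlled by the triangle inequality exactly as in Proposition \ref{prop:lifted_geodesics}.
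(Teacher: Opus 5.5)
Your proposal is correct and takes essentially the same route as the paper. Both fix $\om$, couple $X_0,Y_0$ optimally (the paper via the Brenier map), push both particles along the frozen flow fields, bound $W_2^2(M_t(\om),N_t(\om))$ by $\E\|X_t-Y_t\|^2$, apply Gr\"onwall and then take expectations in $\om$; the paper's own computation also yields the squared form $\sup_t W_2^2(M_t(\om),N_t(\om))\le e^{LT}W_2^2(M_0(\om),N_0(\om))$, which supports your reading of the exponents as a typo. The remaining differences are minor: you absorb the cross term by Cauchy--Schwarz (rate $4L_0$) where the paper uses Young's inequality ($L=L_1^2+1+2L_2$), and you spell out the well-posedness, the identification of the PDE solution with the characteristic flow, and the measurability, all of which the paper leaves implicit.
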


\begin{proof}
    We proceed by a standard argument, coupling the initialisations, and then using the Lipschitz bound to access Grönwall's inequality on their propagations.\newline

    Fix $\om\in\Omega$. Let $X_0\sim M_0(\om)$ and let $Y_0$ be the image of $X_0$ under the Brenier map from $M_0(\om)$ to $N_0(\om)\in\Pac$ so that they are optimally coupled and $W_2^2(M_0(\om),N_0(\om)) = \E[\|X_0-Y_0\|^2]$. For $t>0$ we calculate the propagations $X_t,Y_t$ using the gradient flow,
    \begin{align}
        \frac{d}{dt}\|X_t - Y_t\|^2 &= 2 \l\langle X_t - Y_t, \dot{X_t} - \dot{Y}_t\r\rangle \notag \\
        &= -2 \l\langle X_t - Y_t, \gradW \cF(M_t(\om))(X_t) - \gradW \cF(N_t(\om))(Y_t)\r\rangle \label{eq:derivative_wgf}
    \end{align}
where we have used the fact that the measure are subject Wasserstein gradient flow generated by $\cF$. We can decompose the velocity difference inside the inner product above as,
\begin{equation*}
    \underbrace{\gradW \cF(M_t(\om))(X_t) - \gradW \cF(N_t(\om))(X_t)}_{\text{Measure difference}} + \underbrace{\gradW \cF(N_t(\om))(X_t) - \gradW \cF(N_t(\om))(Y_t)}_{\text{Spatial difference}}
\end{equation*}
where we have added and subtracted the middle terms above. From the Lipschitz assumptions, we have for some $L_1,L_2>0$,
\begin{align*}
    \l\|\gradW \cF(M_t(\om))\l(X_t\r) - \gradW \cF(N_t(\om))\l(X_t\r) \r\| &\leq L_1 W_2(M_t(\om), N_t(\om))\\
    \l\|\gradW \cF(N_t(\om))\l(X_t\r) - \gradW \cF(N_t(\om))(Y_t)\r\| &\leq L_2 \l\|X_t - Y_t\r\|.
\end{align*}
Inserting these into \eqref{eq:derivative_wgf} and applying Cauchy-Schwarz followed by Young's $2ab\leq a^2 + b^2$,
\begin{align*}
    \frac{d}{dt}\|X_t - Y_t\|^2 &\leq 2L_1 W_2(M_t(\om), N_t(\om))\l\| X_t - Y_t \r\| + 2L_2 \|X_t - X_t \|^2 \\
    &\leq L_1^2 W_2^2(M_t(\om), N_t(\om)) + (1+2L_2) \l\| X_t - Y_t \r\|^2.
\end{align*}
Let $f(t) \deq \E[\|X_t-Y_t\|^2]$. While this coupling is optimal at $t=0$, it is not necessarily so for $t>0$, thus $W_2^2\l(M_t(\om), N_t(\om)\r)\leq f(t)$.
Taking expectations in the derivative inequality above gives,
\begin{equation*}
    \dot{f}(t) \leq L_1^2 f(t) + (1+ 2L_2)f(t) = Lf(t)
\end{equation*}
with $L = L_1^2 + 1 + 2L_2$. Thus by Grönwall lemma, 
\begin{equation*}
    W_2^2(M_t(\om), N_t(\om)) \leq f(t) \leq e^{LT}f(0) = e^{LT} W_2^2(M_0(\om), N_0(\om),
\end{equation*}
which is uniform in $t$, and we write
\begin{equation}\label{eq:Grönwall_inequality}
    \sup_{t\in[0,T]}  W_2^2(M_t(\om), N_t(\om)) \leq e^{LT} W_2^2(M_0(\om), N_0(\om)).
\end{equation}
a) follows. Taking expectations (bounded using the fact that the $\WL$ distance between the initialisations is bounded), we also conclude b).

\end{proof}

We will use this stability result for when initialising via an empirical measure in section \ref{sec:4} and \ref{sec:5} via a Bayesian posterior to approximate a population measure $\mu$ under uncertainty. These additionally enable an analysis of self-attention dynamics for transformers in section \ref{sec:6} under random token sampling. 

\section{Empirical measures in $\WL$}\label{sec:4}
\subsection{The empirical measure as a random measure}
Let $\mu\in\Pc$ and let $X_1,\dots,X_n\sim\mu$ be a sequence of stationary draws from the population measure $\mu$. Define the empirical measure
\begin{equation}\label{eq:empirical_measures}
    \hmuno = \frac{1}{n} \sum_{i=1}^n \delta_{X_i(\om)}.
\end{equation}
According to definition \ref{def:rm_space}, this is a random probability measures in $\WL$. Any realisation has bounded support hence finite second moment, the map $\om\mapsto \hmun(\om,B) = \frac{1}{n}\sum_{i=1}^n\1_{X_i(\om)\in B}$ is measurable as a sum of measurable indicators, and under stationarity assumption the second energy $\cE_2(\hmu_n) = \cM_2(\mu)<\infty$. For a general review of empirical measures, and empirical processes, the reader is directed to \cite{Dudley_1999}.

Provided with empirical measures and functionals over $\Pc$, we would like to consider the optimal transport problem. Section \ref{sec:2} provides (for each sample $\hmuno, \hnu_m(\om)$) a wide array of objects under the optimal transport umbrella including of distances, transport maps/couplings, geodesics and gradient flow. The developments in section \ref{sec:3} allow us to consider all of the samples simultaneously, and hence both the sample path properties of the optimal transport machinery, as well as these lifted to the level of $\WL$.

Pathwise convergence in $n$ for many these objects are established results in the i.i.d. case (though we extend to the non-i.i.d. case too). However, the structure of $\WL$ - in particular the property of finite second energy - provides the key assumptions required to additionally conclude convergence at the $L^2$ level via the generalised dominated convergence theorem (DCT) provided in Appendix \ref{App:Generalised_dct}. The $\WL$ framework allows us to ensemble all of these results into one unified framework.

\subsection{Distances between i.i.d. sampled empirical measures}

The immediate problem to consider is how the sample Wasserstein distance $W_2(\hmuno, \hnu_m(\om))$ varies, and the properties of the $\WL$ distance $d(\hmun, \hnu_m)$. Given that $\hmun$ is an estimator for $\mu\in\Pc$, we expect large sample convergence as $\ninf$. In the i.i.d. case, these are the established results.
\begin{proposition}[\cite{Panaretos2020AnIT} Proposition 2.2.6., Lemma 4.7.1]\label{prop:empirical_convergence}
    Let $\mu\in\Pc$. Then sampling $\hmun$ using i.i.d. draws from $\mu$,
    \begin{enumerate}[label=\alph*)]
        \item $W_2(\hmuno,\mu)\to 0$ for $\P$-almost every $\om$
        \item $d(\hmu_n,\mu)\to 0$ where $\mu(\om) = \mu$ for all $\om\in\Omega$
    \end{enumerate}
\end{proposition}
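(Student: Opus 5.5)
For part a), I would use Theorem \ref{thm:weak_wass} with $p=2$, so that it suffices to prove, for $\P$-almost every $\om$, that $\hmuno\wto\mu$ and $\cM_2(\hmuno)\to\cM_2(\mu)$.

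The moment convergence is the strong law of large numbers applied to the i.i.d. real variables $\|X_i\|^2$. These are integrable since $\mu\in\Pc$, so $\frac1n\sum_{i=1}^n\|X_i(\om)\|^2\to\cM_2(\mu)$ almost surely.

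For weak convergence, the difficulty is that the exceptional null set must not depend on the test function. I would fix a countable convergence-determining class $\{f_k\}_{k\ge1}\subset C_b(\R^d)$, for instance bounded Lipschitz functions with rational parameters, using separability of $\R^d$. The strong law gives a null set $N_k$ off which $\int f_k\,d\hmuno\to\int f_k\,d\mu$. On the complement of $\bigcup_k N_k \cup N_0$, where $N_0$ is the null set from the moment step, both conditions hold. This yields a), and I expect this countable-class reduction to be the main technical point.

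For part b), the aim is to upgrade almost sure convergence to $L^2(\P)$ convergence of $W_2(\hmun,\mu)$ via the generalised dominated convergence theorem of Appendix \ref{App:Generalised_dct}. Using the suboptimal coupling $\hmun\otimes\mu$, or the triangle inequality through $\delta_0$, gives
\[
W_2^2(\hmuno,\mu)\le 2\cM_2(\hmuno)+2\cM_2(\mu)=:g_n(\om).
\]
Then $g_n\to 4\cM_2(\mu)$ almost surely by the moment step, and $\E[g_n]=4\cM_2(\mu)$ for every $n$. The generalised DCT therefore gives $\E[W_2^2(\hmun,\mu)]\to0$, which is exactly $d(\hmun,\mu)\to0$.

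One could alternatively argue by uniform integrability of $\frac1n\sum\|X_i\|^2$, which holds because it is a reverse martingale, but the generalised DCT route matches the paper's framework.
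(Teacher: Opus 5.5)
Your proposal is correct and follows the paper's route. The paper proves a) through Theorem \ref{thm:weak_wass}, combining a countable convergence-determining class (giving almost sure weak convergence) with a law of large numbers for $\cM_2(\hmu_n(\omega))$. It proves b) with the generalised dominated convergence theorem and exactly your dominating sequence $2\cM_2(\hmu_n)+2\cM_2(\mu)$; see the ergodic analogue in Appendix \ref{App:ErgodicLLN}, which refers back to this argument. You also rightly check that $g_n\to 4\cM_2(\mu)$ almost surely, which the proof of the generalised DCT needs even though the lemma's statement omits it.
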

A proof follows after making use of the topological equivalence in Theorem \ref{thm:weak_wass} for a), and the generalised DCT to lift to b). Given the convergence result, one considers if stronger assumptions deliver rates. These are provided for general $p\geq 1$ by Fournier and Guillin in Theorem 1 of \cite{fournier2015rate} as below\footnote{The authors additionally demonstrate the results in the case $p<1$ but this does not give rise to a Wasserstein distance.}, which asks only that $\mu$ has some $q^{\text{th}}$ moment for $q>p$.
\begin{theorem}[\cite{fournier2015rate} Theorem 1]\label{thm:FG}
    Let $p\geq1$ and let $\mu\in\cP_q(\R^d)$ for some $q>p$. Then there exists $C= C(p,d,q)$ such that for all $N\geq1$
    \begin{equation}
        \E\l[W_p^p(\hmu_n,\mu)\r] \leq
        C \cM_q^{p/q}(\mu)\l\{\begin{array}{ll}
        n^{-\frac{1}{2}} +n^{-(q-p)/q}& \!\!\!\hbox{if $p>\frac{d}{2}$ and $q\ne 2p$},  \\[+3pt]
        n^{-\frac{1}{2}} \log(n)+n^{-(q-p)/q} &\!\!\! \hbox{if $p=\frac{d}{2}$ and $q\ne 2p$}, \\[+3pt]
        n^{-p/d}+n^{-(q-p)/q} &\!\!\!\hbox{if $p\in (0,\frac{d}{2})$ and $q\ne d/(d-p)$}.
        \end{array}\r.
    \end{equation}
\end{theorem}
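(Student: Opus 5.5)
This is the Fournier–Guillin bound for the empirical measure of $n$ i.i.d. draws. My plan is a multiscale dyadic-partition argument that reduces $W_p^p$ to a weighted sum of total-variation discrepancies at every scale, followed by a separate estimate at each scale.

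\textbf{Normalisation.} By scaling, $W_p^p$ and $\cM_q^{p/q}$ scale the same way under $x\mapsto \lambda x$. I may therefore assume $\cM_q(\mu)=1$ and recover the factor $\cM_q^{p/q}(\mu)$ at the end.

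\textbf{Deterministic coupling bound.} Partition $\R^d$ into shells: $B_0=(-1,1]^d$ and $B_k=(-2^k,2^k]^d\setminus(-2^{k-1},2^{k-1}]^d$ for $k\ge1$. Let $\cP_\ell$ be the natural partition of $(-1,1]^d$ into $2^{d\ell}$ dyadic cubes of side $2^{1-\ell}$. By constructing a coupling that transports mass hierarchically (first between shells, then between dyadic sub-cubes inside each rescaled shell), I would prove that for any $\mu,\nu\in\cP_p(\R^d)$
\begin{equation*}
W_p^p(\mu,\nu)\le C\sum_{k\ge0}2^{pk}\sum_{\ell\ge0}2^{-p\ell}\sum_{F\in\cP_\ell}\big|\mu(2^kF\cap B_k)-\nu(2^kF\cap B_k)\big|.
\end{equation*}
The idea is that mass mismatched at level $\ell$ inside shell $k$ moves a distance of order $2^{k-\ell}$. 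This requires a careful, though elementary, recursive construction, and the constant depends only on $p$ and $d$.

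\textbf{Estimating each term.} Take $\nu=\hmu_n$. For any set $A$, $n\hmu_n(A)\sim\mathrm{Bin}(n,\mu(A))$, so
\begin{equation*}
\E|\hmu_n(A)-\mu(A)|\le\min\big(2\mu(A),\sqrt{\mu(A)/n}\big).
\end{equation*}
Summing over the $2^{d\ell}$ cubes of level $\ell$ in shell $k$ and applying Cauchy–Schwarz gives
\begin{equation*}
\sum_F \E|\cdots|\le\min\big(2\mu(B_k),\,2^{d\ell/2}\sqrt{\mu(B_k)/n}\big).
\end{equation*}
Markov's inequality gives $\mu(B_k)\le 2^{-q(k-1)}$.

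\textbf{Summation (the main obstacle).} For each $k$, split the sum over $\ell$ at the crossover where $2^{d\ell/2}\sqrt{\mu(B_k)/n}\approx\mu(B_k)$.
\begin{itemize}
\item When $p>d/2$, the geometric sum $\sum_\ell 2^{(d/2-p)\ell}$ converges. The shell-$k$ contribution is then $\lesssim 2^{pk}\min(\mu(B_k),\sqrt{\mu(B_k)/n})$.
\item When $p=d/2$, the truncation at the crossover produces the $\log n$ factor.
\item When $p<d/2$, the sum is dominated by the crossover level and yields $2^{pk}\mu(B_k)(n\mu(B_k))^{-p/d}$, giving the $n^{-p/d}$ rate.
\end{itemize}
Finally, sum over $k$ using $\mu(B_k)\le 2^{-q(k-1)}$ together with the minimum with $\mu(B_k)$. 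The shells with $2^{qk}\gtrsim n$ contribute the tail term $n^{-(q-p)/q}$, and the others contribute the main rate.

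The exclusions $q\neq 2p$ and $q\neq d/(d-p)$ are exactly the cases where the two geometric series in $k$ have ratio one, which would create an extra logarithm. I expect this bookkeeping to be the delicate part, along with proving the deterministic coupling bound with a uniform constant.
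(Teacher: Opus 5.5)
Your outline is the Fournier--Guillin argument itself; the paper gives no proof of this statement and only cites it. The steps match: a multiscale shell/dyadic-cube bound on $W_p^p$, the binomial estimate $\E|\hmu_n(A)-\mu(A)|\le\min\{2\mu(A),\sqrt{\mu(A)/n}\}$, Cauchy--Schwarz over the $2^{d\ell}$ cubes, then summation over $\ell$ and over shells. Your unnormalised coupling bound is also fine. The shell-normalised version is the easy one to prove: match shell masses, then couple the rescaled normalised restrictions $\mu|_{B_k}/\mu(B_k)$ and $\nu|_{B_k}/\nu(B_k)$ dyadically. The inequality $(\mu(B)\wedge\nu(B))\sum_F\bigl|\mu(A_F)/\mu(B)-\nu(A_F)/\nu(B)\bigr|\le\sum_F|\mu(A_F)-\nu(A_F)|+|\mu(B)-\nu(B)|$ then converts it into yours.

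\textbf{The error is in your last paragraph.} For $p<d/2$, after inserting $\mu(B_k)\lesssim 2^{-qk}$ the shell sum is $\sum_k 2^{pk}\min\{2^{-qk},\,n^{-p/d}2^{-qk(d-p)/d}\}$. On $\{2^{qk}\le n\}$ its ratio is $2^{p-q(d-p)/d}$, which equals one at $q=pd/(d-p)$. That is exactly where $n^{-p/d}=n^{-(q-p)/q}$, and it is not $q=d/(d-p)$ unless $p=1$. For $p=2$, $d\ge5$ the printed exclusion is even vacuous, since $d/(d-2)<2<q$.

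This misprint is not cosmetic: for $p\neq1$ the stated bound is false at $q=pd/(d-p)$. Take $\mu=\sum_k m_k\,\mathrm{Unif}(B_k)$ with $m_k\asymp 2^{-qk}k^{-1-\epsilon}$ and $0<\epsilon<p/(d-p)$, so that $\cM_q(\mu)<\infty$. Every coupling gives $W_p^p(\hmu_n,\mu)\ge\int\min_i|x-X_i|^p\,\mu(dx)$. Using this on the bulk of each shell with $nm_k\gtrsim1$ yields
\[
\E\bigl[W_p^p(\hmu_n,\mu)\bigr]\gtrsim\sum_{k:\,nm_k\gtrsim1}2^{pk}m_k(nm_k)^{-p/d}\asymp n^{-p/d}\sum_{k\lesssim\log n}k^{-(1+\epsilon)(d-p)/d}\asymp n^{-p/d}(\log n)^{p/d-\epsilon(d-p)/d},
\]
whereas the claimed right-hand side is $2C\cM_q^{p/q}(\mu)\,n^{-p/d}$. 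So your argument proves, and you should state, the third case with the exclusion $q\neq pd/(d-p)$.

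\textbf{A smaller point for $p=d/2$.} In shell $k$ the number of levels below the crossover is of order $\log(2+n\mu(B_k))$, and you must keep it in that form rather than bounding it by $\log n$. With $\log n$ uniformly in $k$, and $q<2p$, the shells near the crossover $2^{qk}\approx n/\log^2 n$ contribute $n^{-(q-p)/q}(\log n)^{2(q-p)/q}$. That is not $O(n^{-1/2}\log n+n^{-(q-p)/q})$. With $\log(2+n\mu(B_k))$, write $2^{qK}=n$ and $j=K-k$; those shells then contribute $n^{-(q-p)/q}\sum_{j\ge0}2^{-(p-q/2)j}(1+j)\lesssim n^{-(q-p)/q}$, as required.
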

The authors show that the rates are indeed sharp, providing examples where the bounds are achieved explicitly. In the $\WL$ framework, the bounds specialise.
\begin{equation}\label{eq:FG_rate}
    d^2(\hmu_n,\mu) \leq C(q,d) \l(1 + \cM_q(\mu)\r)r(n,d),
\end{equation}
for 
\begin{equation}\label{eq:r(n,d)}
    r(n,d) =
        \l\{\begin{array}{ll}
        n^{-\frac{1}{2}}& \!\!\!\hbox{if $d\leq 3$},  \\[+3pt]
        n^{-\frac{1}{2}} \log(1+n) &\!\!\! \hbox{if $d=4$}, \\[+3pt]
        n^{-\frac{2}{d}} &\!\!\!\hbox{if $d\geq5$}.
        \end{array}\r.
\end{equation}
We will denote the rate in \eqref{eq:FG_rate} above by $\text{FG}(n,d,q)$.
The value $d=4$ provides a phase transition above which the high-dimensional rate slows.\newline

Using the bounds above, we can establish an elementary confidence ball for the empirical measure using Markov's inequality, recalling from definition \ref{def:rm_space} that the $W_2$ distance between two random probability measures is measurable and non-negative, hence Markov's inequality is well defined.
\begin{proposition}
    Let $\mu$ satisfy the Fournier-Guillin conditions of Theorem \ref{thm:FG} above. Then from Markov's inequality, we have the following concentration bound
    \begin{equation}\label{FG_Markov}
        \P(W_2(\hmun, \mu) > \varepsilon) \leq \frac{d^2(\hmun,\mu)}{\varepsilon^2} \leq \frac{C(q,d)(1+\cM_q(\mu))}{\varepsilon^2}r(n,d).
    \end{equation}
\end{proposition}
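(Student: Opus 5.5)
The bound follows from Markov's inequality applied to the squared Wasserstein distance, combined with the specialised Fournier--Guillin rate \eqref{eq:FG_rate}.

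First, measurability. The map $\om\mapsto W_2(\hmuno,\mu)$ is a non-negative real random variable. Here $\mu$ is viewed as the constant random measure $\mu(\om)=\mu$, and both $\hmun$ and $\mu$ lie in $\WL$, so this follows from the earlier remark (Appendix \ref{App:Wasserstein_rv}). Consequently $\{W_2(\hmun,\mu)>\varepsilon\}\in\cA$ for every $\varepsilon>0$, and the probability in \eqref{FG_Markov} is well defined.

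Next, fix $\varepsilon>0$. Since $t\mapsto t^2$ is increasing on $[0,\infty)$, the events $\{W_2(\hmun,\mu)>\varepsilon\}$ and $\{W_2^2(\hmun,\mu)>\varepsilon^2\}$ coincide. Markov's inequality applied to the non-negative variable $W_2^2(\hmun,\mu)$ then gives
\[
\P\bigl(W_2(\hmun,\mu)>\varepsilon\bigr)\leq \frac{\E_\om\bigl[W_2^2(\hmuno,\mu)\bigr]}{\varepsilon^2}=\frac{d^2(\hmun,\mu)}{\varepsilon^2},
\]
where the equality is Definition \ref{def:W2L2} with $\mu(\om)\equiv\mu$. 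This is the first inequality in \eqref{FG_Markov}.

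For the second inequality, insert the bound \eqref{eq:FG_rate}, $d^2(\hmun,\mu)\le C(q,d)(1+\cM_q(\mu))\,r(n,d)$. This is Theorem \ref{thm:FG} specialised to $p=2$, and it is where the Fournier--Guillin hypotheses enter: i.i.d.\ sampling and $\mu\in\cP_q(\R^d)$ for a suitable $q>2$.

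The only step requiring care is justifying \eqref{eq:FG_rate} from Theorem \ref{thm:FG}, which I take as given from the text preceding the statement. Two points are involved:
\begin{itemize}
\item Replacing $\cM_q^{2/q}(\mu)$ by $1+\cM_q(\mu)$, which holds because $x^{2/q}\le 1+x$ when $q\ge 2$.
\item Absorbing the moment term $n^{-(q-2)/q}$ into $r(n,d)$. This requires $q$ large enough (for example $q>4$ when $d\le 4$, and $q>2d/(d-2)$ when $d\geq 5$) and avoiding the excluded values of $q$.
\end{itemize}
Beyond this, the argument is a direct two-line chaining of Markov's inequality with the rate bound.
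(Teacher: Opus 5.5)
Your proposal is correct and follows the paper's own argument: measurability of $W_2(\hmun,\mu)$ (Appendix~\ref{App:Wasserstein_rv}), then Markov's inequality applied to $W_2^2(\hmun,\mu)$ to obtain $d^2(\hmun,\mu)/\varepsilon^2$, then insertion of the specialised Fournier--Guillin bound \eqref{eq:FG_rate}. Your caveat is also correct and is a restriction the paper leaves implicit: \eqref{eq:FG_rate} does not follow from the bare hypothesis $\mu\in\cP_q(\R^d)$ with $q>2$, because the term $n^{-(q-2)/q}$ is absorbed into $r(n,d)$ only once $q$ is large enough (e.g.\ $q>4$ for $d\le 4$, $q\ge 2d/(d-2)$ for $d\ge 5$).
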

However, under stronger sub-Gaussian conditions on $\mu$, we find a generally stronger McDiarmid concentration bound. \begin{definition}[Sub-Gaussian measure]\label{def:Sub-Gaussian}
    A probability measure $\mu\in\cP(\R^d)$ is $K$-sub-Gaussian if
    \begin{equation}\label{eq:K_Sub-Gaussian}
        \int e^{\langle \theta, x\rangle}\mu(dx) \leq e^{\frac{K^2\|\theta\|^2}{2}},\qquad \forall\theta\in\R^d.
    \end{equation}
    The sub-Gaussian constant $K(\mu)$ is the infimum of all such $K>0$.
\end{definition}
Sub-Gaussian measures are explored in detail in Rigollet and Hütter's lecture notes on high-dimensional statistics \cite{rigollet2023highdimensionalstatistics}. One finds the following concentration bound, as in \cite{Bandeira2015}, theorem 4.7.
\begin{proposition}[\cite{Bandeira2015} Theorem 4.7]  \label{prop:lipschitz_sub-gaussian}
    Let $f:(\R^d)^n\to\R$ be $L$-Lipschitz, and let $X=(X_1,\dots,X_n)$ consist of $n$ independent $K$-sub-Gaussian vectors. Then $f(X)$ satisfies
    \begin{equation}\label{eq:sub-Gaussian_concentration}
        \P(f(X) - \E[f(X)] > t) \leq \exp\l({-\dfrac{t^2}{2K^2L^2}}\r).
    \end{equation}
\end{proposition}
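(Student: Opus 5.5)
The plan is to combine two classical ingredients: a tensorization (transport--entropy) route to Gaussian concentration for the product measure, and the Herbst argument turning such an inequality into a sub-Gaussian tail for Lipschitz functions. Write $\P_X=\mu_1\otimes\dots\otimes\mu_n$ on $(\R^d)^n$, where each $\mu_i$ is $K$-sub-Gaussian in the sense of Definition \ref{def:Sub-Gaussian}, and take $f$ to be $L$-Lipschitz for the Euclidean norm on $(\R^d)^n$. By homogeneity we may assume $L=1$ and $\E[f(X)]=0$.

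The first step is to reduce everything to a bound on the moment generating function,
\begin{equation*}
\E\bigl[e^{\lambda f(X)}\bigr]\le e^{\lambda^2K^2/2},\qquad \lambda\ge 0 .
\end{equation*}
Given this, the Chernoff bound $\P(f(X)>t)\le e^{-\lambda t+\lambda^2K^2/2}$, optimised at $\lambda=t/K^2$, yields \eqref{eq:sub-Gaussian_concentration} with $L=1$. Rescaling $f\mapsto f/L$ then gives the general statement.

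To prove the exponential bound, I would use the Bobkov--G\"otze duality: the MGF bound for all $1$-Lipschitz $f$ is equivalent to the transport inequality
\begin{equation*}
W_1(\nu,\P_X)\le\sqrt{2K^2\KL(\nu\,\|\,\P_X)}\qquad\text{for all }\nu .
\end{equation*}
This reduces the problem to establishing the transport inequality for a single factor $\mu_i$ and then tensorising. Since $W_1\le W_2$, it is more convenient to prove the stronger $T_2$ inequality
\begin{equation*}
W_2^2(\nu,\mu_i)\le 2K^2\,\KL(\nu\,\|\,\mu_i)
\end{equation*}
for each factor. The $T_2$ form tensorises dimension-free (Marton's argument): one conditions sequentially and uses the chain rule for $\KL$ together with the additivity of the squared Euclidean cost. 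This gives $T_2$ for $\P_X$ with the same constant $2K^2$, and hence the required $W_1$ inequality by $W_1\le W_2$.

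The main obstacle is the single-factor step. The sub-Gaussian MGF bound by itself only gives the $W_1$ ($T_1$) inequality for $\mu_i$. That inequality does tensorise, but only with a constant that grows linearly in $n$, which would destroy the dimension-free bound. The $T_2$ inequality, by contrast, generally fails for merely sub-Gaussian measures.

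I would first attempt to handle this by interpreting the $K$-sub-Gaussian hypothesis, as Bandeira's Theorem 4.7 effectively does, as the vectors $X_i$ satisfying a Gaussian-type (log-Sobolev or $T_2$) inequality with constant $K^2$. Examples are Gaussians, or pushforwards of Gaussians under $K$-Lipschitz maps. Under that reading, the Herbst argument via log-Sobolev tensorisation gives the claim directly.

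If one insists on the pure MGF definition, I would instead settle for the bounded-difference/McDiarmid route for bounded coordinates. Failing that, I would accept a constant depending on $n$ and note that the statement as written requires the stronger hypothesis. This is the point to flag when it is applied to $f=W_2(\hmun,\mu)$, which is $n^{-1/2}$-Lipschitz in $(X_1,\dots,X_n)$.
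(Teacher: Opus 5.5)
Your diagnosis is correct, and it is the central issue. With ``$K$-sub-Gaussian'' understood as in Definition~\ref{def:Sub-Gaussian}, which bounds the moment generating function of \emph{linear} functionals only, the statement is false, so no argument can close the gap you identify.

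The paper does not prove the statement either. It only cites Theorem~4.7 of \cite{Bandeira2015}, which is the Gaussian concentration inequality for a vector with i.i.d.\ standard Gaussian coordinates (general $K$ by scaling). The appendix entry labelled as its proof merely checks that $x\mapsto W_2(\frac1n\sum_i\delta_{x_i},\mu)$ is $n^{-1/2}$-Lipschitz and then applies the cited bound.

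Your argument is a correct proof, with exactly the constant $2K^2L^2$, of the version the citation actually supports. That is: the Chernoff step, combined with either Herbst's argument and tensorised log-Sobolev, or Marton's $T_2$ tensorisation followed by $W_1\le W_2$ and Bobkov--G\"otze duality. Be clear that this is a strictly stronger hypothesis, not a reading of Definition~\ref{def:Sub-Gaussian}. A log-Sobolev inequality with constant $K^2$ for a centred measure implies the definition (apply Herbst to $x\mapsto\langle\theta,x\rangle$), but not conversely.

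What your proposal is missing is a proof that the strengthening is necessary. The failure of $T_2$ for the individual factors does not by itself rule out dimension-free concentration of the product. Talagrand's convex-distance inequality, for instance, gives exactly that for convex Lipschitz $f$ and bounded coordinates. So you should exhibit a counterexample.

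\begin{itemize}
\item Take $d=1$ and $X_i$ i.i.d.\ Rademacher. These are $1$-sub-Gaussian because $\cosh\theta\le e^{\theta^2/2}$.
\item Let $A=\{x\in\{-1,1\}^n:\sum_i x_i\le 0\}$ and $f=\mathrm{dist}(\cdot,A)$, which is $1$-Lipschitz.
\item With $S=\sum_i X_i$, every point of $A$ differs from $X$ in at least $S_+/2$ coordinates, each contributing $4$ to the squared distance. Flipping $\lceil S_+/2\rceil$ of them suffices, so $\sqrt{2S_+}\le f(X)\le\sqrt{2S_++4}$.
\item By the central limit theorem and uniform integrability ($\E f(X)^2\le 2\sqrt n+4$), $n^{-1/4}f(X)$ converges in law and in mean to $\sqrt{2Z_+}$ with $Z\sim\cN(0,1)$.
\item Hence $\P\bigl(f(X)-\E f(X)>n^{1/4}\bigr)$ converges to $\P\bigl(\sqrt{2Z_+}>\E\sqrt{2Z_+}+1\bigr)>0$, whereas the claimed bound is $e^{-\sqrt n/2}$.
\end{itemize}

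The same construction with $n=1$ and $X_1$ uniform on $\{-1,1\}^d$ is still $1$-sub-Gaussian, since its moment generating function factorises. It shows that for $d>1$ the definition does not even give $T_1$ with constant $K^2$ for a single factor, contrary to your remark. So the obstruction is not confined to tensorisation.

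Your McDiarmid fallback does not recover the stated bound. For coordinates in a set of diameter $D$ the bounded differences are $LD$, giving only $\exp(-2t^2/(nL^2D^2))$.

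The correct repair is the one you propose. Assume the $X_i$ (so $\mu$ in Proposition~\ref{prop:W2_concentration}) satisfy a log-Sobolev or $T_2$ inequality with constant $K^2$, e.g.\ $\mu$ Gaussian or $\mu\propto e^{-V}$ with $\nabla^2V\succeq K^{-2}I$. Your argument then applies verbatim, with $L=n^{-1/2}$ in that application.
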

This allows us to formulate the following result.
\begin{proposition}\label{prop:W2_concentration} 
    Let $\mu$ satisfy the Fournier-Guillin conditions of Theorem \ref{thm:FG} above, and be $K$-sub-Gaussian. Then for $\delta\in(0,1)$, 
    \begin{equation}\label{eq:sub-gaussian_prop_concentration}
        \P\big(W_2(\hmun,\mu)>\varepsilon_n(\delta)\big) \leq \delta \qquad \text{where}\quad \varepsilon_n(\delta) \deq
        \text{FG} (n,d,q)^{\frac{1}{2}} + K\sqrt{\frac{2\log({1/\delta})}{n}}.
    \end{equation}
\end{proposition}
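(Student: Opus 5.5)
We view $W_2(\hmun,\mu)$ as the random variable $f(X)$ with
\[
f(x_1,\dots,x_n)\deq W_2\Big(\tfrac1n\textstyle\sum_{i=1}^n\delta_{x_i},\mu\Big),
\]
and split the deviation into a mean term and a fluctuation term:
\[
W_2(\hmun,\mu) = \E[W_2(\hmun,\mu)] + \big(W_2(\hmun,\mu)-\E[W_2(\hmun,\mu)]\big).
\]
The mean is handled by Jensen and Theorem \ref{thm:FG}, and the fluctuation by Proposition \ref{prop:lipschitz_sub-gaussian}.

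\textbf{Mean term.} By Jensen's inequality, $\E[W_2(\hmun,\mu)]\le \E[W_2^2(\hmun,\mu)]^{1/2} = d(\hmun,\mu)$. The specialised Fournier--Guillin bound \eqref{eq:FG_rate} gives $d^2(\hmun,\mu)\le \text{FG}(n,d,q)$. Hence $\E[f(X)]\le \text{FG}(n,d,q)^{1/2}$.

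\textbf{Lipschitz constant of $f$.} We equip $(\R^d)^n$ with the Euclidean norm $\|x-y\|=(\sum_i\|x_i-y_i\|^2)^{1/2}$. By the triangle inequality for $W_2$,
\[
|f(x)-f(y)|\le W_2\big(\tfrac1n\textstyle\sum_i\delta_{x_i},\tfrac1n\sum_i\delta_{y_i}\big).
\]
The coupling that pairs $x_i$ with $y_i$ is admissible, so
\[
W_2^2\le \tfrac1n\textstyle\sum_i\|x_i-y_i\|^2 .
\]
Therefore $|f(x)-f(y)|\le n^{-1/2}\|x-y\|$, i.e.\ $f$ is $L$-Lipschitz with $L=n^{-1/2}$.

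\textbf{Concentration.} The i.i.d.\ draws $X_i\sim\mu$ are independent $K$-sub-Gaussian vectors, so Proposition \ref{prop:lipschitz_sub-gaussian} gives
\[
\P\big(f(X)-\E f(X)>t\big)\le \exp\big(-nt^2/(2K^2)\big).
\]
Choosing $t=K\sqrt{2\log(1/\delta)/n}$ makes the right-hand side equal to $\delta$. Combining with the mean bound, the event $\{W_2(\hmun,\mu)>\varepsilon_n(\delta)\}$ is contained in $\{f(X)-\E f(X)>t\}$, which has probability at most $\delta$. This is \eqref{eq:sub-gaussian_prop_concentration}.

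\textbf{Main obstacle.} The Lipschitz step is the delicate one: the $n^{-1/2}$ constant depends on using the Euclidean norm on the product space, and this must match the norm under which Proposition \ref{prop:lipschitz_sub-gaussian} is stated. Strictly, that concentration result applies cleanly to (possibly non-centred) sub-Gaussian vectors only up to constants, and $\mu$ need not be centred. I would take the proposition as stated in the paper and use it as given.
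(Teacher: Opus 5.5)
Your argument follows the paper's proof step for step: Jensen plus the specialised Fournier--Guillin bound \eqref{eq:FG_rate} for $\E[W_2(\hmun,\mu)]$, a Lipschitz constant $n^{-1/2}$ for $f$, Proposition \ref{prop:lipschitz_sub-gaussian} for the fluctuation, and the same choice of $t$. In the one step where you differ, you are more careful than the paper. The paper changes one coordinate at a time and telescopes, which only gives $|f(x)-f(y)|\le n^{-1/2}\sum_i\|x_i-y_i\|$. That is constant $n^{-1/2}$ for the $\ell^1$-sum norm, but only constant $1$ for the Euclidean norm on $(\R^d)^n$. Your simultaneous coupling $x_i\leftrightarrow y_i$ gives the Euclidean constant $n^{-1/2}$, which is what a Lipschitz concentration inequality needs.

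Your diagnosis of the remaining obstacle is off, and the step you take on trust is where the argument genuinely breaks; the paper's proof breaks there too. Centring is not the issue. The definition \eqref{eq:K_Sub-Gaussian} already forces $\mu$ to have mean $m=0$, since Jensen gives $\langle\theta,m\rangle\le K^2\|\theta\|^2/2$ for all $\theta$. The real issue is that Proposition \ref{prop:lipschitz_sub-gaussian} is false for general independent sub-Gaussian vectors, even up to constants. Dimension-free concentration of Euclidean-Lipschitz functions needs a log-Sobolev or Talagrand $T_2$ type inequality, and sub-Gaussian tails do not provide one.

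In fact the statement itself fails. Take $d=1$ and $\mu=\tfrac12(\delta_{-1}+\delta_1)$, which is $1$-sub-Gaussian with all moments finite. Let $p$ be the fraction of the $X_i$ equal to $1$. Then $W_2^2(\hmun,\mu)=4|p-\tfrac12|$ and $\text{FG}(n,1,q)^{1/2}=A\,n^{-1/4}$ for a constant $A$. So for fixed $\delta$ the CLT gives
\[
\P\big(W_2(\hmun,\mu)>\varepsilon_n(\delta)\big)=\P\big(\sqrt n\,|p-\tfrac12|>\tfrac{A^2}{4}+o(1)\big)\longrightarrow\P\big(|Z|>\tfrac{A^2}{2}\big),\qquad Z\sim\mathcal{N}(0,1).
\]
The limit is a positive number independent of $\delta$, so it exceeds $\delta$ once $\delta$ is small. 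Hence no argument can work under mere sub-Gaussianity.

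The fix is to strengthen the hypothesis. For example, assume $\mu$ is Gaussian with covariance $\preceq K^2 I_d$, or more generally satisfies a log-Sobolev or $T_2$ inequality with constant $K^2$ (normalised so that $\mathcal{N}(0,K^2I_d)$ has constant $K^2$). Tensorisation then yields exactly the bound of Proposition \ref{prop:lipschitz_sub-gaussian} for the Euclidean norm on $(\R^d)^n$, and your proof goes through verbatim.
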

    This follows from Proposition \ref{prop:lipschitz_sub-gaussian} upon verifying $f:(\R^d)^n\to\R$,
    \begin{equation*}
        f(x_1,\dots,x_n) = W_2\l(\frac{1}{n}\sum_{i=1}^n\delta_{x_i},\mu\r)
    \end{equation*}
    is Lipschitz which is shown in Appendix \ref{App:Wass_lipschitz_empirical}.The radius $\varepsilon_n(\delta)$ splits into a bias term, the Fournier Guillin rate, and a concentration term, in the sub-Gaussian constant $K$.

In addition to these concentration bounds, del-Barrio and Loubes \cite{Barrio2017CentralLT} also demonstrate a Central Limit Theorem result concerning the empirical distances, which govern asymptotic concentration bounds under slightly stronger conditions of absolute continuity to leverage the Kantorovich potential.
\begin{theorem}[\cite{Barrio2017CentralLT} Theorem 4.1]\label{thm:dBL}
    Let $\mu,\nu\in\Pac$ such that $\cM_{4+\delta}(\mu), \cM_{4+\delta}(\nu)<\infty$ for some $\delta>0$. Additionally, assume that the optimal Kantorovich potential $\varphi\in C^2(\R^d)$ is strictly convex with $\grad^2\varphi$ bounded away from zero. Let $\phi^c$ be the convex-conjugate potential so that $(\varphi,\varphi^c)$ are optimal for the dual Kantorovich problem as in \cite{Beiglbock2009GeneralDT}. Denote
    \begin{equation}\label{eq:Kantorovich_variance}
        \sigma^2(\mu,\nu) \deq \int_{\R^d}(\|x\|^2 - 2\varphi(x))\mu(dx) - \l(\int_{\R^d} \|x\|^2 - 2\varphi(x)\mu(dx)\r)^2.
    \end{equation}
    Then assuming $n,m\to\infty$ such that $n/(n+m)\to\lambda\in(0,1)$, with $\hmu_n, \hnu_m\in\WL$ as above,
    \begin{equation}\label{eq:var_convergence_clt}
        \frac{nm}{n+m}\Var \l(W_2^2(\hmu_n,\hnu_m)\r) \to (1-\lambda)\sigma^2(\mu,\nu) + \lambda\sigma^2(\nu,\mu)
    \end{equation}
    and
    \begin{equation}\label{eq:clt_convergence}
        \sqrt{\frac{nm}{n+m}}\l(
            W_2^2(\hmu_n,\hnu_m) - W_2^2(\mu,\nu)\r) \wto \cN \l(0,(1-\lambda)\sigma^2(\mu,\nu) + \lambda\sigma^2(\nu,\mu)\r).
    \end{equation}
\end{theorem}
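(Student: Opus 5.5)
This is a cited result (Theorem 4.1 of del Barrio and Loubes), and I would prove it by following their strategy: linearise the squared Wasserstein distance around the population optimal potentials. After that, the fluctuations reduce to a sum of two independent empirical means plus a remainder of order $o_P((nm/(n+m))^{-1/2})$.

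\emph{Step 1: variance bound via Efron--Stein.} Write $W_2^2(\mu,\nu)=\int\|x\|^2d\mu+\int\|y\|^2d\nu-2\sup\{\int\varphi\,d\mu+\int\varphi^c d\nu\}$, where the supremum is the semi-dual Kantorovich problem. For the empirical pair, let $(\hat\varphi_{n,m},\hat\varphi^c_{n,m})$ be optimal. Replace one sample $X_i$ by an independent copy $X_i'$. Comparing the two dual problems, each evaluated at the other's optimal potential, bounds the change in $W_2^2(\hmu_n,\hnu_m)$ by $\tfrac1n$ times the difference of $\|x\|^2-2\hat\varphi(x)$ at $X_i$ and $X_i'$; the analogous bound holds for the $Y_j$. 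Efron--Stein then gives
\[
\Var\big(W_2^2(\hmu_n,\hnu_m)\big)\le \tfrac1n\E\big[(g_{n,m}(X)-g_{n,m}(X'))^2\big]+\tfrac1m\E\big[(h_{n,m}(Y)-h_{n,m}(Y'))^2\big],
\]
with $g_{n,m}=\|\cdot\|^2-2\hat\varphi_{n,m}$ and $h_{n,m}$ the conjugate analogue.

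\emph{Step 2: consistency of the potentials.} Show that $\hat\varphi_{n,m}\to\varphi$ locally uniformly, after normalisation, almost surely. This uses $W_2(\hmu_n,\mu)\to0$ and $W_2(\hnu_m,\nu)\to 0$ almost surely (Proposition \ref{prop:empirical_convergence}a)), stability of optimal plans under $W_2$ convergence, and uniqueness of the Brenier potential for $\mu\in\Pac$ (Theorem \ref{thm:brenier}). Strict convexity with $\grad^2\varphi$ bounded below makes $\grad\varphi$ injective, so the conjugate potential is also unique and converges. Quadratic growth bounds on the potentials, combined with the $4+\delta$ moment assumption, give uniform integrability of $g_{n,m}(X)^2$. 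Hence $\E[(g_{n,m}(X)-g_{n,m}(X'))^2]\to 2\sigma^2(\mu,\nu)$, and similarly for the $\nu$ term. Multiplying by $nm/(n+m)$, and using $m/(n+m)\to 1-\lambda$ and $n/(n+m)\to\lambda$, yields the limit in \eqref{eq:var_convergence_clt} as an upper bound. For the matching lower bound, I would compare with the linear statistic $L_{n,m}=\tfrac1n\sum_i(\|X_i\|^2-2\varphi(X_i))+\tfrac1m\sum_j(\|Y_j\|^2-2\varphi^c(Y_j))$. Applying the same Efron--Stein argument to the difference shows $\tfrac{nm}{n+m}\Var\big(W_2^2(\hmu_n,\hnu_m)-L_{n,m}\big)\to0$, and $\Var(L_{n,m})$ is exactly the target quantity.

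\emph{Step 3: CLT.} The classical Lindeberg CLT applies to $\sqrt{nm/(n+m)}\,(L_{n,m}-\E L_{n,m})$, using finite fourth moments. By Step 2 the remainder $W_2^2(\hmu_n,\hnu_m)-L_{n,m}$ is negligible in $L^2$ at this scale, after centring at $\E W_2^2(\hmu_n,\hnu_m)$. Slutsky's lemma then gives \eqref{eq:clt_convergence} centred at the mean. The main obstacle is that the statement centres at $W_2^2(\mu,\nu)$ rather than at $\E W_2^2(\hmu_n,\hnu_m)$. The bias $\E W_2^2(\hmu_n,\hnu_m)-W_2^2(\mu,\nu)$ is generally of Fournier--Guillin order (Theorem \ref{thm:FG}), which is not $o(n^{-1/2})$ when $d\ge 5$. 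I would therefore first prove the mean-centred version, which is what del Barrio and Loubes actually establish. Replacing the centring by $W_2^2(\mu,\nu)$ requires an extra argument that the bias is negligible, and I would rely on the cited theorem for that step.
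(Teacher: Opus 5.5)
The paper does not prove this statement; it imports it from del Barrio--Loubes. Your Steps 1--3 reproduce their argument: linearise around the population potentials, control the remainder $W_2^2(\hmu_n,\hnu_m)-L_{n,m}$ with Efron--Stein, and apply Lindeberg to $L_{n,m}$. That part is sound for \eqref{eq:var_convergence_clt} and for the CLT centred at $\E W_2^2(\hmu_n,\hnu_m)$, provided $\sigma^2(\mu,\nu)$ is read as $\Var_\mu(\|X\|^2-2\varphi(X))$ (the displayed formula is missing a square on its first integrand).

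\textbf{The gap.} You do not prove \eqref{eq:clt_convergence} as written. You hand the recentring from $\E W_2^2(\hmu_n,\hnu_m)$ to $W_2^2(\mu,\nu)$ to a reference which, as you say yourself, only proves the mean-centred version. No argument can fill this step, because the claim is false under the stated hypotheses when $d\ge5$.
\begin{itemize}
\item Take $\mu$ uniform on $[0,1]^d$, $\nu=\mu(\cdot-a)$ with $a\neq0$, and $n=m$. Then $\varphi(x)=\tfrac12\|x\|^2+\langle a,x\rangle$ satisfies every hypothesis.
\item Write $Y_j=Z_j+a$ with $Z_j\sim\mu$ i.i.d.\ and $\hat\zeta_n=\frac1n\sum_j\delta_{Z_j}$. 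Expanding $\|x-z-a\|^2$ inside any coupling gives the exact identity $W_2^2(\hmu_n,\hnu_n)-W_2^2(\mu,\nu)=W_2^2(\hmu_n,\hat\zeta_n)-2\langle a,\bar X_n-\bar Z_n\rangle$.
\item The linear term is $O_P(n^{-1/2})$. However, $W_2^2(\hmu_n,\hat\zeta_n)\ge\frac1n\sum_i\min_j\|X_i-Z_j\|^2\gtrsim n^{-2/d}$ with high probability.
\item Hence $\sqrt{n/2}\,\bigl(W_2^2(\hmu_n,\hnu_n)-W_2^2(\mu,\nu)\bigr)\to+\infty$ in probability, so there is no Gaussian limit.
\end{itemize}
The correct statement is the mean-centred one. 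Centring at $W_2^2(\mu,\nu)$ needs the bias to be $o\bigl((nm/(n+m))^{-1/2}\bigr)$, which requires extra regularity and low dimension. You should flag \eqref{eq:clt_convergence} as false as stated, rather than attribute it to the citation.

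\textbf{Smaller points.}
\begin{itemize}
\item Your direct Efron--Stein bound in Steps 1--2 does not give the target as an upper bound. Since $\E[(g(X)-g(X'))^2]=2\Var g(X)$, it gives $2(1-\lambda)\sigma^2(\mu,\nu)+2\lambda\sigma^2(\nu,\mu)$. The sharp constant needs the one-sided form $\Var(T)\le\sum_i\E[(T-T^{(i)})_+^2]$ with $(g_{n,m}(X_i)-g_{n,m}(X_i'))_+$. The step is redundant anyway: $\frac{nm}{n+m}\Var\bigl(W_2^2(\hmu_n,\hnu_m)-L_{n,m}\bigr)\to0$ already gives both directions via $|\sqrt{\Var T}-\sqrt{\Var L}|\le\sqrt{\Var(T-L)}$, which is how del Barrio--Loubes argue.
\item Theorem \ref{thm:brenier} gives only existence, and $\nabla\varphi$ is unique only $\mu$-a.e. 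Step 2 actually needs uniqueness of the potentials up to a single additive constant on the supports. Without it, $\sigma^2$ is not even well defined and the normalised empirical potentials have no unique limit. That uniqueness requires a connectedness condition on the supports, which is the key new ingredient in del Barrio--Loubes. It must be assumed, or read into ``the'' optimal potential, not deduced from Brenier.
\item With convex $\varphi$ and $\varphi^c=\varphi^*$, the semi-dual in Step 1 is an infimum, not a supremum.
\end{itemize}
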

In the case $n=m$, the limiting variance $\frac{1}{2}\sigma^2(\mu,\nu) + \frac{1}{2}\sigma^2(\nu,\mu)$ is shown in Proposition 2 of Goldfeld et al. \cite{GoldfeldKato} to match the semi-parametric efficiency bound for estimating $W_2^2(\mu,\nu)$ from i.i.d. data, making the empirical plug-in estimator $W_2^2(\hmu_n,\hnu_m)$ for the distance $W_2^2(\mu,\nu)$ asymptotically efficient.

\subsection{Distances for non i.i.d. sampled empirical measures under strong mixing}

We demonstrate that weakening the i.i.d. assumption by allowing for controllable correlation between nearby samples still permits convergence for the empirical measure as in the i.i.d. case. We determine explicit rates for the convergence when the correlations are governed by common mixing conditions.

We will assume that we sample a stationary and ergodic sequence $(X_i)_{i\geq1}$ from the population measure $\mu$. Recall that $(X_n)_{n\geq1}$ is strictly stationary if for all $k\geq1$, $n\geq 1$, $$X_{n+1,\dots,X_{n+k}}\overset{d}{=}(X_1,\dots,X_k)$$ and ergodic if the dynamical system $(\Omega,\cA,\P,T)$ induced by the shift operator $T:(X_1,X_2,\dots)\mapsto (X_2,X_3,\dots)$ if every $T$-invariant event $A\in\cA$ has $\P(A)\in\{0,1\}$. We are ready to state our first result under these conditions.
\begin{proposition}[$\WL$ Law of large numbers under ergodicity]\label{prop:mixing_convergence}
    Let $(X_n)_{n\geq 1}$ be an ergodic, stationary sequence with common marginals $\mu\in\cP_2(\R^d)$. Then the (random) empirical measure $\hmuno = \frac{1}{n}\sum_{i=1}^n\delta_{X_i(\om)}$ satisfies the convergences in Proposition \ref{prop:empirical_convergence} i.e.
    \begin{enumerate}[label=\alph*)]
        \item $W_2(\hmuno,\mu)\to 0$ for $\P$-almost every $\om$,
        \item $d(\hmu_n,\mu)\to 0$ where $\mu(\om) = \mu$ for all $\om\in\Omega$.
    \end{enumerate}
\end{proposition}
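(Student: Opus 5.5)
The plan is to follow the same two-step route as for Proposition \ref{prop:empirical_convergence}. First we prove almost sure Wasserstein convergence via Theorem \ref{thm:weak_wass}, with Birkhoff's ergodic theorem replacing the strong law of large numbers. Then we lift to $\WL$ with the generalised dominated convergence theorem of Appendix \ref{App:Generalised_dct}.

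For a), fix a countable convergence-determining class $\{f_k\}_{k\geq1}\subset C_b(\R^d)$, for instance bounded Lipschitz functions with rational parameters. For each $k$, stationarity and ergodicity give, by Birkhoff's ergodic theorem,
\[
\frac1n\sum_{i=1}^n f_k(X_i(\om))\to\int f_k\,d\mu
\]
outside a null set $N_k$. Applying Birkhoff once more to the integrable function $\|x\|^2$, which is integrable since $\mu\in\Pc$, gives $\cM_2(\hmuno)=\frac1n\sum_i\|X_i(\om)\|^2\to\cM_2(\mu)$ outside a null set $N_0$. On the complement of $\bigcup_{k\geq0}N_k$ we then have $\hmuno\wto\mu$ together with convergence of second moments. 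Theorem \ref{thm:weak_wass} therefore yields $W_2(\hmuno,\mu)\to0$.

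The one point needing care is that the $f_k$ genuinely determine weak convergence. This holds because $\R^d$ is separable, so one can take a countable family of bounded Lipschitz functions that is dense in the bounded-Lipschitz metric on compacts; tightness comes for free from the moment convergence.

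For b), we dominate $W_2^2(\hmuno,\mu)$ by the coupling $\hmuno\otimes\mu$ and use $(a+b)^2\leq 2a^2+2b^2$:
\[
W_2^2(\hmuno,\mu)\leq \frac2n\sum_{i=1}^n\|X_i(\om)\|^2+2\cM_2(\mu)\deq g_n(\om).
\]
We have $g_n\to g\deq4\cM_2(\mu)$ almost surely, again by Birkhoff. By stationarity $\E[g_n]=4\cM_2(\mu)=\E[g]$ for every $n$. The generalised DCT (dominating functions $g_n\to g$ a.s. with $\E g_n\to\E g$) combined with a) then gives
\[
d^2(\hmun,\mu)=\E\big[W_2^2(\hmun,\mu)\big]\to0 .
\]

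The main obstacle is the second part of a): Birkhoff gives convergence for each fixed test function only off a null set. One therefore has to reduce to countably many test functions and avoid any uniformity over $C_b$. This is exactly what the countable convergence-determining class handles. The domination in b) is then routine, because $\E g_n$ is constant in $n$ and no uniform integrability beyond stationarity is required.
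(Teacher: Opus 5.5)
Your proposal is correct and follows the paper's proof essentially step for step. A countable convergence-determining class together with Birkhoff's theorem applied to $\|x\|^2$ gives almost sure weak and second-moment convergence, hence $W_2$ convergence via Theorem \ref{thm:weak_wass}; then the dominating sequence $2\cM_2(\hmuno)+2\cM_2(\mu)$, which has constant expectation by stationarity and converges almost surely by Birkhoff, lets the generalised DCT lift this to $d(\hmun,\mu)\to0$. You also state explicitly that $g_n\to g$ almost surely, which the generalised DCT genuinely requires and which the paper leaves implicit.
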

A proof is given in Appendix \ref{App:ErgodicLLN}, similarly to Proposition \ref{prop:empirical_convergence} which uses the equivalence of Wasserstein convergence with weak convergence + convergence of the second moment on the paths, after applying Birkhoff's Ergodic Theorem \cite{birkhoff1931proof}. We then lift to the $L^2$ level $\WL$ using the generalised DCT.

The ergodicity assumption is fairly general, and so we cannot discuss convergence rates for the above without imposing quantitative constraints on the sequence of draws. A common, intuitive choice for these are given below, quantifying the decay in correlation as samples become more spread apart.

\begin{definition}[Mixing coefficients]
    For the $\R^d$-valued process $(X_n)_{n\geq1}$, define the $\sigma$-algebras $\cF_j^k\deq \sigma(X_j,\dots,X_k)$ and $\cF_k^\infty\deq \sigma(X_k,X_{k+1},\dots)$. The strong mixing coefficients are
    \begin{equation}\label{eq:mixing}
        \alpha(k)\deq \sup_{n\geq 1} \sup_{A\in\cF_1^n\\B\in\cF_{n+k}^\infty}\big| \P(A\cap B) - \P(A)\P(B) \big|, \qquad k\geq1.
    \end{equation}
    A sequence is said to be strong $(\alpha)$-mixing if $\alpha(k)\to0$ as $k\to\infty$. 
\end{definition}
It is shown in \cite{ibragimov1962} that any stationary $\alpha$-mixing sequence is ergodic. We define, as in Rio \cite{rio1993covariance},
\begin{equation}
    C_\alpha(n) \deq 1 + 2\sum_{k=1}^{n-1} \alpha(k).
\end{equation}
which quantifies the departure from independence accumulated over the first $n-1$ lags. Common mixing regimes are provided in table \ref{tab:mixing} below.
\bgroup
\def\arraystretch{1.5}
\begin{table}[h!]
    \centering
    \begin{tabular}{c c c}
    Mixing type & $\alpha(k)$ & $O(C_\alpha(n))$\\
    \hline
    Geometric $(c>0)$ & $C_0e^{-ck}$ & $O(1)$\\
    Polynomial $(\theta>1)$ & $C_0k^{-\theta}$ & $O(1)$\\
    Polynomial $(\theta=1)$ & $C_0k^{-1}$ & $\log n$\\
    Polynomial $(\theta<1)$ & $C_0k^{-\theta}$ & $n^{1-\theta}$\\
    \end{tabular}
    \caption{Mixing types}
    \label{tab:mixing}
\end{table}
\egroup

We are now placed to formulate the following theorem, generalising the Fournier-Guillin rates of Theorem \ref{thm:FG} to the non-i.i.d. case of strong mixing.
\begin{theorem}\label{thm:mixing}
    Let $\alpha(k)$ be one of the mixing types above in table \ref{tab:mixing}.
    Let $(X_i)_{\geq1}$ be a stationary, $\alpha$-mixing sequence drawn from $\mu\in\cP_q(\R^d)$ for some $q>2$ and $d\geq5$. Then for all $n\geq 1$
    \begin{equation}
        d^2(\hmun,\mu) \leq C_{q,d,\theta}(1+\cM_q(\mu))r_\alpha(n,d,\theta)
    \end{equation}
    where $r_\alpha(n,d)$ is the mixing adjusted Fournier Guillin rate
    \begin{equation}\label{eq:FG_adjusted}
        r_\alpha(n,d) \deq r\l(\dfrac{n}{C_\alpha(n)}\,,\, d\r) = 
        \l(\dfrac{n}{C_\alpha(n)}\r)^{-\frac{2}{d}}
    \end{equation}
\end{theorem}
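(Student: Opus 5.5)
I will follow the proof of Theorem \ref{thm:FG} in \cite{fournier2015rate}, replacing the single place where independence is used (a variance bound for multinomial cell counts) by a covariance inequality for strongly mixing sequences, in the spirit of Rio \cite{rio1993covariance}.

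\emph{Step 1: deterministic reduction.} Fournier and Guillin bound $W_2^2(\hmun,\mu)$ pathwise through nested dyadic partitions. Let $B_0=(-1,1]^d$, $B_k=(-2^k,2^k]^d\setminus(-2^{k-1},2^{k-1}]^d$, and let $\cP_\ell$ be the partition of $(-1,1]^d$ into $2^{d\ell}$ dyadic cubes of side $2^{1-\ell}$. Their Lemma 5 and Lemma 6 give a constant $C_d$ with
\begin{equation*}
W_2^2(\hmun,\mu)\le C_d\sum_{k\ge0}2^{2k}\sum_{\ell\ge0}2^{-2\ell}\sum_{F\in\cP_\ell}\big|\hmun(2^kF\cap B_k)-\mu(2^kF\cap B_k)\big|.
\end{equation*}
This inequality is purely deterministic, so it applies verbatim to every realisation $\hmuno$ in the non-i.i.d. setting.

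\emph{Step 2: the mixing variance bound.} After taking expectations, the only probabilistic input needed is a bound on $\E|\hmun(A)-\mu(A)|$ for Borel sets $A$. Write
\begin{equation*}
\hmun(A)-\mu(A)=\frac1n\sum_{i=1}^n\big(\1_{X_i\in A}-\mu(A)\big).
\end{equation*}
The summands are centred, bounded and stationary. For bounded indicators, the covariance inequality gives $|\mathrm{Cov}(\1_{X_i\in A},\1_{X_j\in A})|\le 4\alpha(|i-j|)$. I will instead aim for the sharper form
\begin{equation*}
|\mathrm{Cov}(\1_{X_i\in A},\1_{X_j\in A})|\le \min\{\mu(A),4\alpha(|i-j|)\}.
\end{equation*}
Summing over pairs, this yields
\begin{equation*}
\Var(\hmun(A))\le \frac{C\,C_\alpha(n)\,\mu(A)}{n}.
\end{equation*}
Keeping the factor $\mu(A)$ is essential, since the Fournier–Guillin argument needs $\E|\hmun(A)-\mu(A)|\le\min\{2\mu(A),\sqrt{\mu(A)/n}\}$. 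The weaker bound $\min\{2\mu(A),\sqrt{C\,C_\alpha(n)/n}\}$ would lose the sum over cells.

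\emph{Main obstacle.} I expect this factor $\mu(A)$ to be the main difficulty. The pointwise bound $\min\{\mu(A),4\alpha(k)\}$ does not sum to $\mu(A)\,C_\alpha(n)$ in general. My first attempt will use Rio's quantile covariance inequality, which bounds the sum of covariances by $\int_0^1\alpha^{-1}(u)\,Q(u)^2\,du$ with $Q$ the quantile function of $|\1_A-\mu(A)|$. Then $Q$ is supported on $[0,\mu(A)]$, and the integral is at most $\mu(A)\,C_\alpha(n)$ after truncating at lag $n$, because $\alpha^{-1}(u)\wedge n\le C_\alpha(n)$ on average. If that fails, I will accept the bound $\min\{\mu(A),4\alpha\}$ and show that, in the regime $d\ge5$ where the $n^{-2/d}$ term dominates, it still gives the effective sample size $n/C_\alpha(n)$.

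\emph{Step 3: conclude.} With
\begin{equation*}
\E|\hmun(A)-\mu(A)|\le\min\Big\{2\mu(A),\sqrt{\mu(A)\,C_\alpha(n)/n}\Big\},
\end{equation*}
the rest of the Fournier–Guillin computation (Cauchy–Schwarz over the $2^{d\ell}$ cells, optimising the cutoff in $\ell$, and using moment tails $\mu(B_k)\le \cM_q(\mu)2^{-q(k-1)}$) is identical with $n$ replaced by $n_{\rm eff}=n/C_\alpha(n)$. For $d\ge5$ and $p=2$ this gives a bound of order
\begin{equation*}
\cM_q^{2/q}(\mu)\Big(n_{\rm eff}^{-2/d}+n_{\rm eff}^{-(q-2)/q}\Big).
\end{equation*}
Using $\cM_q^{2/q}\le1+\cM_q$, and absorbing the second term into the first when $q$ is large relative to $d$ (or otherwise keeping it inside the constant $C_{q,d,\theta}$ as the theorem implicitly does), we obtain $d^2(\hmun,\mu)=\E[W_2^2(\hmun,\mu)]\le C_{q,d,\theta}(1+\cM_q(\mu))\,r_\alpha(n,d)$. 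The dependence on $\theta$ enters through the constants in $C_\alpha(n)$ listed in Table \ref{tab:mixing}.
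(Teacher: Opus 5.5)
Your route differs from the paper's. You keep Fournier--Guillin's deterministic dyadic bound and replace only their i.i.d. variance estimate. The paper instead splits the sample into gapped blocks, couples them to independent copies via Berbee's lemma, and applies Fournier--Guillin to the coupled sample. Your Step 1 is fine.

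\textbf{The gap is Step 2, and neither remedy you propose closes it.} Strong mixing gives, pairwise, $|\mathrm{Cov}(\1_{X_i\in A},\1_{X_j\in A})|\le\min\{\mu(A),\alpha(|i-j|)\}$. Hence $n\Var(\hmun(A))\le\mu(A)+2\sum_{k=1}^{n-1}\min\{\mu(A),\alpha(k)\}$, and this is not $O(C_\alpha(n)\mu(A))$ uniformly in $A$ and $n$.

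Rio's inequality returns exactly the same quantity. The quantile function of $|\1_{X_0\in A}-\mu(A)|$ equals $1-\mu(A)$ on $[0,\mu(A))$ and $\mu(A)$ on $[\mu(A),1)$. So it is not supported on $[0,\mu(A)]$, although that part only costs $\mu(A)^2C_\alpha(n)$. Moreover $\int_0^{\mu(A)}(\alpha^{-1}(u)\wedge n)\,du=\sum_{0\le k<n}\min\{\alpha(k),\mu(A)\}$, the same sum as above. Since $\alpha^{-1}$ is non-increasing, its average over $[0,\mu(A)]$ is at least, not at most, its average over $[0,1]$.

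Concretely, for $\alpha(k)=C_0k^{-\theta}$ with $\theta>1$ the sum is of order $\mu(A)^{1-1/\theta}$ (for $\mu(A)\gtrsim n^{-\theta}$). For geometric mixing it is of order $\mu(A)\log(1/\mu(A))$. In both cases $C_\alpha(n)=O(1)$, yet the ratio to $\mu(A)$ blows up exactly on the small cells that produce the $n^{-2/d}$ term.

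\textbf{The fallback does not recover the exponent either.} Set $V(a)\deq a+2\sum_{k=1}^{n-1}\min\{a,\alpha(k)\}$. It is concave and non-decreasing, so $\sqrt V$ is concave. Jensen over the $2^{d\ell}$ cells of a shell of mass $m$ then bounds the level-$\ell$ fluctuation by $2^{d\ell}\sqrt{V(m2^{-d\ell})/n}$.
\begin{itemize}
\item For polynomial mixing with $\theta>1$ this grows like $2^{d\ell(1+1/\theta)/2}$ rather than $2^{d\ell/2}$. Optimising the cutoff in $\ell$ then gives only $n^{-2\theta/(d(1+\theta))}$.
\item For geometric mixing it gives $(n/\log n)^{-2/d}$.
\end{itemize}
Neither is $(n/C_\alpha(n))^{-2/d}$.

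What your Step 3 actually needs is $|\mathrm{Cov}(\1_{X_i\in A},\1_{X_j\in A})|\le\rho(|i-j|)\mu(A)(1-\mu(A))$, i.e.\ $\rho$-mixing. Under that hypothesis your argument goes through verbatim with $1+2\sum_{k=1}^{n-1}\rho(k)$ in place of $C_\alpha(n)$. This is the setting in which Fournier and Guillin themselves extend their bounds to dependent sequences. It does not prove the stated $\alpha$-mixing result.

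\textbf{Separately, Step 3 cannot keep $n_{\rm eff}^{-(q-2)/q}$ inside the constant.} For $2<q<2d/(d-2)$ this term decays more slowly than $n_{\rm eff}^{-2/d}$. It already matters for i.i.d. samples. Take $\mu_n=(1-\tfrac1n)\delta_0+\tfrac1n\delta_{x_n}$ with $\|x_n\|^q=n$, so $\cM_q(\mu_n)=1$. With probability at least $1/4$, none of $n$ i.i.d. draws hits $x_n$. Hence $\E[W_2^2(\hmun,\mu_n)]\ge\tfrac14 n^{-(q-2)/q}$. You need to assume $q\ge 2d/(d-2)$ or retain that term in the bound.
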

We show the result in Appendix \ref{App:AdjustedFG} via a block decomposition argument, a standard machine for extending i.i.d. results to strong mixing. In the case of geometric mixing and polynomial mixing for $\theta>1$, $C_\alpha(n)$ is $O(1)$ (as the sequence $\alpha(k)$ is summable), and so under these conditions, we recover exactly the Fournier-Guillin rates. In the case of polynomial mixing with coefficient $\theta\leq1$, we can think of the term $n/C_\alpha(n)$ as an effective sample size (ESS \cite{kish1965survey}), governing a quantity of independent samples which would provide the same `information' as the set of correlated samples, thus allowing us to apply the Fournier-Guillin rates after the adjustment.

As in the i.i.d. case, we can collect a simple concentration inequality through Markov's inequality, useful for uncertainty quantification.
\begin{proposition}
    Let $\mu$ satisfy the conditions Theorem \ref{thm:mixing} above with geometric or polynomial mixing coefficients $\alpha(k)$. Then from Markov's inequality, we have the following concentration bound
    \begin{equation}\label{mixing_Markov}
        \P(W_2(\hmun, \mu) > \varepsilon) \leq \frac{d^2(\hmun,\mu)}{\varepsilon^2} \leq \frac{C(q,d)(1+\cM_q(\mu))}{\varepsilon^2}r_\alpha(n,d).
    \end{equation}
    where $r_\alpha(n,d)$ is the adjusted Fournier-Guillin rate \eqref{eq:FG_adjusted}.
\end{proposition}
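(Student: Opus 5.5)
The first inequality in \eqref{mixing_Markov} is Markov's inequality applied to the non-negative random variable $D(\hmun,\mu)^2 = W_2^2(\hmun(\om),\mu)$. This quantity is measurable by the discussion after Definition \ref{def:W2L2} and Appendix \ref{App:Wasserstein_rv}: $\hmun$ is a random element of $\WL$ and $\mu$ is a deterministic, hence constant, element of $\WL$. Since $x\mapsto x^2$ is strictly increasing on $[0,\infty)$, the events $\{W_2(\hmun,\mu)>\varepsilon\}$ and $\{W_2^2(\hmun,\mu)>\varepsilon^2\}$ coincide. Markov's inequality then gives
\begin{equation*}
\P\big(W_2(\hmun,\mu)>\varepsilon\big) = \P\big(W_2^2(\hmun,\mu)>\varepsilon^2\big) \leq \frac{\E_\om\big[W_2^2(\hmun(\om),\mu)\big]}{\varepsilon^2} = \frac{d^2(\hmun,\mu)}{\varepsilon^2},
\end{equation*}
where the last equality is Definition \ref{def:W2L2} with $\mu(\om)=\mu$ for all $\om$. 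The expectation is finite because $\cE_2(\hmun)=\cM_2(\mu)<\infty$ under stationarity.

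The second inequality is a direct substitution of Theorem \ref{thm:mixing}. Under its hypotheses (stationary $\alpha$-mixing, $\mu\in\cP_q(\R^d)$ with $q>2$, $d\geq5$), it gives $d^2(\hmun,\mu)\leq C(1+\cM_q(\mu))\,r_\alpha(n,d)$, and dividing by $\varepsilon^2$ yields the claim.

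The only point needing care is the constant. Theorem \ref{thm:mixing} allows the constant to depend on the mixing parameter $\theta$, written $C_{q,d,\theta}$, whereas the proposition writes $C(q,d)$. Since the mixing type is fixed, I would absorb this dependence (and $C_0$) into the constant, or simply note that $C(q,d)$ denotes the constant of Theorem \ref{thm:mixing}. For the geometric and summable polynomial regimes, $C_\alpha(n)=O(1)$ by Table \ref{tab:mixing}, so $C_\alpha(n)$ is bounded by $1+2\sum_k\alpha(k)$. That bound can also be absorbed into the constant, recovering the i.i.d. Fournier--Guillin rate. No genuine obstacle arises; the proof is two lines once Theorem \ref{thm:mixing} is granted.
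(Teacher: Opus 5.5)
Your proof is correct and follows the same route as the paper's implicit argument: Markov's inequality applied to the nonnegative random variable $W_2^2(\hmun,\mu)$ gives the first bound, and substituting Theorem \ref{thm:mixing} gives the second. Your point that $C(q,d)$ should be read as the constant $C_{q,d,\theta}$ of Theorem \ref{thm:mixing}, with the mixing parameters absorbed into it, is a fair clarification of the paper's notation.
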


\subsection{Gradient flows paths of empirical measures}\label{subsec:4gfs}

We now turn to the dynamics of empirical measures in $\WL$. The dynamics of geodesic interpolants are explored in detail in Appendix \ref{App:Empirical_geodesics}, where we find that the convergence of endpoints $\hmun^n$ and $\hnun$ to $\mu$ and $\nu$ respectively, as provided in the last section, delivers the uniform convergence of the interpolants between them, both pathwise and in $\WL$.

As empirical measures are defined to be discrete, they do not enjoy immediate access to the gradient flow dynamics of section \ref{subsec:3gfs}. One ad-hoc resolution is to mollify the empirical measure via Gaussian convolution. A detailed exploration of this direction is provided in Appendix \ref{App:mollification}. 

However, tt turns out we can still apply the gradient flow theory for functionals which are Wasserstein $C^1$ as in definition \ref{def:Wasserstein_C1}, without applying any mollification. In this case, the Wasserstein gradient $\gradW \cF(\mu)(x) = \grad\delta \cF(\mu)(x)$ admits a continuous extension from $\Pac$ to $\Pc$, and is such well defined for all $\mu\in\Pc, x\in\R^d$. 

We can now combine all preceding results to establish convergence of the Wasserstein gradient flow initialised randomly using the empirical measure. One of the key features of the space $\WL$ is that it provides the necessary structure to discuss a canonical random gradient flow for the random empirical measure. Recall from definition \ref{def:lifted_w2_grad_flow} that given a functional $\cF$ the random gradient $\gradW_{L^2}\E_\om[\cF(M(\om))](\om)$ is the pathwise Wasserstein gradient $\gradW\cF(M(\om))$, hence the gradient flow initialised at the random sample $\hmuno$ satisfies
\begin{equation}\label{eq:empirical_gradient_flow}
    \frac{\partial}{\partial t}\hmu_t^{n}(\om) = \divergence\bigg(\hmu_t^{n}(\om)\gradW\cF\big(\hmu_t^{n}(\om)\big)\bigg)= \divergence\bigg(\hmu_t^{n}(\om)\grad\delta\cF\big(\hmu_t^{n}(\om)\big)\bigg),\qquad \text{for all $\om\in\Omega$,}
\end{equation}
which is well-posed in $\Pac$ under suitable conditions on $\gradW \cF$. In particular, we have the following Grönwall estimate.
\begin{proposition}\label{prop:WGF_empirical}
    Let $\cF$ be Wasserstein $C^1$ as in definition \ref{def:Wasserstein_C1} with $\gradW \cF$ globally Lipschitz in $W_2$. Let $\mu_t$ denote the Wasserstein gradient flow of $\cF$ initialised at $\mu_0\in\Pac$ and let $\mu_t^{n}(\om)$ denote the gradient flow in \eqref{eq:empirical_gradient_flow} above initialised at $\hmuno)$. Then, for each fixed $T>0$:
    \begin{enumerate}[label=\alph*)]
        \item $\sup_{t\in[0,T]}W_2(\hmuno,\mu_t)\to 0$ for $\P$-almost every $\om$
        \item $\sup_{t\in[0,T]}d(\hmu_t^{n,},\mu_t)\to 0$. 
    \end{enumerate}
\end{proposition}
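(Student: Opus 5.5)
The plan is to combine the stability estimate of Proposition \ref{prop:WGF_lifted_stability} with the law of large numbers of Proposition \ref{prop:empirical_convergence}, reading the statement as concerning $\hmu_t^n(\om)$, the flow started at $\hmuno$, against $\mu_t$, the flow started at $\mu_0=\mu$. The empirical initialisation is discrete, so Proposition \ref{prop:WGF_lifted_stability} as stated (for $\wlac$) does not apply verbatim. I would therefore first re-run its Grönwall argument for initial data in $\Pc$.

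\textbf{Step 1: Grönwall bound for discrete initial data.} Since $\cF$ is Wasserstein $C^1$, $\gradW\cF(\nu)(x)$ is defined for all $(\nu,x)\in\Pc\times\R^d$, and I take it to be Lipschitz jointly in $(W_2,\|\cdot\|)$, as in Proposition \ref{prop:WGF_lifted_stability}. The flow is then the law of the McKean--Vlasov ODE $\dot X_t=-\gradW\cF(\mathrm{Law}(X_t))(X_t)$, which is well posed by a standard Picard iteration on $C([0,T];\Pc)$. Given $\om$, I would replace the Brenier map by an optimal coupling $\gamma_\om\in\Gamma_{\hmuno,\mu}$, which exists without absolute continuity. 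I then draw $(X_0,Y_0)\sim\gamma_\om$ and propagate both by the ODE. The same computation as before, using the measure-difference and spatial-difference splitting, Cauchy--Schwarz and Young, gives $f(t)=\E\|X_t-Y_t\|^2$ with $\dot f\le Lf$. Hence
\[
\sup_{t\in[0,T]}W_2^2\bigl(\hmu_t^n(\om),\mu_t\bigr)\le e^{LT}\,W_2^2\bigl(\hmuno,\mu\bigr)\qquad\text{for every }\om .
\]
In this form the constants do not depend on $\om$ or $n$. The step I expect to need the most care is checking that the coupling argument never used absolute continuity. It did not: only the existence of an optimal coupling and the trajectory-wise ODE were used. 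Measurability of $\om\mapsto\hmu_t^n(\om)$ follows from continuity of the flow map $\Pc\to\Pc$, which is itself implied by the displayed estimate.

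\textbf{Step 2: conclude (a).} By Proposition \ref{prop:empirical_convergence}(a), $W_2(\hmuno,\mu)\to0$ for $\P$-a.e.\ $\om$. The bound of Step 1 then gives $\sup_{t\le T}W_2(\hmu_t^n(\om),\mu_t)\to0$ on the same full-measure set.

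\textbf{Step 3: conclude (b).} For each $t\le T$, take expectations in Step 1 to get $d^2(\hmu_t^n,\mu_t)\le e^{LT}d^2(\hmun,\mu)$. Taking the supremum over $t$ then gives $\sup_{t\le T}d(\hmu_t^n,\mu_t)\le e^{LT/2}d(\hmun,\mu)$. The right-hand side tends to $0$ by Proposition \ref{prop:empirical_convergence}(b). Under the hypotheses of Theorem \ref{thm:FG} this also yields the explicit rate $e^{LT}C(q,d)(1+\cM_q(\mu))r(n,d)$ for $\sup_t d^2$. Under the hypotheses of Theorem \ref{thm:mixing} the same argument gives the rate $r_\alpha(n,d)$, since the only input is the rate for $d(\hmun,\mu)$.
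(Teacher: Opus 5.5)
Your proposal is correct and follows the paper's route: the paper likewise combines the Grönwall estimate of Proposition \ref{prop:WGF_lifted_stability} with the convergence of the initialisations from Propositions \ref{prop:empirical_convergence} and \ref{prop:mixing_convergence}. The details you add fill in what the paper leaves implicit rather than changing the argument: re-running the coupling argument with an optimal coupling in place of the Brenier map (since $\hmun$ is not in $\wlac$), and using the squared form $\sup_t W_2^2\le e^{LT}W_2^2$ to get $\sup_t d\le e^{LT/2}d(\hmun,\mu)$.
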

This follows immediately from the Grönwall estimate in Proposition \ref{prop:WGF_lifted_stability}, given the convergence of the initialisations that we've shown in Proposition \ref{prop:empirical_convergence} and Proposition \ref{prop:mixing_convergence}.

Under the assumptions of the Fournier-Guillin Theorem \ref{thm:FG} or Theorem \ref{thm:mixing} for strong mixing, we can recover rates for this convergence given the convergence rate of the initialisation. Under the i.i.d. sample case, the convergence follows the Fournier-Guillin rate \ref{eq:FG_rate}. Moreover in the non-i.i.d. case of strong mixing, the convergence follows the adjusted rate $r_\alpha(n,d)$ in \eqref{eq:FG_adjusted} of Theorem \ref{thm:mixing}.

\section{Bayesian Analysis in $\WL$}\label{sec:5}
\subsection{The Bayesian paradigm}

The Bayesian framework, as in Gelman et al.'s Bayesian Data
Analysis \cite{gelman2013bayesian}, offers a principled framework for reasoning under uncertainty: one begins with a prior belief about an unknown quantity, and updates this belief in light of observed data to form a posterior. In classical parametric statistics, the unknown quantity is a finite-dimensional parameter $\theta\in\R^k$, and the prior is a probability distribution over the parameter space $\R^k$. In the non-parametric setting, the unknown quantity is itself a probability measure $\pi\in\cP(\R^d)$, and the prior must accordingly be a distribution over the space of probability measures, i.e., a random probability measure in the language of definition \ref{def:random_measure}.

Suppose we assume that the unknown data-generating distribution $\pi$ has finite second moments, and is hence in $\Pc$, or even nicer is absolutely continuous in $\Pac$, then the structure and theory developed in section \ref{sec:3} applies. Before observing any data, the uncertainty about $\mu$ is encoded by a prior $\Pi$, a random measure in $\WL$ or $\wlac$. For each sample $\om\in\Omega$, $\Pi(\om)$ is a candidate probability measure representing one's belief about the true data-generating process. As data $X_i\overset{\text{i.i.d.}}{\sim}\pi$ accumulates, Bayes' Theorem prescribes the update from the prior to the posterior via
\begin{equation}\label{eq:Bayes}
    \Pi_n(\cdot)=\Pi(\cdot|X_1,\dots,X_n)\propto \prod_{i=1}^np_\mu(X_i)\Pi(d\theta),
\end{equation}
where $p_\mu$ denotes the density of $\mu$. The posterior $\Pi_n(A,\om) = \Pi(A|X_1(\om),\dots,X_n(\om))$ for $A\in\cB(\R^d)$ is again a random probability measure in $\WL$. 

The Bayesian learning process is encompassed by the sequence of random probability measures $(\Pi_n)_{n\geq1}\subset\WL$ indexed by the amount of data observed, $\Pi$ encoding prior uncertainty, and $\Pi(\cdot|X_1,\dots,X_n)$ encoding posterior uncertainty after $n$ observations. As $\ninf$, one expects the posterior to concentrate i.e. the random probability measure $\Pi_n$ should become less `dispersed' around the truth $\mu$ under appropriate measures of dispersion. Following our preceding exposition, the natural choice is given by the $W_2$ distance on the sample paths, and the distance $d$ at the lifted level of $\WL$.

The analysis we develop for the Bayesian paradigm generalises section \ref{sec:4}: assuming some underlying truth $\pi$ as the data generating process, we discuss convergence (\textit{consistency} in the Bayesian setting) of optimal transport distances, geodesics and gradient flows. As was the case for the empirical measures in section \ref{sec:4}, the key property is pathwise and $\WL$ convergence of the distances, unlocking the subsequent results. This will follow from Schwartz's Theorem for Bayesian consistency, specifically in the case of the topology of weak convergence of probability measures. 

\subsection{Wasserstein consistency}

Bayesian consistency is the phenomenon of a Bayesian posterior distribution concentrating around the true parameter value as the sample size $\ninf$. Formally, a sequence of posterior distributions is consistent at $\pi$ if for any neighbourhood $U$ of $\pi$,
\begin{equation}\label{eq:Bayesian_consistency_general}
    \Pi_n(U) = \Pi(U|X_1,\dots,X_n)\overset{\ninf}{\longrightarrow}0.
\end{equation}
Evidently within our framework, the neighbourhoods $U$ are open sets of the Wasserstein topology, as in the preceding sections. Posterior consistency is fundamental to Bayesian analysis, and as such is well-studied; crucially, Schwartz's Theorem of 1965 \cite{Schwartz1965OnBP} below provides sufficient conditions for consistency of the Bayesian posterior. Preparing the assumptions, we provide the following two definitions.
\begin{definition}\label{def:KL_support}
    Given a (random measure) prior $\Pi$ over $\cP(\R^d)$, and a probability measure $\pi\in\cP(\R^d)$, $\pi$ is said to be in the KL support of $\Pi$ if for all $\varepsilon>0$,
    \begin{equation}\label{eq:KL_Support}
        \P_\Pi\l(\{\mu\in\cP(\R^d):\KL(\mu,\pi)\leq \varepsilon\}\r) > 0.
    \end{equation}
\end{definition}

Informally, the KL condition ensures that the prior distribution places positive mass on models arbitrarily close to $\pi$, where close is understood in entropic sense. The other condition asks for a uniformly consistent sequence of test functions.

\begin{definition}\label{def:Bayes_testing}
    A set $U\subset\R^d$ is said to satisfy testing conditions for $\pi\in\cP(\R^d)$ if $U\in \cB(\R^d)$ is a neighbourhood of $\pi$ such that there are test functions $(\Phi_n)_{n\geq1}$ satisfying
    \begin{equation}\label{eq:testing_conditions}
        \E_{\pi}[\Phi_n(X_n)] \leq Ce^{-cn},\qquad \sup_{\mu\in U^c} \E_{\mu}[1-\Phi_n(X_n)]\leq Ce^{-cn}
    \end{equation}
    where $X_n\overset{\text{i.i.d.}}{\sim}\pi$ and a pair of positive constants $c,C>0$.
\end{definition}

The testing conditions informally assert that the hypothesis $H_0:\mu=\pi$ should be testable against complements of neighbourhoods of $\pi$, i.e. $H_1:\mu\in U^c$. The test $\Phi_n$ is interpreted as follows: the null hypothesis $H_0:\mu=\pi$ is rejected with probability $\Phi_n$, the probability of a type I error. The complement test $1-\Phi_n$ is the probability of rejecting $H_0$ when the data are sampled from $\pi$, the probability of a type II error. Together, the testing conditions ensure that the probability of either error vanishing in large sample. With the conditions in hand, we may now formulate Schwartz's Theorem as in \cite{Schwartz1965OnBP}.
\begin{theorem}[\cite{Schwartz1965OnBP}]\label{thm:Schwartz}
    Let $\pi$ be the true probability distribution for which $X_i\overset{\text{i.i.d.}}{\sim}\pi$, and let $\Pi$ be a prior over $\cP(\R^d)$. If $\pi$ is in the $\KL$ support of $\Pi$ as in definition \ref{def:KL_support} and every neighbourhood $U$ of $P_0$ satisfies the testing conditions of definition \ref{def:Bayes_testing}, then the posterior is consistent at $\pi$, i.e. for every neighbourhood $U$ of $\pi$,
    \begin{equation}
        \P_\Pi( U^c \cdot | X_1,\dots,X_n) \to 0, \qquad \text{$\P$-a.s.}
    \end{equation}
    where $\P$ is the joint law of $(X_n)_{n\geq1}$
\end{theorem}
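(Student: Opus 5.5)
The plan is to follow Schwartz's classical route: write the posterior mass of $U^c$ as a ratio of integrated likelihood ratios and control the numerator and denominator separately. Denote by $p_\mu$ the density of $\mu$ with respect to a dominating measure, and $p_\pi$ that of $\pi$. Bayes' formula \eqref{eq:Bayes} gives
\begin{equation*}
    \Pi_n(U^c) = \frac{\int_{U^c}\prod_{i=1}^n \frac{p_\mu}{p_\pi}(X_i)\,\Pi(d\mu)}{\int\prod_{i=1}^n \frac{p_\mu}{p_\pi}(X_i)\,\Pi(d\mu)} \eqqcolon \frac{N_n}{D_n}.
\end{equation*}
The first step is to insert the tests $\Phi_n$ from Definition \ref{def:Bayes_testing} and bound
\begin{equation*}
    \Pi_n(U^c)\le \Phi_n + \frac{(1-\Phi_n)N_n}{D_n}.
\end{equation*}

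The second step is a lower bound on the denominator from the KL-support condition of Definition \ref{def:KL_support}. I would show that for every $\beta>0$, $\P$-a.s. for all large $n$, $D_n\ge e^{-n\beta}$. To do this, restrict the integral to the set $K_\varepsilon=\{\mu:\KL(\pi,\mu)\le\varepsilon\}$, which has positive prior mass. Apply Jensen to $\log$ of the normalised integral over $K_\varepsilon$, then the strong law of large numbers together with Fubini. This gives
\begin{equation*}
    \liminf_n \frac{1}{n}\log D_n \ge -\varepsilon + \frac{1}{n}\log \Pi(K_\varepsilon)
\end{equation*}
almost surely. We then take $\varepsilon<\beta$.

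The third step is the numerator and the test term. The first testing condition gives $\E_\pi[\Phi_n]\le Ce^{-cn}$, so Markov's inequality and Borel–Cantelli give $\Phi_n\le e^{-cn/2}$ eventually, almost surely. For the numerator, Fubini and the change of measure $\E_\pi[(1-\Phi_n)\prod p_\mu/p_\pi(X_i)]=\E_\mu[1-\Phi_n]$ yield
\begin{equation*}
    \E_\pi[(1-\Phi_n)N_n]\le \sup_{\mu\in U^c}\E_\mu[1-\Phi_n]\le Ce^{-cn}.
\end{equation*}
Markov and Borel–Cantelli again give $(1-\Phi_n)N_n\le e^{-cn/2}$ eventually, almost surely. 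Combining this with the second step for $\beta<c/2$, the ratio is at most $e^{-n(c/2-\beta)}\to0$ almost surely.

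The final step is a union over neighbourhoods. For each fixed $U$ the convergence holds off a null set. The Wasserstein topology on $\Pc$ is separable by Proposition \ref{prop:Polish}, so it suffices to work with a countable neighbourhood base of $\pi$ (for instance $W_2$-balls of rational radius). This makes the null set uniform over all neighbourhoods $U$.

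I expect the main obstacle to be measurability and integrability in step two. Fubini requires joint measurability of $(\mu,x)\mapsto p_\mu(x)$, and the SLLN must be applied to $\log(p_\pi/p_\mu)$ for each $\mu$ in $K_\varepsilon$ at once. I would handle this by applying the SLLN to the single averaged function
\begin{equation*}
    x\mapsto\int_{K_\varepsilon}\log\frac{p_\pi}{p_\mu}(x)\,\Pi(d\mu)/\Pi(K_\varepsilon),
\end{equation*}
which is integrable with mean at most $\varepsilon$. This is the device that makes Jensen usable with only one exceptional null set.
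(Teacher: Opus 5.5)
The paper does not prove this theorem; it only cites \cite{Schwartz1965OnBP}. Your argument is precisely Schwartz's classical proof, and it is correct: you split off $\Phi_n$, lower-bound $D_n$ on $K_\varepsilon$ via Jensen and the SLLN applied to the $\Pi$-averaged log-likelihood ratio, and control $\Phi_n$ and $(1-\Phi_n)N_n$ by Markov plus Borel--Cantelli. Your final step over a countable neighbourhood base is not needed, since the statement lets the null set depend on $U$.

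It is worth making explicit that you take $K_\varepsilon=\{\mu:\int\log(p_\pi/p_\mu)\,d\pi\le\varepsilon\}$, with $\pi$ as the reference measure, which is exactly what the denominator bound requires. By contrast, $\KL(\mu,\pi)$ in Definition~\ref{def:KL_support}, read in the usual way as $\int\log(p_\mu/p_\pi)\,d\mu$, is the reverse divergence, under which the denominator bound and the theorem can fail. For example, the laws $\mathrm{Unif}[0,1-1/k]$ have reverse divergence $\log\frac{k}{k-1}\to0$ from $\pi=\mathrm{Unif}[0,1]$, yet their likelihoods vanish as soon as the sample maximum exceeds $1-1/k$. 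So your reading is the right one.
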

The testing conditions for Schwartz's Theorem are dependent on the topology one endows the parameter space with. In our non-parametric setting, the parameter space is $\cP(\R^d)$. Under the weak-topology, the tests $\Phi_n$ \textit{always} exist for any weak neighbourhood of $\pi$, shown in Appendix \ref{App:Weak_Schwartz}, and so we have the following corollary to Schwartz's Theorem below.

\begin{corollary}[\cite{Schwartz1965OnBP}]\label{cor:schwartz}
    Let $\pi$ be the true probability distribution for which $X_i\overset{\text{i.i.d.}}{\sim}\pi$, and let $\Pi$ be a prior over $\cP(\R^d)$. If $\pi$ is in the $\KL$ support of $\Pi$ as in definition \ref{def:KL_support}, then the posterior is weakly consistent at $\pi$ i.e. for every weak neighbourhood $U$ of $\pi$,
    \begin{equation}\label{eq:weak_Schwartz}
        \P_\Pi(U | X_1,\dots,X_n) \to 1, \qquad \text{$\P$-a.s.}
    \end{equation}
\end{corollary}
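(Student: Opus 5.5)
The plan is to derive the corollary from Theorem \ref{thm:Schwartz}. Its KL-support hypothesis is assumed, so the only remaining task is to verify the testing conditions of Definition \ref{def:Bayes_testing} for every weak neighbourhood $U$ of $\pi$.

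Weak neighbourhoods of $\pi$ have a basis of sets of the form
\[
U=\Bigl\{\mu:\ \Bigl|\int f_j\,d\mu-\int f_j\,d\pi\Bigr|<\varepsilon,\ j=1,\dots,k\Bigr\},
\]
with $f_j$ bounded and continuous, and without loss of generality $0\le f_j\le 1$. Consistency for a smaller neighbourhood implies it for a larger one, so it suffices to treat such basic sets. Moreover $U^c=\bigcup_j U_j^c$, and tests can be combined by taking a maximum. If $\Phi_{n,j}$ are tests for the single-function sets $U_j$, then $\Phi_n=\max_j\Phi_{n,j}$ has type I error at most $\sum_j \E_\pi[\Phi_{n,j}]\le kCe^{-cn}$. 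Its type II error on $U_j^c$ is at most $\E_\mu[1-\Phi_{n,j}]$. Hence it suffices to treat $k=1$.

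\textbf{Single function.} Split $U^c$ further into the two pieces $\{\int f\,d\mu\ge \int f\,d\pi+\varepsilon\}$ and $\{\int f\,d\mu\le\int f\,d\pi-\varepsilon\}$. For the first piece, use the test
\[
\Phi_n=\1\Bigl\{\tfrac1n\textstyle\sum_{i=1}^n f(X_i)>\int f\,d\pi+\varepsilon/2\Bigr\}.
\]
Since $f(X_i)\in[0,1]$ are i.i.d., Hoeffding's inequality gives $\E_\pi[\Phi_n]\le e^{-n\varepsilon^2/2}$. Under any $\mu$ in the first piece, $1-\Phi_n$ requires the empirical mean to fall at least $\varepsilon/2$ below $\int f\,d\mu$. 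Hoeffding therefore gives $\E_\mu[1-\Phi_n]\le e^{-n\varepsilon^2/2}$, uniformly in $\mu$. The second piece is symmetric. Taking the maximum of the two tests yields \eqref{eq:testing_conditions} with constants depending only on $\varepsilon$ and $k$.

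\textbf{Conclusion.} Applying Theorem \ref{thm:Schwartz} gives $\P_\Pi(U^c\mid X_1,\dots,X_n)\to0$ almost surely, which is equivalent to \eqref{eq:weak_Schwartz}.

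The main obstacle is the reduction to basic neighbourhoods: uniformity over $\mu\in U^c$ must hold for a possibly infinite family of alternatives. This is exactly where Hoeffding's bound is needed, because it is distribution-free once $f$ is bounded. That boundedness is the key feature separating the weak topology from the $W_2$ topology, where $\|x\|^2$ is unbounded.
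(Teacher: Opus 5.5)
Your proof is correct and takes the same route as the paper's appendix argument: reduce to neighbourhoods defined by a single bounded continuous function, test with the empirical mean thresholded at $\int f\,d\pi+\varepsilon/2$, bound both error types uniformly by Hoeffding, and invoke Theorem \ref{thm:Schwartz}. Your version is tidier than the paper's. You make explicit the max-of-tests step for finite intersections and the two-sided split, you centre the threshold at $\int f\,d\pi$ rather than at $\E_\mu[g]$, and you obtain the correct exponent $e^{-n\varepsilon^2/2}$ instead of the paper's $e^{-n\varepsilon/2}$.
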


With this in hand, we state and show the following proposition, a Schwartz Theorem for the spaces $(\Pc,W_2)$ and $(\WL,d)$.
\begin{theorem}\label{thm:Posterior_consistency_wasserstein}
Let $\pi\in\Pc$, and let $\Pi$ be a prior over $\Pc$ such that
\begin{enumerate}
    \item $\pi$ is in the $\KL$ support of $\Pi$ (definition \ref{def:KL_support}),
    \item $\E_\om\l[\cM_2(\Pi(\om))^{1+\delta}\r] < \infty$ for some $\delta>0$,
    \item $\sup_{n\geq 1} \E_\om\l[M_2(\Pi_n(\om))^{1+\delta}\r]<\infty$.
\end{enumerate}
Then 
\begin{enumerate}[label=\alph*)]
    \item $W_2(\Pi_n(\om),\pi)\to 0 $ $\P$-a.s. .
    \item $d(\Pi_n,\pi)\to 0 $.
\end{enumerate}
  
\end{theorem}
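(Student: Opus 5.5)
The plan is to reduce everything to Theorem \ref{thm:weak_wass}: pathwise $W_2$ convergence is equivalent to weak convergence plus convergence of second moments. We then lift from pathwise to $\WL$ convergence using the $(1+\delta)$-moment bounds, via Vitali's theorem (the generalised DCT of Appendix \ref{App:Generalised_dct}). Throughout, $\Pi_n(\om)$ is the probability measure on $\R^d$ obtained from the posterior after observing $X_1(\om),\dots,X_n(\om)$, i.e.\ the posterior predictive $\int \mu\,\Pi(d\mu\,|\,X_1(\om),\dots,X_n(\om))$. This is the measure whose second moment appears in condition (3).

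\emph{Step 1 (weak convergence).} By hypothesis (1) and Corollary \ref{cor:schwartz}, for $\P$-a.e.\ $\om$ the posterior puts mass tending to one on every weak neighbourhood of $\pi$. Fix a countable convergence-determining family $\{f_k\}\subset C_b(\R^d)$. For each $k$ and $\varepsilon>0$, the set $U_{k,\varepsilon}=\{\mu: |\int f_k d\mu-\int f_k d\pi|<\varepsilon\}$ is a weak neighbourhood of $\pi$. Splitting the posterior average of $\int f_k\,d\mu$ over $U_{k,\varepsilon}$ and its complement gives
\[
\Bigl|\int f_k\,d\Pi_n(\om)-\int f_k\,d\pi\Bigr|\le \varepsilon+2\|f_k\|_\infty\,\Pi_n(U_{k,\varepsilon}^c\,|\,\om).
\]
Intersecting the countably many full-measure events (over $k$ and rational $\varepsilon$) yields $\Pi_n(\om)\wto\pi$ for $\P$-a.e.\ $\om$.

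\emph{Step 2 (second moments, the main obstacle).} Lower semicontinuity of $\cM_2$ under weak convergence gives $\liminf_n\cM_2(\Pi_n(\om))\ge\cM_2(\pi)$ for free. The difficulty is the matching upper bound, i.e.\ pathwise uniform integrability of $|x|^2$ under $\Pi_n(\om)$. Condition (3) controls this only in $\om$-expectation, which is not pathwise. My first attempt is a Schwartz-type ratio argument applied to the truncated moment $g_K(\mu)=\int_{|x|>K}|x|^2\,d\mu$.
\begin{itemize}
\item The posterior expectation of $g_K$ is a ratio of the integrated likelihood weighted by $g_K$ to the marginal likelihood.
\item By the KL-support hypothesis (1), the denominator is eventually at least $e^{-n\beta}$ a.s., for any $\beta>0$.
\item The numerator, normalised by $\prod_i p_\pi(X_i)$, has $\P$-expectation at most $\E_\om[g_K(\Pi(\om))]$. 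By hypothesis (2) and Hölder, this is at most $C K^{-2\delta}$.
\end{itemize}
Taking $K=K_n$ growing fast enough and applying Borel--Cantelli then controls the tail a.s., which I expect gives $\limsup_n\cM_2(\Pi_n(\om))\le\cM_2(\pi)$. If the exponential factor $e^{n\beta}$ cannot be beaten this way, the fallback is weaker. One can combine (3) with Step 1 to obtain convergence in probability and in $L^1$ of $\cM_2(\Pi_n)$; almost sure convergence then holds along subsequences. Part a) follows from Theorem \ref{thm:weak_wass} once both steps hold.

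\emph{Step 3 (lift to $\WL$).} We have $W_2^2(\Pi_n(\om),\pi)\le 2\cM_2(\Pi_n(\om))+2\cM_2(\pi)$. By (3), the family $\{W_2^2(\Pi_n,\pi)\}_n$ is therefore bounded in $L^{1+\delta}(\P)$, hence uniformly integrable. Together with the a.s.\ convergence of part a), Vitali's theorem gives $d^2(\Pi_n,\pi)=\E_\om[W_2^2(\Pi_n(\om),\pi)]\to0$, which is b).
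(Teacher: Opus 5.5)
\textbf{Verdict: there is a genuine gap, in Step 2, and it cannot be closed, because conclusions a) and b) fail under hypotheses (1)--(3).}

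Your Step 1 is sound. Your Step 3 is sound too, and would even work with convergence in probability via Vitali. You correctly flag Step 2 as the crux, but it is never established, and both a) and b) rest on it.

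\emph{The ratio argument cannot supply Step 2.}
Sets such as $\{\mu:\int_{|x|>K}|x|^2\,d\mu>\varepsilon\}$ contain measures weakly arbitrarily close to $\pi$: move mass $\eta$ to radius $R$ with $\eta R^2$ fixed. So there are no exponentially consistent tests against them, and nothing makes the numerator beat the factor $e^{n\beta}$ from the denominator. Sending $K=K_n\to\infty$ only controls mass beyond $K_n$. It does not give the uniform integrability of $|x|^2$ at a fixed level that you need. Moreover, hypothesis (2) bounds $\cM_2(\Pi(\om))$ in $L^{1+\delta}(\P)$. That yields no decay rate such as $K^{-2\delta}$ for $\E_\om[\int_{|x|>K}|x|^2\,\Pi(\om)(dx)]$, which would require a $(2+2\delta)$-th moment in $x$.

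\emph{The fallback fails for the same reason.} Hypothesis (3) is integrability in $\om$, not uniform integrability of $|x|^2$ under $\Pi_n(\om)$. Weak convergence with bounded second moments does not force moment convergence, as $(1-\frac1n)\pi+\frac1n\delta_{\sqrt n\,e_1}$ shows.

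\emph{A counterexample to the statement.} Let $\pi$ be uniform on the unit ball. Let the prior put weights $w_j\propto j^{-2}$ on $\mu_j=(1-2^{-j})\pi+2^{-j}q_j$, $j\geq1$, where $q_j$ is uniform on the shell $S_j=\{2^{j/2}\leq|x|\leq 2^{j/2}+1\}$.
\begin{itemize}
\item We have $\int\log(d\pi/d\mu_j)\,d\pi=-\log(1-2^{-j})\to0$, so (1) holds in the orientation Schwartz's theorem uses.
\item We have $\cM_2(\mu_j)\leq\cM_2(\pi)+4$ for all $j$, so (2) and (3) hold trivially.
\item No observation ever lands in a shell. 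The posterior weights are therefore deterministic, proportional to $w_j(1-2^{-j})^n$, and they escape to $j\to\infty$.
\item Hence $\Pi_n\to\pi$ in total variation, while $\cM_2(\Pi_n)\to\cM_2(\pi)+1$. So $W_2(\Pi_n,\pi)=d(\Pi_n,\pi)\not\to0$.
\item The same failure occurs if $\Pi_n(\om)$ is read as a posterior draw, since $\inf_j W_2(\mu_j,\pi)>0$.
\end{itemize}
If you want both orientations of $\KL$ to vanish, give $\pi$ a positive density with tails $\asymp|x|^{-(d+3)}$ and take $q_j=\pi|_{S_j}/\pi(S_j)$. The same conclusion then holds $\P$-a.s.

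\emph{The paper's proof breaks at the same point.} It infers posterior $W_2$-consistency from Corollary~\ref{cor:schwartz} on the grounds that $\{W_2(\cdot,\pi)>\varepsilon\}$ is weakly open. But the corollary would need $\{W_2(\cdot,\pi)\leq\varepsilon\}$ to be a weak neighbourhood of $\pi$, and it is merely weakly closed. Its moment step then assumes $W_2(\Pi_n(\om),\pi)\leq\varepsilon$ eventually, which is what was to be proved.

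\emph{What is missing} is a pathwise tail condition, for example $\sup_n\int|x|^{2+\delta}\,\Pi_n(\om)(dx)<\infty$ for $\P$-a.e.\ $\om$. With it, Step 2 follows from your Step 1, and your Step 3 then gives b).
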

\begin{proof}
    We first show that for any $\varepsilon>0$,
    \begin{equation}\label{eq:Wasserstein_consistency}
        \P_{\Pi_n}\l(\l\{\mu\in\Pc:W_2(\mu,\pi)>\varepsilon\r\}\big|X_1,\dots,X_n\r) \to 0, \qquad \text{$\P$-a.s.}
    \end{equation}
    The set $\l\{\mu\in\Pc:W_2(\mu,\pi)>\varepsilon\r\}$ is open in the weak topology. Indeed, as $W_2(\cdot,\pi)$ is lower semi-continuous in the weak topology (Appendix \ref{App:lsc}), the pre-image of $(\varepsilon,\infty)$ under $W_2(\cdot,\pi)$ is weakly open. Thus the corollary \ref{cor:schwartz} to Schwartz's Theorem indeed applies, and we conclude \eqref{eq:Wasserstein_consistency}.\newline

    For a) we appeal to the equivalence of Theorem \ref{thm:weak_wass}, showing almost sure weak convergence and convergence of the second moment. (Weak convergence). The proof of the corollary to Schwartz's Theorem (Appendix \ref{App:Weak_Schwartz}) shows that the convergence in \eqref{eq:Wasserstein_consistency} is actually exponential in $n$. As the corresponding sequence in $n$ is then summable, a standard Borel-Cantelli application upgrades the convergence in probability to almost sure weak convergence.

    (Moment convergence). Fix $\varepsilon>0$. Then for all sufficiently large $n$, $W_2(\Pi_n(\om), \pi)\leq\varepsilon$ $\P$-a.s. . On this event, the reverse $W_2$ triangle inequality yields
    \begin{equation*}
        \l|\cM_2(\Pi_n(\om))^{\frac{1}{2}} - \cM_2(\pi)^{\frac{1}{2}}\r| = \l|W_2(\Pi_n(\om),\delta_0) - W_2(\pi_n,\delta_0)\r| \leq W_2(\Pi_n(\om),\pi_n)\leq \varepsilon.
    \end{equation*}
    so that $\cM_2(\Pi_n(\om))^{\frac{1}{2}}\leq \cM_2(\pi)^{\frac{1}{2}}+ \varepsilon$. Factoring the difference of squares and applying this bound,
    \begin{equation*}
        \l|\cM_2(\Pi_n(\om)) - \cM_2(\pi)\r| \leq W_2(\Pi_n(\om),\pi_n)\l(\cM_2(\Pi_n(\om))^{\frac{1}{2}} + \cM_2(\pi)^{\frac{1}{2}}\r) \leq \varepsilon\l(\varepsilon + 2\cM_2(\pi)^{\frac{1}{2}}\r),
    \end{equation*}
    which can be made arbitrarily small. We conclude almost sure moment convergence and, pairing with the weak convergence, conclude the equivalence of Theorem \ref{thm:weak_wass}, almost sure $W_2$ convergence.\newline
    
    We can now show b), convergence in $\WL$. We decompose the distance below
    \begin{equation}
        d^2(\Pi_n,\pi) = \E_\om\l[W_2^2(\Pi_n(\om),\pi)\1_{W_2(\Pi_n(\om),\pi)\leq \varepsilon}\r] + \E_\om\l[W_2^2(\Pi_n(\om),\pi)\1_{W_2(\Pi_n(\om),\pi)> \varepsilon}\r].
    \end{equation}
    Evidently, the first term is bounded by $\varepsilon^2$. The second term is bounded by the 2nd moments, $W_2^2(\mu,\pi)\leq 2(\cM_2(\mu) + \cM_2(\pi))$,
    \begin{align*}
        \E_\om\l[W_2^2(\Pi_n(\om),\pi)\1_{W_2(\Pi_n(\om),\pi)> \varepsilon}\r] \leq\,\,\, &2 \E_{\om}\l[\cM_2(\Pi_n(\om))\1_{W_2(\Pi_n(\om),\pi)> \varepsilon}\r] + \\ &2\cM_2(\pi)\P_{\Pi_n}\l(\l\{\mu\in\cP(\R^d):W_2(\mu,\pi)>\varepsilon\r\}\r).
    \end{align*}
    By \eqref{eq:Wasserstein_consistency}, the second term $\to0$ $\P$ as $n\to\infty$. For the first we use assumptions 2) and 3), and Hölder's inequality with exponents $(1+\delta)$, $(\frac{1}{\delta}+1)$, so that
    \begin{align*}
        \E_{\om}\l[\cM_2(\Pi_n(\om))\1_{W_2(\Pi_n(\om),\pi)> \varepsilon}\r] &\leq \E_{\om}\l[\cM_2(\Pi_n(\om))^{1+\delta}\r]^{\frac{1}{1+\delta}} \cdot \P_{\Pi_n}\l(\l\{\mu\in\cP(\R^d):W_2(\mu,\pi)>\varepsilon\r\}\r)^{\frac{\delta}{1+\delta}}\\
    \end{align*}
    Using assumptions 2) and 3), the first term is bounded, and the second term $\to0$ by \eqref{eq:Wasserstein_consistency}. Putting this altogether, we find
    $$\limsup_{\ninf}d^2(\Pi_n,\pi) \leq \varepsilon^2,\qquad \text{$\P$-a.s.}$$
    As $\varepsilon>0$ was arbitrary, we conclude b), $d(\Pi_n,\mu)\to0$ $\P$-a.s. .
\end{proof}

As is the case for proving the convergence of the geodesics between empirical measures (in Appendix \ref{App:Empirical_geodesics}, Propositions \ref{prop:geodesic_empirical_convergence} and \ref{prop:geodesic_empirical_convergence_uniform}), one can use the convergence of the endpoints $M_n\to \mu, N_n\to\nu$ to conclude the pathwise convergence of the interpolants, lifting the pathwise convergence to convergence in $\WL$ via the generalised DCT of Appendix \ref{App:Generalised_dct}.
\begin{proposition}
    Let $\mu,\nu \in\Pac$. Let $\eta_t = [(1-t)\id + t\nv^{\mu\to\nu}]_\#\mu$ be the displacement interpolation geodesic from $\mu\to\nu$. Let $M,N\in\WL$ be two priors over $\Pc$ with respective posteriors $(M_n)_{n\geq1},(N_n)_{n\geq1}\subset\wlac$.
    Then for each $t\in[0,1]$ the lifted coupling interpolation $\eta_t^n(\om) \deq [(1-t)\pi^1 + t\pi^2]_\#\gamma_{\hmuno,\hnuno}$ (where $\gamma_{M_n,N_n}$ is an optimal coupling for $M_n,N_n$) satisfies
    \begin{enumerate}[label=\alph*)]
        \item $\sup_{t\in[0,1]}W_2(\eta_t^n(\om),\eta_t)\to 0$ for $\P$-almost every $\om$
        \item $\sup_{t\in[0,1]}d(\eta_t^n,\eta_t)\to 0$.
    \end{enumerate}
\end{proposition}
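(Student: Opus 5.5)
The plan is to work pathwise first and then lift to $\WL$, exactly as for the empirical geodesics in Appendix \ref{App:Empirical_geodesics}. I read the statement as implicitly assuming that the priors $M,N$ satisfy the hypotheses of Theorem \ref{thm:Posterior_consistency_wasserstein} with truths $\mu$ and $\nu$ respectively. Under that reading, $W_2(M_n(\om),\mu)\to0$ and $W_2(N_n(\om),\nu)\to0$ for $\P$-a.e. $\om$, and $d(M_n,\mu),d(N_n,\nu)\to0$. Throughout, $\gamma_n(\om)$ denotes an optimal coupling of $M_n(\om),N_n(\om)$. Since $\mu\in\Pac$, Theorem \ref{thm:brenier} gives that $\bar\gamma=(\id,\nv^{\mu\to\nu})_\#\mu$ is the \emph{unique} optimal coupling of $\mu,\nu$.

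\emph{Step 1: reduction to couplings.} Write $\pi_t(x,y)=(1-t)x+ty$, so that $\eta^n_t(\om)={\pi_t}_\#\gamma_n(\om)$ and $\eta_t={\pi_t}_\#\bar\gamma$. Take any coupling $\Gamma$ of $\gamma_n(\om)$ and $\bar\gamma$ on $\R^{2d}\times\R^{2d}$ and push it forward under $(\pi_t,\pi_t)$. By Cauchy--Schwarz,
\[
\|\pi_t(x,y)-\pi_t(x',y')\|\le \sqrt{(1-t)^2+t^2}\,\|(x,y)-(x',y')\|\le \|(x,y)-(x',y')\|.
\]
Hence $\sup_{t\in[0,1]}W_2(\eta^n_t(\om),\eta_t)\le W_2^{\R^{2d}}(\gamma_n(\om),\bar\gamma)$. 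This removes the supremum in $t$ entirely, and both a) and b) reduce to convergence of the couplings in $W_2$ on $\R^{2d}$.

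\emph{Step 2: pathwise convergence of the couplings.} Fix $\om$ in the full-measure set where both posteriors converge. The marginals of $(\gamma_n(\om))_n$ converge, so the family is tight, and any weak limit point is a coupling of $\mu,\nu$. By the stability of optimal plans under weak convergence with converging second moments (Villani \cite{villani2009optimal}, Theorem 5.20), every such limit point is optimal, so it equals $\bar\gamma$ by uniqueness. Therefore $\gamma_n(\om)\wto\bar\gamma$. The second moments also converge:
\[
\int\|(x,y)\|^2\,d\gamma_n(\om)=\cM_2(M_n(\om))+\cM_2(N_n(\om))\to\cM_2(\mu)+\cM_2(\nu),
\]
using the moment convergence from Theorem \ref{thm:Posterior_consistency_wasserstein}. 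Theorem \ref{thm:weak_wass} on $\R^{2d}$ then gives $W_2(\gamma_n(\om),\bar\gamma)\to0$, and a) follows from Step 1.

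\emph{Step 3: lifting to $\WL$.} For b), I bound, uniformly in $t$,
\[
W_2^2(\eta^n_t(\om),\eta_t)\le 2\cM_2(\eta^n_t(\om))+2\cM_2(\eta_t)\le 2\big(\cM_2(M_n(\om))+\cM_2(N_n(\om))\big)+2\big(\cM_2(\mu)+\cM_2(\nu)\big).
\]
By assumption 3) of Theorem \ref{thm:Posterior_consistency_wasserstein}, the right-hand side is bounded in $L^{1+\delta}(\P)$ uniformly in $n$, and is therefore uniformly integrable. Combined with the a.s. convergence from a), Vitali's theorem (or the generalised DCT of Appendix \ref{App:Generalised_dct}) gives $\E_\om\big[\sup_t W_2^2(\eta^n_t(\om),\eta_t)\big]\to0$. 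This in turn yields $\sup_t d(\eta^n_t,\eta_t)\to0$.

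\emph{Main obstacle.} I expect the main difficulty to be measurability rather than the estimates. One needs $\om\mapsto\gamma_n(\om)$ to be chosen measurably, so that $\eta^n_t\in\WL$. I would handle this with a measurable selection theorem for optimal plans (Villani \cite{villani2009optimal}, Corollary 5.22), using that the optimal-plan correspondence has closed graph. One also needs the supremum over $t$ to be measurable. For this I would note that $t\mapsto\eta^n_t(\om)$ is $W_2$-continuous, so the supremum can be taken over rational $t$. A secondary point is that the stability result requires only weak convergence plus moment convergence of the marginals, which is precisely what Theorem \ref{thm:Posterior_consistency_wasserstein} a) supplies.
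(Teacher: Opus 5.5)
Your argument is correct. Your reading of the missing hypotheses (those of Theorem \ref{thm:Posterior_consistency_wasserstein}, applied to $(M,\mu)$ and to $(N,\nu)$) matches the paper's intent. The route, however, differs from the one the paper points to.

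The paper gives no separate proof and defers to the empirical case in Appendix \ref{App:Empirical_geodesics}. That argument has three stages:
\begin{enumerate}
\item Convergence for each fixed $t$ via Lemma \ref{lemma:interpolant_stability}: limit plans are optimal, hence equal $\bar\gamma$; then weak and second-moment convergence of the interpolants $\eta^n_t$ via a truncation argument.
\item Uniformity in $t$ via equicontinuity of the constant-speed geodesics and an $\varepsilon/3$ grid argument (Proposition \ref{prop:geodesic_empirical_convergence_uniform}).
\item A lift via the generalised DCT of Appendix \ref{App:Generalised_dct}, with the $t$-independent dominating sequence $H_n=2(\cM_2(M_n)+\cM_2(N_n)+\cM_2(\mu)+\cM_2(\nu))$.
\end{enumerate}

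You instead use the $1$-Lipschitz property of $\pi_t$ on $\R^{2d}$, which the paper only exploits later in Proposition \ref{prop:LN_FG}. This dominates $\sup_t W_2(\eta^n_t(\om),\eta_t)$ by $W_2(\gamma_n(\om),\bar\gamma)$, which buys three things:
\begin{itemize}
\item uniformity in $t$ comes for free, with no equicontinuity argument;
\item the moment convergence you need is exact, since $\cM_2(\gamma_n(\om))=\cM_2(M_n(\om))+\cM_2(N_n(\om))$, so no truncation is needed;
\item you get the stronger conclusion that the optimal plans themselves converge, which is exactly the quantity Li--Nochetto controls if one later wants rates.
\end{itemize}

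Your lift through assumption 3) and Vitali is also more explicit than the paper's. In the empirical case $\E[H_n]$ is constant by stationarity. For posteriors, the convergence $\E[H_n]\to\E[\lim_n H_n]$ required by the generalised DCT is not automatic. It has to be derived, e.g.\ from part b) of Theorem \ref{thm:Posterior_consistency_wasserstein} via $|\cM_2(M_n(\om))^{1/2}-\cM_2(\mu)^{1/2}|\le W_2(M_n(\om),\mu)$, or from the same uniform integrability you invoke. Your measurability remarks (measurable selection of $\gamma_n(\om)$, supremum over rational $t$) cover points the paper leaves implicit.

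One small thing to tighten is the uniqueness of $\bar\gamma$. The paper's Theorem \ref{thm:brenier} only says that each optimal plan is induced by a gradient map. Uniqueness needs the standard extra step: the average of two optimal plans is again optimal, hence also supported on a graph, which forces the two plans to coincide.
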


\subsection{Bayesian gradient flow paths}

In the Bayesian setting, we are free to choose the parameter space. To consider the general gradient flow theory of $\WL$, we now restrict our priors and posteriors to be elements of $\wlac$, i.e. random probability measures with absolutely continuous realisations. 

Given the Wasserstein consistency result of Theorem \ref{thm:Posterior_consistency_wasserstein}, we formulate the proposition below, in a similar manner to Proposition \ref{prop:WG_convergence_mollification} in Appendix \ref{App:mollification}.
\begin{proposition}\label{prop:WG_convergence_bayesian}
    Let $\cF$ be Wasserstein $C^1$ in the sense of definition \ref{def:Wasserstein_C1}, with $\grad\delta \cF(\mu)(\cdot)\in L^2(\mu;\R^d)$. Let the population measure for the data $(X_n)_{n\geq1}$ be denoted $\mu\in\Pac$, $\Pi\in\wlac$ be a prior over $\Pac$, and denote the posterior $\Pi_n = \Pi(\cdot | X_1,\dots,X_n)$ after $n$ observations.
    
    Then as $\ninf$
    \begin{enumerate}[label=\alph*)]
        \item For $\P$-almost every $\om\in\Omega$,
        \begin{equation}\label{eq:pathwise_WG_convergence_bayesian}
            \l\|\gradW \cF(\Pi_n(\om)) - \gradW \cF(\mu)\r\|^2_{\Pi_n(\om)}\to0
        \end{equation}
        \item 
        \begin{equation}\label{eq:c_WG_convergence_bayesian}
            \l\|\gradWL \cF(\Pi_n) - \gradWL \cF(\mu)\r\|^2_{\Pi_n}\to0
        \end{equation}
    \end{enumerate}
    where the tangent space norm for a) is deterministic from definition \ref{def:w2_tangent} and for b) is the random measure tangent space norm of definition \ref{def:Lifted_tangent}.
\end{proposition}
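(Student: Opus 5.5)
We first establish the pathwise statement a) on the full-probability event supplied by Theorem \ref{thm:Posterior_consistency_wasserstein}, on which $W_2(\Pi_n(\om),\mu)\to0$. We then lift to b) with the generalised DCT of Appendix \ref{App:Generalised_dct}, exactly as in the empirical-measure results of Section \ref{sec:4}. We implicitly assume that the hypotheses of Theorem \ref{thm:Posterior_consistency_wasserstein} hold: KL support, and uniform $(1+\delta)$-moments of prior and posteriors.

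For a), fix such an $\om$ and write $\nu_n\deq\Pi_n(\om)$, so that $\nu_n\to\mu$ in $W_2$. Then
\begin{equation*}
\l\|\gradW \cF(\nu_n) - \gradW \cF(\mu)\r\|^2_{\nu_n} = \int \l\|\grad\delta\cF(\nu_n)(x) - \grad\delta\cF(\mu)(x)\r\|^2 \nu_n(dx).
\end{equation*}
The natural way to control this is to read the Wasserstein $C^1$ hypothesis at the level of the gradient, i.e. $(\nu,x)\mapsto\grad\delta\cF(\nu)(x)$ jointly continuous on $\Pc\times\R^d$. This is the same reading the paper uses in Section \ref{subsec:4gfs} to extend $\gradW\cF$ continuously to $\Pc$.

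Split the integral over a ball $B_R$ and its complement. On $B_R$, the set $K\deq\{\nu_n\}_n\cup\{\mu\}$ is $W_2$-compact, so $K\times\overline{B_R}$ is compact. Joint continuity on this compact set gives uniform continuity, hence
\begin{equation*}
\sup_{x\in B_R}\l\|\grad\delta\cF(\nu_n)(x)-\grad\delta\cF(\mu)(x)\r\|\to0,
\end{equation*}
and the contribution from $B_R$ vanishes. On $B_R^c$ we need a growth bound of the form $\|\grad\delta\cF(\nu)(x)\|\le C_K(1+\|x\|)$, uniformly for $\nu\in K$. This is automatic if $\gradW\cF$ is Lipschitz in $x$, as in Proposition \ref{prop:WGF_lifted_stability}, and otherwise it is exactly what the hypothesis $\grad\delta\cF(\mu)\in L^2(\mu)$ has to be strengthened to. With the bound, the tail contribution is at most
\begin{equation*}
4C_K^2\int_{B_R^c}(1+\|x\|^2)\,\nu_n(dx).
\end{equation*}
Since $\nu_n\to\mu$ in $W_2$, the second moments of $\nu_n$ are uniformly integrable (Theorem \ref{thm:weak_wass}), so this tail is small uniformly in $n$ once $R$ is large. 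Sending $n\to\infty$ and then $R\to\infty$ gives a).

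For b), the quantity in \eqref{eq:c_WG_convergence_bayesian} is $\E_\om[Y_n(\om)]$, where $Y_n$ is the pathwise integrand from a), which tends to $0$ $\P$-a.s. The same growth bound gives the domination $Y_n(\om)\le 4C^2(1+\cM_2(\Pi_n(\om)))$. Assumption 3 of Theorem \ref{thm:Posterior_consistency_wasserstein}, $\sup_n\E[\cM_2(\Pi_n)^{1+\delta}]<\infty$, makes this family uniformly integrable. Vitali's theorem (or the generalised DCT) then yields $\E[Y_n]\to0$.

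The main obstacle is the growth constant $C$ in b). In a) it only needs to be uniform over the single compact set $K$, whose elements depend on $\om$; in b) it must be uniform over all of $\Pc$, or at least over sets of bounded second moment. The first thing I would try is to take $C$ from the global Lipschitz constant of $\gradW\cF$ in $W_2\times\R^d$, as assumed in Proposition \ref{prop:WGF_lifted_stability}. That constant gives the uniform bound
\begin{equation*}
\|\grad\delta\cF(\nu)(x)\|\le \|\grad\delta\cF(\delta_0)(0)\|+L(W_2(\nu,\delta_0)+\|x\|),
\end{equation*}
whose square is controlled by $1+\cM_2(\nu)+\|x\|^2$, and this is integrable thanks to assumption 3.
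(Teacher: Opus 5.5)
Your proposal is correct under the hypotheses you add. It shares the paper's two-step skeleton: pathwise convergence on the consistency event of Theorem~\ref{thm:Posterior_consistency_wasserstein}, then lifting to $\WL$ by domination or uniform integrability. The paper's proof, however, is only a pointer to Proposition~\ref{prop:WG_convergence_mollification}, and your version departs from that argument at the points that matter.

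For a), the paper uses continuity of $\nu\mapsto\nabla\delta\cF(\nu)(x)$ for each \emph{fixed} $x$. It then integrates the pointwise limit against $\Pi_n(\om)$ via Lemma~\ref{lemma:generalised_lebesgue_dominated_convergence_theorem}. That lemma is for a fixed measure, whereas here the integrating measure moves with $n$, so pointwise convergence in $x$ does not suffice. Your joint continuity of the gradient on the compact set $K\times\overline{B_R}$ gives uniform convergence on balls, which is exactly what a moving measure needs. (A Skorokhod representation plus joint continuity would also work.)

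You also correctly surface three things the paper leaves tacit:
\begin{itemize}
\item the consistency hypotheses of Theorem~\ref{thm:Posterior_consistency_wasserstein}, which are absent from the statement;
\item that Wasserstein $C^1$ must be read at the level of $\nabla\delta\cF$, since continuity of the first variation does not control its $x$-gradient;
\item a growth bound, since the ``sub-Gaussian control'' of the mollified case has no analogue for posteriors, and $\nabla\delta\cF(\mu)\in L^2(\mu)$ gives no domination along $\Pi_n(\om)$.
\end{itemize}
Your b), combining linear growth, assumption 3 of Theorem~\ref{thm:Posterior_consistency_wasserstein} and Vitali, is sound. Your constants are off by a factor of $2$, since $(1+\|x\|)^2\le 2(1+\|x\|^2)$, but this is immaterial.

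Once you adopt the global Lipschitz hypothesis of Proposition~\ref{prop:WGF_lifted_stability}, though, the argument collapses. Setting $x=x'$ in the Lipschitz bound gives $\|\nabla\delta\cF(\nu)(x)-\nabla\delta\cF(\mu)(x)\|\le L\,W_2(\nu,\mu)$ uniformly in $x$, hence
\[
\|\gradW\cF(\Pi_n(\om))-\gradW\cF(\mu)\|^2_{\Pi_n(\om)}\le L^2\,W_2^2(\Pi_n(\om),\mu).
\]
Then a) and b) follow at once from parts a) and b) of Theorem~\ref{thm:Posterior_consistency_wasserstein}, with b) bounded by $L^2d^2(\Pi_n,\mu)$. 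No truncation, compactness or Vitali is needed. Your compactness argument is only needed for a) under the weaker assumption of joint continuity plus a locally uniform linear growth bound.
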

The proof is analogous to that provided in Proposition \ref{prop:WG_convergence_mollification} of the Appendix: given the pathwise and $\WL$-convergence of the posterior to the population measure in place of the mollified empirical measure. Both gradients are well defined since $\pi\in\Pac$ and the realisations $\Pi(\om)\in\Pac$ for all $\om\in\Omega$ are absolutely continuous. The Wasserstein $C^{1}$ assumption allows us to pass consistency of the posterior to pointwise convergence of the gradients. Convergence in $\WL$ once again follows from a generalised DCT argument.

We now provide a stability result on Bayesian consistency for the gradient flow paths. Proposition \ref{prop:WGF_bayesian} below states that if we initialise the gradient flow using a sample from the posterior, after sufficiently many Bayesian updates the resulting gradient flow will be uniformly close to the true gradient flow. Recall that the random gradient flow as in definition \ref{def:lifted_w2_grad_flow} has Wasserstein gradient flow paths. This lets us provide the following proposition for the random gradient flow of the Bayesian posterior, demonstrating stability after sufficiently many Bayesian updates.

\begin{proposition}\label{prop:WGF_bayesian}
    Let $\pi_0\in\Pac$ be a measure and $\Pi\in\wlac$ a prior satisfying the conditions of Theorem \ref{thm:Posterior_consistency_wasserstein} i.e. $\pi$ is in the KL support of $\Pi$ and the posteriors satisfy a sufficient uniform bound on the expected moments.
    
    Let $\cF$ be Wasserstein $C^1$ as in definition \ref{def:Wasserstein_C1} with $\gradW \cF$ globally $L$-Lipschitz with respect to the $W_2$ metric. Let $\pi_t$ denote the Wasserstein gradient flow of $\cF$ initialised at $\pi_0\in\Pac$ and let $\Pi_{n,t}(\om)$ denote the lifted gradient flow of $\mF$ initialised at the posterior $\Pi_{n,0}$ i.e. pathwise,
    \begin{equation}\label{eq:bayesian_gradient_flow}
        \frac{\partial}{\partial t}\Pi_{n,t}(\om) = \divergence\bigg(\Pi_{n,t}(\om)\gradW\cF\big(
        \Pi_{n,t}(\om)\big)\bigg),\qquad \text{for all $\om\in\Omega$,}
    \end{equation}
    initialised at $\Pi_{n,0}(\om)$. Then, for each fixed $T>0$:
    \begin{enumerate}[label=\alph*)]
        \item $\sup_{t\in[0,T]}W_2(\Pi_{n,t}(\om),\pi_t)\to 0$ for $\P$-almost every $\om$.
        \item $\sup_{t\in[0,T]}d(\Pi_{n,t},\pi_t)\to 0$.
    \end{enumerate}
\end{proposition}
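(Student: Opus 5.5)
The plan is to combine the Grönwall stability estimate of Proposition \ref{prop:WGF_lifted_stability} with the Wasserstein consistency of Theorem \ref{thm:Posterior_consistency_wasserstein}. This mirrors the argument for Proposition \ref{prop:WGF_empirical}, with the posterior playing the role of the empirical measure. Throughout I read the true data-generating measure $\pi$ of Theorem \ref{thm:Posterior_consistency_wasserstein} as the initialisation $\pi_0$. I also regard the deterministic flow $\pi_t$ as the constant element $\pi_t(\om)=\pi_t$ of $\wlac$. Its energy is finite because $\pi_t$ stays in $\Pc$ under the Lipschitz flow, so the deterministic and random flows can be compared inside $\WL$.

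\emph{Pathwise bound.} Fix $\om$ and apply the coupling argument of Proposition \ref{prop:WGF_lifted_stability} with $M_0=\Pi_{n,0}$ and $N_0=\pi_0$. Take $X_0\sim\Pi_{n,0}(\om)$ and let $Y_0$ be its image under the Brenier map to $\pi_0$. Propagate both particles along the respective flows. The Lipschitz assumption on $\gradW\cF$, in both the measure and spatial variables, then gives
\[
\sup_{t\in[0,T]}W_2^2(\Pi_{n,t}(\om),\pi_t)\le e^{LT}\,W_2^2(\Pi_{n,0}(\om),\pi_0),
\]
with $L$ independent of $n$ and $\om$. Note that the correct form of the estimate carries squares on both sides; the exponent placement in the statement of Proposition \ref{prop:WGF_lifted_stability} should be read this way.

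\emph{Part a).} Theorem \ref{thm:Posterior_consistency_wasserstein}(a) gives $W_2(\Pi_{n,0}(\om),\pi_0)\to0$ for $\P$-a.e.\ $\om$. Substituting this into the pathwise bound yields the uniform-in-$t$ convergence immediately.

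\emph{Part b).} Take expectations in the pathwise bound. Since $\sup_t$ of an expectation is at most the expectation of the $\sup_t$,
\[
\sup_{t\in[0,T]}d^2(\Pi_{n,t},\pi_t)\le \E_\om\Big[\sup_{t\in[0,T]}W_2^2(\Pi_{n,t}(\om),\pi_t)\Big]\le e^{LT}d^2(\Pi_{n,0},\pi_0).
\]
The right-hand side tends to $0$ by Theorem \ref{thm:Posterior_consistency_wasserstein}(b).

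\emph{Main obstacle.} The delicate step is justifying the pathwise Grönwall argument for each $\om$. I need existence and uniqueness of the flows started at $\Pi_{n,0}(\om)$, which follows from Lipschitz well-posedness of the McKean--Vlasov ODE $\dot X_t=-\gradW\cF(\mu_t)(X_t)$ with $\mu_t$ the law of $X_t$. I also need the realisations to stay absolutely continuous, which holds because the flow is driven by a Lipschitz vector field. Finally I need measurability of $\om\mapsto\sup_t W_2(\Pi_{n,t}(\om),\pi_t)$. For this I would use continuity in $t$ to reduce the supremum to rational times, together with continuous dependence of the flow on its initial datum in $W_2$, which is itself given by the Grönwall bound.

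No generalised DCT is needed, because the Grönwall inequality dominates the whole path by the initial error uniformly in $n$.
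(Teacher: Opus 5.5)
Your proposal is correct and follows the paper's route exactly. The paper applies the pathwise Grönwall estimate of Proposition \ref{prop:WGF_lifted_stability} to the initialisations $\Pi_{n,0}$ and $\pi_0$, reading it (as you rightly note) with squares on both sides and with $L$ uniform in $n$ and $\om$, and combines it with parts a) and b) of Theorem \ref{thm:Posterior_consistency_wasserstein}. Your identification of $\pi$ with $\pi_0$, and your reading of the Lipschitz hypothesis as joint in the measure and spatial variables, are the same ones the paper's one-line proof implicitly makes when it invokes Proposition \ref{prop:WGF_lifted_stability}.
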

The argument is identical to that for the empirical measures in Proposition \ref{prop:WGF_empirical}, given that initialisations converge according to the Bayesian consistency shown in Theorem \ref{thm:Posterior_consistency_wasserstein}, and using Proposition \ref{prop:WGF_lifted_stability} to conclude. 

We thus consider this propagation of chaos result for the gradient flow to be a propagation of initialised uncertainty around the true parameter. In this manner, the uncertainty described by the posterior also propagates pathwise from the prior uncertainty about the initialisation, this uncertainty controlled after sufficiently many Bayesian updates of the posterior.

\section{Transformers with $\WL$ tokens}\label{sec:6}
\subsection{Self-attention as a gradient flow }

We predominantly follow Rigollet's analysis from \cite{rigollet2026meanfielddynamicstransformers} sections 2 and 3 (and the preceding work of collaborators \cite{glpr}) lifting the convergence result to $\WL$. The reader is naturally directed to Vaswani et al. \cite{Vaswani} for a more comprehensive review of transformer architecture. \newline

A vanilla transformer concatenates attention blocks, consisting of an attention layer and a feed-forward (MLP) layer, both components typically followed by normalization and residual connections. Letting $X=(X_1, \ldots, X_n) \in\l(\R^{d}\r)^n$ denote the matrix of $n$ tokens of dimension $d$, a single layer performs the composition of the attention and MLP for each token.

The attention block computes a soft-max weighted average of all other tokens. Given (learnable) matrices $Q,K,V\in\R^{d\times d}$ and a temperature parameter $\beta>0$, the attention operator takes the form
\begin{equation}
    \label{eq:attention}
    \mathrm{Att}(X)_i
    = \sum_{j=1}^n  \frac{\exp\big(\beta \langle QX_i, KX_j\rangle\big)}
    {\sum_{k=1}^n \exp\big(\beta \langle QX_i, KX_k\rangle\big)}  \, VX_j.
\end{equation} 
\eqref{eq:attention} can be interpreted as a non-linear interaction rule; each token updates as a weighted average of all others, with weights depending on their (cosine) similarity in feature space.

A transformer processes data sequentially through layers via updates of the form $X_{k+1} = X_k + F(X_k)$
where $X_k\in\l(\R^{d}\r)^n$ denotes the matrix of token embeddings at layer $k$ in the network, and $F$ encodes a composition of attention, feed-forward and normalizing operations. Interpreting the layer index as a time discretisation, passing to the continuous-time limit $\dot X_{t} = F(X_t)$, the  resulting system is regarded as a non-linear flow on $(\R^d)^n$. The attention mechanism defines a non-local velocity field, coupling each token particle to all others through a kernel dependent on the pairwise similarities. 

This perspective places transformers within the theory of interacting particle systems and mean-field dynamics as in \cite{rigollet2026meanfielddynamicstransformers} where a simplified continuous-time self-attention model is introduced, retaining self-attention and layer normalization while dropping the MLP layer. Let $x_i(t)\in \bS^{d-1}$ denote the position of the $i$-th token at time $t$, and let $\beta>0$ be an inverse-temperature parameter.
The self-attention (SA) dynamics are given by
\begin{equation}\label{eq:SA}
    \dot x_i(t) 
    = \proj_{x_i(t)}\left(
        \frac{1}{Z_{\beta,i}(t)} 
        \sum_{j=1}^n e^{\beta \langle x_i(t), x_j(t)\rangle}\, x_j(t)
      \right),
      \qquad 
      Z_{\beta,i}(t) = \sum_{k=1}^n e^{\beta \langle x_i(t), x_k(t)\rangle},
\end{equation}
where $\proj_x y = y - \langle x,y\rangle x$ denotes the orthogonal projection onto $\bS^{d-1}$, enforcing a layer normalization effect, restricting all tokens to the unit sphere.
The exponential weights represent attention scores, while the normalization ensures that each row of the attention matrix forms a probability vector.

\eqref{eq:SA} describes $n$ particles on the sphere interacting through the kernel $K(x,y)=e^{\beta\langle x,y\rangle}$.
The interplay between this non-local interaction and the spherical geometry produces rich collective dynamics such as clustering and synchronization studied in great depth by Rigollet \cite{rigollet2026meanfielddynamicstransformers} and collaborators Geshkovski et al. \cite{glpr}, as well as Bruno et al. \cite{bruno2026a}.

As Rigollet remarks, to leverage the machinery of optimal transport, it would be valuable if one could identify the dynamics in \eqref{eq:SA} with a Wasserstein gradient flow. However, it is shown in \cite{sinkformers} that \eqref{eq:SA} cannot in fact describe the Wasserstein gradient flow of a functional due to a symmetry issue. Circumventing the issue, \cite{rigollet2026meanfielddynamicstransformers} provides the surrogate, un-normalised self-attention (USA) model
\begin{equation}\label{eq:USA}
    \dot x_i(t)
    = \proj_{x_i(t)}\left(\frac{1}{n}\sum_{j=1}^n e^{\beta \langle x_i(t), x_j(t)\rangle}\, x_j(t)\right),
\end{equation}
demonstrating that it retains many of the features of the standard (SA) model, while enjoying a more tractable analysis. One reformulates \eqref{eq:USA} using the empirical distribution of the tokens $\mu_t = \frac{1}{n}\sum_{i=1}^n\delta_{x_i(t)}$, evolving according to the continuity equation \eqref{eq:continuity} with the family of vector fields given by
\begin{equation}\label{eq:continuity_2}
    v_t(x)=\proj_{(\cdot)}\int e^{\beta\langle x,y\rangle} y\,\mu_t(dy).
\end{equation} 
For \eqref{eq:USA}, the continuity equation \eqref{eq:continuity} is the Wasserstein gradient ascent of the interaction energy
\begin{equation}
\label{eq:E}
    \cE^{\beta}(\mu)
    = \frac1{2\beta} \iint e^{\beta\langle x,y\rangle}\,d  \mu(x)d  \mu(y),
\end{equation}
pointing in the direction of energy increase.

Recall from the discussion following the Wasserstein $C^1$ definition \ref{def:Wasserstein_C1} that one can extend the gradient flow to discrete measures. Indeed, one verifies the interaction energy is Wasserstein $C^1$ in the sense of that definition, and the gradient flow theory applies to the discrete empirical measure without further modifications (e.g. mollification).

We have explored a theory where the base space is $\R^d$, whereas the normalisation restricts the particles to lie on the unit sphere $\bS^{d-1}$. The Wasserstein gradient thus needs to lie on the tangent space of the sphere - one identifies the Wasserstein gradient of a functional on the sphere as the projection of the Wasserstein gradient back onto the sphere $\proj_{(\cdot)} \grad \delta \cF(\mu)(\cdot) $.

\subsection{Sampling $\WL$ tokens}

The fast-developing theory for the mean-field (USA) dynamics applies once we fix the token initialisation, and then let the token distribution evolve through the random (USA) gradient flow \eqref{eq:USA}. Suppose that rather than fixing a token distribution, we consider sampling tokens from some distribution over the embedding space. This is the perspective of the empirical measure as a random measure from section \ref{sec:4}, and the Bayesian posterior random sampling from section \ref{sec:5}. Our $\WL$ framework provides the correct setting for us to explore the (USA) paths induced by \eqref{eq:continuity_2} simultaneously, discussing properties of $\WL$ distances and convergence pathwise using the $\WL$ random gradient flow, as well as at the lifted functional level.

Following definition \ref{def:random_functional}, we define the \textit{random} interaction energy $\cE^{\beta}:\Omega\times\wlsac\to\R$,
\begin{equation}\label{eq:random_interaction}
    \cE^{\beta}(\omega,M) = \cE^{\beta}(M(\om))
    = \frac1{2\beta} \iint e^{\beta\langle x,y\rangle}\,M(\omega)(dx)M(\om)(dy),
\end{equation}
and the corresponding lifted functional
\begin{equation*}\label{eq:lifted_interaction}
    \mE^{\beta}(M) = \E_\om\l[\cE^{\beta}(M(\om))\r]
    = \frac1{2\beta} \E_\om\l[\iint e^{\beta\langle x,y\rangle}\,M(\om)(dx)M(\om)(dy)\r].
\end{equation*}
From Theorem \ref{thm:LiftedgradW}, this defines a random Wasserstein gradient flow, given by
\begin{equation}\label{eq:lifted_interaction_gf}
    \partial_t M_t(\om) =  \divergence\bigl(M_t(\om) \gradWL \mE(M_t)(\om)\bigr),
\end{equation}
where
\begin{equation*}
    \gradWL \mE(M_t)(\om)(\cdot) = \nabla\delta \cE^{\beta}\big(M_t(\om)\big)(\cdot) = \beta\int e^{\beta\langle \cdot, y\rangle}yM_t(\om)(dy).
\end{equation*}
We need to project this gradient flow back onto the tangent space of the unit sphere $\cT\bS^{d-1}$, considering now the space $\WLS$ of random probability measures over $\bS^{d-1}$ (i.e. definition \ref{def:rm_space} mutandis mutatis for $\bS^{d-1}$ in place of $\R^{d}$). Much of the theory of section \ref{sec:3} applies to $\WLS$ with little effort after using the canonical great-circle distance on the sphere in place of the Euclidean distance as the ground metric for the Wasserstein distance over $\cP_2(\bS^{d-1})$, and projecting the samples of tangent vectors in the tangent space $\cT\WL$ onto $\bS^{d-1}$. Thus over $\WLS$, one replaces the random velocity field in \eqref{eq:lifted_interaction_gf} with
\begin{equation}\label{eq:lifted_interaction_proj_gf}
  \proj_{(\cdot)} \nabla\delta \cE^{\beta}(M_t(\om))(\cdot) = \proj_{(\cdot)}\l(\int e^{\beta\langle \cdot, y\rangle}yM_t(\om)(dy)\r).
\end{equation}

Given some oracle token sampling distribution $\mu_0$, a very natural question is the stability of the empirical measure approximation. Under the assumption that $\mu_0\in\wlac$ the theory of section \ref{sec:4} applies. Viewing the transformer as a token flow map, we would like to quantify the stability of the empirical measure approximate gradient flow evolution, the continuous time analogue of tokens passing through discrete layers of self-attention. In particular, recall in Proposition \ref{prop:WGF_empirical} that we provided an estimate for uniform convergence of the empirical gradient flow paths under Lipschitz Wasserstein continuity of the given functional instantiating the flow. Showing the Lipschitz continuity of the interaction energy over the unit sphere delivers the following proposition, similar in nature to that provided in Proposition 3.1 of the recent work \cite{agazzi2026}.

\begin{proposition}\label{prop:discrete_interaction_flow_stability}
    Let $\mu_t$ denote the gradient flow \eqref{eq:lifted_interaction_gf} induced by the vector field \eqref{eq:lifted_interaction_proj_gf} initialised at $\mu_0\in\Pac$. Let $\hmu_t^{n}$ denote the gradient flow above initialised at the empirical measure $\hmuno$, sampled using the strong mixing conditions of Proposition \ref{prop:mixing_convergence}.
    Then we obtain pathwise and $\WL$ convergence of the lifted gradient flow with
    \begin{align}
        \sup_{t\in[0,T]}  W_2^2(\hmu_t^{n}(\om), \mu_t) &\leq e^{Lt} W_2^2(\hmuno), \mu_0), \qquad \text{$\P$-a.s.}\\
        \sup_{t\in[0,T]}  d^2(\hmu_t^{n}, \mu_t) &\leq e^{Lt} d^2(\hmun, \mu_0),
    \end{align}
    for $L=(\beta + 1)^2e^{2\beta} + 1 + 2(\beta + 2)e^\beta$.
\end{proposition}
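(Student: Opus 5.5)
The plan is to reduce the statement to the Grönwall estimate of Proposition \ref{prop:WGF_lifted_stability}, transposed to the sphere. The only genuinely new work is to verify the two Lipschitz bounds used there for the projected interaction field
\[
v(\mu)(x) \deq \proj_x\Big(\int e^{\beta\langle x,y\rangle} y\,\mu(dy)\Big), \qquad x\in\bS^{d-1},
\]
and to track the constants so that they produce the stated $L$. Wasserstein $C^1$-ness of $\cE^\beta$ is clear because $(\mu,x)\mapsto\int e^{\beta\langle x,y\rangle}\mu(dy)$ is jointly continuous on the compact sphere. Hence the flow is well posed from the discrete initialisation $\hmuno$ without mollification, as discussed after Definition \ref{def:Wasserstein_C1}.

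\textbf{Step 1 (measure difference).} Fix $x$ and write $g_x(y)=e^{\beta\langle x,y\rangle}y$. On $\bS^{d-1}$ we have $|g_x|\le e^\beta$. Moreover, $\nabla_y g_x = e^{\beta\langle x,y\rangle}(I+\beta y x^\top)$ has operator norm at most $(1+\beta)e^\beta$. So $g_x$ is $(1+\beta)e^\beta$-Lipschitz. Take an optimal coupling $\gamma$ of $\mu,\nu$. Then
\[
\Big\|\int g_x\,d\mu-\int g_x\,d\nu\Big\|\le (1+\beta)e^\beta\int\|y-y'\|\,d\gamma\le(1+\beta)e^\beta W_2(\mu,\nu).
\]
Since $\proj_x$ is a contraction, $L_1=(\beta+1)e^\beta$, and $L_1^2=(\beta+1)^2e^{2\beta}$ gives the first term of $L$. (Euclidean chordal distance dominates nothing badly here. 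I would work with the chordal metric on the embedded sphere, or note that it is comparable to the geodesic metric.)

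\textbf{Step 2 (spatial difference).} Write $h(x)=\int g_x\,d\nu$. Then $\|h\|\le e^\beta$, and differentiating in $x$ shows $h$ is $\beta e^\beta$-Lipschitz. For the projection, $\proj_x h(x)-\proj_{x'}h(x') = (h(x)-h(x'))-\langle x,h(x)\rangle x+\langle x',h(x')\rangle x'$. Splitting the last difference by adding and subtracting mixed terms bounds it by $\|x-x'\|(\beta e^\beta+2e^\beta)$, up to combining the pieces. This yields $L_2=(\beta+2)e^\beta$, which matches the term $2(\beta+2)e^\beta$ once inserted into $L=L_1^2+1+2L_2$ from the proof of Proposition \ref{prop:WGF_lifted_stability}. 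I expect this constant bookkeeping to be the main (if mild) obstacle: getting exactly $(\beta+2)$ rather than a looser constant requires organising the projection terms carefully.

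\textbf{Step 3 (Grönwall and lifting).} Run the coupling argument of Proposition \ref{prop:WGF_lifted_stability} pathwise. For fixed $\om$, couple $X_0\sim\hmuno$ and $Y_0\sim\mu_0$ optimally; since $\hmuno$ is discrete, use an optimal plan rather than a Brenier map, which the argument allows. Propagate both along the flow. The sphere constraint is harmless because the dynamics preserve $\bS^{d-1}$ and the ambient computation of $\frac{d}{dt}\|X_t-Y_t\|^2$ is unchanged. Grönwall then gives $\sup_{t\le T}W_2^2(\hmu^n_t(\om),\mu_t)\le e^{LT}W_2^2(\hmuno,\mu_0)$. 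Taking expectations gives the $\WL$ bound; integrability is automatic because everything is supported on the compact sphere. Finally, the right-hand sides tend to zero by Proposition \ref{prop:mixing_convergence}, and at rate $r_\alpha$ by Theorem \ref{thm:mixing} where applicable.
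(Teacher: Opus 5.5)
Your proposal is correct and follows the paper's proof. The paper likewise verifies in its appendix the measure-Lipschitz constant $L_1=(\beta+1)e^\beta$ (mean value theorem plus an optimal coupling) and the spatial constant $L_2=(\beta+2)e^\beta$, then invokes the Grönwall coupling argument of Proposition~\ref{prop:WGF_lifted_stability} to get $L=L_1^2+1+2L_2$; the exact $(\beta+2)e^\beta$ you flagged comes from grouping the difference as $\proj_x\bigl(h(x)-h(x')\bigr)+(\proj_x-\proj_{x'})h(x')$ and using that $\proj_x$ is a contraction, $\|h\|\le e^\beta$, and $\|xx^\top-x'x'^\top\|\le 2\|x-x'\|$ on $\bS^{d-1}$.
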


This follows from Proposition \ref{prop:WGF_empirical} once we verify the Wasserstein-Lipschitz constants, targeting
\begin{align*}
    \l\|\gradW \cF(\mu)(x) - \gradW \cF(\nu)(x) \r\| &\leq L_1 W_2(\mu, \nu)\\
    \l\|\gradW \cF(\mu)(x) - \gradW \cF(\mu)(y)\r\| &\leq L_2 \l\|x - y\r\|,
\end{align*}
where the first is Lipschitz in measure $\Pc$, and the second is Lipschitz in space $\bS^{d-1}$. A full calculation is found in Appendix \ref{App:transformer_lip}, after which
the proof of Proposition \ref{prop:WGF_lifted_stability} delivers $L=L_1^2 + 1 +  2L_2$.
The stability decays exponentially in time - stronger constraints could provide a stronger convergence in this regard, but the rates provide a strong handle on sufficient sample token (context) size required to approximate the population token measure to arbitrary precision for any given depth of self-attention blocks in this simplified model.

Under an i.i.d. sample, the rate here, in token sample size $n$, retains the Fournier-Guillin rates\footnote{with the leading constant specialised to the sphere.} as in Theorem \ref{thm:FG}, again providing a sufficient sample size to approximate the token population measure under uncertainty. As mentioned, a very similar result was formalised recently in Proposition 3.1 of \cite{agazzi2026}; however, the definition of $\WL$ formalises the notion of the random gradient flow in this setting. One might also want to allow some weak correlation between nearby tokens in sequence, analogously to nearby words being closely dependent in a sentence in the NLP setting - we also provided (adjusted) convergence rates in the non-i.i.d. case of strong mixing, thus under an $(\alpha)$-mixing token sample, one finds the rates of Theorem \ref{thm:mixing} for the uniform convergence.\newline

Geshkovksi et al. \cite{glpr} remark 3.3 briefly mentions alternatives to different initial measures instantiating the gradient flow than the empirical measure. In light of our analysis in section \ref{sec:5}, we are placed nicely to discuss a Bayesian formulation of Proposition \ref{prop:discrete_interaction_flow_stability} above. In particular, consider a procedure whereby we approximate the random initial token (context) sample by means of a Bayesian posterior, and then ask for the stability of the approximated flow evolving according to continuous time self-attention dynamics. We formulate an analogue for the above propositions, now for the Bayesian setting, below.

\begin{proposition}\label{prop:Bayesian_interaction_flow_stability}
    Let $\mu_t$ denote the gradient flow \eqref{eq:lifted_interaction_gf} induced by the vector field \eqref{eq:lifted_interaction_proj_gf} initialised at $\mu_0\in\Pac$. Suppose $\Pi\in\wlac$ is a prior satisfying the conditions of Theorem \ref{thm:Posterior_consistency_wasserstein} i.e. $\pi$ is in the KL support of $\Pi$ and the posteriors satisfy a sufficient uniform bound on the expected moments. 
    
    Let $\Pi_{n,t}(\om)$ denote the gradient flow above initialised at a posterior sample $\Pi_{n,0}(\om)$. Then we obtain pathwise and $\WL$ convergence of the lifted gradient flow with
    \begin{align}
        \sup_{t\in[0,T]}  W_2^2(\Pi_{n,t}(\om), \mu_t) &\leq e^{Lt} W_2^2(\Pi_{n,0}(\om), \mu_0),\qquad \text{$\P$-a.s.}\\
        \sup_{t\in[0,T]}  d^2(\Pi_{n,t}, \mu_t) &\leq e^{Lt} d^2(\Pi_{n,0}, \mu_0),
    \end{align}
    for the same Lipschitz constant $L=(\beta + 1)^2e^{2\beta} + 1 + 2(\beta + 2)e^\beta$ as before.
\end{proposition}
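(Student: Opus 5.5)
The proof follows the template of Proposition \ref{prop:discrete_interaction_flow_stability}, with the empirical initialisation replaced by a posterior sample. Everything reduces to the deterministic Grönwall estimate of Proposition \ref{prop:WGF_lifted_stability}, applied path by path. The only inputs are the two Lipschitz constants of the projected interaction field \eqref{eq:lifted_interaction_proj_gf}:
\begin{equation*}
L_1=(\beta+1)e^{\beta}\ \text{(in the measure, w.r.t. } W_2\text{)},\qquad L_2=(\beta+2)e^{\beta}\ \text{(in space on } \bS^{d-1}\text{)}.
\end{equation*}
These are the constants already computed in Appendix \ref{App:transformer_lip} for the empirical case. They do not depend on how the initial measure is produced, so they transfer verbatim and give $L=L_1^2+1+2L_2$, as stated.

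\textbf{Step 1 (pathwise bound).} Fix $\om$ outside a $\P$-null set. Then $\Pi_{n,0}(\om)\in\cP_{2,ac}(\bS^{d-1})$, so a Brenier-type optimal map to $\mu_0$ exists. Alternatively, and more robustly, take any optimal coupling $(X_0,Y_0)$, since the Grönwall argument only needs an optimal coupling at time $0$.

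Propagate both particles along the characteristics of \eqref{eq:lifted_interaction_proj_gf}. The interaction energy is Wasserstein $C^1$ with a globally Lipschitz, bounded velocity field on the compact sphere, so these ODEs are well posed.

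Next, differentiate $\|X_t-Y_t\|^2$. Split the velocity difference into a measure part, controlled by $L_1W_2$, and a spatial part, controlled by $L_2\|X_t-Y_t\|$, and apply Young's inequality. This gives $\dot f\le Lf$ for $f(t)=\E\|X_t-Y_t\|^2\ge W_2^2(\Pi_{n,t}(\om),\mu_t)$. Grönwall yields $W_2^2(\Pi_{n,t}(\om),\mu_t)\le e^{Lt}W_2^2(\Pi_{n,0}(\om),\mu_0)$ for each $t\le T$.

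One point needs care: the projection $\proj$ keeps the particles on $\bS^{d-1}$, and I will use the chordal (Euclidean) metric for the coupling cost. This is consistent with how Appendix \ref{App:transformer_lip} states $L_2$. If one insists on the geodesic ground metric, the two metrics are equivalent up to the factor $\pi/2$, which only changes constants.

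\textbf{Step 2 (lift to $\WL$).} Take expectations in $\om$ of the pathwise inequality. This needs $\om\mapsto W_2(\Pi_{n,t}(\om),\mu_t)$ to be measurable, which follows from Appendix \ref{App:Wasserstein_rv} together with measurability of the flow map $\Pi_{n,0}(\om)\mapsto\Pi_{n,t}(\om)$; that map is even Lipschitz in $W_2$ by Step 1. Integrability is automatic because $\bS^{d-1}$ is bounded, so every $W_2$ is at most $\pi$. Summing over $\om$ gives the $d^2$ bound, which is uniform in $t\in[0,T]$. Strictly, the supremum over $t$ should be taken inside the expectation after bounding pathwise by $e^{LT}$. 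With that reading, the displayed $e^{Lt}$ should be understood as $e^{LT}$, and I would note this.

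\textbf{Consistency.} Theorem \ref{thm:Posterior_consistency_wasserstein} gives $W_2(\Pi_{n,0}(\om),\mu_0)\to0$ almost surely and $d(\Pi_{n,0},\mu_0)\to0$, taking $\pi=\mu_0$. The displayed bounds then force both left-hand sides to $0$. Its moment hypotheses hold trivially on the compact sphere.

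\textbf{Main obstacle.} The expected difficulty is not analytic but a matter of transfer: Theorem \ref{thm:Posterior_consistency_wasserstein} and Proposition \ref{prop:WGF_lifted_stability} are stated on $\R^d$, whereas here they are needed on $\bS^{d-1}$. I would handle this by viewing $\cP(\bS^{d-1})\subset\Pc$ with the chordal cost. Then $\bS^{d-1}$ is closed, weak and $W_2$ convergence coincide, and the Schwartz argument goes through unchanged.
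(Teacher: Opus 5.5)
Your proposal is correct and follows the paper's route: the paper also obtains this result by plugging the Lipschitz constants $L_1=(\beta+1)e^{\beta}$ and $L_2=(\beta+2)e^{\beta}$ from Appendix~\ref{App:transformer_lip} into the Gr\"onwall estimate of Proposition~\ref{prop:WGF_lifted_stability}, with Bayesian consistency (Theorem~\ref{thm:Posterior_consistency_wasserstein}) taking the place of the empirical law of large numbers. Your additional care only tightens details the paper leaves implicit: you couple via an arbitrary optimal plan rather than a Brenier map on the sphere, fix the chordal cost, check measurability through the $W_2$-Lipschitz flow map, and read the prefactor as $e^{LT}$.
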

As is the case in the propositions above, this justifies the quality of the mean-field approximation under sampling from the Bayesian posterior over the token context distribution after sufficiently many Bayesian updates.

\section{Discussion}
This work provides the natural framework to treat the problem of optimal transport under uncertainty. We lift the classical theory of section \ref{sec:2} to the paths of random probability measures in section \ref{sec:3} and consider the optimal transport problems induced by their realisations, hence discussing a random optimal transport. In particular, we introduce the $L^2$ over Wasserstein space $\WL$ which, pathwise, inherits all of the theory of deterministic transport, allowing us to define a natural random theory in section \ref{sec:3} via $\WL$ optimal transport with that same pathwise theory extended to the $L^2$ level $\WL$. Crucially, we demonstrate that the formal Riemannian structure $\WL$ induces over the space of random probability measures, gives rise to random gradient flow with Wasserstein gradient flow sample paths, thus $\WL$ be considered the right setting within which to explore random dynamics in Wasserstein space.

The framework developed in section \ref{sec:3} lays a solid foundation to discuss both a frequentist and Bayesian analysis of the optimal transport problem in sections \ref{sec:4} and \ref{sec:5} respectively. We ensemble numerous existing results of statistical optimal transport under our formalised $\WL$ framework for empirical measures, demonstrating convergence results in the large sample case for distances, geodesics, and gradient flows both in the i.i.d. and non-i.i.d. cases. We further demonstrate the suitability of $\WL$ for a Bayesian treatment of optimal transport, demonstrating a Wasserstein consistency result for suitably defined random measure priors in $\WL$ (over $\Pc$), which delivers posterior consistency for distances, geodesics and gradient flows within our framework. 

In section \ref{sec:6} we show the applications of $\WL$ for considering random token sampling for transformer models using (un-normalised) self attention flow paths, providing non-asymptotic bounds for the mean-field approximation using empirical measure token samples, and samples from a Bayesian posterior. We prove that uniform convergence of the flow still holds even when the samples are correlated.
\newline

We conclude by listing some possible directions for future study. Given the natural theory we were able to prescribe for the $L^2$ over Wasserstein space $\WL$, the frequentist and Bayesian analyses fell into place with little additional effort. A first question is whether one can determine resulting $\WL$ posterior contraction/merging rates for the posterior in the Bayesian setting under more specific assumptions within our framework.

Another avenue of direction is generalising the $L^2$ over Wasserstein space $\WL$ to consider either a different ground space than $\R^d$. One could naturally generalise to other metric spaces given the framework we have outlined. While we have focused on the $L^2$ space $\WL$ to access the theory of Wasserstein gradient flows, one could equally consider an $L^p$ over Wasserstein, with the $p=1$ case likely providing a useful framework, just as in the classical setting.

Seeing how the $\WL$ framework can be applied to recent works on noise injection into transformer self-attention dynamics \cite{mckeanvlasov}, \cite{agazzi2026}, \cite{koubbi2026homogenizedtransformers} is another attractive direction, considering a continuous injection of noise to $\WL$ gradient flow paths via an appropriate noisy functional. To move towards more general generative modelling, one may represent an unknown data distribution as a random measure in $\WL$ and then study how the corresponding estimators behave over time, potentially providing a principled way to study how techniques such as flow-matching or denoising generalise under distributional uncertainty.

Entropic regularization is one of the most active research areas in modern optimal transport, enabling fast computation of optimal transport distances using the Sinkhorn algorithm \cite{Cuturi}. A natural computational direction given our framework is to consider an $L^{2,\varepsilon}$ over entropic Wasserstein with appropriate modifications. Further restrictions/extensions of the Wasserstein space, such as the Bures-Wasserstein, Gromov-Wasserstein or Wasserstein-Fisher-Rao spaces could also be explored easily with appropriate modifications of the $\WL$ space.

\section*{Acknowledgements}
The authors would like to thank Philippe Rigollet for discussions which significantly advanced the scope of this work. The authors would also like to thank Karan Andrade, Vasudev Joy and Kane Rowley, members of the Optimal Transport DRP reading group at Imperial, for their valuable insights.

\addtocontents{toc}{\protect\setcounter{tocdepth}{1}}

\hypertarget{app}{\appendix}

\setcounter{equation}{0}
\section{Classical Optimal transport (Section 2)}
\subsection{Wasserstein functional is lower semi-continuous with respect to the weak topology}\label{App:lsc}
\begin{proof}
    Let $\mu_n\wto\mu\in\Pc$ and $\nu_n\wto\nu\in\Pc$. We would like to show lower semi-continuity,
    \begin{equation}\label{eq:lsc}
        \liminf_{\ninf}W_2(\mu_n,\nu_n) \geq W_2(\mu,\nu).
    \end{equation}
    Let $\gamma_n\in\Gamma_{\mu_n,\nu_n}$ be an optimal coupling for each $n\geq1$, and let $L = \liminf_{\ninf}W_2^2(\mu_n,\nu_n)$. Let $n_k$ be a subsequence so that the Wasserstein squared distance $W_2^2(\mu_{n_k},\nu_{n_k})\to L$. Clearly along the subsequence we still have $\mu_{n_k}\wto\mu,\nu_{n_k}\wto\nu$, and Prokhorov's Theorem means that these subsequences are both tight, i.e. for all $\varepsilon>0$, there are compact sets $K_\mu,K_\nu\subset \R^d$ such that
    \begin{equation*}
        \sup_{k\geq1}\mu_{n_k}(K_\mu^c)\leq\frac{\varepsilon}{2}, \qquad \sup_{k\geq1}\nu_{n_k}(K_\nu^c)\leq\frac{\varepsilon}{2}.
    \end{equation*}
    This immediately implies that the sequence of marginals is tight, indeed $K= K_\mu\times K_\nu\subset \R^d\times\R^d$ is compact and we can calculate
    \begin{align*}
        \sup_{k\geq1}\gamma_{n_k}(K^c) &=  \sup_{k\geq1}\gamma_{n_k}((K_\mu\times K_\nu)^c) \leq \sup_{k\geq1}\gamma_{n_k}(K_\mu^c\times\R^d) + \gamma_{n_k}(\R^d\times K_\nu^c)\\
        &= \sup_{k\geq1}\mu_{n_k}(K_\mu^c) + \nu_{n_k}(K_\nu^c) \leq \frac{\varepsilon}{2} + \frac{\varepsilon}{2} = \varepsilon.
    \end{align*}
    Applying Prokhorov's Theorem again to the tight sequence $\gamma_{n_k}$, there is a further subsequence $\gamma_{n_{k_j}}$ that converges weakly to some $\gamma$, a probability measures on $\R^d\times\R^d$. We verify the marginals of $\gamma$ are $\mu$ and $\nu$: for any bounded continuous test function $f\in C_b(\R^d)$, from the definition of weak convergence
    \begin{equation*}
        \int f(x)\gamma(dx,dy) = \lim_{j\to\infty}\int f(x) \gamma_{n_{k_j}}(dx,dy) = \lim_{j\to\infty}\int f(x) \mu_{n_{k_j}}(dx) = \int f(x) \mu(dx),
    \end{equation*}
    idem for $\nu$, hence the marginals are indeed correct and $\gamma\in\Gamma_{\mu,\nu}$. By Fatou's lemma,
    \begin{equation*}
        L=\liminf_{j\to\infty}\int \|x-y\|^2\gamma_{n_{k_j}}(dx,dy) \geq \int \|x-y\|^2\gamma(dx,dy).
    \end{equation*}
    But the coupling $\gamma$ is not necessarily optimal, hence
    \begin{equation*}
        \int \|x-y\|^2\gamma(dx,dy) \geq W_2^2(\mu,\nu),
    \end{equation*}
    leaving $L\geq W_2^2(\mu,\nu)$ as desired.
\end{proof}

\subsection{Proof of Theorem \ref{thm:weak_wass}: Wasserstein topology is equivalent to the weak topology with convergence of the 2nd moment}\label{App:Wasserstein_is_weak_+_moments}
\begin{proof}
    $(\implies)$ Assume $W_2(\mu_n,\mu)\to0$. We work to show that $\mu_n\wto\mu$ and $M_2(\mu_n)\to \cM_2(\mu)$.

    We first demonstrate that the second moments $M_2(\mu_n)$ are uniformly bounded. From the $W_2$ triangle inequality,
    \begin{equation*}
        \sup_{n\geq1}\cM_2(\mu_n)^{\frac{1}{2}} = \sup_{n\geq1}W_2(\mu_n,\delta_0) \leq \sup_{n\geq1}W_2(\mu_n,\mu) + W_2(\mu,\delta_0) = \sup_{n\geq1}W_2(\mu_n,\mu) + \cM_2(\mu)^{1/2} \leq C
    \end{equation*}
    where we've used the convergence of $W_2(\mu_n,\mu)\to0$ to conclude the sequence is bounded hence the moments are bounded uniformly in $n$.\\

    (Weak convergence). We use Portmanteau's equivalence. Let $f\in C_b(\R^d)$ be bounded continuous, with $|f|\leq K$. For each $n\geq1$, let $\gamma_n\in\Gamma_{\mu_n,\mu}$ be an optimal coupling. Then
    \begin{align*}
        \l|\int f d\mu_n - \int f d\mu\r| &= \l|\int \l[f(x) - f(y)\r]\gamma_n(dx,dy)\r|,
    \end{align*}
    which is bounded above for any $R>0$ by
    \begin{equation}\label{eq:portint}
        \int_{\{\|x\|\leq R\}, \{\|y\|\leq R\}} |f(x) - f(y)|\gamma_n(dx,dy) + \int_{\{\|x\|> R\}\cup \{\|y\|\leq R\}} 2K\gamma_n(dx,dy).
    \end{equation}
    The second term can be made arbitrarily small using Markov's inequality
    \begin{equation*}
        \gamma_n\big(\l\{\|x\|>R\cup\|y\|>R\r\}\big) \leq \mu_n\big(\|x\|>R\big) + \mu\big(\|y\|>R\big)\leq \frac{\cM_2(\mu_n) + \cM_2(\mu)}{R^2} \leq \frac{C^2 + \cM_2(\mu)}{R^2}
    \end{equation*}
    which can be taken to $0$ uniformly in $n$ as $R\to\infty$. For the first term, $f$ restricted to the compact set $\bar B_R\times \bar B_R$ is uniformly continuous. Thus there exists $\delta>0$ such that $|f(x)-f(y)|<\varepsilon$ whenever $\|x-y\|<\delta$ and $\|x\|\|y\|\leq R$. We then split the first integral in \eqref{eq:portint}
    \begin{equation*}
        \int_{\{\|x-y\|<\delta\}\cap(\bar B_R\times \bar B_R)} |f(x)-f(y)| \gamma_n(dx,dy) + \int_{\{\|x-y\|<\delta\}\cap(\bar B_R\times \bar B_R)} |f(x)-f(y)| \gamma_n(dx,dy)
    \end{equation*}
    which is then bounded by the continuity bound and Markov's inequality
    \begin{equation*}
        \varepsilon + 2K\gamma_n(\|x-y\|>\delta) \leq \varepsilon + \frac{2K}{\delta^2} \int \|x-y\|^2 \gamma_n(dx,dy) = \varepsilon + \frac{2K}{\delta^2}W_2^2(\mu_n,\mu),
    \end{equation*}
    where the second term converges to $0$ as $\ninf$. Thus $$\limsup_{\ninf}\l|\int f d\mu - \int f d\mu_n\r|\leq \varepsilon.$$
    As $\varepsilon$ was arbitrary, we can conclude weak convergence by Portmanteau's Theorem.\newline

    (Moment convergence). We use $\|x\|^2-\|y\|^2 = \|x-y\|\|x+y\|$, to employ the Cauchy-Schwartz inequality (where $\gamma_n$ optimally couples $(\mu_n,\mu)$):
    \begin{align*}
        |\cM_2(\mu_n) - \cM_2(\mu)| &= \l|W_2(\mu_n,\delta_0) - W_2(\mu,\delta_0)\r| \leq \l|\int \|x-y\|^2 \gamma_n(dx,dy)\r| \\
        &\leq \l(\int\|x-y\|^2\gamma_n(dx,dy)\r)^{\frac{1}{2}}\l(\int\|x+y\|^2\gamma_n(dx,dy)\r)^{\frac{1}{2}}\\
        &\leq W_2(\mu_n,\mu)\cdot 2(\cM_2(\mu_n) + \cM_2(\mu)).
    \end{align*}
    where the last inequality uses $(a+b)^2\leq 2a^2+2b^2$. We have seen above that the second moments are bounded uniformly in $n$, thus given $W_2(\mu_n,\mu)\to0$ we conclude convergence of the second moments too, and we have shown the forward implication.\\

    $(\impliedby)$ We use Skorokhod's Theorem. Given the weak convergence $\mu_n\wto\mu$, Skorokhod's Theorem guarantees the existence of a common probability space $(\Omega,\cA,\P)$ and random variables $X_n\sim\mu_n$, $X\sim\mu$ such that $X_n\to X$ a.s. so that by continuous mapping, $\|X_n-X\|^2\to0$ a.s. . 

    Under $\P$, each pair $X_n,X$ is a valid coupling for $\mu_n,\mu$, thus we find
    \begin{equation}
        W_2^2(\mu_n,\mu) \leq \E\l[\|X_n-X\|^2\r],
    \end{equation}
    so that showing $\E\l[\|X_n-X\|^2\r]\to0$ will let us conclude. The Skorokhod representation already delivers the convergence pathwise for $\om$ in a set of $\P$-measure 1.

    Let $f_n = \|X_n-X\|^2$. From $\|a-b\|^2 \leq 2\|a\|^2 + 2\|b\|^2$, we have $f_n\leq 2\|X_n\|^2 + 2\|X\|^2\eqqcolon g_n$, and $g_n$ acts as a dominating sequence. From the almost sure convergence of $X_n\to X$, and continuous mapping theorem, $g_n\to 4\|X\|^2$ a.s. . Assuming the convergence of the second moments, we have 
    \begin{equation*}
        \E\l[g_n\r] = 2\E\l[\|X_n\|^2\r] + 2\E\l[\|X\|^2\r] \to 4\E[\|X\|^2].
    \end{equation*}
    Thus the expectation of the dominating sequence $g_n$ converges to the expectation of the pointwise limit. As $f_n\to0$ a.s., the generalised dominated convergence theorem of Appendix \ref{App:Generalised_dct} yields the convergence $\E[f_n]\to0$ i.e. $\E\l[\|X_n-X\|^2\r]\to0$. As this is no smaller than the Wasserstein distance $W_2^2(\mu_n,\mu)$ for each $n$, we conclude convergence of the Wasserstein distance.
\end{proof}

\section{Random Measure Optimal Transport (Section 3)}
\setcounter{equation}{0}

\subsection{The Wasserstein distance in $\WL$ is random variable}\label{App:Wasserstein_rv}
\begin{proof}
    We can represent $D$ as the composition of two functions $D = W_2 \circ R$ where 
    \begin{itemize}
        \item $R:\Omega\to P_{2}(\R^d)\times P_{2}(\R^d)$, $\om\mapsto (\xi(\om), \eta(\om))$ is the product of the random probability measures $\xi$ and $\eta$,
        \item $W_2:P_{2}(\R^d)\times P_{2}(\R^d)\to\R$, $(\mu,\nu)\mapsto W_2(\mu,\nu)$ is the 2-Wasserstein distance.
    \end{itemize}
    From definition \ref{def:rm_space}, the maps $\xi:\Omega\to\P_2(\R^d)$, $\om\mapsto\xi(\om)$ and $\eta:\Omega\to\P_2(\R^d)$, $\eta\mapsto\xi(\om)$ are both measurable, thus the product map $R(\om) = (\xi(\om),\eta(\om))$ is measurable too.
    
    As for $W_2$, this is a metric on $\Pc$ hence continuous with respect to its own product metric topology. Any continuous function is measurable with respect to the Borel $\sigma$-algebra on the domain, which we have defined to be $\cB_{W_2}(\P_2(\R^d))\otimes \cB_{W_2}(\P_2(\R^d))$ to coincide with that for the range of $R$. Hence $(\mu,\nu)\mapsto W_2(\mu,\nu)$ is also measurable.

    As the composition of two measurable functions is measurable, we conclude.
\end{proof}

\section{Empirical Measures (Section 4)}
\setcounter{equation}{0}

\subsection{Generalised Lebesgue dominated convergence theorem}\label{App:Generalised_dct}
\begin{lemma}[Generalised Lebesgue's dominated convergence theorem]\label{lemma:generalised_lebesgue_dominated_convergence_theorem}
    Let $g,g_n:\Omega\to[0,\infty]$ be integrable random variables i.e. $\E[|g|], \E[|g_n|]<\infty$ such that $\E[g_n]\to\E[g]$ as $\ninf$. Let $f,f_n:\Omega\to[-\infty,\infty]$, $n\geq 1$ be random variables such that
    \begin{equation}
        |f_n(\om)|<g_n(\om)\text{ for all $\om\in\Omega$},\qquad f_n\to f \text{ $\P$-a.s.}
    \end{equation}
    Then 
    \begin{equation}
        |\E[f_n]-\E[f]|\leq \E[|f_n-f|] \overset{\ninf}{\longrightarrow} 0
    \end{equation}
\end{lemma}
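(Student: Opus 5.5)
The plan is to reduce to the classical Fatou lemma, applied twice to suitably chosen nonnegative sequences, in the usual proof of dominated convergence but with the fixed dominating function replaced by the moving sequence $g_n$. First I will dispose of integrability. Since $|f_n|\le g_n$ and $g_n$ is integrable, each $f_n$ is integrable and a.s. finite. Passing to the a.s. limit gives $|f|\le \liminf_n g_n$ a.s. The first step is to identify this limit with $g$: I will assume, as is implicit in the statement, that $g_n\to g$ $\P$-a.s. Without this the conclusion can fail. For example, take $f_n = f = g_n = \1_{A_n}$ with $\P(A_n)=1/2$ oscillating and $g\equiv 1/2$; then $\E g_n\to \E g$, but $g$ does not dominate $f$. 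Granting it, Fatou gives $\E|f|\le \liminf_n \E g_n = \E g<\infty$, so $f$ is integrable. The first inequality $|\E f_n - \E f|\le \E|f_n-f|$ is then just the triangle inequality for the integral.

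The main step is to show $\E|f_n - f|\to 0$. Consider the nonnegative random variables
\[
h_n \deq g_n + g - |f_n - f| \;\ge\; g_n + g - |f_n| - |f| \;\ge\; g - |f| \;\ge\; 0 \quad \text{a.s.},
\]
using $|f|\le g$. Almost surely, $h_n \to 2g$ since $f_n\to f$ and $g_n\to g$. Fatou's lemma then yields
\[
2\E g = \E\bigl[\liminf_n h_n\bigr] \le \liminf_n \E h_n = \liminf_n\bigl(\E g_n + \E g - \E|f_n-f|\bigr) = 2\E g - \limsup_n \E|f_n - f|.
\]
Here the hypothesis $\E g_n\to\E g$ is exactly what lets the $\E g_n$ term pass to the limit. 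Because $\E g<\infty$, we may cancel $2\E g$ from both sides and obtain $\limsup_n \E|f_n-f|\le 0$, which is the claim.

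The only delicate point is the a.s. convergence $g_n\to g$, which is needed both to identify $\liminf_n h_n = 2g$ and to get $|f|\le g$. In the paper's applications it holds by construction. For instance, in the proof of Theorem \ref{thm:weak_wass}, $g_n=2\|X_n\|^2+2\|X\|^2\to 4\|X\|^2$ a.s. by Skorokhod, and in the $\WL$ lifts $g_n$ is built from second moments that converge pathwise. I would therefore state that assumption explicitly in the lemma. If one insists on not assuming it, the fallback is to take $g\deq\liminf_n g_n$ and require $\E g_n\to \E[\liminf_n g_n]$. Fatou still gives $\E[\liminf_n h_n]\ge$ the same bound with this $g$, and the argument goes through verbatim.
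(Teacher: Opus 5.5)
Your argument is the paper's own proof: Fatou's lemma applied to the nonnegative sequence $g_n+g-|f_n-f|$, then cancelling $2\E[g]$ via $\E[g_n]\to\E[g]$. You are also right that it needs $g_n\to g$ $\P$-a.s., which the paper uses silently both when it asserts $|f|\le g$ and when it writes $\E[\liminf_n(g+g_n-|f_n-f|)]=\E[2g]$, so your explicit extra hypothesis (or your $\liminf$ fallback) is a genuine correction of the statement.

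Your counterexample does not show that the conclusion fails, however. With oscillating $A_n$, $f_n=\1_{A_n}$ does not converge a.s., so the hypothesis $f_n\to f$ is violated. In the only consistent reading ($A_n\equiv A$) one has $f_n=f$, so the example shows only that $|f|\le g$ can fail, while the conclusion holds trivially. A genuine counterexample is $\Omega=[0,1]$ with Lebesgue measure, $f_n=n\1_{[0,1/n]}$, $g_n=f_n+1/n$, $f=0$, $g\equiv 1$: here $\E[g_n]\to\E[g]$ but $\E|f_n-f|=1$ for every $n$.
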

\begin{proof}
    This follows the standard argument for Lebesgue's dominated convergence theorem, making use of Fatou's lemma.

    The assumptions imply that $|f_n|\leq g_n$ and $|f|\leq |g|$, thus $f_n,f$ are integrable random variables. Moreover, $|f_n-f| \leq |f_n| + |f| \leq g_n + g$, thus $|f_n-f|$ is also an integrable random variable. Applying Fatou's lemma to the sequence of functions $g+g_n - |f-f_n|$ $(\geq0)$ we find that
    \begin{align*}
        \E[2g] &= \E\left[\liminf_{\ninf } g + g_n - |f-f_n|\right] \leq \liminf_{\ninf}\E\left[g + g_n -|f_n-f|\right] 
        \\&= \liminf_{\ninf} (\E\left[g + g_n\right] - \E\left[|f_n - f\right]) = \E[2g] - \limsup_{\ninf} \E\left[|f_n - f|\right],
    \end{align*}
    which gives $\limsup_{\ninf} \E\left[|f_n - f|\right] = 0$ hence $\lim_{\ninf}\E\left[|f_n - f|\right] = 0$ and we conclude.
\end{proof}

\subsection{Proof of proposition \ref{prop:lipschitz_sub-gaussian}}\label{App:Wass_lipschitz_empirical}
\begin{proof}
    Consider modifying the first coordinate. The reverse triangle inequality gives
    \begin{equation*}
        \l|f(x_1,x_2,\dots,x_n) - f(x_1', x_2,\dots,x_n)\r| \leq W_2\l(\frac{1}{n}\sum_{i=2}^n\delta_{x_i} + \frac{1}{n}\delta_{x_1},\frac{1}{n}\sum_{i=2}^n\delta_{x_i} + \frac{1}{n}\delta_{x_1'}\r).
    \end{equation*}
    A valid (deterministic) coupling here maps each $x_i$ to itself for $i = 2,\dots, n$ (with 0 cost) and maps $x_1$ to $x_1'$, hence
    \begin{equation*}
        W_2\l(\frac{1}{n}\sum_{i=2}^n\delta_{x_i} + \frac{1}{n}\delta_{x_1},\frac{1}{n}\sum_{i=2}^n\delta_{x_i} + \frac{1}{n}\delta_{x_1'}\r) \leq \frac{\|x_1-x_1'\|}{\sqrt{n}}.
    \end{equation*}
    Repeating across all coordinates and applying the triangle inequality, $f$ is Lipschitz with constant $\frac{1}{\sqrt{n}}$. Calling Proposition \ref{prop:lipschitz_sub-gaussian}, we find
    \begin{equation*}
        \P(W_2(\mu_n,\mu) > t + \E[W_2(\hmun, \mu)]) \leq \exp\l({-\dfrac{nt^2}{2K^2}}\r),
    \end{equation*}
    and since the Fournier-Guillin rates are for $\E[W_2^2(\hmun,\mu)]$, we use Jensen's for $$\E[W_2(\hmun,\mu)]\leq\E\l[W_2^2(\hmun,\mu)\r]^{\frac{1}{2}},$$
    so that
    \begin{equation*}
        \P\bigg(W_2(\hmun,\mu) > t + \text{FG}(n,d,q)^{\frac{1}{2}}\bigg) \leq \exp\l({-\dfrac{nt^2}{2K^2}}\r).
    \end{equation*}
    Setting $t = K\sqrt{\frac{2\log{(1/\delta)}}{n}}$, (so that $\exp\l(-\frac{nt^2}{2K^2}\r) = \delta$) we arrive at 
    \begin{equation*}
        \P\l(W_2(\hmun,\mu) > \text{FG}(n,d,q)^{\frac{1}{2}} + K\sqrt{\frac{2\log({1/\delta})}{n}}\r) \leq \delta,
    \end{equation*}
    and conclude \eqref{eq:sub-gaussian_prop_concentration}.
\end{proof}

\subsection{Proof of proposition \ref{prop:mixing_convergence}$: \WL$ Law of large numbers under ergodicity}\label{App:ErgodicLLN}
The following theorem of Birkhoff \cite{birkhoff1931proof} characterises average convergence of a function on a random sequence under ergodicity, proving particularly useful for our purposes.
\begin{theorem}[\cite{birkhoff1931proof}]\label{thm:Birkhoff}
    Let $(\Omega,\cA,\P,T)$ be an ergodic measure-preserving system, and let $h\in L^1(\P)$. Then
    \begin{equation*}
        \frac{1}{n}\sum_{i=1}^n h\l(T^{i-1}\om\r) \overset{\ninf}{\longrightarrow} \int h d\P
    \end{equation*}
    both $\P$-a.s. and in $L^1(\P)$.
\end{theorem}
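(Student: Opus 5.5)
The plan is the classical route through the maximal ergodic inequality, followed by an invariance argument and a truncation step for the $L^1$ convergence. Throughout, write $S_n h \deq \sum_{i=1}^n h\circ T^{i-1}$. Replacing $h$ by $h - \int h\, d\P$, it suffices to treat $\int h\, d\P = 0$ and show $S_n h/n \to 0$.

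\textbf{Maximal ergodic lemma.} First I would prove this lemma in Garsia's form: for $h\in L^1(\P)$, set $M_N \deq \max\{0, S_1h,\dots,S_Nh\}$. Then
\[
\int_{\{M_N>0\}} h\, d\P \geq 0 .
\]
The argument is short. For $1\le k\le N$ we have $S_k h \le h + M_N\circ T$, and hence on $\{M_N>0\}$ we get $M_N \le h + M_N\circ T$. Integrating, and using that $T$ preserves $\P$ so that $\int M_N\circ T = \int M_N$, gives
\[
\int_{\{M_N>0\}} h \geq \int_{\{M_N>0\}} (M_N - M_N\circ T) \geq \int (M_N - M_N\circ T) = 0 ,
\]
where the middle inequality uses $M_N\ge 0$ and $M_N=0$ off $\{M_N>0\}$. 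Letting $N\to\infty$ yields $\int_{\{\sup_n S_nh>0\}} h \ge 0$.

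\textbf{Almost sure convergence.} Let $\bar h \deq \limsup_n S_nh/n$. Since $S_nh(T\om) = S_{n+1}h(\om) - h(\om)$, the function $\bar h$ is $T$-invariant. By ergodicity it is therefore $\P$-a.s. equal to a constant $c\in[-\infty,\infty]$. Suppose $c>\varepsilon>0$. Apply the maximal lemma to $h-\varepsilon$. On the invariant set $E=\{\bar h>\varepsilon\}$, which has full measure, we have $\sup_n S_n(h-\varepsilon)>0$. Hence $\int_E (h-\varepsilon)\,d\P \ge 0$, i.e. $0=\int h \ge \varepsilon$, a contradiction. So $\limsup_n S_nh/n\le 0$ a.s. Applying the same argument to $-h$ gives $\liminf_n S_nh/n\ge 0$, and together these give a.s. convergence to $\int h\,d\P$.

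The main delicacy here is handling the restriction of the maximal lemma to the invariant set $E$. I would do this by applying the lemma to $(h-\varepsilon)\1_E$, which is legitimate because $\1_E\circ T=\1_E$. Ergodicity actually makes $E$ trivial, which simplifies matters.

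\textbf{$L^1$ convergence.} For bounded $h$, $L^1$ convergence follows from the a.s. convergence by bounded convergence. For general $h\in L^1$, fix $\varepsilon$ and choose bounded $h'$ with $\|h-h'\|_{L^1}<\varepsilon$. Since $T$ is measure preserving, $\|S_n(h-h')/n\|_{L^1}\le \|h-h'\|_{L^1}<\varepsilon$. Combining this with the bounded case and $|\int h - \int h'|<\varepsilon$ gives
\[
\limsup_n \l\|S_nh/n - \textstyle\int h\, d\P\r\|_{L^1}\le 2\varepsilon ,
\]
which concludes the proof since $\varepsilon$ was arbitrary.
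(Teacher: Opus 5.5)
Your proof is correct. It is the standard argument through Garsia's maximal ergodic inequality, and each step holds: the bound $S_kh\le h+M_N\circ T$ for $1\le k\le N$, the cancellation $\int M_N\circ T\,d\P=\int M_N\,d\P$, the invariance of $\limsup_n S_nh/n$ and its a.s.\ constancy under ergodicity, the contradiction obtained from $h-\varepsilon$ (and from $-h$), and the approximation by bounded functions for the $L^1$ part.

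The paper gives no proof of this statement. It quotes the result from Birkhoff and uses it as a black box in the proof of Proposition~\ref{prop:mixing_convergence}, applying it on the shift system with $h=g_k(X_1)$ and $h=\|X_1\|^2$. So there is no in-paper route to compare against. Your version has the advantage of being self-contained. It also shows that ergodicity enters only to force the invariant function $\limsup_n S_nh/n$ to be constant. Your remark about applying the lemma to $(h-\varepsilon)\1_E$ is exactly what the non-ergodic version needs, with $\int h\,d\P$ replaced by the conditional expectation onto the invariant $\sigma$-algebra.

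Two small points deserve an explicit line:
\begin{itemize}
\item You need $M_N\in L^1$ to cancel $\int M_N\circ T$ against $\int M_N$. This is immediate, since $0\le M_N\le\sum_{k\le N}|S_kh|$ and each $S_kh\in L^1$ by measure preservation.
\item You should take $h$ finite-valued everywhere. Modify it on a null set $N$; the sets $T^{-j}N$ are null by measure preservation. Then $S_nh\circ T=S_{n+1}h-h$ holds pointwise, and the sets $\{\bar h>r\}$ are genuinely $T$-invariant when you invoke ergodicity.
\end{itemize}
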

We can now present the result.
\begin{proof}
    We begin with a). (Weak convergence). From remark Theorem 6.18 of Villani \cite{villani2009optimal}, the Wasserstein space is a Polish space, in particularly separable. Weak convergence is characterised by convergence on a countable class of functions via Portmanteau's Theorem. As $\R^d$ is separable, there is a countable family\footnote{One can for example take bounded Lipschitz functions supported on open balls with rational centres and radii.} $\{g_k\}_{k\geq1}\subset C_b(\R^d)$ such that for any sequence of probability measures,
    \begin{equation*}
        \nu_n\wto\nu \iff \int g_k d\nu_n \overset{\ninf}{\longrightarrow} \int g_k d\nu,\quad\text{for all $k\geq 1$.}
    \end{equation*}
    Fix $k\geq 1$. Since $g_k\in C_b(\R^d)$, we have $|g_k|\leq \|g_k\|_\infty<\infty$. Applying Birkhoff's ergodic Theorem with $f = g_k$, 
    \begin{equation*}
        \int g_k d\hmuno = \frac{1}{n}\sum_{i=1}^n g_k(X_i(\om))\overset{\ninf}{\longrightarrow} \E[g_k(X_1)] = \int g_k d\mu \qquad \P\text{-a.s.}.
    \end{equation*}
    Let $\Omega_k\subset\Omega$ denote the full-measure set on which this convergence holds for $g_k$, so that $\P(\Omega_k)=1$ for all $k\geq 1$. Let $\Omega^*\deq \cap_{k=1}^\infty \Omega_k$, then as a countable intersection of probability 1 sets, $\P(\Omega^*) = 1$. On this set, the convergence
    \begin{equation*}
        \int g_k d\hmun(\om) \to \int g_k d\mu
    \end{equation*}
    holds for all $k$. As $g_k$ determines the convergence over the full space, this delivers almost sure weak convergence by Portmanteau's Theorem.

    (Moment convergence). As $\mu\in\Pc$, the function $x\mapsto\|x\|^2$ is in $L^1(\P)$. Thus the hypothesis of Birkhoff's Theorem is satisfied, leaving
    \begin{equation*}
        \cM_2(\hmuno) = \int \|x\|^2\hmuno (d\om) = \frac{1}{n}\sum_{i=1}^n \|X_i(\om)\|^2\overset{\ninf}{\longrightarrow} \E\l[\|X_1\|^2\r] = \cM_2(\mu), \qquad \P\text{-a.s.}
    \end{equation*}

    Combining the almost sure weak convergence with the convergence of the second moment through the equivalence of Theorem \ref{thm:weak_wass} delivers a).\newline

    We now lift to convergence in $\WL$. As in the proof of Theorem \ref{prop:empirical_convergence}, let 
    \begin{equation*}
        Z_n(\om)\deq W_2^2(\hmuno,\mu),\qquad G_n(\om)\deq 2\cM_2(\hmuno) + 2\cM_2(\mu).
    \end{equation*}
    As before from Theorem \ref{prop:empirical_convergence}, $Z_n \leq G_n$, $\P$-a.s, and $\E[G_n]\to\E[G]$ (as the empirical measure is a linear combination of Diracs, the expectation decouples, and the same calculation applies in spite of the mixing). Thus $G_n$ provides a dominating sequence and generalised DCT applies, delivering b).
\end{proof}

\subsection{Proof of Theorem \ref{thm:mixing}: Adjusted Fournier Guillin rates}\label{App:AdjustedFG}
We collect the following lemma of Berbee \cite{berbee1979random}, which let's us replace the correlated samples with i.i.d. draws, at the quantitative cost of the mixing coefficients.

\begin{lemma}[\cite{berbee1979random}]\label{lemma:Berbee}
    Let $B_1, B_2, \dots, B_b$ be a sequence of random vectors constructed from a stationary sequence $(X_i)_{i\geq1}$ with strong mixing coefficients $\alpha(k)$. Suppose each block is separated from the next by a gap of at least $g$ samples. 
    
    Then there exists a sequence of random vectors $B_1^*, B_2^*, \dots, B_b^*$ defined on the same probability space such that: each $B_j^*\overset{d}{=}B_j$, the vectors $\{B_j^*\}_{j=1}^b$ are mutually independent, the samples within each block are all i.i.d.,  and $\P\l(B_j\neq B_j^*\r)\leq \alpha(g).$
\end{lemma}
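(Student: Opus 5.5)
The plan is to build the independent copies $B_1^*,\dots,B_b^*$ one at a time, by induction on $j$, using the maximal coupling lemma at each step. Write $\cF_{(j)}$ for the $\sigma$-algebra generated by the blocks $B_1,\dots,B_{j-1}$. By the gap assumption, $\cF_{(j)}\subset\cF_1^n$ and $\sigma(B_j)\subset\cF_{n+g}^\infty$ for a suitable $n$. If necessary, first enlarge $(\Omega,\cA,\P)$ by an independent uniform random variable $U$; this is standard, and it is what ``defined on the same probability space'' will mean.

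At step $j$ we condition on $(B_1,\dots,B_{j-1},B_1^*,\dots,B_{j-1}^*)$. Using a fresh piece of $U$ and the maximal coupling between the regular conditional law $\P(B_j\in\cdot\mid\cF_{(j)})$ and the marginal law $\P_{B_j}$, we construct $B_j^*$ with three properties: $B_j^*\sim\P_{B_j}$; $B_j^*$ is independent of $(B_1^*,\dots,B_{j-1}^*)$ together with the past blocks; and
\begin{equation*}
\P(B_j\neq B_j^*)=\E\Big[\big\|\P(B_j\in\cdot\mid\cF_{(j)})-\P_{B_j}\big\|_{TV}\Big].
\end{equation*}
Equality in law and mutual independence of the $B_j^*$ then follow by induction. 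As for the clause ``samples within each block are i.i.d.'', I read it as saying that each $B_j^*$ reproduces the within-block law of $B_j$; that reading is the one the block-decomposition argument in Appendix \ref{App:AdjustedFG} actually uses, and I would state it that way.

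Everything then reduces to bounding the expected total variation on the right by $\alpha(g)$. The first step is to rewrite it as $\tfrac12\sup\sum_{i,l}|\P(A_i\cap C_l)-\P(A_i)\P(C_l)|$. Here the supremum runs over finite partitions $\{A_i\}\subset\cF_{(j)}$ and $\{C_l\}\subset\sigma(B_j)$, and the identity follows by approximating the conditional law through a monotone class argument.

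This step is where I expect the real obstacle. Each single term $|\P(A\cap C)-\P(A)\P(C)|$ is at most $\alpha(g)$ by \eqref{eq:mixing}, but the sum over partitions is the absolute regularity coefficient $\beta(g)$, which in general exceeds $\alpha(g)$. My first attempt would be to show that the supremum is attained on two-set partitions $\{A,A^c\}$, $\{C,C^c\}$. That would give a bound of $2\alpha(g)$, so the constant would have to be absorbed by redefining $\alpha$ up to a factor. If that fails, I would fall back on Bradley's refinement of the coupling. It keeps $B_j^*\overset{d}{=}B_j$ and the independence, but replaces exact equality by closeness: $\P(\|B_j-B_j^*\|>\varepsilon)\lesssim\alpha(g)^{c}$. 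For the Wasserstein bounds of Theorem \ref{thm:mixing} this would still suffice, since closeness in norm transfers to $W_2$ through the second-energy control.
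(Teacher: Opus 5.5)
The construction is right, but the last step cannot be completed. You enlarge the space and build $B_j^*$ by a conditional maximal coupling of $\P(B_j\in\cdot\mid\cF_{(j)})$ with $\P_{B_j}$ using fresh randomness. That is the standard proof of Berbee's lemma, and the identity $\P(B_j\neq B_j^*)=\E\|\P(B_j\in\cdot\mid\cF_{(j)})-\P_{B_j}\|_{TV}$ is correct.

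The gap is the final step. The right-hand side is the absolute regularity coefficient $\beta(\cF_{(j)},\sigma(B_j))$, and it cannot be bounded by any multiple of $\alpha(g)$. Your reduction to two-set partitions fails in general. Such partitions contribute exactly $2|\P(A\cap C)-\P(A)\P(C)|$, so if the supremum were always attained there, you would get $\beta=2\alpha$ for every pair of $\sigma$-algebras. But $2\alpha\le\beta$ is only a one-sided inequality.

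More decisively, the lemma as printed is false, so no argument can close this step. Take $b=2$. If $B_1^*,B_2^*$ are independent with $B_j^*\overset{d}{=}B_j$ and $\P(B_j\neq B_j^*)\le\alpha(g)$, then
\[
\sup_{E}\bigl|\P((B_1,B_2)\in E)-(\P_{B_1}\otimes\P_{B_2})(E)\bigr|\le\P\bigl((B_1,B_2)\neq(B_1^*,B_2^*)\bigr)\le 2\alpha(g),
\]
i.e.\ $\beta(\sigma(B_1),\sigma(B_2))\le 2\alpha(g)$ for all blocks separated by $g$. Letting the blocks grow, and using continuity of $\beta$ along increasing $\sigma$-algebras, every stationary strongly mixing sequence would then be absolutely regular. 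That is false, for instance for suitable stationary Gaussian sequences. Berbee's lemma in \cite{berbee1979random} is a $\beta$-mixing statement, $\P(B_j\neq B_j^*)\le\beta(g)$. The paper quotes it with $\alpha$ and gives no proof, so nothing there supplies your missing step.

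Your fallback to Bradley's coupling does not rescue the statement either. It replaces $\P(B_j\neq B_j^*)\le\alpha(g)$ by a bound on $\P(\|B_j-B_j^*\|>\varepsilon)$ involving a power of $\alpha(g)$ and a moment of $B_j$, which is a different lemma. It is also a result for real-valued variables, while here the blocks lie in $(\R^d)^l$ with $l$ growing in $n$. So the claim that it ``still suffices'' for Theorem \ref{thm:mixing} would need a dimension-uniform coupling and a fresh rate computation.

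Your reading of the i.i.d.\ clause is forced, since taken literally it contradicts $B_j^*\overset{d}{=}B_j$ for any dependent process. However, it is not what Appendix \ref{App:AdjustedFG} uses. There Fournier--Guillin is applied to $\hmu^*_{\text{block}}$ as an empirical measure of $bl$ i.i.d.\ draws. In the rows of Table \ref{tab:bg_rates} with $b=1$ the coupling is vacuous, so the literal (false) clause carries the whole argument.

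What your proof does establish is the correct Berbee lemma: $\beta(g)$ in place of $\alpha(g)$, and ``each $B_j^*$ has the joint law of $B_j$'' in place of the i.i.d.\ clause. Using it downstream would require a $\beta$-mixing hypothesis and a separate treatment of within-block dependence.
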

We are now placed to prove Theorem \ref{thm:mixing}.
\begin{proof}
    We proceed by a block decomposition argument. The strong mixing tells us that as the samples get more spread out, they also become less correlated and are closer to i.i.d. . In turn, Berbee's coupling lemma tells us that if we group the samples into sufficiently ($g$-)spaced blocks, then we recover an i.i.d. sample at the cost of the correlation which was broken. It is then left to argue that the contributions of samples on the gaps to the empirical measure become negligible when the gaps represent a smaller overall proportion of the samples compared to the blocks.
    
    Fix integers $n$ for sample size and a block number\footnote{assume for simplicity that $b | n$, otherwise appropriately choose the nearest suitable integer for all variables.} $b$, as well as a gap size $g\geq 1$. Partition the $n$ observations into $b$ consecutive blocks, each of size $\frac{n}{b}$. Within each block $j$, designate the last $g$ observations as the \textit{gap} portion, and the remaining $l\deq \frac{n}{b} - g$ observations as the \textit{active} portion of block $j$. We have
    \begin{equation*}
        n = bl + bg,\qquad l = \frac{n}{b} - g.
    \end{equation*}
    Define the three empirical measures
    \begin{equation}
        \underbrace{\hmun \deq \frac{1}{n}\sum_{i=1}^n \delta_{X_i}}_{\text{All samples}}   ,\qquad 
        \underbrace{\hmu_{\text{block}} \deq \frac{1}{bl} \sum_{j=1}^b \sum_{i\in B_j} \delta_{X_i}}_{\text{Block samples}}  ,\qquad 
        \underbrace{\hmu_{\text{block}}^* \deq \frac{1}{bl} \sum_{j=1}^b \sum_{i\in B_j^*} \delta_{X_i}}_{\text{Berbee copy block samples}}  ,
    \end{equation}
    where $B_j$ denotes the active indices of block j, and $B_j^*$ is the Berbee-coupled independent copy of block $j$. The gap indices $(G_j)_{j\geq1}$
    (the last $g$ observations of block $j$) separate consecutive active portions by at least $g$ time steps. 

    From the $W_2$ triangle inequality and $(a+b+c)^2\leq 3(a^2+b^2+c^2)$ we split
    \begin{equation}\label{eq:block_gap_errors}
        \E\l[W_2^2(\hmun,\mu)\r] \leq 3 \underbrace{\E\l[W_2^2(\hmun,\hmu_{\text{block}})\r]}_{\text{Gap error}} + 3\underbrace{\E\l[W_2^2(\hmu_{\text{block}},\hmu_{\text{block}}^*)\r]}_{\text{Coupling error}} + 3\underbrace{\E\l[W_2^2(\hmu_{\text{block}}^*,\mu)\r]}_{\text{Fournier-Guillin bound}}.
    \end{equation}
    Bounding each term and matching rates will deliver the result.\newline

    (Gap error). We can rewrite $\hmun$ as a mixture of the block and gap empirical measures
    \begin{equation*}
        \hmun = \frac{bl}{n}\hmu_{\text{block}} + \frac{bg}{n}\hmu_{\text{gap}},\qquad \hmu_{\text{gap}} \deq \frac{1}{bg}\sum_{j=1}^b\sum_{i\in G_j} \delta_{X_i}.
    \end{equation*}
    Using the (sub-optimal) coupling from $\hmun$ to $\hmu_{\text{block}}$ which fixes the block masses from the full empirical measure and transports the gap masses to the block masses, we find
    \begin{equation*}
        W_2(\hmun,\hmu_{\text{block}})\leq \frac{bg}{n}W_2(\hmu_{\text{block}},\hmu_{\text{gap}})\leq \frac{bg}{n} \l(\cM_2(\hmu_{\text{block}})^{\frac{1}{2}} + \cM_2(\hmu_{\text{gap}})^{\frac{1}{2}}\r),
    \end{equation*}
    where the last inequality follows from $W_2(\mu,\nu)\leq W_2(\mu,
    \delta_0)+W_2(\nu,\delta_0)$. Then from $(a+b)^2\leq 2a^2+2b^2$,
    \begin{equation*}
        W_2^2(\hmun,\hmu_{\text{block}})\leq \l(\frac{bg}{n}\r)^2 2\cM_2(\hmu_{\text{block}}) + 2\cM_2(\hmu_{\text{gap}}).
    \end{equation*}
    Under stationarity, all observations have marginal $\mu$ so that $\cE[\hmu_{\text{block}}] = \cE[\hmu_{\text{gap}}] = \cM_2(\mu)$, thus
    \begin{equation}\label{eq:Gap_estimate}
        \E\l[W_2^2(\hmun,\hmu_{\text{block}})\r] \leq 4\l(\frac{bg}{n}\r)^2 \cM_2(\mu),
    \end{equation}
    vanishing under $bg = o(n)$.\newline

    (Berbee coupling error). By Berbee's coupling lemma \ref{lemma:Berbee},  for each $j$ there exists a random block $B_j^*$ which has the same distribution as $B_j$, is independent of all previous observations, and satisfies $\P(B_j\neq B_j^*)\leq \alpha(g)$. Applying to each of the $b$ different blocks, and taking the union bound $\P(E^c)\leq b\alpha(g)$ for $E=\cap_{j=1}^b\{B_j = B_j^*\}$. 
    
    On $E$, $\hmu_{\text{block}} = \hmu_{\text{block}}^*$ and the coupling error is $0$. On $E^c$, the triangle inequality and Cauchy-Schwarz give
    \begin{align}\label{eq:Berbee_estimate}
        \E\l[W_2^2(\hmu_{\text{block}},\hmu_{\text{block}}^*)\r] &= \E\l[W_2^2(\hmu_{\text{block}},\hmu_{\text{block}}^*)\1_{E^c} \r]  \leq\E\l[W_2^4(\hmu_{\text{block}},\hmu_{\text{block}}^*)\r]^{\frac{1}{2}}\P(E^c)^{\frac{1}{2}}\notag\\  &\leq C\cM_2(\mu)^2\sqrt{b\alpha(g)},
    \end{align}
    where $C$ satisfies the estimate $\E[W_2^4(\hmu_{\text{block}},\hmu_{\text{block}}^*)]\leq C\cM_2(\mu)^2$.\newline

    (Fournier-Guillin bound). From Berbee's coupling lemma \ref{lemma:Berbee}, the blocks $B_1^*,\dots B_b^*$ are now all i.i.d with $\mu$ marginals. The coupled empirical measure $\hmu_{\text{block}}^*$ is thus an empirical measure of $bl$ observations from $\mu$ with i.i.d. block structure. The Fournier-Guillin Theorem \ref{thm:FG} applies for which 
    \begin{equation}\label{eq:FG_estimate}
        \E\l[W_2^2(\hmu_{\text{block}}^*, \mu)\r]\leq C_{q,d}(1+2\cM_q(\mu))(bl)^{-\frac{2}{d}}.
    \end{equation}
    Combining the three estimates \eqref{eq:Gap_estimate}, \eqref{eq:Berbee_estimate}, \eqref{eq:FG_estimate} with appropriate choices of $b$ and $g$ will deliver our rates. In particular, for each of the $\alpha$-mixing types in table \ref{tab:mixing}, we want to choose $b$ and $g$ such that each estimate is $O(n/C_{\alpha}(n))^{-\frac{2}{d}}$. One verifies that the choices below give rise to these rates 
    \bgroup
    \def\arraystretch{1.5}
    \begin{table}[h!]
        \centering
        \begin{tabular}{c c c c}
        Mixing type & $\alpha(k)$ & $b$ & $l$\\
        \hline
        Geometric $(c>0)$ & $C_0e^{-ck}$ & $1$ & $\l\lfloor \frac{4}{cd}\log n\r\rfloor$\\
        Polynomial $(\theta>1)$ & $C_0k^{-\theta}$ & $1$ & $n^{\frac{4}{5}}$\\
        Polynomial $(\theta=1)$ & $C_0k^{-1}$ & $n^{\frac{d-5}{2d}}(\log n)^{\frac{5}{2d}}$ & $n^{\frac{d+3}{2d}}(\log n)^{-\frac{3}{2d}}$\\
        Polynomial $(\theta<1)$ & $C_0k^{-\theta}$ & $n^{\frac{\theta(d-4-\theta)}{d(1+\theta)}}$ & $n^{\frac{d+3\theta}{d(1+\theta)}}$\\
        \end{tabular}
        \caption{Choices of $b$ and $g$ for different mixing types}
        \label{tab:bg_rates}
    \end{table}
    \egroup   
We now consider the 3 rates $$\l(\frac{bg}{n}\r)^2,\quad \sqrt{b\alpha(g)}, \quad (n-bg)^{-\frac{2}{d}}.$$
We show that these are $O(n^{-\frac{2}{d}})$ in each case for $d\geq 5$.

\subsubsection*{Geometric mixing} $\alpha(k) = C_0e^{-ck}$, $b = 1$, $g = \frac{4}{cd}\log n $. The dominant term here is $\sqrt{b\alpha(g)}$, and we calculate (dropping the floor notation for convenience).
\begin{align*}
    \l(\frac{bg}{n}\r)^2 & \,= \l(\frac{g}{n}\r)^2 = O\l(\log(n)^2 n^{-2}\r) = O(n^{-\frac{2}{d}}),\\
    \sqrt{b\alpha(g)} &\,= O(1) e^{-\frac{c}{2} \frac{4}{cd}\log n} = O(1)e^{-\frac{2}{d}\log n} = O(n^{-\frac{2}{d}}),\\
    (n-bg)^{-\frac{2}{d}} &\,= (n-\log n)^{-\frac{2}{d}} = O(n^{-\frac{2}{d}}).\\
\end{align*}

\subsubsection*{Polynomial mixing $\theta>1$} $\alpha(k) = C_0k^{-\theta}$, $b=1$, $g = n^{\frac{4}{5}}$.
\begin{align*}
    \l(\frac{bg}{n}\r)^2 & \,= \l(\frac{g}{n}\r)^2 = n^{-\frac{2}{5}} = O(n^{-\frac{2}{d}}),\\
    \sqrt{b\alpha(g)} &\,= O(1) k^{-\frac{\theta}{2} \frac{4}{5}} = O(1)k^{-\frac{2\theta}{5}} = O(n^{-\frac{2}{d}}),\\
    (n-bg)^{-\frac{2}{d}} &\,= \l(n-n^{\frac{4}{5}}\r)^{-\frac{2}{d}} = O(n^{-\frac{2}{d}}).\\
\end{align*}

We now show the rates below are $O(n^{-\frac{2\theta}{d}})$

\subsubsection*{Polynomial mixing $\theta<1$} $\alpha(k) = C_0k^{-\theta}$, $b=n^{\frac{\theta(d-4-\theta)}{d(1+\theta)}}$, $g=n^{\frac{d+3\theta}{d(1+\theta)}}$. 
\begin{align*}
    \l(\frac{bg}{n}\r)^2 & \,= n^{\displaystyle{2\l(\frac{\theta(d-4-\theta)}{d(1+\theta)} + \frac{d+3\theta}{d(1+\theta)} - 1\r)}} = n^{\displaystyle{2\l(\frac{(d-\theta)(1+\theta)}{d(1+\theta)} - 1\r)}} = 
    O(n^{-\frac{2\theta}{d}}),\\
    \sqrt{b\alpha(g)} &\,= O(1) n^{\displaystyle{\frac{1}{2}\l(\frac{\theta(d-4-\theta)}{d(1+\theta)} + \theta\frac{d+3\theta}{d(1+\theta)}\r)}} = O(1) n^{\displaystyle{\frac{\theta}{2d(1+\theta)}(-4-4\theta)}} = O(n^{-\frac{2}{d}}),\\
    (n-bg)^{-\frac{2}{d}} &\,= \l(n-n^{1-\frac{\theta}{d}}\r)^{-\frac{2}{d}} = O(n^{-\frac{2\theta}{d}}).\\
\end{align*}

\subsection*{Polynomial mixing $\theta = 1$ } $b = n^{\frac{d-5}{2d}}(\log n)^{\frac{5}{2d}}$. $g = n^{\frac{d+3}{2d}}(\log n)^{-\frac{3}{2d}}$

\begin{align*}
    \l(\frac{bg}{n}\r)^2 &= \l(n^{\frac{2d-2}{2d}-1}(\log n)^{\frac{2}{2d}}\r)^2\,= \l(\l(\frac{n}{\log n}\r)^{-\frac{1}{d}}\r)^2 = O\l(\l(\frac{n}{\log n}\r)^{-\frac{2}{d}}\r),    
    \\
    \sqrt{b\alpha(g)} &\, = O(1) \sqrt{bg^{-1}} = O(1)\sqrt{n^{-\frac{8}{2d}}(\log n)^{\frac{8}{2d}}} = O\l(\l(\frac{n}{\log n}\r)^{-\frac{2}{d}}\r),\\
    (n-bg)^{-\frac{2}{d}} &\,= \l(n-n^{1-\frac{1}{d}}(\log n)^{\frac{1}{d}}\r)^{-\frac{2}{d}} = O\l(\l(\frac{n}{\log n}\r)^{-\frac{2}{d}}\r).\\
\end{align*}
We conclude.
\end{proof}

\subsection{Empirical Geodesics}\label{App:Empirical_geodesics}

Having established convergence and concentration bounds for the transport cost, we can turn to the convergence of geometric objects for the empirical measure in $\WL$ introduced in section \ref{subsec:geo_rm}, now studying the behaviour of the geodesic interpolants between empirical measures.

As $\hmu_n$ has discrete realisations, we need to use the pathwise coupling interpolation of proposition \ref{prop:Wasserstein_is_geodesic} i.e. for an optimal coupling $\gamma_n(\om)\in\Gamma_{\hmuno,\hmu_{\text{block}}o}$ on the empirical measures we define for each $\om\in\Omega$
\begin{equation}
    \hat M_n^t(\om) \deq \l[(1-t)\pi^1 + t\pi^2\r]_\#\gamma_n(\om).
\end{equation}
Towards proving the convergence, we first collect a more general lemma for any $\mu_k,\nu_k$ converging to $\mu,\nu$ respectively in Wasserstein metric.
\begin{lemma}\label{lemma:interpolant_stability}
    Let $\mu,\nu\in\Pac$.
    If $W_2(\mu_k,\mu)\to0$ and $W_2(\nu_k,\nu)\to0$ then $W_2(\eta_t^k,\eta_t)\to0$ for each $t\in[0,1]$ where $\eta_t^k$, $\eta_t$ are the geodesic interpolations from $\mu_k$ to $\nu_k$ and $\mu$ to $\nu$ respectively.
\end{lemma}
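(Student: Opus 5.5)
The plan is to realise both interpolants on a common probability space and compare them through couplings, using stability of optimal couplings under weak convergence. Fix $t\in[0,1]$. Let $\gamma_k\in\Gamma_{\mu_k,\nu_k}$ be optimal and let $\eta_t^k=[(1-t)\pi^1+t\pi^2]_\#\gamma_k$, as in Proposition \ref{prop:Wasserstein_is_geodesic}. Similarly let $\eta_t=[(1-t)\pi^1+t\pi^2]_\#\gamma$, where $\gamma=(\id,\nv)_\#\mu$ is the Brenier coupling. By Theorem \ref{thm:brenier}, $\gamma$ is the unique optimal coupling, since $\mu\in\Pac$. The goal is to show $\eta_t^k\wto\eta_t$ together with convergence of second moments, and then conclude via Theorem \ref{thm:weak_wass}.

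\textbf{Step 1: weak convergence of the optimal couplings.} Exactly as in Appendix \ref{App:lsc}, the marginals $\mu_k,\nu_k$ converge weakly, so the family $(\gamma_k)$ is tight. Any subsequential weak limit $\tilde\gamma$ therefore lies in $\Gamma_{\mu,\nu}$. We then show that $\tilde\gamma$ is optimal. By Fatou, or lower semicontinuity of $\gamma\mapsto\int\|x-y\|^2d\gamma$, we have $\int\|x-y\|^2d\tilde\gamma\le\liminf W_2^2(\mu_k,\nu_k)=W_2^2(\mu,\nu)$, where the equality uses the $W_2$-continuity of $W_2$ (triangle inequality). So $\tilde\gamma$ is optimal, and uniqueness of the Brenier coupling forces $\tilde\gamma=\gamma$. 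Hence the whole sequence satisfies $\gamma_k\wto\gamma$.

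\textbf{Step 2: pushforward and moments.} The map $(x,y)\mapsto(1-t)x+ty$ is continuous, so the continuous mapping theorem gives $\eta_t^k\wto\eta_t$. For the second moments, $\|(1-t)x+ty\|^2\le 2\|x\|^2+2\|y\|^2$. The functions $\|x\|^2$ and $\|y\|^2$ are uniformly integrable under $\gamma_k$, because their marginal moments converge: $\cM_2(\mu_k)\to\cM_2(\mu)$ and $\cM_2(\nu_k)\to\cM_2(\nu)$ by Theorem \ref{thm:weak_wass}. So $\|(1-t)x+ty\|^2$ is uniformly integrable along $\gamma_k$. Weak convergence of $\gamma_k$ plus this uniform integrability gives $\cM_2(\eta_t^k)\to\cM_2(\eta_t)$. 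This could alternatively be packaged through Skorokhod representation and the generalised DCT of Appendix \ref{App:Generalised_dct}, with dominating sequence $2\|X_k\|^2+2\|Y_k\|^2$, mirroring the proof of Theorem \ref{thm:weak_wass}. Theorem \ref{thm:weak_wass} then yields $W_2(\eta_t^k,\eta_t)\to0$.

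\textbf{Main obstacle.} The delicate point is Step 1, the identification of the limit coupling. It needs both optimality of the limit, obtained from lower semicontinuity plus convergence of the costs, and uniqueness of the optimal plan, which is where absolute continuity of $\mu$ enters. Without uniqueness, we would only get convergence along subsequences to \emph{some} geodesic.

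An alternative route, if one wants to avoid the compactness argument, is a quantitative bound. One would glue $\gamma_k$ with optimal couplings of $(\mu_k,\mu)$ and $(\nu_k,\nu)$, but this controls the distance only to interpolants of a possibly non-optimal plan. So uniqueness is still needed, and I would stick with the compactness argument above.
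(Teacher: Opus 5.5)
Your proposal is correct and follows essentially the same route as the paper. Both arguments use tightness of the optimal couplings and Prokhorov, identify any limit point as an optimal plan via lower semicontinuity and convergence of $W_2(\mu_k,\nu_k)$, invoke uniqueness of the Brenier plan, and finish with weak and second-moment convergence of the pushforwards under $(x,y)\mapsto(1-t)x+ty$ fed into Theorem \ref{thm:weak_wass}. The differences are cosmetic: you use uniqueness up front to get $\gamma_k\wto\gamma$ along the full sequence, rather than working along a subsequence and closing with a subsequence argument at the end, and you correctly note that only $\mu\in\Pac$ is needed for uniqueness.
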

\begin{proof}
    From Theorem \ref{thm:weak_wass}, the convergence in Wasserstein metric immediately implies weak convergence of the measures $\mu_k\wto \mu$, $\nu_k\wto \nu$ and convergence of the second moments, $\cM_2(\mu_k) \to \cM_2(\mu)$, $\cM_2(\nu_k) \to \cM_2(\nu)$.\\

    We first claim that the sequence of couplings $(\gamma_k\in\Gamma_{\mu_k,\nu_k})$ is tight, as the both sequences of corresponding marginals are tight. Indeed, computing for $\mu_k$, from the convergence of the second moment we know that $C_\mu\deq\sup_k \cE_2(\mu_k) <\infty$ and that by Markov's inequality
    \begin{equation}
        \gamma_k\l(\{\|x\|>R\}\times\R^d\r) = \mu_k(\{\|x\|>R\})\leq \frac{1}{R^2}\cE_2(\mu_k)\leq \frac{C_\mu}{R^2},
    \end{equation}
    with an identical bound on the second marginal with $C_\nu\deq\sup_{k}\cE_2(\nu_k)$. Then
    \begin{align*}
        \gamma\l((B_R\times B_R)^c\r)\leq \gamma_k(\{\|x\|>R\}\times\R^d) + \gamma_k\l(\R^d\times\{\|y\|>R\}\r)\leq \frac{C_\mu+ C_\nu}{R^2},
    \end{align*}
    which can be made arbitrarily small. As $B_R\times B_R$ is compact in $\R^d\times\R^d$ and the bound above is uniform in $k$, the sequence $(\gamma_k)$ is tight.\\

    With this now in hand, by Prokhorov's Theorem there is a subsequence $(\gamma_{k_j})$ converging weakly to some $\gamma'\in \cP(\R^d\times\R^d)$. We first verify that this is indeed a valid coupling with marginals $\mu,\nu$. Let $f\in C_b(\R^d)$ be a bounded continuous function from $\R^d$ to $\R$. Then $(x,y)\mapsto f(x)$ is bounded continuous on $\R^d\times\R^d$. From the weak convergence of the couplings $\gamma_{k_j}\to\gamma'$, 
    \begin{equation}
        \int f(x)\gamma'(dx,dy) = \lim_{j\to\infty}\int f(x)\gamma_{k_j}(dx,dy) = \lim_{j\to\infty} \int f(x) \mu_{k_j}(dx) = \int f d\mu
    \end{equation}
    where the last equality uses the weak convergence of $\mu_{k_j}\wto \mu$. As $f$ was arbitrary, we conclude that the first marginal of $\gamma'$ is $\mu$, with an identical argument showing the second marginal is $\nu$. Hence $\gamma'\in\Gamma_{\mu,\nu}$.

    We now demonstrate optimality of the coupling. Denote for $R>0$ the truncated cost $f_R(x,y) = \min(|x-y|^2)$ which is bounded continuous, allowing us to use Portmanteau Theorem applied to the weakly convergent sequence of marginals $\gamma_{k_j}$
    \begin{equation}\label{eq:portmanteau_coupling_optimality}
        \int f_R d\gamma' = \lim f_R d\gamma_{k_j} \leq \liminf_{j\to\infty} \int |x-y|^2 \gamma_{k_j}(dx,dy) = \liminf_{j\to\infty} W_2^2(\mu_{k_j}, \nu_{k_j}).
    \end{equation}
    By assumption we have weak convergence of the marginals, thus from the $W_2$ triangle inequality 
    $$
    \l|W_2(\mu_{k_j},\nu_{k_j}) - W_2(\mu,\nu)\r| \leq W_2(\mu_{k_j},\mu) + W_2(\nu_{k_j},\nu)\to0
    $$
    so that the liminf on the RHS of \eqref{eq:portmanteau_coupling_optimality} is exactly $W_2(\mu,\nu)$. Then taking via monotone convergence theorem, $\R\to\infty$ in \eqref{eq:portmanteau_coupling_optimality} delivers
    \begin{equation*}
        \int |x-y|^2 \gamma' (dx,dy) \leq W_2^2(\mu,\nu),
    \end{equation*}
    thus $\gamma'\in\Gamma_{\mu,\nu}$ is indeed optimal.

    This immediately gives weak convergence of the corresponding interpolants. Indeed the map $\Phi_t:\R^d\times \R^d\to\R^d$, $(x,y)\mapsto (1-t)x + ty$ is continuous, and so with the weak convergence of $\gamma_{k_j}\wto \gamma'$, the continuous mapping theorem gives
    \begin{equation*}
        \eta_t^{k_j} = (\Phi_t)_\#\gamma_{k_j} \wto (\Phi_t)\#\gamma'\eqqcolon \eta_t'
    \end{equation*}
    as desired. Thus we have shown weak convergence of $\eta_t^{k_j}\wto \eta_t$ along the subsequence $(k_j)$

    We now show the convergence of the second moments. Having assumed convergence of the second moments for the marginals,
    \begin{equation}\label{eq:dominating_function}
        \int \l(\|x\|^2 + \|y\|^2\r)\gamma_{k_j}(dx,dy) = \cM_2(\mu_{k_j}) + \cM_2(\nu_{k_j}) \to \cM_2(\mu) + \cM_2(\nu) = \int \l(\|x\|^2 + \|y\|^2\r)\gamma'(dx,dy).
    \end{equation}

    The modulus is convex, thus $g(x,y) = \|(1-t)x+ty\|^2$ satisfies $g(x,y) \leq \|x\|^2 + \|y\|^2$. \eqref{eq:dominating_function} shows that $\|x\|^2 + \|y\|^2$ is an integrable dominating function for $g(x,y)$, establishing uniform integrability of $g$ under $(\gamma_{k_j})$. For $R>0$ we split
    \begin{equation}\label{eq:integral_copmact_split}
      \int g d\gamma_{k_j} = \int_{B_R\times B_R} g d\gamma_{k_j} + \int_{(B_R\times B_R)^c} g d\gamma_{k_j}.
    \end{equation}
    The second term vanishes uniformly in $j$ as $R\to\infty$ by the domination $g(x,y) \leq \|x\|^2 + \|y\|^2$ paired with the tightness of the sequence $(\gamma_{k_j})$ and the fact that the weak limit $\gamma'$ has finite second moment. For the first term, $g$ is bounded continuous on the domain, thus by Portmanteau Theorem the weak convergence $\gamma_{k_j}\wto \gamma'$ delivers
    \begin{equation*}
        \int_{B_R\times B_R} g\gamma_{k_j} \to \int_{B_R\times B_R}\gamma',
    \end{equation*}
    for each $R>0$, thus together we indeed find
    $$
        \int g d\gamma_{k_j}\to \int gd\gamma'
    $$
    i.e. $\cM_2(\eta_t^{k_j})\to \cM_2(\eta_t)$ and we have shown convergence of the second moment along the subsequence $(k_j)$,

    Putting the weak convergence together with the convergence of the second moment, we conclude by Theorem \ref{thm:weak_wass} that along the subsequence $(k_j)_{j\geq1}$, $W_2(\eta_t^{k_j},\eta_t)\to0$.\\
    
    To conclude convergence of the full sequence $\eta_t^{k}$, recall that as $\mu,\nu\in\Pac$ the optimal coupling is unique (induced by a Brenier map), hence any limit point of couplings $\gamma_{k_l}$ must be equal to $\gamma'$. By a standard result, if every subsequence has a further subsequence converging to the same limit, then the whole sequence converges. Thus we may conclude $W_2(\eta_t^k,\eta_t)\to0$ as $k\to\infty$.
\end{proof}

Combining the lemma \ref{lemma:interpolant_stability} above with the $\WL$ law of large numbers Theorem \ref{prop:empirical_convergence} immediately provides.
\begin{proposition}\label{prop:geodesic_empirical_convergence}
    Let $\mu,\nu \in\Pac$. Let $\eta_t = [(1-t)\id + t\nv^{\mu\to\nu}]_\#\mu$ be the displacement interpolation geodesic from $\mu\to\nu$. Let $\hmu_n, \hnu_n$ be empirical measures estimating $\mu,\nu$ respectively, sampled under the mixing conditions of proposition \ref{prop:mixing_convergence}. Then for each $t\in[0,1]$ the lifted coupling interpolation $\eta_t^n(\om) \deq [(1-t)\pi^1 + t\pi^2]_\#\gamma_{\hmuno,\hmu_{\text{block}}o}$ (where $\gamma_{\hmu_n,\hnu_n}$ is an optimal coupling) satisfies
    \begin{enumerate}[label=\alph*)]
        \item $W_2(\eta_t^n(\om),\eta_t)\to 0$ for $\P$-almost every $\om$
        \item $d(\eta_t^n,\eta_t)\to 0$.
    \end{enumerate}
\end{proposition}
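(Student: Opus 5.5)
The plan is to combine Lemma \ref{lemma:interpolant_stability} pathwise with the ergodic law of large numbers of Proposition \ref{prop:mixing_convergence}, and then lift to $\WL$ with the generalised dominated convergence theorem of Appendix \ref{App:Generalised_dct}.

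For a), apply Proposition \ref{prop:mixing_convergence} a) to both samples. This gives full-measure sets $\Omega_\mu,\Omega_\nu$ on which $W_2(\hmuno,\mu)\to0$ and $W_2(\hnuno,\nu)\to0$. Fix $\om\in\Omega_\mu\cap\Omega_\nu$. Lemma \ref{lemma:interpolant_stability} then applies with $\mu_k=\hmu_k(\om)$ and $\nu_k=\hnu_k(\om)$. Its proof uses only $W_2$ convergence of the endpoints and absolute continuity of the limits $\mu,\nu$; the latter gives a unique optimal coupling, so every subsequential limit of the couplings $\gamma_{\hmuno,\hnuno}$ is the Brenier coupling. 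The lemma is stated for arbitrary, possibly discrete, approximating measures, so the discreteness of the empirical measures causes no problem. It yields $W_2(\eta_t^n(\om),\eta_t)\to0$ for each fixed $t$. By Proposition \ref{prop:Wasserstein_is_geodesic}, the limit $\eta_t=(\Phi_t)_\#\gamma'$ coincides with the displacement interpolation $[(1-t)\id+t\nv^{\mu\to\nu}]_\#\mu$.

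For b), set $Z_n(\om)\deq W_2^2(\eta_t^n(\om),\eta_t)$. To see that $Z_n$ is a random variable, note that $\om\mapsto\eta_t^n(\om)$ is measurable into $(\Pc,\cB_{W_2})$; this needs a measurable selection of optimal couplings, which I would take for granted or justify through continuity of $W_2$ as in Appendix \ref{App:Wasserstein_rv}. For the dominating sequence, use $W_2^2(\eta_t^n,\eta_t)\le 2\cM_2(\eta_t^n)+2\cM_2(\eta_t)$. Convexity of $\|\cdot\|^2$ gives $\cM_2(\eta_t^n(\om))\le\int\big((1-t)\|x\|^2+t\|y\|^2\big)\,d\gamma_n(\om)=(1-t)\cM_2(\hmuno)+t\cM_2(\hnuno)$. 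So I take
\begin{equation*}
G_n(\om)\deq 2(1-t)\cM_2(\hmuno)+2t\cM_2(\hnuno)+2\cM_2(\eta_t),
\end{equation*}
which satisfies $Z_n\le G_n$. By stationarity, $\E[\cM_2(\hmun)]=\cM_2(\mu)$ and $\E[\cM_2(\hnu_n)]=\cM_2(\nu)$, so $\E[G_n]$ is constant in $n$. By the moment part of the proof of Proposition \ref{prop:mixing_convergence} (Birkhoff), $G_n\to G\deq2(1-t)\cM_2(\mu)+2t\cM_2(\nu)+2\cM_2(\eta_t)$ almost surely, and $\E[G_n]=\E[G]$. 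The generalised DCT then gives $\E[Z_n]\to0$, which is b).

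The main obstacle is in a): making sure Lemma \ref{lemma:interpolant_stability} genuinely covers the pathwise setting. Two points need care. First, the almost-sure sets for $\mu$ and $\nu$ must be intersected; they may be built from separate or jointly stationary sequences, and I would assume both are defined on the common $(\Omega,\cA,\P)$. Second, the lemma's subsequence argument must hold for every such $\om$, which relies on uniqueness of the optimal plan from $\mu$ to $\nu$ in $\Pac$. Measurability in b) is a secondary technical point.
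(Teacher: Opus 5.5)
Your proposal is correct and follows the paper's proof: part a) applies Lemma \ref{lemma:interpolant_stability} pathwise on the full-measure set where both empirical measures converge in $W_2$, and part b) uses the generalised DCT with a second-moment dominating sequence whose expectation is constant in $n$ by stationarity. The differences are minor. The paper uses the cruder, $t$-uniform bound $H_n=2\big(\cM_2(\hmuno)+\cM_2(\hnuno)+\cM_2(\mu)+\cM_2(\nu)\big)$, which it reuses for the uniform-in-$t$ upgrade, and it cites the strong law of large numbers where your appeal to Birkhoff is the correct justification under the mixing assumptions.
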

\begin{proof}
    a) From Theorem \ref{prop:empirical_convergence}, the empirical measures $\hmuno,\hmu_{\text{block}(\om)}$ both converge pointwise for $\om\in\Omega$ to $\mu,\nu$ in $W_2$ respectively thus by lemma \ref{lemma:interpolant_stability} the interpolations $\eta_t^n(\om)$ also converge as $\ninf$ in $W_2$ giving a). \\
    
    b) We again appeal to the generalised DCT from appendix \ref{App:Generalised_dct} to lift the pointwise convergence to $\WL$ convergence.
    
    We establish a (uniform in $t$) bound on $W_2(\eta_t^n(\om), \eta_t)$. Let $t\in[0,1].$ By the $W_2$ triangle inequality,
    \begin{equation}\label{eq:Empirical_energy}
        W_2(\eta_t^n(\om), \eta_t) \leq W_2(\eta_t^n(\om), \delta_0) + W_2(\delta_0, \eta_t) = \cM_2\l(\eta_t^n(\om)\r)^{\frac{1}{2}} + \cM_2\l(\eta_t\r)^{\frac{1}{2}}.
    \end{equation}
    We use the bound
    \begin{equation*}\label{eq:energy_bound}
        \cM_2(\eta_t) = \int \|(1-t)x +ty\|^2 \gamma(dx,dy) \leq \int \l(\|x\|^2 + \|y\|^2\r) \gamma(dx,dy) = \cM_2(\mu) + \cM_2(\nu),
    \end{equation*}
    where $\gamma\in\Gamma_{\mu,\nu}$ is the optimal coupling for $(\mu,\nu)$. Similarly for $\eta_t^n(\om)$, $\cM_2(\eta_t^n(\om))\leq \cM_2(\hmuno) + \cM_2(\hmu_{\text{block}}o)$. Putting this into \eqref{eq:Empirical_energy}, with $(a+b)^2\leq 2a^2+2b^2$, gives the bound
    \begin{equation}\label{eq:double_energy_bound}
        W_2^2(\eta_t^n(\om), \eta_t) \leq 2\bigg(
            \cM_2(\hmuno) + \cM_2(\hmu_{\text{block}}o) + \cM_2(\mu) + \cM_2(\nu)
        \bigg)\eqqcolon H_n(\om)
    \end{equation}
    which is uniform in $t$. Thus denoting $Z_n(\om) \deq \sup_{t\in[0,1]}W_2^2(\eta_t^n(\om), \eta_t)$, $Z_n\leq H_n$ $\P$-a.s. . 
    
    Using $\E[\cM_2(\hmu_n)] = \cE_2(\hmu_n) = \cM_2(\mu), \E[\cM_2(\hnu_n)] = \cE_2(\hmu_n) =\cM_2(\nu)$, 
    \begin{equation}\label{eq:H_n_expectation}
        \E[H_n] = 4(\cM_2(\mu)+\cM_2(\nu))
    \end{equation}
    is integrable and constant (hence convergent) in $n$. Moreover, by the strong law of large numbers, $\cM_2(\hmuno)\to \cM_2(\mu)$, $\cM_2(\hmu_{\text{block}}o)\to \cM_2(\nu)$ $\P$-a.s. hence 
    \begin{equation}\label{eq:H_n_convergence_SLLN}
        H_n(\om)\to 4(\cM_2(\mu)+\cM_2(\nu)) \qquad \text{$\P$-a.s.} .
    \end{equation}
    Thus generalised DCT applies. As $Z_n\to$ a.s. by part a), we conclude $\E[Z_n] = d^2(\eta_t^n,\eta_t)\to0$ as desired.
\end{proof}
Given that the dominating sequence $H_n$ is uniform in $t$, we can actually upgrade to uniform convergence in $t\in[0,1]$.
\begin{proposition}\label{prop:geodesic_empirical_convergence_uniform}
    In the setting of proposition \ref{prop:geodesic_empirical_convergence}, the convergence of both a) and b) is uniform in $t$ i.e.
    \begin{enumerate}[label=\alph*)]
        \item $\sup_{t\in[0,1]}W_2(\eta_t^n(\om),\eta_t)\to 0$ for $\P$-almost every $\om$
        \item $\sup_{t\in[0,1]}d(\eta_t^n,\eta_t)\to 0$.
    \end{enumerate}
\end{proposition}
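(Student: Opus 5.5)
The plan is to upgrade the pointwise-in-$t$ convergence of Proposition \ref{prop:geodesic_empirical_convergence} to uniform convergence by exploiting the Lipschitz-in-$t$ structure of both interpolation curves. This structure is supplied by Proposition \ref{prop:Wasserstein_is_geodesic}. For fixed $\om$, the curve $t\mapsto\eta_t^n(\om)$ is a constant-speed geodesic, so
\[
W_2(\eta^n_s(\om),\eta^n_t(\om)) = |s-t|\,W_2(\hmuno,\hnuno).
\]
Similarly, $W_2(\eta_s,\eta_t)=|s-t|\,W_2(\mu,\nu)$. Since $W_2(\hmuno,\hnuno)\to W_2(\mu,\nu)$ $\P$-a.s. by the triangle inequality and part a) of Proposition \ref{prop:mixing_convergence}, the Lipschitz constants $L_n(\om)\deq W_2(\hmuno,\hnuno)$ are eventually bounded by $L(\om)\deq W_2(\mu,\nu)+1$ on a full-measure set $\Omega^*$.

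For a), fix $\om\in\Omega^*$ and $\varepsilon>0$, and choose a finite grid $0=t_0<t_1<\dots<t_K=1$ with mesh $h$ such that $h\,(L(\om)+W_2(\mu,\nu))<\varepsilon$. For any $t$ there is a grid point $t_k$ with $|t-t_k|\leq h$, and the triangle inequality gives
\[
W_2(\eta_t^n(\om),\eta_t)\leq h L_n(\om) + W_2(\eta_{t_k}^n(\om),\eta_{t_k}) + h W_2(\mu,\nu).
\]
Taking the supremum over $t$ bounds $\sup_t W_2(\eta^n_t(\om),\eta_t)$ by $\varepsilon+\max_k W_2(\eta^n_{t_k}(\om),\eta_{t_k})$. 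By Proposition \ref{prop:geodesic_empirical_convergence} a), the finite maximum tends to $0$; to keep the exceptional set $\om$-independent, I would intersect the null sets over rational $t$, or over grids with rational points. Letting $\varepsilon\downarrow0$ gives a).

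For b), I would apply the generalised DCT (Lemma \ref{lemma:generalised_lebesgue_dominated_convergence_theorem}) directly to $Z_n(\om)=\sup_{t\in[0,1]}W_2^2(\eta_t^n(\om),\eta_t)$, which is how the proof of Proposition \ref{prop:geodesic_empirical_convergence} already defined it. Part a) gives $Z_n\to0$ a.s. The bound \eqref{eq:double_energy_bound} is uniform in $t$, so $Z_n\leq H_n$, with $\E[H_n]$ constant and $H_n$ converging a.s. to its mean by \eqref{eq:H_n_convergence_SLLN}. Hence $\E[Z_n]\to0$. Finally, $\sup_t d^2(\eta^n_t,\eta_t)=\sup_t\E[W_2^2(\eta^n_t,\eta_t)]\leq\E[Z_n]$, which yields b).

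The main obstacle is measurability of $Z_n$, since it is a supremum over uncountably many $t$. I would handle this by continuity in $t$: both curves are $W_2$-continuous, so the supremum equals the supremum over rational $t$, which is measurable. A second subtlety is the choice of measurable optimal couplings $\gamma_n(\om)$, which I would take as given, as the paper does.
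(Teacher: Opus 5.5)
Your proposal is correct and follows essentially the same route as the paper: Lipschitz-in-$t$ equicontinuity from the constant-speed geodesic property, a finite-grid $\varepsilon/3$ argument for a), and the generalised DCT applied to $Z_n=\sup_{t} W_2^2(\eta^n_t,\eta_t)$ with the $t$-uniform dominating sequence $H_n$ for b). Your remarks on the measurability of $Z_n$ (via continuity in $t$ and rational times) and the explicit inequality $\sup_t d^2(\eta^n_t,\eta_t)\leq \E[Z_n]$ supply details that the paper leaves implicit.
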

\begin{proof}
    a) We demonstrate pathwise uniform convergence. Fix $\om$ in the $\P$ full measure event where Theorem \ref{prop:empirical_convergence} holds i.e. where we have a.s. $W_2$ convergence of the empirical measures and convergence of the second energies. This immediately gives $$C(\om)\deq \sup_{n\geq1}W_2(\hmuno,\hnuno)<\infty$$ is bounded as $W_2(\hmuno,\hnuno)\to W_2(\mu,\nu)$ converges hence the sequence is bounded. Moreover, by lemma \ref{lemma:interpolant_stability} above, we already have pointwise convergence for each $t\in[0,1]$ of $W_2\l(\eta_t^n(\om), \eta_t\r)\overset{\ninf}{\to}0$. 
    
    We turn to equicontinuity. As $W_2$ satisfies the triangle inequality we find
    \begin{equation*}
        \l|
            W_2^2(\eta_t^n(\om), \eta_t) - W_2^2(\eta_s^n(\om), \eta_s)
        \r| \leq W_2(\eta_t^n(\om), \eta_s^n(\om)) + W_2(\eta_t(\om),\eta_s(\om)).
    \end{equation*}
    As the interpolating geodesics are both constant speed (proposition \ref{prop:lifted_geodesics}) we find 
    \begin{equation}\label{eq:equicontinuity}
        W_2(\eta_t^n(\om), \eta_s^n(\om)) + W_2(\eta_t,\eta_s) \leq |t-s| \l( C(\om) + W_2(\mu,\nu) \r).
    \end{equation}
    Set $K(\om)\deq C(\om) + W_2(\mu,\nu) < \infty$. Then the family $(f_n^\om)_{n\geq1}$, $f_n^\om(t)\deq W_2(\eta_n^t(\om),\eta_t)$ satisfies
    \begin{equation*}
        \l|
            f_n^\om(t) - f_n^\om(s)
        \r| \leq K(\om)|t-s|
    \end{equation*}
    for all $s,t\in[0,1], n\geq 1$. Hence the family $(f_n^\om)_{n\geq1}$ is equicontinuous on $[0,1]$, uniformly in $n$ with common Lipschitz constant $K(\om)$.''

    We are now placed to demonstrate uniform convergence via an $\frac{\varepsilon}{3}$ argument. Fix $\varepsilon>0$ and choose $\delta = \frac{1}{N}$ for some integer $N$ s.t. $\frac{1}{N} < \varepsilon/(3K(\om))$. Let $t_j = \delta j$ for $j=1,\dots,N$ so that $\{t_1,\dots,t_N\}$ is a uniform $N$-partition of $[0,1]$. From lemma \ref{lemma:interpolant_stability} we have pointwise convergence for each $t$, hence for each $j$, there is a corresponding $n_j(\om)$ s.t. for all $n\geq n_j(\om)$:
    \begin{equation}\label{eq:eps/3_no1}
        W_2(\eta_{t_j}^n(\om),\eta_{t_j}) < \frac{\varepsilon}{3}.
    \end{equation}
    Setting $n^*(\om) \deq \max_{1\leq j\leq N} n_j(\om)$. Now fix $t\in[0,1]$, and choose the grid-point $t_j$ s.t. $|t-t_j|<\delta$. By the triangle inequality,
    \begin{equation*}
        W_2\l(\eta_t^n(\om), \eta_t\r)\leq W_2\l(\eta_t^n(\om), \eta_{t_j}^n(\om)\r) + W_2\l(\eta_{t_j}^n(\om), \eta_{t_j}\r) + W_2\l(\eta_{t_j}, \eta_t\r).
    \end{equation*}
    From the Lipschitz bound \eqref{eq:equicontinuity}, the first and last term on the RHS are both bounded by $K(\om)\delta$ for which $K(\om)\delta< \varepsilon/3$ by construction. Using the pointwise convergence \eqref{eq:eps/3_no1}, the middle term is also bounded by $\varepsilon/3$, thus we find $\forall n\geq n^*(\om)$,
    \begin{equation*}
        W_2\l(\eta_t^n(\om), \eta_t\r) < \varepsilon
    \end{equation*}
    and as the bound was uniform in $t$, for all $n\geq n^*(\om)$
    \begin{equation}\label{eq:bounded_empirical_geodesic_distance}
        \sup_{t\in[0,1]}W_2\l(\eta_t^n(\om), \eta_t\r) < \varepsilon.
    \end{equation}
    As $\varepsilon>0$ was arbitrary, we conclude the uniform convergence, $\sup_{t\in[0,1]}W_2\l(\eta_t^n(\om), \eta_t\r)\to0$ $\P$-a.s.\\

    We finally argue b) Let $Z_n(\om)\deq \sup_{t\in[0,1]}W_2^2(\eta^t_n(\om), \eta_t)$, so that $Z_n\to0$ $\P$-a.s. from part a).
    One verifies that $H_n$, as in \eqref{eq:double_energy_bound} from the preceding proposition, is a dominating sequence for $Z_n$ (as the bound was independent of $t$) thus we again satisfy the hypotheses of the generalised DCT, since we readily have the pointwise convergence from a). Thus we can indeed lift the pointwise convergence of part a) to convergence in $\WL$, and that we can indeed conclude $\sup_{t\in[0,1]}d(\eta_t^n,\eta_t)\to 0$.
\end{proof}

Under stronger assumptions, as in the theorem below from Li and Nochetto \cite{NochettoLi}, one can also determine rates for the convergence in the metric $d$.
\begin{theorem}[\cite{NochettoLi} Corollary 3.9]\label{thm:Li_Nochetto}
    Let $\mu,\nu\in\Pac$ and let $\nv$ be the Brenier map from $\mu$ to $\nu$. Suppose that $\varphi$ is $\lambda$-regular for some $\lambda>0$ i.e. $\frac{\lambda}{2}\|x\|^2-\varphi(x)$ is convex. 

    Let $\gamma = (\id,\nv)_\# \mu$ be the optimal transport plan induced by $\nv$. Let $\mu_h, \nu_h \in P_2(\R^d)$ be approximations to $\mu,\nu$ and $\gamma_h \in \Gamma(\mu_h, \nu_h)$ be a transport plan between $\mu_h$ and $\nu_h$. Then we have
	\begin{equation}\label{eq:Li_Nochetto_bound}
    	W_2(\gamma, \gamma_h) \le 2 \, \lambda^{\frac12} \,
    	\tilde{e}_h^{\frac{1}{2}} \Big( W_2(\mu,\nu) + \tilde{e}_h \Big)^{\frac{1}{2}} + e_h,
    \end{equation}
	where $e_h = W_2(\mu, \mu_h) + W_2(\nu, \nu_h)$ and $\tilde{e}_h$ is given by
    \begin{equation}\label{eq:OT_stability_epsilon}
  \tilde{e}_h := e_h + \frac{\varepsilon_h}{2},
  \quad
  \varepsilon_h := \l( \int |x' - y'|^2 d\gamma_h(x',y') \r)^{\frac{1}{2}} - W_2(\mu_h, \nu_h).
\end{equation}
\end{theorem}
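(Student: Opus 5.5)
The plan is to insert an intermediate plan $\tilde\gamma\in\Gamma_{\mu,\nu}$ between $\gamma_h$ and $\gamma$ and split by the triangle inequality,
$$W_2(\gamma,\gamma_h)\le W_2(\gamma,\tilde\gamma)+W_2(\tilde\gamma,\gamma_h).$$
The plan $\tilde\gamma$ is built by gluing. The first term is controlled by the $\lambda$-regularity of $\varphi$, and the second by $e_h$.

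\emph{Construction of $\tilde\gamma$.} Take $(X',Y')\sim\gamma_h$. By the gluing lemma, on a common probability space realise $X\sim\mu$ optimally coupled to $X'$, and $Y\sim\nu$ optimally coupled to $Y'$. Set $\tilde\gamma$ to be the law of $(X,Y)$, so $\tilde\gamma\in\Gamma_{\mu,\nu}$. Coupling $(X,Y)$ with $(X',Y')$ gives
$$W_2^2(\tilde\gamma,\gamma_h)\le \E\|X-X'\|^2+\E\|Y-Y'\|^2=W_2^2(\mu,\mu_h)+W_2^2(\nu,\nu_h)\le e_h^2 .$$
Minkowski in $L^2$ bounds the cost $\tilde C\deq \E[\|X-Y\|^2]^{1/2}$ of $\tilde\gamma$:
$$\tilde C\le \E[\|X'-Y'\|^2]^{1/2}+e_h=W_2(\mu_h,\nu_h)+\varepsilon_h+e_h .$$
The triangle inequality for $W_2$ gives $W_2(\mu_h,\nu_h)\le W_2(\mu,\nu)+e_h$. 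Hence $0\le \tilde C-W_2(\mu,\nu)\le 2e_h+\varepsilon_h=2\tilde e_h$.

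\emph{Key step: quantitative stability from $\lambda$-regularity.} Since $\frac\lambda2\|x\|^2-\varphi$ is convex, $\nabla\varphi$ is $\lambda$-Lipschitz. Then the convex conjugate $\varphi^*$ is $\frac1\lambda$-strongly convex, which yields the sharpened Fenchel--Young inequality
$$\varphi(x)+\varphi^*(y)-\langle x,y\rangle\ \ge\ \tfrac{1}{2\lambda}\|y-\nabla\varphi(x)\|^2 .$$
Integrate this against $\tilde\gamma$. Only the marginals enter $\int\varphi\,d\mu+\int\varphi^*d\nu$, and this quantity equals $\int\langle x,\nabla\varphi(x)\rangle d\mu$ by Brenier's Theorem \ref{thm:brenier}. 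The left side therefore integrates to $\frac12(\tilde C^2-W_2^2(\mu,\nu))$. This gives
$$\E\|Y-\nabla\varphi(X)\|^2\le\lambda(\tilde C-W_2)(\tilde C+W_2)\le \lambda\cdot 2\tilde e_h\cdot 2\big(W_2(\mu,\nu)+\tilde e_h\big).$$
Now couple $\tilde\gamma$ with $\gamma=(\id,\nabla\varphi)_\#\mu$ through the common first coordinate $X$. This yields $W_2(\tilde\gamma,\gamma)\le 2\lambda^{1/2}\tilde e_h^{1/2}(W_2(\mu,\nu)+\tilde e_h)^{1/2}$. Adding the $e_h$ bound gives \eqref{eq:Li_Nochetto_bound}.

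The main obstacle is this key step: justifying the strong-convexity Fenchel inequality and the integrability of $\varphi,\varphi^*$. I would first use the standard duality that $\nabla\varphi$ being $\lambda$-Lipschitz is equivalent to $\varphi^*$ being $\lambda^{-1}$-strongly convex. For integrability, the quadratic growth bounds $\varphi(x)\le C(1+\|x\|^2)$ and $\varphi^*(y)\ge -C'(1+\|y\|^2)$, together with finite second moments, should make all integrals finite. The gluing and triangle-inequality parts are routine.
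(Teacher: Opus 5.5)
The paper does not prove this statement. It quotes it from Li--Nochetto \cite{NochettoLi} and uses it only as a black box in Proposition \ref{prop:LN_FG}, with $\gamma_h$ optimal so that $\varepsilon_h=0$. There is therefore no internal argument to compare against, and your proposal has to stand on its own. It does: it is a correct and complete proof of the stated bound.

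\begin{itemize}
\item \textbf{Gluing.} The glued plan $\tilde\gamma\in\Gamma_{\mu,\nu}$ satisfies $W_2(\tilde\gamma,\gamma_h)\le e_h$. Minkowski's inequality together with the triangle inequality for $W_2$ gives $0\le \tilde C-W_2(\mu,\nu)\le 2e_h+\varepsilon_h=2\tilde e_h$.
\item \textbf{Stability estimate.} Integrating the strengthened Fenchel--Young inequality against $\tilde\gamma$ gives $W_2^2(\gamma,\tilde\gamma)\le \E\|Y-\nv(X)\|^2\le \lambda\big(\tilde C^2-W_2^2(\mu,\nu)\big)$.
\item \textbf{Constant.} Factoring $\tilde C^2-W_2^2(\mu,\nu)=(\tilde C-W_2(\mu,\nu))(\tilde C+W_2(\mu,\nu))$ produces exactly the constant $2\lambda^{1/2}\tilde e_h^{1/2}(W_2(\mu,\nu)+\tilde e_h)^{1/2}$.
\end{itemize}

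The middle step is a Gigli-type stability estimate for plans with the exact marginals. Combining it with gluing to absorb the perturbation of the marginals is, as far as I can tell, also the mechanism behind the cited result. Writing it out as you did would make the paper self-contained at essentially no cost.

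Two small refinements address the step you flagged as the main obstacle.

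\begin{itemize}
\item \textbf{Shorter route to the inequality.} You do not need the duality between Lipschitz gradients and strong convexity of $\varphi^*$. Convexity of $\frac{\lambda}{2}\|\cdot\|^2-\varphi$ is exactly the upper bound $\varphi(z)\le \varphi(x)+\langle \nv(x),z-x\rangle+\frac{\lambda}{2}\|z-x\|^2$. Inserting this into $\varphi^*(y)=\sup_z\{\langle y,z\rangle-\varphi(z)\}$ and optimising over $z$ gives $\varphi(x)+\varphi^*(y)-\langle x,y\rangle\ge \frac{1}{2\lambda}\|y-\nv(x)\|^2$ in one line.
\item \textbf{Integrability of $\varphi^*$.} Finiteness of $\int\varphi^*\,d\nu$ does not come from a lower bound on $\varphi^*$. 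A lower bound only gives $\varphi^*\ge -\varphi(0)$. It comes instead from the pushforward and the Fenchel equality: $\int\varphi^*\,d\nu=\int\big(\langle x,\nv(x)\rangle-\varphi(x)\big)\,\mu(dx)$. This integrand is $O(1+\|x\|^2)$, because $\nv$ is $\lambda$-Lipschitz and hence has linear growth, while $\varphi$ is squeezed between an affine minorant and a quadratic majorant.
\end{itemize}

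Since the integrand $\varphi(x)+\varphi^*(y)-\langle x,y\rangle$ is nonnegative and each of its three pieces is then $\tilde\gamma$-integrable, splitting the integral is justified.
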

Note that in the case $\gamma_h\in\Gamma_{\nu_h\mu_h}$ is an optimal coupling for $(\mu_h,\nu_h)$, $\varepsilon_h = 0$ and $\tilde e_h = e_h$ is the sum of Wasserstein distances between the two approximations. One can use this to prove that the rate of uniform convergence of the empirical geodesics to the true geodesic in the metric $d$ is given by the square root of the Fournier-Guillin rate of Theorem \ref{thm:FG} under i.i.d. sampling.

\begin{proposition}\label{prop:LN_FG}
    Let $p\geq1$ and let $\mu,\nu\in\cP_q(\R^d)$ for some $q>p$. Under the condition of Theorem \ref{thm:Li_Nochetto}, the convergence of b) in proposition \ref{prop:geodesic_empirical_convergence_uniform} i.e. $\sup_{t\in[0,1]}d(\eta_t^n,\eta_t)\to0$ achieves the square root of the Fournier-Guillin rates of Theorem \ref{thm:FG} when the empirical measure is i.i.d. sampled. i.e.
    \begin{equation}\label{eq:rates}
        \sup_{t\in[0,1]}d^2(\eta_t^n,\eta)\leq K r(n,d)^\frac{1}{2} = Ks(n,d)
    \end{equation}
    for some $K = K(p,d,q)$, and where
    \begin{equation}\label{eq:s(n,d)}
        s(n,d) =
        \l\{\begin{array}{ll}
        n^{-1}& \!\!\!\hbox{if $d\leq 3$},  \\[+3pt]
        n^{-1} (\log(1+n))^{\frac{1}{2}} &\!\!\! \hbox{if $d=4$}, \\[+3pt]
        n^{\frac{-1}{d}} &\!\!\!\hbox{if $d\geq5$}.
        \end{array}\r.
    \end{equation}
\end{proposition}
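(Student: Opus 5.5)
The plan is to pass from the Li--Nochetto stability bound on couplings (Theorem \ref{thm:Li_Nochetto}) to a bound on the interpolants, take expectations, and then insert the Fournier--Guillin rate of Theorem \ref{thm:FG}.

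\emph{Step 1: interpolants are controlled by couplings, uniformly in $t$.} Fix $\om$ and let $\gamma_n(\om)$ be an optimal coupling of $(\hmuno,\hnuno)$, so that $\eta_t^n(\om)=(\Phi_t)_\#\gamma_n(\om)$ with $\Phi_t(x,y)=(1-t)x+ty$. Likewise $\eta_t=(\Phi_t)_\#\gamma$ with $\gamma=(\id,\nv)_\#\mu$. By convexity of the squared norm,
\[
\|\Phi_t(x,y)-\Phi_t(x',y')\|^2\le (1-t)\|x-x'\|^2+t\|y-y'\|^2\le \|(x,y)-(x',y')\|^2,
\]
so $\Phi_t$ is $1$-Lipschitz for every $t\in[0,1]$. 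Pushing an optimal coupling of $(\gamma_n(\om),\gamma)$ forward by $\Phi_t\times\Phi_t$ then gives
\[
\sup_{t\in[0,1]}W_2(\eta_t^n(\om),\eta_t)\le W_2(\gamma_n(\om),\gamma).
\]

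\emph{Step 2: apply Li--Nochetto pathwise.} Take $\mu_h=\hmuno$, $\nu_h=\hnuno$ and $\gamma_h=\gamma_n(\om)$. This plan is optimal, so $\varepsilon_h=0$ and $\tilde e_h=e_h(\om)=W_2(\hmuno,\mu)+W_2(\hnuno,\nu)$. Squaring \eqref{eq:Li_Nochetto_bound} with $(a+b)^2\le 2a^2+2b^2$ yields
\[
\sup_t W_2^2(\eta_t^n(\om),\eta_t)\le 8\lambda\, e_h(\om)\big(W_2(\mu,\nu)+e_h(\om)\big)+2e_h(\om)^2 .
\]
Take expectations and use $\sup_t d^2\le \E[\sup_t W_2^2]$. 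Cauchy--Schwarz (or Jensen) gives $\E[e_h]\le \big(2d^2(\hmun,\mu)+2d^2(\hnun,\nu)\big)^{1/2}$, and directly $\E[e_h^2]\le 2d^2(\hmun,\mu)+2d^2(\hnun,\nu)$.

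\emph{Step 3: insert the rates.} By \eqref{eq:FG_rate}, which needs the moment hypothesis $q>2$, both $d^2$ terms are $O(r(n,d))$. The linear term $8\lambda W_2(\mu,\nu)\E[e_h]$ is then $O(r^{1/2})$, while the quadratic terms are $O(r)\le O(r^{1/2})$ since $r\le 1$ for $n\ge1$ up to constants. The constant $K$ collects $\lambda$, $W_2(\mu,\nu)$, $\cM_q(\mu)$, $\cM_q(\nu)$ and $C(q,d)$, and we obtain $\sup_t d^2(\eta_t^n,\eta_t)\le K\, r(n,d)^{1/2}$.

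The last point is to compare $r^{1/2}$ with the displayed $s(n,d)$. For $d\ge5$ one gets $r^{1/2}=n^{-1/d}$, which matches. For $d\le4$, however, $r^{1/2}=n^{-1/4}$, possibly times a log factor, and not $n^{-1}$. I expect this to be the main obstacle: the table for $s$ appears to be misstated, and the provable statement is the bound $K r(n,d)^{1/2}$, which I would prove as written and record $s$ consistently with it.

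The remaining technical concerns are measurability of $\om\mapsto W_2(\gamma_n(\om),\gamma)$ and the choice of $\gamma_n(\om)$. Both are avoided by working directly with the measurable quantity $\sup_t W_2^2(\eta^n_t,\eta_t)$: it is a supremum over rational $t$, by continuity in $t$ from constant-speed geodesics, and it is bounded pathwise by the $e_h$ expression, which is measurable by Appendix \ref{App:Wasserstein_rv}.
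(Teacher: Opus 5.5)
Your proposal is correct and is essentially the paper's own proof: the $1$-Lipschitz property of $\Phi_t$ giving $\sup_t W_2(\eta_t^n(\om),\eta_t)\le W_2(\gamma_n(\om),\gamma)$, then Theorem~\ref{thm:Li_Nochetto} with $\varepsilon_h=0$, then Theorem~\ref{thm:FG} via Jensen. Your separate bounds $\E[e_h]\le \E[e_h^2]^{1/2}=O(r^{1/2})$ and $\E[e_h^2]=O(r)$ make precise the paper's pathwise ``$O(e_h)$'' step (one small correction: $q>2$ alone does not give \eqref{eq:FG_rate}, since Theorem~\ref{thm:FG} with $p=2$ carries an extra $n^{-(q-2)/q}$ term that is dominated by $r(n,d)$ only for $q$ large enough, e.g.\ $q>4$, an assumption the paper also leaves implicit).

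You are also right about $s(n,d)$: this route, the paper's included, yields only $K\,r(n,d)^{1/2}$, i.e.\ $n^{-1/4}$ for $d\le 3$ and $n^{-1/4}(\log(1+n))^{1/2}$ for $d=4$. The displayed $n^{-1}$ is in fact false in general. Since $\eta_0^n=\hmu_n$, take $\mu=\nu$ uniform on $[0,1]\cup[2,3]$; the Brenier map is the identity, so Theorem~\ref{thm:Li_Nochetto} applies with $\lambda=1$. The random mass imbalance between the two intervals then forces $\sup_t d^2(\eta^n_t,\eta_t)\ge d^2(\hmu_n,\mu)\gtrsim n^{-1/2}$.
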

\begin{proof}
    We proceed in two steps: we first show that the distance $d^2(\eta_t^n(\om), \eta_t)$ is (uniformly) bounded by $d^2(\gamma_n,\gamma)$ where $\gamma_n(\om)\in\Gamma_{\hmuno,\hnuno}$ for each $\om\in\Omega$ and $\gamma\in\Gamma_{\mu,\nu}$ are optimal couplings, and the distance $d$ is over $L^2_W(\R^{d}\times\R^d)$. We then appeal to corollary 3.9 of \cite{NochettoLi} which bounds this further by estimates $d^2(\hmu_n,\hmu),d^2(\hnu_n,\hnu)$ for which the Fourier-Guillin rates of Theorem \ref{thm:FG} apply.\\

    Let $\pi_t:\R^d\times\R^d\to\R^d$ denote the interpolation $(x,y)\mapsto (1-t)x+ty$. From the convexity of the norm, we find for any $x,y,x',y'\in\R^d$,
    \begin{align}\label{eq:norm_convexity}
        \l\|\pi_t(x,y) - \pi_t(x',y')\r\|^2_{\R^d} &= \l\|(1-t)(x-x') + t(y-y')\r\|^2_{\R^d} \leq \l\|x-x'\r\|^2_{\R^d} + \l\|y-y'\r\|^2_{\R^d} \notag\\
        &= \l\|(x-x',y-y')\r\|^2_{\R^d\times\R^d}.
    \end{align}
    We can now use this to uniformly bound the Wasserstein distance between the empirical and true geodesics. Fix $\om\in\Omega$, and write $\gamma_n(\om)\in\Gamma_{\hmuno, \hnu(\om)}$ for the optimal coupling between $(\hmuno, \hnu(\om))$. Viewing both $\gamma_n(\om)$ and $\gamma\in\Gamma(\mu,\nu)$ (the optimal coupling for $(\mu,\nu)$) as probability measures over $\R^d\times\R^d$, we can consider coupling these too i.e. consider some $\Pi\in\Gamma_{\gamma_{n}(\om),\gamma}$, where $\Pi$ (not optimal yet) is of the form $\Pi((dx_n,dy_n),(dx,dy))$. 
    
    Recalling the definition of the displacement interpolation, since $\Pi$ couple the optimal couplings, we can write
    \begin{equation}
        W_2^2(\eta_t^n(\om),\eta_t) \leq \int \l\|\pi_t(x_n,y_n) - \pi_t(x,y)\r\|^2\Pi((dx_n,dy_n),(dx,dy)).
    \end{equation}
    Using the inequality \eqref{eq:norm_convexity} above, we find
    \begin{equation}
        W_2^2(\eta_t^n(\om),\eta_t) \leq\int \l\|(x_n-x,y_n-y)\r\|^2_{\R^d\times\R^d}\Pi(dx_n,dy_n,dx,dy) = W_2^2(\gamma_n(\om),\gamma)
    \end{equation}
    where we have assumed $\Pi\in\Gamma_{\gamma_{n}(\om),\gamma}$ is optimal on the RHS, giving the Wasserstein distance between the couplings. Note that this bound is uniform in $t$, thus we can take a supremum on the LHS of the inequality above.\\

    We can now employ Theorem \ref{thm:Li_Nochetto} above. As we've assumed $\gamma_{n}(\om)$ is optimal already the bound \eqref{eq:Li_Nochetto_bound} reduces with $\tilde e_h = e_h$, and so the theorem gives
    \begin{align*}\label{eq:bound_nli}
        W_2^2(\gamma_n(\om),\gamma) &\leq \l(2\lambda^{\frac{1}{2}}e_h^\frac{1}{2}(W_2(\mu,\nu) + e_h)^{\frac{1}{2}} + e_h\r)^2 = O(e_h)
    \end{align*}
    as $n\to\infty$ and $e_h\to0$. But $e_h = W_2(\hmuno,\mu)+W_2(\hnuno,\nu)$. As the Fournier-Guillin rate $r(n,d)$ of \eqref{eq:r(n,d)} applies to $\E[W_2(\hmuno,\mu)+W_2(\hnuno,\nu)]$, we recover the square root of the rate here and thus conclude using Theorem \ref{thm:FG}.
\end{proof}

\subsection{Mollified empirical measures}\label{App:mollification}

The empirical measures $\hmuno, \hnu_n(\om)$ are always discrete and as such lie in $\WL\setminus\wlac$. While we've thus far been able to establish $\WL$ convergence results (for both the true measures and the geodesics), the discrete empirical measures are incompatible with the gradient theory proposed in section \ref{subsec:3gfs} which takes place in $\wlac$  (unless we have a Wasserstein $C^1$ functional which allows us to extend the results to all of $\WL$). In particular, there is no Brenier map from $\hmuno$ to $\hnuno$, so we cannot sensibly identify tangent space $\cT_{\hmuno\Pc}$ in the same manner as we did for absolutely continuous measures and so we have no immediately compatible notion to lift the gradient structure from $\wlac$ to $\WL$ without enforcing constraints on the types of functionals we consider.

A natural alternate direction, is to consider a \textit{mollification} of $\hmuno$ via a convolution with a Gaussian kernel with some bandwidth $\sigma>0$, producing smooth, absolutely continuous approximations to $\hmu_n$ (and hence $\mu$ itself) in $\wlac$. Such a direction has proven popular of late (\cite{nietert2021smooth}, \cite{wang2023wasserstein}, \cite{fischer2024sharp}, \cite{Zhou_2024}, \cite{olivera2025quantitative}). We predominantly follow the analysis of \cite{golfeldGreenewaldPolyanskiyNilesWeed}. Taking $n\to\infty, \sigma(n)\to0$ (at some appropriate rate) recover convergence to $\mu$ in $\WL$, while retaining absolute continuity and hence the Riemannian structure of $\wlac$. 

\begin{definition}[Mollified empirical measure]\label{def:smoothed_empirical_measure}
    Let $Z_\sigma \sim \cN(0,\sigma^2 I_d)$ be an isotropic Gaussian on $\R^d$ of scale $\sigma>0$. The (Gaussian) mollified empirical measure at bandwidth $\sigma$ is given by
    \begin{equation}\label{eq:mollified_e_m}
        \hmuns(\om)\deq \hmuno * Z_\sigma = \frac{1}{n}\sum_{i=1}^n \cN(X_i(\om), \sigma^2 I_d) \in \Pac,
    \end{equation}
    which admits a smooth density
    \begin{equation}\label{eq:mollified_density_e_m}
        \hat p_n^\sigma(x,\om) = \frac{1}{n(2\pi\sigma^2)^\frac{d}{2}}   \sum_{i=1}^n\exp\l(-\frac{\|x-X_i(\om)\|^2}{2\sigma^2}\r).
    \end{equation}
    We also define the corresponding smoothed population measure, $\mu^\sigma\deq \mu * Z_\sigma$.
\end{definition}
In proposition \ref{prop:mollified_convergence} we will verify the convergence of $\hmun^{\sigma(n)}$ to $\mu$ both pathwise and in $\WL$ as $\ninf$, $\sigma(n)\to0$ via a triangle inequality decomposition
\begin{equation*}
    W_2(\hmun^{\sigma(n)}(\om), \mu)\leq W_2(\hmun^{\sigma(n)}(\om), \mu^{\sigma(n)}) + W_2(\mu^{\sigma(n)}, \mu).
\end{equation*}
In preparation, we collect the following lemma.
\begin{lemma}\label{lemma:smoothing_bias}
    Let $\mu\in\Pc$ and $\sigma>0$. Then
    \begin{equation}\label{eq:smoothing_bias}
        W_2^2(\mu^\sigma, \mu)\leq d\sigma^2.
    \end{equation}
\end{lemma}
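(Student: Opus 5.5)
The bound follows from the definition of $W_2$ as an infimum over couplings (\eqref{eq:wassersteindef}), by exhibiting one explicit coupling of $\mu^\sigma$ and $\mu$ and computing its cost. On a common probability space, take $X\sim\mu$ and $Z_\sigma\sim\cN(0,\sigma^2 I_d)$ independent of $X$. By Definition \ref{def:smoothed_empirical_measure}, $\mu^\sigma=\mu*Z_\sigma$ is the law of $X+Z_\sigma$. The joint law $\gamma$ of the pair $(X+Z_\sigma,X)$ therefore lies in $\Gamma_{\mu^\sigma,\mu}$.

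Bounding the infimum by the cost of this coupling gives
\begin{equation*}
W_2^2(\mu^\sigma,\mu)\leq \int\|x-y\|^2\gamma(dx,dy)=\E\l[\|X+Z_\sigma-X\|^2\r]=\E\l[\|Z_\sigma\|^2\r]=\sum_{i=1}^d\E\l[Z_{\sigma,i}^2\r]=d\sigma^2,
\end{equation*}
since each coordinate of an isotropic Gaussian has variance $\sigma^2$. This is exactly \eqref{eq:smoothing_bias}.

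The one thing to check is that the Wasserstein distance here is well defined, i.e. that $\mu^\sigma\in\Pc$. This holds because
\begin{equation*}
\cM_2(\mu^\sigma)=\E\l[\|X+Z_\sigma\|^2\r]\leq 2\cM_2(\mu)+2d\sigma^2<\infty.
\end{equation*}
Independence of $X$ and $Z_\sigma$ is used only to identify the law of $X+Z_\sigma$ with the convolution. The argument needs no Brenier map and no absolute continuity of $\mu$, so the statement holds for every $\mu\in\Pc$ as claimed.
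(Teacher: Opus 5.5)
Your proof is correct and is the same as the paper's: both use the coupling given by the joint law of $X$ and $X+Z_\sigma$, with $Z_\sigma$ independent of $X$, and bound $W_2^2(\mu^\sigma,\mu)$ by $\E\|Z_\sigma\|^2 = d\sigma^2$. Your check that $\mu^\sigma\in\Pc$ is a small addition that the paper leaves implicit.
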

\begin{proof}
    Let $X\sim\mu$ and $Z\sim Z_\sigma$. Define the coupling $\tilde\gamma\in\Gamma_{\mu,\mu^\sigma}$ as the joint law of $(X,X+Z)$. The first marginal is $\mu$, and second marginal is given by $\mu^\sigma$ (as an independent convolution of the two), hence indeed $\tilde\gamma\in\Gamma_{\mu,\mu^\sigma}$. By the sub-optimality of this coupling we can bound the Wasserstein distance
    \begin{equation}
        W_2^2(\mu^\sigma,\mu)\leq \int \|x-y\|^2\tilde\gamma(dx,dy) = \E\l[\|X-(X+Z)\|^2\r] = \E[\|Z_\sigma\|^2] = d\sigma^2,
    \end{equation}
    where the last equality uses the fact that $Z_\sigma$ is a d-dimensional isotropic Gaussian.
\end{proof}

A result of Goldfeld et al. \cite{golfeldGreenewaldPolyanskiyNilesWeed} gives a parametric rate for the convergence of the mollifications when the bandwidth $\sigma>0$ exceeds the sub-Gaussian constant of $\mu$ of definition \ref{def:Sub-Gaussian}.
\begin{theorem}[\cite{golfeldGreenewaldPolyanskiyNilesWeed}]\label{thm:mollification_parametric_rate}
    Let $\mu\in\Pc$ be $K$-sub-Gaussian and let $\sigma>K(\mu)$. Then there exists $C=C(\sigma,K,d)<\infty$ such that for all $n\geq1$,
    \begin{equation}
        d^2(\hmuns,\mu) = \E_\om\l[W_2^2(\hmuns(\om), \mu^\sigma)\r]\leq \frac{C}{n}.
    \end{equation}
\end{theorem}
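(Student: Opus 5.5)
The plan is to bound $W_2^2(\hmuns(\om),\mu^\sigma)$ realisation by realisation by a $\chi^2$-type quantity that is quadratic in $\hmuns(\om)-\mu^\sigma$, so that taking $\E_\om$ yields a variance and hence the $1/n$ rate. This follows the strategy of \cite{golfeldGreenewaldPolyanskiyNilesWeed}. The quantity actually controlled is $\E_\om[W_2^2(\hmuns(\om),\mu^\sigma)]$, with the smoothed population measure as target, as written on the right of the display.

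\textbf{Step 1 (from $W_2$ to a negative Sobolev norm).} Use the comparison $W_2(\nu,\rho)\le 2\|\nu-\rho\|_{\dot H^{-1}(\rho)}$, valid for probability measures $\nu\ll\rho$, with $\rho=\mu^\sigma$ and $\nu=\hmuns(\om)$. Both are absolutely continuous with positive smooth densities, so this applies to every realisation.

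\textbf{Step 2 (from $\dot H^{-1}$ to $\chi^2$).} If $\mu^\sigma$ satisfies a Poincaré inequality with constant $C_P$, then
\[
\|\nu-\mu^\sigma\|_{\dot H^{-1}(\mu^\sigma)}^2\le C_P\,\chi^2(\nu\,\|\,\mu^\sigma) .
\]
This reduces the problem to showing $\E_\om\big[\chi^2(\hmuns(\om)\,\|\,\mu^\sigma)\big]\le C/n$. I expect the Poincaré property of $\mu^\sigma=\mu*\cN(0,\sigma^2 I_d)$ for sub-Gaussian $\mu$ to be the main obstacle.
\begin{itemize}
\item My first attempt would be to invoke known results on functional inequalities for Gaussian convolutions of sub-Gaussian measures.
\item Failing that, I would bypass Poincaré and bound $\|\cdot\|_{\dot H^{-1}(\mu^\sigma)}$ directly. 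The idea is to write $\nu-\mu^\sigma$ as the divergence of the Gaussian-smoothed flux of the empirical fluctuation, and bound its weighted $L^2(1/p^\sigma)$ norm by the same integral as in Step 3.
\end{itemize}

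\textbf{Step 3 (variance computation).} Write $\varphi_\sigma$ for the $\cN(0,\sigma^2 I_d)$ density and $p^\sigma=\E_X\varphi_\sigma(\cdot-X)$ for the density of $\mu^\sigma$. By \eqref{eq:mollified_density_e_m}, $\hat p_n^\sigma(x,\om)$ is an average of $n$ i.i.d. copies of $\varphi_\sigma(x-X_i)$, each with mean $p^\sigma(x)$. Fubini then gives
\[
\E_\om\,\chi^2(\hmuns\,\|\,\mu^\sigma)=\frac1n\int\frac{\Var_X\,\varphi_\sigma(x-X)}{p^\sigma(x)}\,dx\le\frac1n\int\frac{\E_X[\varphi_\sigma(x-X)^2]}{p^\sigma(x)}\,dx ,
\]
so it remains to show that the last integral is a finite constant $C(\sigma,K,d)$.

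\textbf{Step 4 (finiteness, where $\sigma>K$ enters).} For the numerator, note that $\varphi_\sigma^2$ is a constant times $\exp(-\|x-X\|^2/\sigma^2)$. For the denominator, Jensen's inequality gives the Gaussian lower bound
\[
p^\sigma(x)\ge(2\pi\sigma^2)^{-d/2}\exp\!\big(-\E_X\|x-X\|^2/(2\sigma^2)\big).
\]
A sharper lower bound is obtained by restricting to a ball carrying mass at least $1/2$. The ratio then behaves like
\[
\E_X\exp\!\big(-\|x-X\|^2/\sigma^2+\|x\|^2/(2\sigma^2)\big)
\]
up to factors of order $e^{O(\|x\|)}$. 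Expanding the square, integrability in $x$ reduces to a Gaussian integral against $\E_X e^{\langle x,X\rangle\cdot 2/\sigma^2}$, and \eqref{eq:K_Sub-Gaussian} bounds this by $e^{2K^2\|x\|^2/\sigma^4}$. The resulting exponent in $\|x\|^2$ is negative precisely when $K<\sigma$, up to the constants I will need to track carefully; tracking these constants is the delicate part of this step.

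Combining Steps 1–4 gives $\E_\om[W_2^2(\hmuns,\mu^\sigma)]\le 4C_P\cdot C/n$, uniformly in $n\ge1$.
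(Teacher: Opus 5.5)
The paper gives no proof of this result; it is quoted from \cite{golfeldGreenewaldPolyanskiyNilesWeed}. Your chain is the right architecture: Peyre's bound $W_2\le 2\|\cdot\|_{\dot H^{-1}(\mu^\sigma)}$, then a Poincar\'e inequality for $\mu^\sigma$, then the variance identity for $\E\,\chi^2(\hmuns\,\|\,\mu^\sigma)$ (which uses the i.i.d.\ sampling the statement tacitly assumes), then finiteness of the resulting $\chi^2$-information. Steps 1 and 3 are correct. However, two steps are not established, and you have placed the hypothesis $\sigma>K$ in the wrong step.

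\emph{Step 2 is a missing idea, not a deferrable detail.} Without a Poincar\'e (or log-Sobolev) inequality for $\mu^\sigma$, your chain from $W_2$ to $\chi^2$ is broken. For $\sigma<K$ such an inequality can genuinely fail. Take symmetric atoms at very sparse far points $\pm x_k$ with masses proportional to $e^{-\|x_k\|^2/(2K'^2)}$, where $\sigma<K'<K$. This keeps $\mu$ $K$-sub-Gaussian but creates exponentially deep valleys in $p^\sigma$ between the bulk and each atom. Your fallback does not avoid this. A flux $J$ with $-\mathrm{div}\,J=\hmuns-\mu^\sigma$ must carry mass between the samples and the bulk of $\mu$. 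So $\int\|J\|^2/p^\sigma$ is not the Step 3 integral, and it needs control of $p^\sigma$ along exactly those paths. What closes the step is a functional inequality for Gaussian convolutions under a Gaussian-moment condition. For example, F.-Y.~Wang and J.~Wang (Ann.\ IHP 2016) show: if $\E e^{\theta\|X\|^2/(2\sigma^2)}<\infty$ for some $\theta>1$, then $\mu*\cN(0,\sigma^2I_d)$ satisfies a log-Sobolev inequality, hence a Poincar\'e inequality. For $\lambda<1/(2K^2)$ one has $\E e^{\lambda\|X\|^2}\le(1-2\lambda K^2)^{-d/2}$; to see this, average $e^{\sqrt{2\lambda}\langle G,X\rangle}$ over $G\sim\cN(0,I_d)$. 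Hence $K$-sub-Gaussianity guarantees the Wang--Wang condition precisely when $\sigma>K$, and this is where the hypothesis naturally belongs. With log-Sobolev in hand you can even drop Step 1 and use Talagrand's inequality, $W_2^2\le 2C_{\mathrm{LS}}\,\mathrm{KL}\le 2C_{\mathrm{LS}}\,\chi^2$.

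\emph{Step 4 does not give $\sigma>K$.} With your choices the exponent is $\|x\|^2\bigl(\frac{2K^2}{\sigma^4}-\frac{1}{2\sigma^2}\bigr)$, which is negative only for $\sigma>2K$. You can do better by keeping the dropped term $-\|X\|^2/\sigma^2$, integrating in $x$ first, and using the Jensen bound (the paper's definition forces $\E X=0$ and gives $\E\|X\|^2\le dK^2$). This yields
\[
\int\frac{\E[\varphi_\sigma(x-X)^2]}{p^\sigma(x)}\,dx\le e^{\E\|X\|^2/(2\sigma^2)}\,\E e^{\|X\|^2/\sigma^2}\le e^{dK^2/(2\sigma^2)}\Bigl(\frac{\sigma^2}{\sigma^2-2K^2}\Bigr)^{d/2},
\]
which requires $\sigma>\sqrt2K$. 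Your ball refinement only changes lower-order terms. The window $K<\sigma\le\sqrt2K$ is not a matter of tracking constants. Any majorant that replaces $p^\sigma$ by a single Gaussian around the bulk diverges there. Take sparse atoms as above, now with $\sigma/\sqrt2<K'<K$: the region near $2x_k$ contributes about $e^{\|x_k\|^2(1/\sigma^2-1/(2K'^2))}$. To reach $\sigma>K$ you must also bound $p^\sigma(x)$ below by the part of the mass of $\mu$ that makes $\E\varphi_\sigma(x-X)^2$ large. One way is shell by shell in $\|x-X\|$, combined with $\P(\|X\|\ge t)\le(1-2\lambda K^2)^{-d/2}e^{-\lambda t^2}$. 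Without this, your argument proves the statement only for $\sigma>\sqrt2K$.
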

Combining this theorem with lemma \ref{lemma:smoothing_bias} above gives the following proposition
\begin{proposition}\label{prop:mollified_convergence}
    Let $\mu\in\Pc$ be $K$-sub-Gaussian and let $\sigma(n)\to0$ with $\sigma(n)>K(\mu)$ for all $n$ and $\sqrt{n}\sigma(n)\to\infty$. Then  
    \begin{enumerate}[label=\alph*)]
        \item $W_2(\hmun^{\sigma(n)}(\om),\mu)\to 0$ for $\P$-almost every $\om$
        \item $d(\hmun^{\sigma(n)},\mu)\to 0$ where $\mu(\om) = \mu$ for all $\om\in\Omega$.
    \end{enumerate}
\end{proposition}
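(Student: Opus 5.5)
Following the decomposition announced just before the statement, I would bound, for every $\om$ and $n$,
\begin{equation*}
W_2(\hmun^{\sigma(n)}(\om),\mu)\leq W_2(\hmun^{\sigma(n)}(\om),\mu^{\sigma(n)}) + W_2(\mu^{\sigma(n)},\mu),
\end{equation*}
and square it with $(a+b)^2\leq 2a^2+2b^2$. Lemma \ref{lemma:smoothing_bias} controls the deterministic bias term by $W_2^2(\mu^{\sigma(n)},\mu)\leq d\,\sigma(n)^2\to 0$, since $\sigma(n)\to0$. All the work is therefore in the fluctuation term $W_2(\hmun^{\sigma(n)}(\om),\mu^{\sigma(n)})$.

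For part b) I would take expectations and invoke Theorem \ref{thm:mollification_parametric_rate} at bandwidth $\sigma=\sigma(n)>K(\mu)$. This gives $\E_\om[W_2^2(\hmun^{\sigma(n)}(\om),\mu^{\sigma(n)})]\leq C(\sigma(n),K,d)/n$, and hence $d^2(\hmun^{\sigma(n)},\mu)\leq 2C(\sigma(n),K,d)/n+2d\sigma(n)^2$.

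This is the main obstacle. The constant depends on $\sigma$ and blows up as $\sigma$ decreases towards the sub-Gaussian constant. Moreover, the hypotheses $\sigma(n)\to0$ and $\sigma(n)>K(\mu)$ for all $n$ force $K(\mu)=0$, i.e.\ $\mu=\delta_{m}$ up to degeneracy. I would handle this in one of two ways.

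\emph{Degenerate case.} I would first note that under the stated hypotheses $K(\mu)=0$. Then $\mu$ is a Dirac mass, so i.i.d.\ samples are a.s.\ constant. This gives $\hmun^{\sigma}(\om)=\mu^{\sigma}$ a.s., so the fluctuation term vanishes identically and both a) and b) follow from the bias bound alone.

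\emph{Robust alternative.} I would avoid the $\sigma$-dependence of $C$ altogether and bound the fluctuation term directly. Coupling $X_i+Z$ with the population copy through the same Gaussian noise shows that convolution with $Z_\sigma$ is a $W_2$-contraction: $W_2(\hmun^{\sigma}(\om),\mu^{\sigma})\leq W_2(\hmuno,\mu)$, uniformly in $\sigma$. Part a) then follows at once from Proposition \ref{prop:empirical_convergence} a) (or Proposition \ref{prop:mixing_convergence} a) in the ergodic case) together with $d\sigma(n)^2\to0$. Part b) follows from $d^2(\hmun^{\sigma(n)},\mu)\leq 2d^2(\hmun,\mu)+2d\sigma(n)^2$ and Proposition \ref{prop:empirical_convergence} b). I would present this contraction route as the main argument and use Theorem \ref{thm:mollification_parametric_rate} only to remark on the rate when $\sigma$ is held fixed. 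In this route the condition $\sqrt n\sigma(n)\to\infty$ is not needed.

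For completeness I would also record the direct pathwise-to-$\WL$ lift, in case one prefers not to use the contraction for b). The dominating sequence is $G_n=2\cM_2(\hmun^{\sigma(n)})+2\cM_2(\mu)$. It has $\cM_2(\hmun^{\sigma})=\cM_2(\hmun)+d\sigma^2$, so $\E[G_n]=4\cM_2(\mu)+2d\sigma(n)^2\to4\cM_2(\mu)$. Combined with a), the generalised DCT (Lemma \ref{lemma:generalised_lebesgue_dominated_convergence_theorem}) then gives b).
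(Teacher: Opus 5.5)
Your proposal is correct, and your main argument takes a different route from the paper. The paper's proof is exactly your first attempt. It uses the triangle inequality, Lemma~\ref{lemma:smoothing_bias} for the bias and Theorem~\ref{thm:mollification_parametric_rate} for the fluctuation term. This yields a bound of the form $2C/n+2d\sigma(n)^2$, with $C$ silently treated as independent of $n$. Part a) is dismissed as ``similar pathwise''.

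Your diagnosis of that route is right. $C=C(\sigma,K,d)$ is evaluated at a moving bandwidth. Also, $\sigma(n)>K(\mu)$ together with $\sigma(n)\to0$ forces $K(\mu)=0$; under the paper's uncentred Definition~\ref{def:Sub-Gaussian} this even forces $\mu=\delta_0$. So the hypotheses only hold in the degenerate case you treat directly. Moreover, an $O(1/n)$ bound in expectation does not by itself yield the almost sure statement a).

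Your contraction argument sidesteps all of this. Coupling through a common Gaussian gives $W_2(\hmun^{\sigma}(\om),\mu^{\sigma})\leq W_2(\hmuno,\mu)$ for every $\sigma$. The argument needs only $\mu\in\Pc$ and $\sigma(n)\to0$, and uses neither sub-Gaussianity nor $\sqrt n\,\sigma(n)\to\infty$. It also proves a) genuinely, from Proposition~\ref{prop:empirical_convergence} a) or Proposition~\ref{prop:mixing_convergence} a).

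What it gives up is the motivation for mollifying (Remark~\ref{remark:rates_comp}). Under the Fournier--Guillin moment condition your rate is $2\,\text{FG}(n,d,q)+2d\sigma(n)^2$, still subject to the curse of dimension. The paper's route, where it is legitimate (fixed $\sigma>K(\mu)$), gives a dimension-free $O(1/n)$ fluctuation term, at the price of a bias that does not vanish.

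For your optional DCT lift, the generalised DCT also needs $G_n\to 4\cM_2(\mu)$ almost surely. The paper's statement of Lemma~\ref{lemma:generalised_lebesgue_dominated_convergence_theorem} omits this hypothesis, but its proof uses it. It holds here by the strong law, since $G_n=2\cM_2(\hmun)+2d\sigma(n)^2+2\cM_2(\mu)$.
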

\begin{proof}
    We show only b) directly, (the argument for a) is similar pathwise). We calculate
    \begin{align*}\label{eq:part_b_estimate_Z_sigma}
        d^2(\hmun^{\sigma(n)},\mu) &= \E\l[W_2^2(\mu_n^{\sigma(n)},\mu)\r] \leq \E\l[\l(W_2(\hmu_n^{\sigma(n)},\mu^{\sigma(n)})+W_2(\mu^{\sigma(n)}, \mu)\r)^2\r]\\
        &\leq 2\E\l[W_2^2(\hmu_n^{\sigma(n)},\mu^{\sigma(n)})\r] + 2\E\l[W_2^2(\mu^{\sigma(n)}, \mu) \r]=2\E\l[W_2^2(\hmu_n^{\sigma(n)},\mu^{\sigma(n)}\r] + 2W_2^2(\mu^{\sigma(n)}, \mu)),
    \end{align*}
    where we have used the elementary inequality $(a+b)^2 \leq 2a^2+2b^2$, and removed expectations in the last term since both measures are deterministic (i.e. not random). Using the bounds of lemma \ref{lemma:smoothing_bias} and Theorem \ref{thm:mollification_parametric_rate}, we can bound this further as,
    \begin{equation}\label{eq:bounded_mollification}
        d^2(\hmun^{\sigma(n)},\mu) \leq \frac{2C}{n} + 2d\sigma(n) \overset{\ninf}{\longrightarrow}0
    \end{equation}
    as we assume the bandwidth $\sigma(n)\to0$.
\end{proof}

\begin{remark}\label{remark:rates_comp}
    The rate in part b) above gives the rate $d^2(\hmun^{\sigma(n)}, \mu) = O(n^{-1}+\sigma(n)^2)$, thus for the choice $\sigma(n) = n^{-\frac{1}{2}}$, this gives $O(n^{-1})$, recovering a rate strictly better than the Fournier-Guillin rate $n^{-\frac{2}{d}}$ in Theorem \ref{thm:FG}. Thus the mollification resolves the curse of dimensionality in Theorem \ref{thm:FG} at the cost of a bias term $W_2(\mu^{\sigma(n)}, \mu) = \sqrt{d}O(\sigma(n))$ which can be controlled explicitly. A precise characterisation of the optimal bandwidth schedule $\sigma(n)$ and resulting sharp rates in all dimensions is detailed in \cite{golfeldGreenewaldPolyanskiyNilesWeed}.

    Additionally, one can use Markov's inequality to use enforce a concentration bound using the rate here, as well as the more tailored sub-Gaussian bound if the same conditions of proposition \ref{prop:W2_concentration} are satisfied. In both cases, one replaces the Fournier-Guillin rate with the rate of Goldfeld et al. here.
\end{remark}

We are able to establish convergence of the Wasserstein gradient using the mollification.
\begin{proposition}\label{prop:WG_convergence_mollification}
    Let $\cF$ be Wasserstein $C^1$ in the sense of definition \ref{def:Wasserstein_C1}, with $\grad\delta \cF(\mu)(\cdot)\in L^2(\mu;\R^d)$. Let $\sigma(n)$ satisfy the conditions of proposition \ref{prop:mollified_convergence}. Then 
    \begin{enumerate}[label=\alph*)]
        \item For $\P$-almost every $\om\in\Omega$,
        \begin{equation}\label{eq:pathwise_WG_convergence_moll}
            \l\|\gradW \cF(\hmun^{\sigma(n)}(\om)) - \gradW \cF(\mu)\r\|^2_{\hmun^{\sigma(n)}(\om)}\to0
        \end{equation}
        \item 
        \begin{equation}\label{eq:c_WG_convergence_moll}
            \l\|\gradWL \cF(\hmun^{\sigma(n)}) - \gradWL \cF(\mu)\r\|^2_{\hmun^{\sigma(n)}}\to0
        \end{equation}
    \end{enumerate}
    where the tangent space norm for a) is deterministic from definition \ref{def:w2_tangent} and for b) is the random measure tangent space norm of definition \ref{def:Lifted_tangent}.
\end{proposition}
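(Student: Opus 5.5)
Part a) is proved pathwise and then lifted to b) with the generalised DCT of Appendix \ref{App:Generalised_dct}, the same way the paper handles Propositions \ref{prop:mollified_convergence} and \ref{prop:geodesic_empirical_convergence}. Write $\mu_n(\om)\deq\hmun^{\sigma(n)}(\om)$ and
\[
f_n(\om)\deq\int\big\|\grad\delta\cF(\mu_n(\om))(x)-\grad\delta\cF(\mu)(x)\big\|^2\mu_n(\om)(dx).
\]
Part b) asks for $\E[f_n]\to0$. Fix $\om$ in the full-measure event where Proposition \ref{prop:mollified_convergence} a) gives $W_2(\mu_n(\om),\mu)\to0$. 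By Theorem \ref{thm:weak_wass}, $\mu_n(\om)\wto\mu$ and $\cM_2(\mu_n(\om))\to\cM_2(\mu)$. The set $\cK_\om\deq\{\mu_n(\om):n\geq1\}\cup\{\mu\}$ is then compact in $(\Pc,W_2)$.

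\textbf{Step 1 (pathwise, part a).} I would read the Wasserstein $C^1$ hypothesis as joint continuity of $(\nu,x)\mapsto\grad\delta\cF(\nu)(x)$; this is the reading used in Proposition \ref{prop:WGF_lifted_stability}. On the compact set $\cK_\om\times\bar B_R$ this map is uniformly continuous. Hence
\[
\sup_{\|x\|\leq R}\big\|\grad\delta\cF(\mu_n(\om))(x)-\grad\delta\cF(\mu)(x)\big\|\to0,
\]
and the integral of $f_n(\om)$ over $B_R$ tends to zero. The tail $\int_{B_R^c}$ needs a growth bound. I would assume, as implicit in the hypotheses, that $\|\grad\delta\cF(\nu)(x)\|\leq C(1+\|x\|)$ uniformly for $\nu$ in $W_2$-bounded sets; global Lipschitzness in $(W_2,x)$ implies this. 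The tail is then at most $C\int_{B_R^c}(1+\|x\|^2)\,d\mu_n(\om)$. This is small uniformly in $n$ once $R$ is large, because Wasserstein convergence makes the second moments of $\mu_n(\om)$ uniformly integrable. This is the same split into compact part and tail used in Lemma \ref{lemma:interpolant_stability}. Letting $n\to\infty$ and then $R\to\infty$ gives $f_n(\om)\to0$.

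\textbf{Step 2 (lift, part b).} The same growth bound gives the domination
\[
f_n\leq g_n\deq 4C^2\big(1+\cM_2(\mu_n(\om))\big).
\]
We have $\cM_2(\mu_n(\om))=\cM_2(\hmuno)+d\sigma(n)^2$, since convolving with an independent centred Gaussian adds its variance. Using $\E[\cM_2(\hmuno)]=\cM_2(\mu)$, this gives $\E[g_n]\to4C^2(1+\cM_2(\mu))=\E[g]$, where $g$ is the almost sure limit of $g_n$. Lemma \ref{lemma:generalised_lebesgue_dominated_convergence_theorem} then gives $\E[f_n]\to0$, which is b).

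\textbf{Main obstacle.} The main obstacle is Step 1's need for a growth or uniform-integrability control on $\grad\delta\cF$. Definition \ref{def:Wasserstein_C1} supplies only continuity, and the stated assumption $\grad\delta\cF(\mu)\in L^2(\mu)$ concerns the single measure $\mu$, not the family $\mu_n(\om)$. I would first try to get by with continuity plus uniform integrability, and otherwise make the linear-growth (or Lipschitz) condition explicit as part of ``suitable integrability conditions,'' in line with Theorem \ref{thm:LiftedgradW}. There is also an ambiguity in the statement: $\gradW\cF(\mu)$ must be evaluated on the support of $\mu_n(\om)$, which is all of $\R^d$. Joint continuity on all of $\R^d$ resolves this.
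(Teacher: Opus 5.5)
Your proof has the same two-stage structure as the paper's: pathwise convergence from continuity of $\grad\delta\cF$ in the measure variable, then a lift to $\WL$ via Lemma~\ref{lemma:generalised_lebesgue_dominated_convergence_theorem}. Your pathwise step, however, is genuinely different, and it is the version that actually closes.

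The paper uses only continuity of $\nu\mapsto\grad\delta\cF(\nu)(x)$ for each fixed $x$. It obtains pointwise convergence of the integrand and then applies the generalised DCT ``integrating against $\hmun^{\sigma(n)}(\om)$'', with an unspecified sub-Gaussian integrability bound. That lemma concerns a fixed measure and varying integrands. Pointwise convergence of integrands is not enough when the integrating measure also moves with $n$: a unit bump of width $1/n$ centred at $1/n$ tends to $0$ pointwise but integrates to $1$ against $\delta_{1/n}$. Your argument supplies exactly what is missing. Joint continuity on the compact set $(\{\mu_n(\om)\}_{n\ge1}\cup\{\mu\})\times\bar B_R$ gives uniform convergence on balls, and $W_2$-convergence gives uniform integrability of $\|x\|^2$ under $\mu_n(\om)$ for the tails.

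In b), your dominating sequence satisfies $\E[g_n]\to\E[g]$, which is the hypothesis the lemma actually needs; the paper only asserts boundedness in expectation. You also use second moments and linear growth rather than sub-Gaussianity. You are right that Definition~\ref{def:Wasserstein_C1} (continuity of $\delta F/\delta\mu$, not of its gradient) together with $\grad\delta\cF(\mu)\in L^2(\mu)$ does not suffice. A growth hypothesis must be added in either version. You are also right that $\gradW\cF(\mu)$ needs an everywhere-defined representative, because $\mu_n(\om)$ has full support.

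\textbf{Repair in Step 2.} A growth constant uniform only over $W_2$-bounded sets is enough for the tail in Step 1, since for fixed $\om$ the sequence $(\mu_n(\om))_{n\ge1}$ is $W_2$-bounded. It does not give $f_n\le 4C^2(1+\cM_2(\mu_n(\om)))$ with a deterministic $C$. The family $\{\mu_n(\om):n\ge1,\ \om\in\Omega\}$ is not $W_2$-bounded as soon as $\mu$ has unbounded support. Take the hypothesis in the form $\|\grad\delta\cF(\nu)(x)\|\le C(1+\|x\|+\cM_2(\nu)^{1/2})$ for all $\nu\in\Pc$, which is what global Lipschitzness actually yields. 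Then $f_n\le C'(1+\cM_2(\mu_n(\om))+\cM_2(\mu))\eqqcolon g_n$, and your identity $\E[\cM_2(\mu_n)]=\cM_2(\mu)+d\sigma(n)^2$ still gives $\E[g_n]\to\E[g]$.

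\textbf{Lipschitz fallback.} Under that assumption everything reduces to $f_n(\om)\le L_1^2W_2^2(\mu_n(\om),\mu)$. So a) and b) follow directly from the convergence of $\mu_n$ to $\mu$, pathwise and in $d$. The compactness argument is only needed under the weaker continuity-plus-growth hypothesis.

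\textbf{Source of the almost sure convergence.} Rather than citing Proposition~\ref{prop:mollified_convergence} a), obtain it from $W_2(\hmun^{\sigma(n)}(\om),\hmuno)\le\sqrt{d}\,\sigma(n)$ (the coupling of Lemma~\ref{lemma:smoothing_bias} applied to $\hmuno$) together with Proposition~\ref{prop:empirical_convergence} or~\ref{prop:mixing_convergence}. This uses only $\sigma(n)\to0$. Proposition~\ref{prop:mollified_convergence} a) is only sketched in the paper, and its hypotheses $\sigma(n)>K(\mu)$, $\sigma(n)\to0$ force $K(\mu)=0$, i.e.\ $\mu=\delta_0$.
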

\begin{proof}
    Since $\hmun^{\sigma(n)}(\om),\mu\in\Pac$ $\P$-a.s., both $\gradW\cF(\hmun^{\sigma(n)}(\om))$ and $\gradW\cF(\mu)$ are well-defined elements of their respective tangent spaces. We write the $L^2(\hmun^{\sigma(n)}(\om))$ norm as,
    \begin{align*}\label{eq:tangent_triangle_inequality}
        \l\|\gradW \cF(\hmun^{\sigma(n)}(\om)) - \gradW \cF(\mu)\r|^2_{\hmun^{\sigma(n)}(\om)} \leq& \int \l\|\grad\delta\cF(\hmun^{\sigma(n)}(\om))(x)-\grad\delta\cF(\mu)(x)\r|^2\hmun^{\sigma(n)}(\om)(dx).
    \end{align*}
    Since $W_2(\hmun^{\sigma(n)}(\om), \mu)\to0$ $\P$-a.s. and $\cF$ is assumed to be Wasserstein $C^1$, for each fixed $x$, the map $\mu\mapsto\grad\delta\cF(\mu,x)$  is continuous. Hence for $\P$-almost every $\om$ we have pointwise convergence in $x$
    \begin{equation*}
        \grad\delta\cF(\hmun^{\sigma(n)}(\om))(x)-\grad\delta\cF(\mu)(x)\overset{\ninf}{\longrightarrow}0,\qquad \forall x\in\R^d.
    \end{equation*}
    Integrating against $\hmun^{\sigma(n)}(\om)$, the pointwise convergence can be lifted to convergence of the integral using the uniform integrability bound on $|\gradW \cF(\cdot)(x)|^2$ under the sub-Gaussian control on $\hmun^{\sigma(n)}(\om)$ to permit a conclusion of a) via the generalised DCT (appendix \ref{App:Generalised_dct}). \\
    
    b) Follows again by generalised DCT. Indeed a) gives the pathwise convergence and the sub-Gaussian bound on $\l\|\gradWL \cF(\hmun^{\sigma(n)})\r\|^2_{\hmun^{\sigma(n)}}$ provides a dominating sequence which is uniformly bounded in expectation.
\end{proof}

\section{Bayesian Analysis (Section 5)}
\setcounter{equation}{0}

\subsection{Proof of corollary \ref{cor:schwartz}: Corollary to Schwartz's Theorem under the weak topology}\label{App:Weak_Schwartz}
\begin{proof}
    A $\mu$-weak neighbourhood of $\pi$ is a subset of $\cP(\R^d)$ which can be represented by arbitrary unions of basis elements, given by subsets of the form
    \begin{equation*}
        \l\{\mu\in\cP(\R^d): \forall i\in\{1,\dots,k\}, \l|\E_\mu[g_i] - \E_\pi[g_i]\r| < \varepsilon_i\r\}
    \end{equation*}
    where for all $i$, $g_i\in C_b(\R^d)$ is bounded continuous, $\varepsilon_i>0$ and $k\in\N$. We can easily consider a reduced basis, with elements of the form
    \begin{equation*}
        U\deq\l\{\mu\in\cP(\R^d): \E_\mu[g] < \E_\pi[g] + \varepsilon\r\}
    \end{equation*}
    and instead show that the probability of the complements of such sets vanish almost surely, i.e. $\P_\Pi(U^c|X_1,\dots,X_n)\to0$ $\pi^{\otimes\N}$-a.s. and we only have to verify the testing conditions on these sets. Define
    \begin{equation}\label{eq:Schwartz_weak_test}
      \Phi_n(X_1,\dots,X_n) = \1\l\{\frac{1}{n}\sum_{i=1}^ng(X_i) > \E_\mu[g] + \frac{\varepsilon}{2}\r\}.
    \end{equation}
    The integral of the test function is bounded via Hoeffding’s inequality: since g is a bounded function (and w.l.o.g. can be rescaled such that $0\leq g\leq 1$),
    \begin{equation*}\label{eq:Hoeffdings_Schwartz}
        \P_{\pi}\l(\frac{1}{n}\sum_{i=1}^ng(X_i)>\E_\pi[g] + \frac{\varepsilon}{2}\r) \leq e^{-\frac{n\varepsilon}{2}},
    \end{equation*}
    which is the first testing condition (the $H_0$ null test). For the other testing condition (the $H_1$ alternative test). Further, for any $\mu\in U^c$, $\E_\mu[g]-\E_\pi[g]>\varepsilon$ gives, again by Hoeffding's inequality,
    \begin{equation*}\label{eq:Hoeffding_complement_Schwartz}
        \P_{\mu}\l(-\frac{1}{n}\sum_{i=1}^ng(X_i) > - \E_\pi[g] + \frac{\varepsilon}{2}\r)\leq e^{-\frac{n\varepsilon}{2}}
    \end{equation*}
    which satisfies the testing conditions. Thus, to guarantee weak posterior consistency, we need only to verify that the condition that the prior has positive KL support for the true density.
\end{proof}

\section{Transformer random token sampling (Section 6)}

\subsection{Lipschitz constant for the interaction energy on $\bS^{d-1}$}\label{App:transformer_lip}

For ease of notation, let $h_\mu(x) = \int e^{\beta\langle x,y\rangle}\beta y\,\mu(dy)$ with $v_\mu(x) = \proj_x(h_\mu(x))$.

Let $\mu,\nu\in\Pc$, and let $\gamma\in\Gamma_{\mu,\nu}$ be an optimal coupling so that
\begin{equation*}
    h_{\mu}(x) - h_{\nu}(x) = \beta\iint (e^{\beta\langle x,y\rangle}y - e^{\beta\langle x,y'\rangle}y')\gamma(dy,dy').
\end{equation*}
Considering the integrand, the triangle inequality provides
\begin{align*}
    \l\|e^{\beta\langle x,y\rangle}y - e^{\beta\langle x,y'\rangle}y'\r\|\leq \l|e^{\beta\langle x,y\rangle} - e^{\beta\langle x,y'\rangle}\r|\|y\| + e^{\beta\langle x,y'\rangle}\|y-y'\| \leq (\beta+1)e^\beta \|y-y'\|
\end{align*}
where we have used the mean value theorem to bound the first difference. Using the bounds on the flow,
\begin{equation*}
    \|v_\mu(x) - v_\nu(x)\|\leq \|h_\mu(x) - h_\nu(x)\| \leq (\beta+1)e^\beta \int \|y-y'\|\gamma(dy,dy')\leq (\beta+1)e^\beta W_2(\mu,\nu),
\end{equation*}
where the last inequality follows from Cauchy-Schwartz. Thus the first Lipschitz constant $L_1 = (\beta + 1)e^\beta$.

Working towards the other Lipschitz constant, we first decompose
\begin{equation*}
    v_\mu(x) - v_\nu(x') = \proj_x(h_\mu(x)-h_\nu(x')) + (\proj_x - \proj_{x'})h_{\mu}(x').
\end{equation*}
Again from the mean-value theorem, the first term $h_\mu(x)-h_\nu(x')\leq \beta e^\beta\|x-x'\|$, and as the projection retains the inequality here,
\begin{equation*}
    \|\proj_x(h_\mu(x)-h_\nu(x'))\| \leq \beta e^\beta\|x-x'\|.
\end{equation*}
For the second term, as $\|h_\mu(x')\|\leq e^\beta$, using the inequality on the projection
\begin{equation*}
    \|(\proj_x - \proj_{x'})h_{\mu}(x')\|\leq 2e^\beta \|x-x'\|.
\end{equation*}
Thus, combining the two, we find $L_2 = (\beta+2)e^\beta$. Combining the constants as in the proof of proposition \ref{prop:WGF_empirical}, we find $L = L_1^2 + 1 + 2L_2$, so that
\begin{equation*}
    L = (\beta + 1)^2e^{2\beta} + 1 + 2(\beta + 2)e^\beta,
\end{equation*}
as desired, and we conclude.
\label{Appendix}

\bibliographystyle{alphaabbr}
\bibliography{Bibliography.bib}

\vspace{1cm}

\end{document}